	\newcommand{\ftn}[3]{ #1 : #2 \rightarrow #3 }
	\newcommand{\setof}[2]{\ensuremath{\left\{ #1 \: : \: #2 \right\}}}
	\newcommand{\norm}[1]{\left\| #1 \right\|}
	\newcommand{\p}{\mathfrak{p}}
	\newcommand{\q}{\mathfrak{q}}
	\newcommand{\dirlim}{\displaystyle \lim_{\longrightarrow}}
	\newcommand{\Hom}{\ensuremath{\operatorname{Hom}}}
	\newcommand{\multialg}{\mathcal{M}}
	\newcommand{\id}{\ensuremath{\operatorname{id}}}
	\newcommand{\Z}{\ensuremath{\mathbb{Z}}\xspace}
	\newcommand{\C}{\ensuremath{\mathbb{C}}\xspace}
	\newcommand{\Q}{\ensuremath{\mathbb{Q}}\xspace}
	\newcommand{\R}{\ensuremath{\mathbb{R}}\xspace}
	\newcommand{\N}{\ensuremath{\mathbb{N}}\xspace}
	\newcommand{\K}{\ensuremath{\mathbb{K}}\xspace}
        \newcommand{\A}{\mathfrak{A}}
        \newcommand{\M}{\mathsf{M}}
        \newcommand{\ZC}{\mathcal{Z}}
        \theoremstyle{plain}
	\newtheorem{thm}{Theorem}[section]
	\newtheorem{lemma}[thm]{Lemma}
	\newtheorem{theorem}[thm]{Theorem}
	\newtheorem{corollary}[thm]{Corollary}
	\theoremstyle{definition}
	\newtheorem{definition}[thm]{Definition}
	\newtheorem{remark}[thm]{Remark}
	\newtheorem{notation}[thm]{Notation}
	\numberwithin{equation}{section}
	\numberwithin{figure}{section}
\begin{document}
\title[The Automorphism group of a simple $C^{*}$-algebra]%
	{The Automorphism group of a simple $\mathcal{Z}$-stable $C^{*}$-algebra}	
	
	\author{Ping Wong Ng}
	\address{University of Louisiana at Lafayette \\
	217 Maxim D. Doucet Hall \\
 	P.O.Box 41010 \\
	Lafayette, LA 70504-1010 USA }
	\email{png@louisiana.edu}
	\author{Efren Ruiz}
        \address{Department of Mathematics \\
        University of Hawaii Hilo \\
        200 W. Kawili St. \\
        Hilo, Hawaii 96766 USA}
        \email{ruize@hawaii.edu}
        \date{\today}
	
%AMS info

	\keywords{automorphism, topological groups}
	\subjclass[2000]{Primary: 46L35}
	
	\begin{abstract}
	We study the automorphism group of a simple, unital, $\mathcal{Z}$-stable $C^{*}$-algebra.  We show that $\overline{\mathrm{Inn}}_{0} ( \mathfrak{A} )$ is a simple topological group and $\frac{ \overline{ \mathrm{Inn} } ( \mathfrak{A} ) }{ \overline{\mathrm{Inn}}_{0} ( \mathfrak{A} ) }$ is isomorphic (as topological groups) to the inverse limit of quotient groups of $K_{1} ( \mathfrak{A} )$, where $\mathfrak{A}$ is a $\mathcal{Z}$-stable $C^{*}$-algebra satisfying the following property:  for every UHF algebra $\mathfrak{B}$, $\mathfrak{A} \otimes \mathfrak{B}$ is a nuclear, separable, simple, tracially AI algebra satisfying the Universal Coefficient Theorem (UCT) of Rosenberg and Schochet.  By the recent results of Lin and Winter, ordered $K$-theory, traces, and the class of the unit is a complete isomorphism invariant for this class of $C^{ *}$-algebras.
	\end{abstract}
        \maketitle

\section{Introduction}

Denote the group of automorphisms of $\mathfrak{A}$ equipped with the topology of pointwise convergence by $\mathrm{Aut}( \mathfrak{A} )$, denote the closure of the group of inner automorphisms of $\mathfrak{A}$ by $\overline{ \mathrm{Inn} } ( \mathfrak{A} )$, and denote the closure of the group of inner automorphisms of $\mathfrak{A}$ whose implementing unitaries are connected to $1_{\mathfrak{A}}$ via a norm continuous path of unitaries by $\overline{ \mathrm{ Inn } }_{0} ( \mathfrak{A} )$.  We then have that $\mathrm{Aut} ( \mathfrak{A} )$ decomposes into the following series of closed normal subgroups
\begin{equation*}
\overline{ \mathrm{Inn} }_{0} ( \mathfrak{A} ) \lhd \overline{ \mathrm{Inn} } ( \mathfrak{A} ) \lhd \mathrm{Aut} ( \mathfrak{A} ).
\end{equation*} 
  
In \cite{EllRorAuto}, Elliott and R{\o}rdam showed that for a simple, unital $C^{*}$-algebra $\mathfrak{A}$ that either is real rank zero, stable rank one and weakly unperforated, or is purely infinite, $\overline{ \mathrm{Inn} }_{0} ( \mathfrak{A} )$ is a simple topological group (no non-trivial closed normal subgroup).  They also showed that $\frac{ \overline{ \mathrm{ Inn } } ( \mathfrak{A} ) }{ \overline{ \mathrm{ Inn } }_{0} ( \mathfrak{A} ) }$ is totally disconnected when $\mathfrak{A}$ is a simple AT algebra with real rank zero.  Hence, when $\mathfrak{A}$ is a simple AT algebra with real rank zero, the results of Elliott and R{\o}rdam give a structure theorem for $\mathrm{Aut} ( \mathfrak{A} )$ since $\mathrm{Aut} ( \mathfrak{A} )$ fits into the following exact sequence
\begin{equation*}
\{ 1 \} \to \overline{ \mathrm{ Inn } } ( \mathfrak{A} ) \to \mathrm{Aut} ( \mathfrak{A} ) \to \mathrm{Aut} ( K_{*} ( \mathfrak{A} ) )_{+,1} \to \{ 1 \}.
\end{equation*}
In their paper, they asked if $\overline{ \mathrm{Inn} }_{0} ( \mathfrak{A} )$ is a simple topological group and if $\frac{ \overline{ \mathrm{ Inn } } ( \mathfrak{A} ) }{ \overline{ \mathrm{ Inn } }_{0} ( \mathfrak{A} ) }$ is totally disconnected for every simple, unital $C^{*}$-algebra $\mathfrak{A}$.  

Recently, the authors in \cite{pr_auto} and \cite{pner_ugrps} proved $\overline{ \mathrm{Inn} }_{0} ( \mathfrak{A} )$ is a simple topological group and $\frac{ \overline{ \mathrm{ Inn } } ( \mathfrak{A} ) }{ \overline{ \mathrm{ Inn } }_{0} ( \mathfrak{A} ) }$ is totally disconnected for all nuclear, separable, simple, tracially AI algebras satisfying the UCT and for all nuclear, purely infinite, separable, simple $C^{*}$-algebras satisfying the UCT.  In this paper, we generalize the results of \cite{pr_auto} and \cite{pner_ugrps}.  We show that $\frac{ \overline{ \mathrm{ Inn } } ( \mathfrak{A} ) }{ \overline{ \mathrm{ Inn } }_{0} ( \mathfrak{A} ) }$ is isomorphic to an inverse limit of discrete abelian groups (similar to the one used by Elliott and R{\o}rdam in \cite{EllRorAuto}) for a nuclear, separable, simple $\mathcal{Z}$-stable $C^{*}$-algebra $\mathfrak{A}$ such that for each supernatural number $\mathfrak{p}$ of infinite type, $\mathfrak{A} \otimes \mathsf{M}_{ \mathfrak{p} }$ is a tracially AI algebra that satisfies the UCT.  
(Here, $\mathsf{M}_{ \mathfrak{p} }$ is the UHF algebra with supernatural
number $\mathfrak{p}$.) 
In fact, we show that $\frac{ \overline{ \mathrm{ Inn } } ( \mathfrak{A} ) }{ \overline{ \mathrm{ Inn } }_{0} ( \mathfrak{A} ) }$ is isomorphic (as topological groups) to the inverse limit of quotient groups of $K_{1} ( \mathfrak{A} )$, where the quotient groups are given the discrete topology and the inverse limit is given the inverse limit topology.  
Consequently, $\frac{ \overline{ \mathrm{ Inn } } ( \mathfrak{A} ) }{ \overline{ \mathrm{ Inn } }_{0} ( \mathfrak{A} ) }$ is totally disconnected.  
Moreover, we show that $\overline{ \mathrm{Inn} }_{0} ( \mathfrak{A} )$ is a simple topological group for any separable, simple, unital, $\mathcal{Z}$-stable $C^{*}$-algebra that is either nuclear and quasidiagonal or is exact and
has a unique tracial state.  

It turns out that a large class of simple $C^{*}$-algebras satisfies the above condition.  We give examples of $C^{*}$-algebras in this class.
\begin{itemize}
\item[(1)] Nuclear, separable, simple, unital, tracially AI algebras which satisfy the UCT.

\item[(2)] Simple, unital, $\mathcal{Z}$-stable AH algebras (see Corollary 11.12 of \cite{hl_asyunit}).

\item[(3)]  The Jiang-Su algebra $\mathcal{Z}$ (\cite{JiangSu}).

\item[(4)]  Simple, unital $\mathcal{Z}$-stable $C^{*}$-algebras which are locally type I with unique tracial state (see Corollary 5.6 of \cite{LinNiuAdvances}; see also Corollary 8.2 of \cite{ww_localelliott}).

\item[(5)] Simple, unital $\mathcal{Z}$-stable, ASH-algebras $\mathfrak{A}$ such that $T (\mathfrak{A}) = S[1] \left( K_{0} (\mathfrak{A}) \right)$, where $S[1] \left( K_{0} ( \mathfrak{A} ) \right)$ is 
the state space of $K_{0}( \mathfrak{A} )$ (see Corollary
5.5 of \cite{LinNiuAdvances}; also see Corollaries 6.3 and 8.3 of \cite{ww_localelliott}).

\item[(6)]  Simple, unital $A\mathbb{T}D$ algebras (an $A\mathbb{T}D$ algebra is an inductive limit
of dimension drop circle algebras; see Definitions 2.3 and 2.4 of \cite{hlzn_range} ; and see Theorem 4.2 of  \cite{hlzn_range}).
\end{itemize}

The paper is organized as follows: In Section \ref{s:innerauto}, we show that for a separable, simple, unital, $\mathcal{Z}$-stable $C^{*}$-algebra
$\mathfrak{A}$ that is either nuclear and quasidiagonal or is exact and has a unique tracial state, $\overline{ \mathrm{Inn} }_{0} ( \mathfrak{A} )$ is a simple topological group.  In Section \ref{pathunitaries}, we introduce the Bott Maps which are defined by Lin in \cite{hl_asyunitinner} and we present some technical results Theorem \ref{t:bottunit}, Corollary \ref{c:bottunit}, and Theorem \ref{t:approxcomm}.  
In Section \ref{results}, we show that the topological group $\frac{ \overline{ \mathrm{ Inn } } ( \mathfrak{A} ) }{ \overline{ \mathrm{ Inn } }_{0} ( \mathfrak{A} ) }$ is isomorphic to the inverse limit of discrete abelian groups, where the inverse limit is given the inverse limit topology.  We also provide a partial structure theorem for $\mathrm{Aut} ( \mathfrak{A} )$.

\section{Simplicity of $\overline{ \mathrm{Inn} }_{0} ( \mathfrak{A} )$} \label{s:innerauto}

We first start with some notation that will be used throughout the paper.  Let $\mathfrak{A}$ be a $C^{*}$-algebra and let $p$ and $q$ be projections in $\mathfrak{A}$.
\begin{itemize}
\item[(1)] If $p$ and $q$ are \emph{Murray-von Neumann equivalent}, i.e., there exists $v \in \mathfrak{A}$ such that $v^{*} v = p$ and $vv^{*} = q$, we write $p \sim q$.

\item[(2)] If there exists $v \in \mathfrak{A}$ such that $v^{*} v = p$ and $vv^{*} \leq q$, we write $p \precsim q$. 

\item[(3)] If there exists $v \in \mathfrak{A}$ such that $v^{*} v = p$, $vv^{*} \leq q$, and $vv^{*} \neq q$, we write $p \precnsim q$. 

\item[(4)] For $a, b \in \mathfrak{A}$, $(a,b) = a b a^{*} b^{*}$.

\item[(5)]  If $\mathfrak{A}$ is unital, then denote the norm closure of the commutator subgroup of $U( \mathfrak{A} )$ by $CU( \mathfrak{A} )$ and denote the norm closure of the commutator subgroup of $U( \mathfrak{A} )_{0}$ by $CU( \mathfrak{A} )_{0}$.
\end{itemize}

\begin{definition}
Let $\mathfrak{p}$ and $\mathfrak{q}$ be supernatural numbers.  Set
\begin{equation*}
\mathcal{Z}_{ \mathfrak{p} , \mathfrak{q} } = \setof{ f \in C\left( [ 0 , 1 ],
 \mathsf{M}_{ \mathfrak{p} } \otimes \mathsf{M}_{ \mathfrak{q} } \right) }{ 
f(0) \in \mathsf{M}_{ \mathfrak{p} } \otimes 1_{ \mathsf{M}_{ 
\mathfrak{q} } } \ \text{and} \ f(1) \in 1_{ \mathsf{M}_{ 
\mathfrak{p} } } \otimes \mathsf{M}_{ \mathfrak{q} } }.   
\end{equation*}
(Here, $\mathsf{M}_{ \mathfrak{p}}$ is the UHF algebra with supernatural
number $\mathfrak{p}$. Similar for $\mathsf{M}_{ \mathfrak{q} }$.)

We shall regard $\mathcal{Z}_{ \mathfrak{p} , \mathfrak{q} }$ (and any tensor product with it) as $C( [ 0 , 1 ] )$-algebra with the obvious central embedding of $C( [ 0 , 1] )$.

\end{definition}

\begin{lemma} Let $\mathfrak{A}$ be a separable, simple, unital  $C^{*}$-algebra
and let $\mathfrak{C}$ be a UHF algebra.  Then $\mathfrak{A} \otimes \mathfrak{C}$ is $\mathcal{Z}$-stable and hence either purely infinite or
stably finite.  Moreover, if $\mathfrak{A} \otimes \mathfrak{C}$ is (stably) finite
then it has the following
properties:
\begin{enumerate}
\item stable rank one 
\item cancellation of projections
\item strict comparison of positive elements when $\A$ is, additionally,
exact  
\item weak unperforation
\item $K_1$-injectivity
\item the (SP) property
\item For every nonzero projection $p \in \mathfrak{A} \otimes \mathfrak{C}$, for every $n \geq 2$,
$p(\mathfrak{A} \otimes \mathfrak{C})p$ contains a unital sub-$C^{*}$-algebra which is 
isomorphic to $\mathsf{M}_n \oplus \mathsf{M}_{n+1}$. 
\item If $p, q$ are nonzero projections in $\mathfrak{A} \otimes \mathfrak{C}$, then there exist
nonzero projections $p', q'$ in $p(\mathfrak{A} \otimes \mathfrak{C}) p$ and 
$q (\mathfrak{A} \otimes \mathfrak{C}) q$ respectively such that 
$p' \sim q'$.  
\end{enumerate}
\label{lem:AotimesUHFProperties} 
\end{lemma}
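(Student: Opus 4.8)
The plan is to recognize each assertion as a known structural property of simple, unital, $\mathcal{Z}$-stable (respectively UHF-stable) $C^{*}$-algebras, so that the proof is essentially an assembly of theorems of R{\o}rdam, Jiang--Su, Toms--Winter, Rieffel and Haagerup, together with one hands-on argument for the ``small projection'' statements (6)--(8).

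First, an (infinite-dimensional) UHF algebra is strongly self-absorbing, hence $\mathcal{Z}$-stable by Winter, so $\mathfrak{C} \cong \mathfrak{C} \otimes \mathcal{Z}$ and therefore $\mathfrak{A} \otimes \mathfrak{C} \cong ( \mathfrak{A} \otimes \mathfrak{C} ) \otimes \mathcal{Z}$ (alternatively, $\mathsf{M}_{ \mathfrak{p} } \otimes \mathcal{Z} \cong \mathsf{M}_{ \mathfrak{p} }$ by classification). Being simple, unital and $\mathcal{Z}$-stable, $\mathfrak{A} \otimes \mathfrak{C}$ is then purely infinite or stably finite by R{\o}rdam's dichotomy for $\mathcal{Z}$-absorbing $C^{*}$-algebras; set $B := \mathfrak{A} \otimes \mathfrak{C}$ and suppose from now on that $B$ is stably finite. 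By R{\o}rdam's theorem on the stable rank of $\mathcal{Z}$-absorbing algebras, a simple, unital, stably finite, $\mathcal{Z}$-stable $C^{*}$-algebra has stable rank one, which is (1); then (2) follows since stable rank one forces cancellation of projections, and (5) follows since stable rank one gives $U(B)/U_{0}(B) \cong K_{1}(B)$ (Rieffel), in particular $K_{1}$-injectivity. For (4) one invokes that $K_{0}$ of any $\mathcal{Z}$-stable $C^{*}$-algebra is weakly unperforated (Jiang--Su). For (3), if $\mathfrak{A}$ is moreover exact then $B$ is exact (a spatial tensor product of an exact and a nuclear $C^{*}$-algebra is exact); $\mathcal{Z}$-stability makes $\mathrm{Cu}(B)$ almost unperforated (R{\o}rdam), and since quasitraces on exact $C^{*}$-algebras are traces (Haagerup), $B$ has strict comparison of positive elements.

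Items (6)--(8) use that $B$ is \emph{UHF-stable}: $\mathfrak{C} \cong \mathfrak{C} \otimes \mathfrak{C}$ gives $B \cong B \otimes \mathfrak{C}$. For (7), fix a nonzero projection $p$; then $pBp$ is a separable, simple, unital hereditary subalgebra of $B$, hence again $\mathfrak{C}$-stable (Toms--Winter), so $pBp \cong ( pBp ) \otimes \mathfrak{C}$ contains a unital copy of $\mathsf{M}_{d}$ for every matrix size $d$ occurring in $\mathfrak{C}$, and hence for all large $d$; an elementary counting argument (the Frobenius problem for $n$ and $n+1$) shows $\mathsf{M}_{d}$ contains a unital copy of $\mathsf{M}_{n} \oplus \mathsf{M}_{n+1}$ once $d \geq n(n+1)+1$, which is (7). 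For (6), recall that a unital, stably finite $C^{*}$-algebra carries a $2$-quasitracial state, and that almost unperforation of $\mathrm{Cu}(B)$ yields, in the simple case, strict comparison of positive elements with respect to the $2$-quasitracial states of $B$. Given a nonzero hereditary subalgebra $D = \overline{ a B a }$, choose $\varepsilon > 0$ with $(a - \varepsilon)_{+} \neq 0$; since $B$ is simple every $2$-quasitracial state of $B$ is faithful, and since the $2$-quasitracial state space is weak-$*$ compact we get $c := \inf_{\tau} d_{\tau}( (a - \varepsilon)_{+} ) > 0$. Now $1_{\mathfrak{A}} \otimes \mathfrak{C} \cong \mathfrak{C}$ is a unital subalgebra on which every $2$-quasitracial state of $B$ restricts to the unique trace of $\mathfrak{C}$, and $\mathfrak{C}$ has projections of arbitrarily small trace, so pick a projection $e \in 1_{\mathfrak{A}} \otimes \mathfrak{C}$ with $\tau(e) < c$ for all $\tau$; strict comparison gives $e \precsim (a - \varepsilon)_{+}$, i.e.\ a partial isometry $v$ with $v^{*} v = e$ and $v v^{*} \in \overline{ (a - \varepsilon)_{+} B (a - \varepsilon)_{+} } \subseteq D$, so $v v^{*}$ is a nonzero projection in $D$, proving (6). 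Finally (8) follows from (6) and simplicity: for nonzero projections $p, q$, simplicity produces a nonzero $x \in B$ with $x^{*} x \in pBp$ and $x x^{*} \in qBq$; by (6) the hereditary subalgebra $\overline{ x^{*} x B x^{*} x } \subseteq pBp$ contains a nonzero projection $p'$, and since $x^{*} x$ and $x x^{*}$ are Cuntz equivalent, $p' \precsim x x^{*}$, so $p'$ is Murray--von Neumann equivalent to a projection $q' \in \overline{ x x^{*} B x x^{*} } \subseteq qBq$.

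The only genuinely non-formal step is (6): this is where comparison theory is needed, and the one subtlety is that exactness is not assumed there, so strict comparison must be phrased in terms of $2$-quasitraces rather than traces. Everything else is bookkeeping --- chiefly, checking that the quoted R{\o}rdam theorems require only simplicity, unitality and (for stable rank one and strict comparison) stable finiteness, which we have --- together with the counting observation for (7) and a standard simplicity argument for (8); I would expect no further difficulties.
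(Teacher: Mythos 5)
The paper itself disposes of this lemma with a bare citation to \cite{BlackKumjRor}, \cite{GongJiangSu}, \cite{JiangSu}, \cite{RordamUHFI}, \cite{RordamUHFII}, \cite{MRrrZabs} and \cite{mr_dimstablerk}, so you are not diverging from any argument the paper actually records; your write-up reconstructs the proofs that those references supply, and items (1)--(6) and (8) are correctly assembled: $\mathcal{Z}$-stability of $\mathfrak{A}\otimes\mathfrak{C}$ via $\mathsf{M}_{\mathfrak{p}}\otimes\mathcal{Z}\cong\mathsf{M}_{\mathfrak{p}}$, the Gong--Jiang--Su dichotomy, R{\o}rdam's stable-rank-one theorem for finite $\mathcal{Z}$-stable algebras, the usual consequences of stable rank one, weak unperforation from Gong--Jiang--Su, strict comparison from almost unperforation plus Haagerup, and the $2$-quasitrace comparison argument for (SP).

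There is, however, a genuine gap in (7), and a false side-claim feeding into it. You assert that ``an (infinite-dimensional) UHF algebra is strongly self-absorbing'' and later that ``$\mathfrak{C}\cong\mathfrak{C}\otimes\mathfrak{C}$.'' Both fail for UHF algebras whose supernatural number is not of infinite type, e.g.\ $\mathsf{M}_{2^\infty\cdot 3}$: its supernatural number squared is $2^\infty\cdot 3^2\ne 2^\infty\cdot 3$, so $\mathfrak{C}\otimes\mathfrak{C}\not\cong\mathfrak{C}$, and it is not strongly self-absorbing. For the initial $\mathcal{Z}$-stability step this does not matter because you supply the correct alternative $\mathsf{M}_{\mathfrak{p}}\otimes\mathcal{Z}\cong\mathsf{M}_{\mathfrak{p}}$. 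But in (7) the argument hinges on ``$pBp$ is $\mathfrak{C}$-stable by Toms--Winter,'' and the Toms--Winter theorem on hereditary subalgebras requires the absorbed algebra to be strongly self-absorbing, so the step as written is unjustified for general UHF $\mathfrak{C}$. The fix is short: the supernatural number $\mathfrak{n}$ of $\mathfrak{C}$ is infinite, so some prime $p$ satisfies $p^\infty\mid\mathfrak{n}$, whence $\mathfrak{C}\cong\mathfrak{C}\otimes\mathsf{M}_{p^\infty}$ and therefore $B\cong B\otimes\mathsf{M}_{p^\infty}$; $\mathsf{M}_{p^\infty}$ \emph{is} strongly self-absorbing, Toms--Winter gives $pBp\cong pBp\otimes\mathsf{M}_{p^\infty}$, so $pBp$ contains unital copies of $\mathsf{M}_{p^k}$ for all $k$, and your Frobenius count $d\ge n(n+1)+1$ then applies to $d=p^k$ for $k$ large. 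With that replacement (7) is complete, and since (8) uses only (6) and simplicity, nothing else is affected.
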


\begin{proof} These results are contained in  
\cite{BlackKumjRor},
\cite{GongJiangSu}, \cite{JiangSu}, 
\cite{RordamUHFI}, \cite{RordamUHFII}, \cite{MRrrZabs} 
and \cite{mr_dimstablerk}.
\end{proof}

\begin{lemma}  Let $\mathfrak{A}$ be a unital $\mathcal{Z}$-stable $C^{*}$-algebra.
Then $\mathbb{T} \subseteq CU(\mathfrak{A})_0$, i.e., $CU(\mathfrak{A})_0$ contains all scalar unitaries. 
\label{lem:scalarunitaries}  
\end{lemma}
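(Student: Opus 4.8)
The plan is to exploit the $\mathcal{Z}$-stability of $\mathfrak{A}$ to reduce to a concrete computation inside $\mathcal{Z} \otimes \mathfrak{A}$ (or even inside a matrix algebra over $\mathfrak{A}$), where scalar unitaries become visible as genuine commutators up to small error. Fix $\lambda \in \mathbb{T}$ and $\varepsilon > 0$. Since $\mathfrak{A} \cong \mathcal{Z} \otimes \mathfrak{A}$, it suffices to produce, inside $\mathcal{Z} \otimes \mathfrak{A}$, a unitary in $CU(\cdot)_0$ that is within $\varepsilon$ of $\lambda \cdot 1$. The key classical input is that in a matrix algebra $\mathsf{M}_n$, the scalar unitary $e^{2\pi i k/n} \cdot 1_n$ is a product of commutators of unitaries in $U(\mathsf{M}_n)_0$ — indeed $\det$ of such a scalar is $1$, and $SU(n)$ is its own commutator subgroup and is connected — so any scalar unitary with determinant $1$ lies in the (literal, not just closed) commutator subgroup of $U(\mathsf{M}_n)_0$.

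First I would recall the standard fact that a unital $\mathcal{Z}$-stable $C^*$-algebra has the property that $U_0/CU$ is torsion-free and, more relevantly here, that $\mathcal{Z}$ admits, for every $n$, a unital embedding of a dimension-drop algebra $Z_{n,n+1}$ whose $K$-theory forces scalar unitaries to become trivial in $U_0/CU$. Concretely: inside $\mathcal{Z}$ there is, for each $n$, a unital copy of the dimension-drop interval $Z_{n,n+1}$, and the scalar unitary $e^{2\pi i /n}1$ in its fiber $\mathsf{M}_n$ — having determinant $1$ — is a finite product of commutators of unitaries connected to the identity in $\mathsf{M}_n$. Transporting this along a path in $\mathcal{Z}$ that interpolates between the two endpoint fibers (this is exactly what the dimension-drop structure provides), one manufactures a unitary $u \in U(\mathcal{Z})_0$ with $u \in CU(\mathcal{Z})_0$ (a genuine product of commutators) and $\|u - e^{2\pi i/n}1\|$ as small as we like. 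Tensoring with $1_{\mathfrak{A}}$ gives the same statement in $\mathcal{Z}\otimes\mathfrak{A} \cong \mathfrak{A}$: every root of unity lies in $CU(\mathfrak{A})_0$.

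Next I would upgrade from roots of unity to all of $\mathbb{T}$. Since roots of unity are dense in $\mathbb{T}$ and $CU(\mathfrak{A})_0$ is norm-closed by definition, and since $\lambda \mapsto \lambda 1_{\mathfrak{A}}$ is norm-continuous, the closure of $\{\zeta 1_{\mathfrak{A}} : \zeta \text{ a root of unity}\}$ inside $CU(\mathfrak{A})_0$ is all of $\{\lambda 1_{\mathfrak{A}} : \lambda \in \mathbb{T}\}$. This gives $\mathbb{T} \subseteq CU(\mathfrak{A})_0$. Alternatively — and this avoids any density argument — one can observe directly that for each fixed $\lambda \in \mathbb{T}$ and each $\varepsilon$, working in a unital copy of $\mathsf{M}_n \subseteq \mathcal{Z}$ with $n$ large, one may choose a scalar unitary in $\mathsf{M}_n$ of the form $\mathrm{diag}(\lambda, \lambda, \ldots, \lambda, \mu)$ arranged to have determinant $1$ with $|\lambda - \mu|$ small; this is literally a product of commutators in $U(\mathsf{M}_n)_0$, approximates $\lambda 1_n$, and then one repeats the interpolation-in-$\mathcal{Z}$ step.

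The main obstacle I anticipate is the interpolation step: producing the genuine commutator word in $U(\mathcal{Z})_0$ rather than merely in $U(\mathsf{M}_n)_0$, i.e., checking that the path joining the two endpoint fibers of the dimension-drop subalgebra can be taken through unitaries realizing the commutator relations at every point simultaneously. This is really a statement about the dimension-drop algebra $Z_{n,n+1}$: one needs that the scalar unitary at one endpoint is connected, through unitaries that are products of a fixed number of commutators, to the corresponding scalar at the other endpoint; since $SU(n) \cap SU(n+1)$-type constraints are compatible (both determinants are $1$) and $SU(k)$ is connected and perfect, this causes no $K$-theoretic obstruction, but it requires care to write down. Everything else — $\mathcal{Z}$-stability, closedness of $CU(\mathfrak{A})_0$, density of roots of unity — is routine.
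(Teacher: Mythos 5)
Your first step --- replace $\mathfrak{A}$ by $\mathfrak{A}\otimes\mathcal{Z}$ and try to produce the scalars inside the unital copy $1_{\mathfrak{A}}\otimes\mathcal{Z}$ --- is exactly what the paper does; but the paper then simply cites \cite{NgZ} for the remaining assertion that $CU(\mathcal{Z})_0$ contains all scalar unitaries, whereas you attempt to reprove that assertion directly from the dimension-drop structure of $\mathcal{Z}$. That is a genuinely different and more ambitious route, but it does not close as written, and the problem sits exactly where you said you expected trouble.

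Take a unital copy of $Z_{n,n+1}\subseteq\mathcal{Z}$ and the scalar $\lambda 1$ with $\lambda=e^{2\pi i/n}$. You note that $\lambda 1_{n}$ has determinant $\lambda^{n}=1$ in the $t=0$ fiber $\mathsf{M}_{n}$, but in the $t=1$ fiber $\mathsf{M}_{n+1}$ it has determinant $\lambda^{n+1}=\lambda\neq 1$. Evaluation at $t=1$ is a unital $*$-homomorphism onto $\mathsf{M}_{n+1}$ that carries $CU(Z_{n,n+1})_0$ into $SU(n+1)$, so no element of $CU(Z_{n,n+1})_0$ comes closer to $\lambda 1$ than $\mathrm{dist}(\lambda 1_{n+1},SU(n+1))>0$. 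Passing to deeper building blocks $\mathcal{Z}_{\mathfrak{p},\mathfrak{q}}$ cannot dissolve this: the endpoint fibers $\mathsf{M}_{\mathfrak{p}}$ and $\mathsf{M}_{\mathfrak{q}}$ have coprime supernatural numbers, and a scalar has determinant one in both only if it equals $1$. This is nothing but the de la Harpe--Skandalis/Thomsen determinant, the map $\Delta\colon U_0(\mathcal{Z})/CU(\mathcal{Z})_0\to\mathrm{Aff}(T(\mathcal{Z}))/\overline{K_0(\mathcal{Z})}\cong\R/\Z$ used by the paper in Lemma~\ref{lem:CUApproximation}, which sends $e^{2\pi i\theta}1$ to $\theta+\Z$; your commutator words all lie in its kernel, and nothing you build in the matrix fibers moves the value $\theta+\Z$. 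So the interpolation step has to be replaced by a substantively different idea, which means you cannot avoid engaging with the actual argument in \cite{NgZ}. Separately, your alternative sketch fails at the first line: $\mathcal{Z}$ contains no unital copy of $\mathsf{M}_n$ for $n\geq 2$, since such a copy would force $n\mid[1_{\mathcal{Z}}]$ in $K_0(\mathcal{Z})=\Z$, and $[1_{\mathcal{Z}}]$ is a generator.
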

\begin{proof}
Since $\mathfrak{A} \cong \mathfrak{A} \otimes \mathcal{Z}$, it is enough to show that $\mathbb{T} \subseteq CU( \mathfrak{A} \otimes \mathcal{Z} )_{0}$.  Clearly, $CU(1_{\mathfrak{A}} \otimes \mathcal{Z}) = CU(1_{\mathfrak{A}} \otimes \mathcal{Z})_0 \subseteq
CU(\mathfrak{A} \otimes \mathcal{Z})_0$.
By \cite{NgZ},  $CU(1_{\mathfrak{A}} \otimes \mathcal{Z})_0$ contains all scalar unitaries.  Therefore, $\mathbb{T} \subseteq CU( \mathfrak{A} \otimes \mathcal{Z} )_{0}$.   
\end{proof}

The next lemma can be proven using a spectral theory argument.

\begin{lemma}  For every $\epsilon > 0$,
there exists $\delta > 0$ such that
for any unital $C^*$-algebra $\mathfrak{A}$, if
\begin{enumerate}
\item $p_1, p_2, ..., p_n$ are pairwise orthogonal projections in $\mathfrak{A}$,
\item $q_1, q_2, ..., q_n$ are pairwise orthogonal projections in $\mathfrak{A}$,
\item $\alpha_1, \alpha_2, ..., \alpha_n$ are scalars (complex numbers)
with norm one,
\item  $| \alpha_i - \alpha_j | \geq \epsilon$ for $i \neq j$, and
\item  $\| (\alpha_1 p_1 + \alpha_2 p_2 + \cdots + \alpha_n p_n) -
(\alpha_1 q_1 + \alpha_2 q_2 + \cdots + \alpha_n q_n ) \| < \delta$
\end{enumerate}
then $\| p_i - q_i \| < \epsilon$ and
$p_i \sim q_i$ in $\mathfrak{A}$ for $1 \leq i \leq n$.
\label{lem:FinSpectUnitAppr}
\end{lemma}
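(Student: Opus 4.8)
The plan is to recover each projection from the finite-spectrum normal element it builds, using a holomorphic functional calculus with a contour that depends only on $\epsilon$, and then to estimate through the resolvent identity. It suffices to treat the case $\epsilon \le 1$: the hypotheses for a general $\epsilon$ imply those for $\epsilon' := \min(\epsilon,1)$ (using the $\delta$ produced for $\epsilon'$), and the resulting conclusion $\|p_i - q_i\| < \epsilon' \le \epsilon$ together with $p_i \sim q_i$ is exactly what is wanted; so from now on assume $\epsilon \le 1$.

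Set $a := \alpha_1 p_1 + \cdots + \alpha_n p_n$ and $b := \alpha_1 q_1 + \cdots + \alpha_n q_n$. Since the $p_i$ are pairwise orthogonal projections and each $|\alpha_i| = 1$, the element $a$ is normal with $\sigma(a) \subseteq F := \{0, \alpha_1, \dots, \alpha_n\}$, and likewise $\sigma(b) \subseteq F$. By hypothesis (4) and $\epsilon \le 1$, any two distinct points of $F$ lie at distance $\ge \epsilon$. Let $U$ be the union of the open disks of radius $\epsilon/4$ about the points of $F$; these are pairwise disjoint, so the function $\chi_i \colon U \to \mathbb{C}$ that is $1$ on the disk about $\alpha_i$ and $0$ on the remaining disks is holomorphic on $U$. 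Since $\sigma(a) \subseteq F$ and $\chi_i$ takes the value $1$ at $\alpha_i$ and $0$ at the other points of $F$, the holomorphic functional calculus gives $\chi_i(a) = p_i$, and similarly $\chi_i(b) = q_i$ (if some $p_j$ or $q_j$ vanishes, or $\sum_j p_j \neq 1_{\mathfrak{A}}$, the affected points simply fail to lie in the relevant spectrum and the identity persists).

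Let $\Gamma = \partial U$, a contour depending only on $\epsilon$ which winds once about each point of $F$, hence about $\sigma(a)$ and about $\sigma(b)$. The Cauchy integral formula gives
\begin{equation*}
p_i - q_i \;=\; \chi_i(a) - \chi_i(b) \;=\; \frac{1}{2\pi i}\oint_{\Gamma} \chi_i(z)\bigl[(z-a)^{-1} - (z-b)^{-1}\bigr]\,dz .
\end{equation*}
For a normal element $x$ one has $\|(z-x)^{-1}\| = 1/\operatorname{dist}(z,\sigma(x))$, and for $z \in \Gamma$ one checks $\operatorname{dist}(z,F) = \epsilon/4$, so $\|(z-a)^{-1}\|, \|(z-b)^{-1}\| \le 4/\epsilon$ on $\Gamma$. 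Combining this with the resolvent identity $(z-a)^{-1} - (z-b)^{-1} = (z-a)^{-1}(a-b)(z-b)^{-1}$, with $\|\chi_i\|_\infty = 1$, and with the length bound $\operatorname{length}(\Gamma) = |F|\cdot(\pi\epsilon/2) \le (n+1)\pi\epsilon/2$, we obtain $\|p_i - q_i\| \le (4(n+1)/\epsilon)\,\|a-b\|$.

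Finally, $n$ is at most the maximal cardinality $N(\epsilon)$ of an $\epsilon$-separated subset of the unit circle, which depends only on $\epsilon$; thus taking $\delta := \epsilon^2/(4(N(\epsilon)+1))$ forces $\|p_i - q_i\| \le (4(n+1)/\epsilon)\,\|a-b\| < (4(N(\epsilon)+1)/\epsilon)\,\delta = \epsilon$. Since $\epsilon \le 1$ this gives $\|p_i - q_i\| < 1$, whence $p_i$ and $q_i$ are unitarily equivalent, and therefore Murray--von Neumann equivalent, in $\mathfrak{A}$. The one point requiring care is that $a$ and $b$ need not commute, so the naive Lipschitz estimate for a continuous functional calculus is unavailable; routing the argument through the holomorphic functional calculus with a fixed contour bypasses this and, via the packing bound on $n$, simultaneously supplies the uniformity in the scalars $\alpha_1, \dots, \alpha_n$.
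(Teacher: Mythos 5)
Your proof is correct. The paper does not actually give a proof of this lemma---it is stated with only the remark that it ``can be proven using a spectral theory argument''---and your holomorphic functional calculus argument is exactly the kind of spectral argument that remark points to. One small observation: the dependence on $n$ (and hence the packing bound $N(\epsilon)$) is not actually needed. The integrand $\chi_i(z)\bigl[(z-a)^{-1}-(z-b)^{-1}\bigr]$ vanishes on every boundary circle of $\Gamma$ except the one centred at $\alpha_i$, so the only contributing arc has length $\pi\epsilon/2$, giving directly
\begin{equation*}
\|p_i - q_i\| \le \frac{1}{2\pi}\cdot\frac{\pi\epsilon}{2}\cdot\frac{16}{\epsilon^2}\,\|a-b\| = \frac{4}{\epsilon}\,\|a-b\|,
\end{equation*}
so that $\delta := \epsilon^2/4$ works uniformly in $n$ with no packing estimate. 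Your bound, which replaces $|\chi_i(z)|$ by $1$ on all of $\Gamma$ and compensates via $N(\epsilon)$, is a little lossier but still valid; the rest of the argument---normality of $a$ and $b$, the separation of the spectral atoms, the resolvent bound $\|(z-x)^{-1}\|=1/\operatorname{dist}(z,\sigma(x))$, and the deduction of unitary (hence Murray--von Neumann) equivalence from $\|p_i-q_i\|<1$---is all sound, and the edge cases (vanishing $p_i$ or $q_i$, $\sum p_i \ne 1$) are handled correctly.
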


\begin{lemma}\label{l:isoz}
There exists a $*$-isomorphism $\ftn{ \Phi }{ \mathcal{Z} }{ \mathcal{Z} \otimes \mathcal{Z} }$ and there exists a sequence of unitaries $\{ u_{n} \}_{ n = 1}^{ \infty }$ in $U( \mathcal{Z} \otimes \mathcal{Z} )_{0}$ such that for all $a \in \mathcal{Z}$,
\begin{align*}
\lim_{ n \to \infty } \| \Phi ( a ) - u_{n} ( a \otimes 1_{ \mathcal{Z} } ) 
u_{n}^{*} \| = 0. 
\end{align*}
\end{lemma}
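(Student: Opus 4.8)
The plan is to reduce the whole statement to the fact that $\mathcal{Z}$ is strongly self-absorbing, which was established by Jiang and Su \cite{JiangSu}. Recall that this means precisely that the first-factor embedding $\iota \colon \mathcal{Z} \to \mathcal{Z} \otimes \mathcal{Z}$, $a \mapsto a \otimes 1_{\mathcal{Z}}$, is approximately unitarily equivalent to a $*$-isomorphism. So there is a $*$-isomorphism $\Phi \colon \mathcal{Z} \to \mathcal{Z} \otimes \mathcal{Z}$ with the following property: for every finite set $F \subseteq \mathcal{Z}$ and every $\epsilon > 0$ there is a unitary $u \in U(\mathcal{Z} \otimes \mathcal{Z})$ with $\| \Phi(a) - u\,(a \otimes 1_{\mathcal{Z}})\,u^{*} \| < \epsilon$ for all $a \in F$.

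To produce a single sequence of unitaries that works simultaneously for all elements, I would invoke separability of $\mathcal{Z}$. Fix an increasing sequence $F_{1} \subseteq F_{2} \subseteq \cdots$ of finite subsets of $\mathcal{Z}$ with dense union, and for each $n$ choose a unitary $u_{n} \in U(\mathcal{Z} \otimes \mathcal{Z})$ with $\| \Phi(a) - u_{n}\,(a \otimes 1_{\mathcal{Z}})\,u_{n}^{*} \| < 1/n$ for all $a \in F_{n}$. Given $a \in \mathcal{Z}$ and $\eta > 0$, choose $m$ and $b \in F_{m}$ with $\| a - b \| < \eta$; then for $n \ge m$ one has $\| u_{n}\,(a \otimes 1_{\mathcal{Z}})\,u_{n}^{*} - u_{n}\,(b \otimes 1_{\mathcal{Z}})\,u_{n}^{*} \| = \| a - b \| < \eta$ and $\| \Phi(a) - \Phi(b) \| < \eta$, so the triangle inequality gives $\limsup_{n} \| \Phi(a) - u_{n}\,(a \otimes 1_{\mathcal{Z}})\,u_{n}^{*} \| \le 2 \eta$. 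Letting $\eta \to 0$ yields the desired convergence for every $a \in \mathcal{Z}$.

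It then remains to check that the $u_{n}$ lie in $U(\mathcal{Z} \otimes \mathcal{Z})_{0}$, and this is automatic: $\mathcal{Z} \otimes \mathcal{Z} \cong \mathcal{Z}$ has stable rank one and $K_{1}(\mathcal{Z} \otimes \mathcal{Z}) \cong K_{1}(\mathcal{Z}) = 0$ \cite{JiangSu}, so $U(\mathcal{Z} \otimes \mathcal{Z}) / U(\mathcal{Z} \otimes \mathcal{Z})_{0} \cong K_{1}(\mathcal{Z} \otimes \mathcal{Z}) = 0$; in other words $U(\mathcal{Z} \otimes \mathcal{Z})$ is connected. I do not expect any genuine obstacle here: the entire content is the strong self-absorption of $\mathcal{Z}$ together with the vanishing of $K_{1}(\mathcal{Z})$, and the only point requiring (routine) care is the separability argument that extracts one sequence of unitaries from the family of local approximations.
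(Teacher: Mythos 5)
Your argument is correct and takes essentially the same route as the paper, which simply cites Theorems 7.6 and 8.7 of \cite{JiangSu}: the former is exactly the strong self-absorption (approximate unitary equivalence of $a \mapsto a \otimes 1_{\mathcal{Z}}$ with an isomorphism) that you invoke, and the latter supplies the connectedness of $U(\mathcal{Z})$, which you re-derive from stable rank one and $K_{1}(\mathcal{Z})=0$. The diagonal/separability step to extract a single sequence $\{u_n\}$ is routine and is implicit in the paper's citation.
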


\begin{proof}
The result follows from Theorems 7.6 and 8.7 of \cite{JiangSu}.
\end{proof}

\begin{lemma}  Let $\mathfrak{B}$ be a simple, separable, 
unital $C^{*}$-algebra and let $G$ be a closed normal subgroup of 
$U(\mathfrak{B} \otimes \mathcal{Z} \otimes \mathcal{Z} )_0$ that 
properly contains $\mathbb{T}$.  
Then $G$ contains 
$CU(1_{\mathfrak{B} \otimes \mathcal{Z} } \otimes \mathcal{Z})_0$.   
\label{lem:containU(Z)}
\end{lemma}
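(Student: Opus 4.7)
The plan is to exploit the normality of $G$ together with Lemma \ref{l:isoz} to show that from any non-scalar $u \in G$ one may extract a non-scalar element of $G \cap U(1_{\mathfrak{B}\otimes\mathcal{Z}} \otimes \mathcal{Z})_0$; an Elliott--R{\o}rdam style simplicity argument applied internally to $\mathcal{Z}$ then promotes this to the full containment $CU(1_{\mathfrak{B}\otimes\mathcal{Z}}\otimes\mathcal{Z})_0 \subseteq G$.

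Fix $u \in G \setminus \mathbb{T}$. After multiplying by a scalar from $\mathbb{T} \subseteq G$, one may assume that the spectrum of $u$ contains two points at distance at least $\epsilon_0 > 0$ from each other. By normality and closedness of $G$, the commutator $(v,u) = vuv^*u^*$ lies in $G$ for every $v \in U(\mathfrak{B} \otimes \mathcal{Z} \otimes \mathcal{Z})_0$, and so do all limits of products of such commutators; this is the only mechanism by which new elements enter $G$ in the argument.

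The key geometric input comes from Lemma \ref{l:isoz} applied to the second $\mathcal{Z}$-factor. Setting $\tilde u_n := 1_{\mathfrak{B}} \otimes u_n \in U(\mathfrak{B} \otimes \mathcal{Z} \otimes \mathcal{Z})_0$, one has
\begin{equation*}
\lim_{n\to\infty} \tilde u_n (1_{\mathfrak{B}} \otimes a \otimes 1_{\mathcal{Z}}) \tilde u_n^* = 1_{\mathfrak{B}} \otimes \Phi(a) \qquad \text{for all } a \in \mathcal{Z}.
\end{equation*}
Since $\Phi$ is an isomorphism onto $\mathcal{Z} \otimes \mathcal{Z}$, for every $b \in \mathcal{Z}$ one can take $a_b := \Phi^{-1}(1_{\mathcal{Z}} \otimes b) \in \mathcal{Z}$ and obtain $\tilde u_n (1_{\mathfrak{B}} \otimes a_b \otimes 1_{\mathcal{Z}}) \tilde u_n^* \to 1_{\mathfrak{B} \otimes \mathcal{Z}} \otimes b$. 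Because $G$ is closed and normal, this intertwining shows that producing elements (resp.\ commutators) of $G$ inside $U(1_{\mathfrak{B}\otimes\mathcal{Z}} \otimes \mathcal{Z})_0$ is equivalent, via the substitution $b \mapsto a_b$, to producing them inside $U(1_{\mathfrak{B}} \otimes \mathcal{Z} \otimes 1_{\mathcal{Z}})_0$.

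Once a non-scalar element is exhibited in $G \cap U(1_{\mathfrak{B}\otimes\mathcal{Z}} \otimes \mathcal{Z})_0$, this intersection is a closed normal subgroup of $U(1_{\mathfrak{B}\otimes\mathcal{Z}} \otimes \mathcal{Z})_0 \cong U(\mathcal{Z})_0$ strictly containing $\mathbb{T}$, and an Elliott--R{\o}rdam-type simplicity argument specialized to the Jiang--Su algebra forces it to contain all of $CU(\mathcal{Z})_0$. The main obstacle is the production of this initial non-scalar element: starting from a general $u \in G$ whose non-scalar behavior may be spread over all three tensor factors, one must combine a spectral argument based on Lemma \ref{lem:FinSpectUnitAppr} with iterated conjugation by $\tilde u_n$-type unitaries and suitable local unitaries (built from the properties of $\mathfrak{B} \otimes \mathcal{Z}$ listed in Lemma \ref{lem:AotimesUHFProperties}) in order to isolate non-scalar spectral content in a single $\mathcal{Z}$-factor.
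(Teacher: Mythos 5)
Your overall architecture matches the paper's: pass to a non-scalar element of $G$ supported in a single $\mathcal{Z}$-tensor-factor, then invoke the simplicity of $U(\mathcal{Z})_0/\mathbb{T}$ (resp.\ $CU(\mathcal{Z})/\mathbb{T}$) from \cite{NgZ} to absorb all of $CU(1_{\mathfrak{B}\otimes\mathcal{Z}}\otimes\mathcal{Z})_0$; your use of Lemma \ref{l:isoz} as a transfer principle is also sound (the paper uses the same lemma, in the reverse direction, to replace $u$ by $w\otimes 1_{\mathcal{Z}}$ with $w\in U(\mathfrak{B}\otimes\mathcal{Z})_0\setminus\mathbb{T}$). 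However, there is a genuine gap exactly where you flag ``the main obstacle'': you never actually produce the initial non-scalar element of $G\cap U(1_{\mathfrak{B}\otimes\mathcal{Z}}\otimes\mathcal{Z})_0$, and that step is essentially the entire content of the lemma. The difficulty is real: the non-scalarity of $u$ may be carried entirely by the $\mathfrak{B}$-factor (e.g.\ $u=v\otimes 1\otimes 1$ with $v$ non-scalar), and no amount of conjugation by $\tilde u_n$-type unitaries moves spectral content out of $\mathfrak{B}$; one must manufacture new elements of $G$ by taking commutators of $u$ with carefully built unitaries. The paper does this by writing the last copy of $\mathcal{Z}$ as an inductive limit of building blocks $\mathcal{Z}_{\mathfrak{p},\mathfrak{q}}$, choosing (via the comparison and divisibility properties of Lemma \ref{lem:AotimesUHFProperties}) orthogonal Murray--von Neumann equivalent projections $p_1\sim q_1$ sitting under spectral projections of $w$ at two separated spectral values $1$ and $\alpha$, and conjugating by a $C[0,1]$-valued path of unitaries that swaps $p_1$ and $q_1$ in the middle of the interval and is trivial at the endpoints; the resulting commutator has non-constant eigenvalue functions $g_1,g_2$, and after the corrections of Claims 1 and 2 one obtains a non-scalar element of $CU(1_{\mathfrak{B}\otimes\mathcal{Z}}\otimes\mathcal{Z}_N)$ lying in $G$. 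None of this construction appears in your proposal.

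Two further points are not addressed. First, the case where the spectrum of $w$ is contained in $\{\pm 1,\pm i\}$ needs a separate argument (the paper's Case 2), because the eigenvalue functions produced by the commutator construction can degenerate when $\alpha^2=\overline{\alpha}^{\,2}$. Second, $\mathfrak{B}\otimes\mathcal{Z}\otimes\mathcal{Z}$ may be purely infinite, in which case the projection-comparison machinery is unavailable; the paper disposes of that case at the outset by citing Theorem 2.4 of \cite{EllRorAuto}, which gives $G=U(\mathfrak{B}\otimes\mathcal{Z}\otimes\mathcal{Z})_0$ outright. Without these, and above all without an actual construction of the initial non-scalar element, what you have is a correct strategy outline rather than a proof.
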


\begin{proof}
By Theorem 3 of \cite{GongJiangSu},  
$\mathfrak{B} \otimes \mathcal{Z} \otimes \mathcal{Z}$ 
($\cong \mathfrak{B} \otimes \mathcal{Z}$) is either
purely infinite or stably finite. 
If $\mathfrak{B} \otimes \mathcal{Z} \otimes \mathcal{Z}$
is purely 
infinite, then by Theorem 2.4 of \cite{EllRorAuto},   
$G = U(\mathfrak{B} \otimes \mathcal{Z} \otimes \mathcal{Z} )_0$; and thus,
$G$ contains $CU(1_{\mathfrak{B} \otimes \mathcal{Z} } \otimes \mathcal{Z})_0$.  Hence, we may assume that 
$\mathfrak{B} \otimes \mathcal{Z} \otimes \mathcal{Z}$
(and hence $\mathfrak{B}$; see Lemma 3.3 of \cite{GongJiangSu}) 
is stably finite.

Set $\mathfrak{A} = \mathfrak{B} \otimes \mathcal{Z}$.  Let $u$ be an element of $G \setminus \mathbb{T}$ and $\ftn{ \Phi }{ \mathcal{Z} }{ \mathcal{Z} \otimes \mathcal{Z} }$ be the $*$-isomorphism given in Lemma \ref{l:isoz}.  Then by Lemma \ref{l:isoz}, there exist $w \in \mathfrak{A}$ and a sequence of unitaries $\{ u_{n} \}_{ n = 1}^{ \infty } \subseteq U( \mathfrak{A} \otimes \mathcal{Z} )_{0}$ such that $(\id_{ \mathfrak{B} } \otimes \Phi )( w ) = u$ and 
\begin{align*}
\lim_{ n \to \infty } \| u_{n}^{*} ( \id_{ \mathfrak{B} } \otimes 
\Phi )( w ) u_{n} - w \otimes 1_{ \mathcal{Z} } \| = 0. 
\end{align*}
Since $\id_{ \mathfrak{B} } \otimes \Phi$ is an isomorphism and since $u \in U( \mathfrak{A} \otimes \mathcal{Z} )_{0} \setminus \mathbb{T}$, $w \in U( \mathfrak{A} )_{0} \setminus \mathbb{T}$.  Since $G$ is a closed normal subgroup of $U( \mathfrak{A} \otimes \mathcal{Z} )_{0}$, $u_{n}^{*} u u_{n} = u_{n}^{*} ( \id_{ \mathfrak{B} } \otimes \Phi )( w ) u_{n} \in G$ which implies that $w \otimes 1_{ \mathcal{Z} } \in G$.  Hence, $G$ contains a unitary of the form $x = w \otimes 1_{\mathcal{Z}}$ where $w \in U(\mathfrak{A})_0 \setminus \mathbb{T}$.
   
   Let $\p$, $\q$ be relatively prime supernatural numbers of infinite
type.
By Theorem 3.4 of \cite{RordamWinter}, $\mathcal{Z}$ is a C*-inductive limit
$\mathcal{Z} = \overline{\bigcup_{n=1}^{\infty} \mathcal{Z}_n}$
where $\mathcal{Z}_n \cong \mathcal{Z}_{\p, \q}$ for all $n \geq 1$ and where the connecting
maps are unital and injective.
Consider an arbitrary building block $\mathcal{Z}_N$.
We will prove that $G$ contains a nonscalar unitary in 
$CU(1_{\mathfrak{A}} \otimes
\mathcal{Z}_N) = CU(1_{\mathfrak{A}} \otimes \mathcal{Z}_N)_0$.

Now $x \in U(\mathfrak{A} \otimes \mathcal{Z}_N)_0$; in particular, 
$x = w \otimes 1_{\mathcal{Z}_N} \in \mathfrak{A} \otimes \mathcal{Z}_N \cong \mathfrak{A} \otimes \mathcal{Z}_{\p, \q}$.  Since $\mathbb{T} \subseteq G$, multiplying $x$ by a scalar if necessary, we may assume that $1$ is in the spectrum of $w$.  Hence, $1$ will be in the spectrum of $x$.  Since $w$ is not in $\mathbb{T}$, the spectrum $w$ and $x$ contains a point other than $1$.

Case 1:   Suppose that the spectrum of $w$ contains a point
$\alpha \neq -1, 1, i, -i$.  To proceed, recall the following matrix computation:
\[
\left[
\begin{array}{cc}
0 & 1 \\
1 & 0 
\end{array}
\right]
=
\left[
\begin{array}{cc}
\frac{1}{\sqrt{2}} & \frac{1}{\sqrt{2}} \\
\frac{1}{\sqrt{2} } & -\frac{1}{\sqrt{2}}
\end{array}
\right]
\left[
\begin{array}{cc}
1 & 0 \\
0 & -1
\end{array}
\right]
\left[
\begin{array}{cc}
\frac{1}{\sqrt{2}} & \frac{1}{\sqrt{2}} \\
\frac{1}{\sqrt{2}} & -\frac{1}{\sqrt{2}}
\end{array}
\right].  
\]
Let $\{ u(t) \}_{t \in [0,1] }$ be the continuous path of 
unitaries in $\mathsf{M}_{2}$ given by 
\begin{equation}
u(t) = 
\left[
\begin{array}{cc}
\frac{1}{\sqrt{2}} & \frac{1}{\sqrt{2}} \\
\frac{1}{\sqrt{2}} & -\frac{1}{\sqrt{2}}
\end{array}
\right]
\left[  
\begin{array}{cc}
1 & 0 \\
0 & \exp(i \pi t)    
\end{array}
\right]
\left[
\begin{array}{cc}
\frac{1}{\sqrt{2}} & \frac{1}{\sqrt{2}} \\
\frac{1}{\sqrt{2}} & -\frac{1}{\sqrt{2}}
\end{array}
\right]
\label{equ:p_1p_2homotopy1} 
\end{equation}
for all $t \in [0,1]$.
Hence, 
$u(0) = 1_{\mathsf{M}_{2}}$ and
$u(1) = \left[ \begin{array}{cc} 0 & 1 \\ 1 & 0 \end{array} \right]$.
Also, for 
$\beta, \gamma \in \mathbb{T}$, 
if $\{ v(t) \}_{t \in [0,1] }$ is the
continuous path of unitaries in $\mathsf{M}_{2}$ that is given by 
\begin{equation}
v(t) = u(t) \left[ \begin{array}{cc} \beta & 0 \\ 0 & \gamma 
\end{array} \right] u(t)^* \left[ \begin{array}{cc} \overline{\beta}
 & 0 \\ 0 & \overline{\gamma} 
\end{array} \right]   
\label{equ:p_1p_2homotopy2} 
\end{equation}
for all $t \in [0,1]$, 
then $v(0) = 1_{\mathsf{M}_{2}}$ and 
$v(1) = \left[ \begin{array}{cc} \overline{\beta} \gamma & 0 \\
0 & \beta \overline{\gamma} \end{array} \right]$. 
Moreover, by a direct computation, we get that 
if $\beta \neq \pm \gamma$ then 
for all $t \in (0,1)$,  the eigenvalues of $v(t)$ are
distinct and are complex conjugates of each other;  also, for $0 < s, t < 1$
with $s \neq t$,  the set of eigenvalues of $v(s)$ is
different from the set of eigenvalues of $v(t)$.    

Let $h_1, h_2 : [0,1] \rightarrow \mathbb{T}$
be the unique continuous functions such that  
$h_1(0) = h_2(0) = 1, 
h_1(1) = \alpha^2, h_2(1) = \overline{\alpha}^2$, 
and 
$\{ h_1, h_2 \}$ are the eigenfunctions of the continuous path of 
unitaries 
$\{ u(t) \mathrm{diag}(\alpha, \overline{\alpha}) u(t)^* 
\mathrm{diag}(\overline{\alpha}, \alpha) \}_{t \in [0,1]}$, i.e., for all $t \in [0,1]$, $\{ h_1(t), h_2(t) \}$ is the set of
(distinct when $0 < t < 1$) eigenvalues of $u(t) \mathrm{diag}(\alpha, \overline{\alpha}) u(t)^* 
\mathrm{diag}(\overline{\alpha}, \alpha)$.  Define two continuous functions $g_1, g_2 : [0,1] \rightarrow \mathbb{R}$ by
\[
g_1(t) =
\begin{cases}
\alpha^2 & \makebox{ if } t \in \left[\frac{2}{5}, \frac{3}{5}\right] \\
1  & \makebox{ if } t \in \left[0, \frac{1}{5}\right] \cup \left[\frac{4}{5}, 1\right] \\
h_1(5t - 1) &  \makebox{ if } t \in \left[\frac{1}{5}, \frac{2}{5}\right] \\
h_1(-5t + 4) & \makebox{ if } t \in \left[\frac{3}{5}, \frac{4}{5}\right]  
\end{cases}
\] 

\[
g_2(t) =
\begin{cases}
\overline{\alpha}^2 & \makebox{ if } t \in \left[\frac{2}{5}, \frac{3}{5}\right] \\
1  & \makebox{ if } t \in \left[0, \frac{1}{5}\right] \cup \left[\frac{4}{5}, 1\right] \\
h_2(5t - 1) &  \makebox{ if } t \in \left[\frac{1}{5}, \frac{2}{5}\right] \\
h_2(-5t + 4) & \makebox{ if } t \in \left[\frac{3}{5}, \frac{4}{5}\right].  
\end{cases}
\]

Claim 1:  For every $\epsilon > 0$,
there exist pairwise orthogonal nonzero projections
$r_1, r_2 \in C[0,1] \otimes \mathfrak{A}  \otimes \mathsf{M}_{\p} \otimes \mathsf{M}_{\q}$
such that if $p_1, p_2$ are nonzero projections 
in $r_1 (C[0,1] \otimes \mathfrak{A} \otimes 
\mathsf{M}_{\p} \otimes \mathsf{M}_{\q}) r_1, 
r_2 (C[0,1] \otimes \mathfrak{A} \otimes \mathsf{M}_{\p} \otimes \mathsf{M}_{\q}) r_2$ respectively
with $p_1 \sim p_2$, then there exists a unitary $w' \in G$ such that the
following hold:
\begin{enumerate}
\item[(a)] $w' (1 - (p_1 +p_2)) = (1 - (p_1 + p_2)) w' = 1 - (p_1 + p_2)$   
in $C[0,1] \otimes \A \otimes \M_{\p} \otimes \M_{\q}$ and 
\item[(b)] $\| w'(t) - (g_1(t) p_1 (t) + g_2(t) p_2(t) + 
1 - (p_1(t) + p_2(t)) ) \| < \epsilon$ for all $t \in [0,1]$.    
\end{enumerate}
    
  Towards proving Claim 1, let $\epsilon > 0$ be given.   
For simplicity, we may assume that $\epsilon < 1/2$.  Plug $\min \left\{ \frac{ |1-\alpha| }{ 2} , \frac{ |1 - \overline{\alpha}| }{ 2 }, 
\frac{ |\alpha - \overline{\alpha}| }{ 2} , \frac{ \epsilon }{ 100 } \right\}$
into Lemma \ref{lem:FinSpectUnitAppr} to get a positive real number
$\delta'$.  Let $\delta = \min \left\{ \frac{ \epsilon }{ 1000 }, \frac{ \delta' }{ 1000 }, \frac{ |1 - \alpha|}{1000},
\frac{ |1 - \overline{\alpha}| }{ 1000 }, \frac{ |\alpha - \overline{\alpha} | }{ 1000 } \right\}$.
Contracting $\delta > 0$ if necessary, we may assume that for all 
$\gamma_1, \gamma_2 \in \mathbb{T}$, if $| \gamma_1 - \gamma_2 | < 100 \delta$,
then $| \gamma_1^2 - \gamma_2^2 | < \frac{ \epsilon }{ 100 }$.   
By Lemma \ref{lem:AotimesUHFProperties}, there exist nonzero projections
$r'_1, s'_1$ in 
$\mathfrak{A} \otimes \mathsf{M}_{\p} \otimes \mathsf{M}_{\q}$ such that the following hold:
\begin{itemize}
\item[(1)] $r'_1$ is contained in the hereditary sub-$C^{*}$-algebra of 
$\mathfrak{A} \otimes \mathsf{M}_{\p} \otimes \mathsf{M}_{\q}$ generated
by $f(w \otimes 1_{\mathsf{M}_{\p} \otimes \mathsf{M}_{\q}})$, 
where $f$ is a continuous real-valued function on 
$\mathbb{C}$ with $0 \leq f \leq 1$, $f(1) = 1$ and
$f$ vanishes outside of a small precompact open neighbourhood $O_1$ of $1$. 
\item[(2)] $s'_1$ is contained in the hereditary sub-$C^{*}$-algebra of 
$\mathfrak{A} \otimes \mathsf{M}_{\p} \otimes \mathsf{M}_{\q}$ that is generated by 
$g( w \otimes 1_{\mathsf{M}_{\p} \otimes \mathsf{M}_{\q}})$, where $g$  
is a continuous real-valued function on $\mathbb{C}$ with $0 \leq g \leq 1$,
$g(\alpha) = 1$ and $g$ vanishes outside of a small precompact neighbourhood
$O_2$ of $\alpha$.
\item[(3)] $\overline{O_1} \cap \overline{O_2} = \emptyset$.  
\item[(4)] $r'_1 \sim s'_1$ in 
$\mathfrak{A} \otimes \mathsf{M}_{\p} \otimes \mathsf{M}_{\q}$.
\item[(5)] For all $s , t \in \{ r'_1 (w \otimes 1_{\mathsf{M}_{\p} \otimes \mathsf{M}_{\q}}), 
(w \otimes 1_{\mathsf{M}_{\p} \otimes \mathsf{M}_{\q}}) r'_1, r'_1 (w \otimes 
1_{\mathsf{M}_{\p} \otimes \mathsf{M}_{\q}})  r'_1, r'_1 \}$, 
\begin{equation*}
\| s - t \| < \delta. 
\end{equation*} 
\item[(6)] For all $s, t \in \{ s'_1 (w \otimes 1_{\mathsf{M}_{\p} \otimes \mathsf{M}_{\q}}), 
(w \otimes 1_{\mathsf{M}_{\p} \otimes \mathsf{M}_{\q}}) s'_1, s'_1 (w \otimes 
1_{\mathsf{M}_{\p} \otimes \mathsf{M}_{\q}})  s'_1, \alpha s'_1 \}$, 
\begin{equation*}
\| s - t \| < \delta. 
\end{equation*}
\end{itemize}
   
Define $r_1, s_1 \in C[0,1] \otimes \mathfrak{A} \otimes \mathsf{M}_{\p} \otimes 
\mathsf{M}_{\q}$ by $r_1 = r'_1 \otimes 1_{C[0,1]}$ and $s_1 = s'_1 \otimes 1_{C[0,1]}$.
Let $p_1, q_1$ be projections in $\mathfrak{A} \otimes \mathsf{M}_{\p} \otimes \mathsf{M}_{\q}$
that satisfy the hypotheses of the statement of 
Claim 1.  
From the definition of $r_1, s_1$ and the properties of 
$r'_1, s'_1$, we know that the following must hold for $p_1, q_1$:
\begin{itemize}
\item[(7)] $p_1$ is contained in the hereditary sub-$C^{*}$-algebra of 
$C[0,1] \otimes \mathfrak{A} \otimes \mathsf{M}_{\p} \otimes \mathsf{M}_{\q}$ generated
by $f(x)$, and $q_1$ is contained in the hereditary sub-$C^{*}$-algebra of
$C[0,1] \otimes \mathfrak{A} \otimes \mathsf{M}_{\p} \otimes \mathsf{M}_{\q}$ generated 
by $g(x)$, where $f, g$ are the functions in the definition
of $r'_1, s'_1$.   
\item[(8)] For all $s,t \in \{ p_1 x, x p_1, p_1 x p_1 , p_1 \}$,
\begin{equation*}
\| s - t \| < \delta. 
\end{equation*} 
\item[(9)] For all $s, t \in \{ q_1 x, x q_1, q_1 x q_1, \alpha q_1 \}$,  
\begin{equation*}
\| s -t \| < \delta.   
\end{equation*}
\end{itemize}
Since $p_1$ and $q_1$ are orthogonal projections and  since $p_1 \sim q_1$,
there exists $v_1 \in U(C[0,1] \otimes \mathfrak{A} \otimes \mathsf{M}_{\p} \otimes \mathsf{M}_{\q})_0$ such that 
$v_1 p_1 v_1^* = v_1^* p_1 v_1 = q_1$ and 
$v_1 ( 1 - (p_1 + q_1)) = ( 1 - (p_1 + q_1)) v_1 = 
1 - (p_1 + q_1)$.   We may assume that $p_1 v_1 = v_1^* q_1$. 
   
By (7)--(9) and the definition of $v_1$, we get $\| x^* v_1 x v_1^* p_1 - \alpha p_1 \|  < 2 \delta$, $\| x^* v_1 x v_1^* q_1 - \overline{\alpha} q_1 \|   <  2 \delta$, and
\begin{align*}
  \| x^* v_1 x v_1^* ( 1- (p_1 + q_1)) - 
( 1- (p_1 + q_1)) \| <  6 \delta.   
\end{align*}
The above inequalities implies that  
\begin{equation}
\| x^* v_1 x v_1^*  - (\alpha p_1 + \overline{\alpha} q_1 
+  1- (p_1 + q_1)) \| < 10 \delta.   
\label{equ:troll}
\end{equation}
Recall that if $a, b$ are elements in a unital $C^{*}$-algebra such 
that $a$ is invertible and $\| a - b \| < \frac{ 1 }{  \| a^{-1} \| }$ 
then $b$ is also invertible in the $C^{*}$-algebra.  From this, 
(\ref{equ:troll}) and the definition of $\delta$,   
it follows that the spectrum of $x^* v_1 x v_1^*$ is contained in 
three pairwise disjoint open balls with centres $1, \alpha$ and 
$\overline{\alpha}$.  Since the spectrum is a compact set, we may assume
that the closures of the three open balls are also pairwise disjoint.
In particular, we can take each open ball to have radius
$10 \delta$.
Hence, there exist pairwise disjoint self-adjoint
partial isometries $x_1, y_1, z_1 
\in C[0,1] \otimes \mathfrak{A} \otimes \mathsf{M}_{\p} \otimes \mathsf{M}_{\q}$ 
and there exist pairwise disjoint projections $c_1, d_1, e_1 \in 
C[0,1] \otimes \mathfrak{A} \otimes \mathsf{M}_{\p} \otimes \mathsf{M}_{\q}$ such that the following
hold:
\begin{itemize}
\item[(10)] $x^* v_1 x v_1^* = x_1 + y_1 + z_1$.     
\item[(11)] $x_1, y_1, z_1$ are elements of the open balls about
$\alpha, \overline{\alpha}, 1$ (respectively), all with
radius $10 \delta$.  (Of course, we are really applying the continuous
functional calculus to $x^* v_1 x v_1^*$.)   
\item[(12)] $x_1^{*} x_1 = x_1 x_1^* = c_1$, $y_1^{*} y_1 = y_1 y_1^{*} = d_1$ and
$z_1^* z_1 = z_1 z_1^* = e_1$.   
\item[(13)] $c_1 + d_1 + e_1 = 1_{C[0,1] \otimes \mathfrak{A} \otimes \mathsf{M}_{\p}  \otimes   
\mathsf{M}_{\q}}$.  
\end{itemize}
By (11) and (12), we have that
\begin{equation}
\| x^* v_1 x v_1^* - ( \alpha c_1 + \overline{\alpha} d_1 + e_1 ) \|
< 10 \delta. 
\label{equ:Roberts}
\end{equation}
Together with (\ref{equ:troll}),  
we get  
\begin{equation} 
\| \alpha p_1 + \overline{\alpha} q_1 + 1 - (p_1 + q_1)
- ( \alpha c_1 + \overline{\alpha} d_1 + e_1 ) \| < 20 \delta.  
\label{equ:Niu}
\end{equation}
Hence, by (\ref{equ:Niu}), the definition of $\delta$ and 
Lemma \ref{lem:FinSpectUnitAppr},   
we have that $c_1, d_1, e_1$ is Murray-von Neumann equivalent and
close to $p_1, q_1, 1 - (p_1 + q_1)$ respectively.   

Let $w_1 \in U(C[0,1] \otimes \mathfrak{A} \otimes \mathsf{M}_{\p} \otimes \mathsf{M}_{\q})_0$ be a 
unitary such that 
$w_1 c_1 w_1^* = p_1$, $w_1 d_1 w_1^* = q_1$ and 
$w_1 e_1 w_1^* = 1 - (p_1 + q_1)$.  We can choose $w_1$ to be close to $1$.  Then, by   (\ref{equ:Roberts}),  
\begin{equation}
\| w_1 x^* v_1 x v_1^* w_1^* - (\alpha p_1 + \overline{\alpha} q_1 + 
1 - (p_1 + q_1)) \| < 10 \delta. 
\label{equ:mitacshit}
\end{equation}
Let $x_1$ be the unitary in $U(C[0,1] \otimes \mathfrak{A} \otimes \mathsf{M}_{\p} 
\otimes \mathsf{M}_{\q})_0$  that is given by 
$$x_1 = (w_1 x^* v_1 x v_1^* w_1^*) v_1 (w_1 x^* v_1 x v_1^* w_1^*)^* 
v_1^*.$$ 
By (10)--(13), (\ref{equ:mitacshit}), and the definitions
of $v_1$, $w_1$, $x_1$ and $\delta$, the following
hold:
\begin{itemize}
\item[(14)] $x_1 p_1 = p_1 x_1 = p_1 x_1 p_1$ and 
$\| x_{1} p_{1} - \alpha^{2} p_{1} \| < 20 \delta$. 
   
\item[(15)] $x_1 q_1 = q_1 x_1 = q_1 x_1 q_1$ 
and $\| x_{1} q_{1} - \overline{ \alpha }^{2} q_{1} \|  < 20 \delta$. 
 
\item[(16)] $x_1 (1 - (p_1 + q_1)) = (1 - (p_1 + q_1)) x_1 = 
1 - (p_1 + q_1)$.   
\end{itemize}

We now define elements $v_2, w_2, v_3 \in U(\mathfrak{A} \otimes \mathcal{Z}_{\p, \q})_0$.  To simplify notation, assume that      
$p_1, q_1, v_1, w_1 \in 1_{C[0,1]} \otimes \mathfrak{A} 
\otimes \mathsf{M}_{\p} \otimes
\mathsf{M}_{\q}$.  The general case is similar.  
To define $v_2$, we first consider the norm-continuous path of 
unitaries $\{ u(t) \}_{t \in [0,1]}$ in $(p_1 + q_1)(\mathfrak{A} \otimes \mathsf{M}_{\p} \otimes \mathsf{M}_{\q}) (p_1 + q_1)$ given by (\ref{equ:p_1p_2homotopy1}) with the standard system of matrix units for $\mathsf{M}_2$ as 
$e_{1,1} = p_1$, $e_{2,2} = q_1$, $e_{1,2} = p_1 v_1 = v_1^* q_1$.  The last equality follows from the definition of $v_1$.  Define $v_{2}$ by
\[
v_2(t) = 
\begin{cases} 
v_1  & \makebox{ if } t \in \left[\frac{ 2 }{ 5 }, \frac{3}{ 5 } \right] \\
1    & \makebox{ if } t \in \left[0, \frac{1}{5} \right] \cup \left[\frac{4}{5}, 1\right] \\
u(5t - 1) + 1 - (p_1 + q_1) & \makebox{ if } t \in \left[\frac{1}{5}, \frac{2}{5} \right] \\
u(-5t + 4) + 1 - (p_1 + q_1) & \makebox{ if } t \in \left[\frac{3}{5}, 
\frac{4}{5} \right].  
\end{cases}
\] 
Note that $v_2 \in U(\mathfrak{A} \otimes \mathcal{Z})_0$.  Let $w_2, v_3 \in U(\mathfrak{A} \otimes \mathcal{Z}_{\p, \q})_0$ be unitaries
chosen so that 
$w_2(t) = w_1$ for $t \in \left[\frac{1}{5}, \frac{4}{5} \right]$, $w_2(0) = w_2(1) = 1$, 
$v_3(t) = v_1$ for $t \in \left[\frac{1}{5}, \frac{4}{5}\right]$, and
$v_3(0) = v_3(1) = 1$.  

Since $x \in G$ and $G$ is a normal subgroup of $U(\mathfrak{A} \otimes \mathcal{Z}_{\p,
\q} )_{0}$,  
$$w_3 = (w_2 x^* v_3 x v_3^* w_2^*) v_2 
(w_2 x^* v_3 x v_3^* w_2^*)^* v_2^*$$  
is an element of $G$.  Hence, it follows, by the definitions above, that the
following hold: 
\begin{itemize}
\item[(17)] $w_3 (1 - (p_1 + q_1)) = (1 - (p_1 + q_1)) w_3 
= 1 - (p_1 + q_1)$. 
\item[(18)] $\| w_3 - (\alpha p_1 + \overline{\alpha} q_1   + 1 - (p_1 + 
q_1)) v_2 (\alpha p_1 + \overline{\alpha} q_1   + 1 - (p_1 + 
q_1))^* v_2^* \| < 20 \delta$. 
\item[(19)] The eigenvalues of $(\alpha p_1 + \overline{\alpha} q_1   
+ 1 - (p_1 + 
q_1)) v_2(t) (\alpha p_1 + \overline{\alpha} q_1   + 1 - (p_1 + 
q_1))^* v_2^*(t)$ are given by $\{ g_1(t), g_2(t) \}$ for all $t \in [0,1]$. 
\end{itemize}
Since $g_1(t) \neq g_2(t)$ for $t \in \left[ \frac{1}{5}, \frac{4}{5} \right]$ and 
$g_1(t) = g_2(t) = 1$ for $t \in \left[ 0,\frac{1}{5} \right] \cup \left[ \frac{4}{5}, 1 \right]$, 
there exist two projections $p_2, q_2 \in C[0,1] \otimes \mathfrak{A} \otimes
\mathsf{M}_{\p} \otimes \mathsf{M}_{\q}$ such that 
$$(\alpha p_1 + \overline{\alpha} q_1   
+ 1 - (p_1 + 
q_1)) v_2 (\alpha p_1 + \overline{\alpha} q_1   + 1 - (p_1 + 
q_1))^* v_2^* = g_1 p_2 + g_2 q_2 + 1 - (p_1 + q_1).$$
Indeed, $p_2, q_2$ can be chosen to be
 projections inside the copy of $\mathsf{M}_2$ that 
is generated by $\{ p_1, q_1, v_1 p_1 \}$; and they can be chosen to satisfy
$p_1 \sim p_2$, $q_1 \sim q_2$.  Hence, there exists a unitary
$v_4 \in U( \mathfrak{A} \otimes \mathcal{Z}_{\p, \q} )_{0}$ such that 
$$v_4 (\alpha p_1 + \overline{\alpha} q_1   
+ 1 - (p_1 + 
q_1)) v_2 (\alpha p_1 + \overline{\alpha} q_1   + 1 - (p_1 + 
q_1))^* v_2^*v_4^* = g_1 p_1 + g_2 q_1 + 1 - (p_1 + q_1)$$
and 
$$v_4 (1 - (p_1 + q_1)) = (1 - (p_1 + q_1)) v_4 = 
1 - (p_1 + q_1).$$

Set $w' = v_4 w_3 v_4^*$.  Then $w' \in G$ is a unitary that satisfies the statement of Claim 1.
This completes the proof of Claim 1. 

Claim 2:  Suppose that $r, s$ are pairwise orthogonal projections
in $C[0,1] \otimes \mathfrak{A} \otimes \mathsf{M}_{\p} \otimes \mathsf{M}_{\q}$ which
are Murray-von Neumann equivalent in $C[0,1] \otimes \mathfrak{A} \otimes \mathsf{M}_{\p}
\otimes \mathsf{M}_{\q}$.  Then 
$$g_1 r + g_2 s + 1 - (r + s) \in G.$$

    We will show that $g_1 r + g_2 s + 1 - (r + s)$ can be approximated
arbitrarily close by elements of $G$.  Let $\epsilon > 0$ be given.  Plug this $\epsilon$ into Claim 1 to
get nonzero orthogonal projections $r_1, r_2 \in C[0,1] \otimes \mathfrak{A} \otimes 
\mathsf{M}_{\p} \otimes \mathsf{M}_{\q}$.  Decompose $r, s$ into pairwise orthogonal projections 
$$r = r_{1, 1} + r_{1, 2} + .... + r_{1, n}$$
and 
$$s = s_{1, 1} + s_{1, 2} + .... + s_{1, n}$$ 
so that the following hold: 
\begin{itemize}
\item[(i)] $r_{1,j}$ and $s_{1,j}$ 
are projections in $C[0,1] \otimes \mathfrak{A} \otimes 
\mathsf{M}_{\p} \otimes \mathsf{M}_{\q}$ for all $j \geq 1$.   
\item[(ii)] $r_{1,j} \sim s_{1,j}$ in $C[0,1] \otimes \mathfrak{A} \otimes 
\mathsf{M}_{\p} \otimes \mathsf{M}_{\q}$ for all $j \geq 1$.  
\item[(iii)] $r_{1,j} \precsim r_1 \sim r_2$ 
in $C[0,1] \otimes \mathfrak{A} \otimes 
\mathsf{M}_{\p} \otimes \mathsf{M}_{\q}$
for all $j \geq 1$.   
\end{itemize}
Since $C[0,1] \otimes \mathfrak{A} \otimes 
\mathsf{M}_{\p} \otimes \mathsf{M}_{\q}$ has cancellation of projections and since
$G$ is a normal subgroup of $U(\mathfrak{A} \otimes \mathcal{Z})_0$, it follows, by
Claim 1, that for $j \geq 1$, there exists 
a unitary $w_{1,j} \in G$ such that the following conditions
hold:
\begin{itemize}
\item[(iv)] $w_{1,j} ( 1 - (r_{1,j} + s_{1,j})) = 
( 1 - (r_{1,j} + s_{1,j})) w_{1,j} = 
1 - (r_{1,j} + s_{1,j})$  for all $j \geq 1$ and
\item[(v)] $\| w_{1,j} - (g_1 r_{1,j} + g_2 s_{1,j} + 1 - (r_{1,j} + 
s_{1,j}) ) \| < \epsilon$ for all $j \geq 1$.  
\end{itemize}
Set $w_4 = w_{1,1} w_{1,2} ... w_{1,n}$.
Then $$\| w_4 - (g_1 r + g_2 s + 1 - (r + s)) \| < \epsilon.$$
Since $\epsilon > 0$ is arbitrary and since $G$ is closed,
$$g_1 r + g_2 s + 1 - (r + s) \in G$$ 
as required.
This completes the proof of Claim 2.

    We now complete the proof that 
$CU(1_{\mathfrak{A}} \otimes \mathcal{Z})_0 \subseteq G$ for Case 1.
Choose two nonzero orthogonal
projections $r, s \in C[0,1] \otimes 1_{\mathfrak{A}} 
\otimes \mathsf{M}_{\p} \otimes \mathsf{M}_{\q}$ such that
$r \sim s$ in $C[0,1] \otimes 1_{\mathfrak{A}}
\otimes \mathsf{M}_{\p} \otimes \mathsf{M}_{\q}$.  
By Claim 2, 
$w_5 = g_1 r + g_2 s + 1 - (r + s) \in G$.
Note that $w_5$ is also a unitary in $CU(1_{\mathfrak{A}} \otimes \mathcal{Z})$ which is not
a scalar multiple of the identity. 
By \cite{NgZ},  $CU(\mathcal{Z})/ \mathbb{T}$ is a simple topological group.
Hence, $CU(1_{\mathfrak{A}} \otimes \mathcal{Z}) \subseteq G$ as required.  

Case 2:  Suppose that the spectrum of $w$ (and hence the spectrum of $x$) is contained
in $\{ -1, 1, i, -i \}$.  Recall that $w$ is not in $\mathbb{T}$ and its spectrum contains $1$.
For simplicity, let us assume that the spectrum of $w$ is 
$\{ 1, \alpha \}$ where $\alpha \in \{ -1, i, -i \}$.  (The proofs
for the other cases are similar.)  Hence, there exist nonzero orthogonal projections $p, q \in \mathfrak{A}$
such that $w = p + \alpha q$.
Therefore, $x =  p \otimes 1_{\mathcal{Z}_{\p, \q}}
+ \alpha q \otimes 1_{\mathcal{Z}_{\p, \q}}$.   

Let $r, s \in  1_{\mathfrak{A}} \otimes 
\mathsf{M}_{\p} \otimes \mathsf{M}_{\q}$  
be nonzero orthogonal projections such that 
$r$ is orthogonal to $p$, $s$ is orthogonal to $q$, 
$r \sim s \precnsim p \otimes
1_{\mathsf{M}_{\p} \otimes \mathsf{M}_{\q}}$ and 
$s \precnsim q \otimes 1_{\mathsf{M}_{\p} \otimes \mathsf{M}_{\q}}$ 
in $\mathfrak{A} \otimes
\mathsf{M}_{\p} \otimes \mathsf{M}_{\q}$.   
Then there exist projections $r', s' \in \mathfrak{A} \otimes \mathsf{M}_{\p} 
\otimes \mathsf{M}_{\q}$ and $v \in \mathfrak{A} \otimes \mathsf{M}_{\p} 
\otimes \mathsf{M}_{\q}$ such that 
$r \sim r' \lneq p \otimes 1_{\M_{\p} \otimes \M_{\q}}$, $s \sim s' \lneq q \otimes 1_{\M_{\p} \otimes \M_{\q}}$, and $v^* v = r'$ and $v v^* = s'$.  

Let $\{ u(t) \}_{t \in [0,1]}$ be the norm-continuous
path of unitaries in $\mathsf{M}_2$ as in (\ref{equ:p_1p_2homotopy1})
except that the canonical system of matrix units is taken to be
$e_{1,1} = r'$, $e_{2,2} = s'$ and $e_{2, 1} = v$.  Define the norm-continuous path of unitaries 
$\{ w(t) \}_{t \in [0,1] }$ in $U(\mathfrak{A} \otimes \M_{\p} \otimes \M_{\q})$ by 
\[
w(t) = 
\begin{cases}
u(2t) + 1 - (r' + s') \otimes 1_{C[0,1]} & \makebox{ if } t \in \left[0,\frac{1}{2} \right] \\
u(-2t + 2) + 1 - (r' + s') \otimes 1_{C[0,1]} & \makebox{ if } t \in 
\left[ \frac{1}{2}, 1 \right]. 
\end{cases}
\]
Note that $w \in U(\A \otimes \ZC_{\p, \q})_0$.  
Since $G$ is a closed normal subgroup of $U(\mathfrak{A}
 \otimes \mathcal{Z})_0$,
$w' = w x w^* x^*$ is an element of $G \cap U(\A \otimes \ZC_{\p, \q})_0$.
Moreover, we have that $w'$ is not a scalar multiple of the identity,  
$w' (1 - (r' + s') \otimes 1_{C[0,1]}) = 
(1 - (r' + s') \otimes 1_{C[0,1]}) w' = 1 - (r' + s') \otimes
1_{C[0,1]}$ in $C[0,1] \otimes \A \otimes \M_{\p} \otimes \M_{\q}$
and $w'(0) = w'(1) = 1$.

   Let $f_3, g_3 : [0,1] \rightarrow \mathbb{T}$ be two continuous 
functions such that 
for all $t \in [0,1]$, $\{ f_3(t), g_3(t) \}$ are the eigenvalues
of $(r' + s') w'(t) (r' + s')$.
Note that $f_3(0) = f_3(1) = g_3(0) = g_3(1) = 1$.
Set $w'' = f_3 r' + g_3 s' + 1 - (r' + s') \otimes 1_{C[0,1]}$.
Then $w'' \in U(\A \otimes \ZC_{\p, \q})$.  Note that $f_3 r' + g_3 s' \in C[0,1] \otimes C^*(r', s', v) \cong
C[0,1] \otimes \M_2$.  Hence, by \cite{ThomsenCircleAlgebras},  
$w'$ and $w''$ are approximately unitarily equivalent in
$C[0,1] \otimes \A \otimes \M_{\p} \otimes \M_{\q}$.
Moreover, since $w'(0) = w'(1) = w''(0) = w''(1) = 1$, 
we can choose the implementing unitaries to also have unit value at the
endpoints and be elements of
$U(\A \otimes \mathcal{Z}_{\p, \q})_0$.  
Hence, since $G$ is a closed normal subgroup of $U(\A \otimes \ZC)_0$,
$w'' \in G$.  

Since  
$f_3(t) = f_3(1 - t)$, $g_3(t) = g_3(1-t)$ for all $t \in [0,1]$,   
$w''' = f_3 r  + g_3 s  + 1 - 
(r + s) \otimes 1_{C[0,1]} \in U(1_{\A} \otimes \ZC_{\p, \q})_0$.
Note also that $w'''$ is not a scalar multiple of the identity.  
Since $r \sim r'$, $s \sim s'$, $r$ is orthogonal to $r'$, $s$ is orthogonal to $s'$, and
$w'''(0) = w''(0) = w'''(1) = w''(1) = 1$,   
we have that $w''$ and $w'''$ are approximately unitarily 
where we can choose the unitaries to have unit value at endpoints and be elements in $U(\A \otimes \ZC)_0$.
Hence, since $G$ is a closed normal subgroup of $U(\A \otimes \ZC)_0$,
we must have that $w''' \in G$, i.e., $G$ contains an element 
of $U(1_{\A} \otimes \ZC)_0$ which is not a scalar multiple of the
identity.  By \cite{NgZ}, $U(1_{\A} \otimes \ZC)_0/\mathbb{T}$ is 
a simple topological group.  Hence, 
$U(1_{\A} \otimes \ZC)_0 \subseteq G$ as required.  This completes the
proof for Case 2 and hence, all the cases.     
\end{proof}

Recall that for a $C^*$-algebra $\mathfrak{C}$ and for elements
$a, b \in \mathfrak{C}$,
$(a,b)$ is defined to be $(a,b) = a b a^* b^*$.

\begin{lemma}
Consider the supernatural numbers $\p = 2^{\infty}$ and $\q = 3^{ \infty }$.  Let 
$\mathfrak{A}$ be a simple unital $C^{*}$-algebra.  Let $G$ be a closed normal 
subgroup of $U(\mathfrak{A} \otimes \mathcal{Z}_{\p, \q})_0$ that contains $CU(1_{\mathfrak{A}} \otimes \mathcal{Z}_{\p, \q})_0$ and let $u_i, v_i : [0,1] \rightarrow U(\mathfrak{A} \otimes 1_{\mathcal{Z}_{\p,\q}})_0$
($1 \leq i \leq n$) be norm-continuous paths.  Define $w$ by 
\begin{align*}
w = \prod_{i=1}^n (u_i, v_i) = (u_1, v_1) (u_2, v_2) ... (u_n , v_n).
\end{align*}
Note that $w \in CU(C[0,1] \otimes \mathfrak{A} \otimes 1_{\M_{\p} \otimes
\M_{\q}})_0 \subseteq CU(\A \otimes \ZC_{\p, \q})_0.$  If $w(0) = 1$, then $w \in G$.    
\label{lem:leftendpoint}
\end{lemma}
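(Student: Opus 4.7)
The plan is to combine a reparametrization of $u_i, v_i$ near the left endpoint with a matrix-trick construction exploiting the $\M_{2} \hookrightarrow \M_\p = \M_{2^\infty}$ embedding in the $t=0$ fiber of $\mathcal{Z}_{\p, \q}$. The hypothesis $w(0) = 1$ enters twice: first to justify the reparametrization, and then to ensure the support of the matrix-trick can be placed entirely inside a region where $w$ is already trivial.

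First, I would reduce to the stronger hypothesis that $w \equiv 1$ on an initial interval $[0, \delta]$. Choose continuous non-decreasing surjections $\phi_k : [0, 1] \to [0, 1]$ with $\phi_k \equiv 0$ on $[0, 1/k]$ and $\phi_k \to \mathrm{id}_{[0,1]}$ uniformly. The reparametrized paths $u_i \circ \phi_k, v_i \circ \phi_k$ are again norm-continuous paths in $U(\A \otimes 1_{\mathcal{Z}_{\p, \q}})_0$, their product of commutators $w_k := \prod_{i=1}^n (u_i \circ \phi_k, v_i \circ \phi_k) = w \circ \phi_k$ is of the required form and is identically $1$ on $[0, 1/k]$, and $w_k \to w$ in norm. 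Since $G$ is closed, it suffices to show $w_k \in G$ for each $k$, and this reduces us to the case in which $w \equiv 1$ on $[0, \delta]$ for some $\delta > 0$.

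Second, under this stronger hypothesis, the plan is to build an explicit witness of $w \in G$ from the $\M_2$-block structure of $\M_\p$. Fix a unital embedding $\M_2 \hookrightarrow \M_\p$ with matrix units $\{e_{ij}\}_{i, j = 1,2}$, and a central bump $\chi \in C[0, 1]$ (viewed as central in $\mathcal{Z}_{\p, \q}$ via the given embedding) with $\chi \equiv 1$ on $[0, \delta/3]$ and $\chi \equiv 0$ on $[\delta/2, 1]$. I would construct partial isometries $s_{ij} \in 1_{\A} \otimes \mathcal{Z}_{\p, \q}$ by tapering the $e_{ij}$'s via $\chi$; the swap $S := s_{12} + s_{21} + (1 - s_{11} - s_{22}) \in U(1_\A \otimes \mathcal{Z}_{\p, \q})_0$ is then at hand. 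Using $S$ and the $s_{ij}$'s, I would rewrite $w$ as a product of $S$-conjugates of commutators of unitaries in $\A \otimes \mathcal{Z}_{\p, \q}$ whose $\A$-content is confined to $\{\chi > 0\} \subseteq [0, \delta/2]$; in that region $w \equiv 1$, so each such commutator can be further telescoped (either by iterating inside $\M_{4} \subseteq \M_\p$, or by a direct computation using $w(0) = 1$) into an element of $CU(1_\A \otimes \mathcal{Z}_{\p, \q})_0 \subseteq G$. Normality of $G$ absorbs the $S$-conjugations, giving $w \in G$.

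The hard part is Step 2: since $\mathcal{Z}_{\p, \q}$ has only trivial projections (every projection has trace in $\Z[1/2] \cap \Z[1/3] = \Z$, hence equals $0$ or $1$), the $\M_2$-block structure cannot be implemented via genuine projections in $1_\A \otimes \mathcal{Z}_{\p, \q}$; it must be carried by the partial isometries $s_{ij}$ tapered by $\chi$, and the construction must respect the dimension-drop endpoint condition at $t = 1$ (where $\M_2$ does not embed unitally into $\M_\q = \M_{3^\infty}$). The hypothesis $w \equiv 1$ on $[0, \delta]$ is exactly what makes this tractable: it allows $\chi$ to be supported in the trivial region of $w$, so the matrix-trick does not need to extend toward the right endpoint and the projection obstruction is sidestepped.
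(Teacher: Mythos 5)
Your Step 1 (reparametrizing so that $w \equiv 1$ on an initial interval $[0,\delta]$) is fine and is exactly the paper's opening move. The gap is in Step 2, and it is structural: you have placed the matrix apparatus in the wrong tensor factor and, as a consequence, supported it on the wrong part of $[0,1]$. Your swap $S$ is built from matrix units of $\M_2 \subseteq \M_{\p}$ tapered by a bump $\chi$ vanishing on $[\delta/2,1]$, so $S \equiv 1$ on $[\delta/2,1]$, and you require the commutators in your factorization of $w$ to have their $\A$-content confined to $\{\chi>0\} \subseteq [0,\delta/2]$. But after Step 1 the entire content of $w$ lives on $[\delta,1]$: $w$ is identically $1$ precisely where your apparatus is nontrivial, and nontrivial precisely where your apparatus is the identity, so no factorization of the kind you describe can exist. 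The hypothesis $w(0)=1$ plays the opposite role from the one you assign it: the permutation unitaries must be honest \emph{constant} matrix units on $[\delta,1]$, hence must be taken in a matrix subalgebra of $\M_{\q}$ (the paper uses $\M_3 \subseteq \M_{\q}$ with connecting map $a \mapsto \mathrm{diag}(a,a,a)$), since only then do they satisfy the right-endpoint condition $f(1) \in 1_{\M_{\p}}\otimes\M_{\q}$ of $\ZC_{\p,\q}$; they are tapered to $1$ only on $[0,\delta]$ to meet the left-endpoint condition $f(0)\in\M_{\p}\otimes 1_{\M_{\q}}$. The interval where the tapering destroys the $3\times 3$ commutator identities is then contained in the interval where $w\equiv 1$, and the resulting discrepancy there is repaired by an explicit correction unitary ($y''_1$ in the paper), itself shown to lie in $G$ by composing with the retraction $h$ of $[0,1]$ onto $[0,\delta]$.

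There is a second, smaller gap in your route into $G$: you claim the resulting commutators land in $CU(1_{\A}\otimes\ZC_{\p,\q})_0$. They cannot, since any commutator carrying the $u_i,v_i$ has nontrivial $\A$-content. The mechanism that actually puts these pieces into $G$ is normality applied to commutators with one entry in $G$: if $g\in G$ and $x\in U(\A\otimes\ZC_{\p,\q})_0$, then $(x,g)=(xgx^*)g^* \in G$. The paper's $3\times3$ identities are engineered precisely so that $\mathrm{diag}\bigl((u_i,v_i),(u_i,v_i),(u_i,v_i)\bigr)$ becomes a conjugate of a product of commutators each having the tapered, determinant-one permutation unitary $x'_1$ or $y'_1$ (which does lie in $CU(1_{\A}\otimes\ZC_{\p,\q})_0\subseteq G$) as one entry; it is this, not membership of the whole commutator in $CU(1_{\A}\otimes\ZC_{\p,\q})_0$, that yields $w\in G$.
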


\begin{proof}

   Since $G$ is a closed subset of $U(\mathfrak{A} \otimes \mathcal{Z}_{\p, \q})_0$, we may assume that there exists $\delta > 0$ 
($\delta < 1$) such that for all $t \in [0, 2\delta)$,
$w(t) = 1$.  
By definition of $\q$,
\[
\mathsf{M}_{\q} = \overline{\bigcup_{j=1}^{\infty} \mathsf{M}_{3^j}}
\]
with connecting maps defined by  
$a \mapsto \mathrm{diag}(a, a, a)$.

As an intermediate step, we will work inside $C[0,1] \otimes \mathfrak{A} \otimes \mathsf{M}_3$.  Let $x_1, x_{2,i}, x_{3,i}, x_{4,i} \in C[0,1] \otimes \mathfrak{A} \otimes \mathsf{M}_3$ be given as
follows:

\[
x_1 = \left[ \begin{array}{ccc} 0 & 1 & 0\\ 1 & 0 & 0
\\ 0 & 0 & -1 \end{array} \right] 
\quad
x_{2,i} = \left[ \begin{array}{ccc} u_i & 0 & 0 \\ 0 & v_i^* & 0 \\
0 & 0 & 1\end{array} \right] 
\quad 
x_{3,i} = \left[ \begin{array}{ccc} 1 & 0 & 0 \\ 0 & v_i & 0 \\
0 & 0 & 1 \end{array} \right] 
\quad x_{4,i} = \left[ \begin{array}{ccc} 1 & 0 & 0 \\ 0 & u_i & 0 \\
0 & 0 & 1 \end{array} \right]. 
\]
Hence, in $C[0,1] \otimes \mathfrak{A} \otimes \mathsf{M}_3$, we have that 
\[
\left[
\begin{array}{ccc} 
u_i v_i  & 0 & 0\\
0 & u_i^* v_i^* & 0 \\
0 & 0 & 1
\end{array}
\right]
= x_{3,i} x_{2,i} x_1 x_{2,i}^* x_1^{*} x_{3,i}^* 
= x_{3,i} (x_{2,i}, x_1 ) x_{3,i}^*
\]
and 
\[
\left[
\begin{array}{ccc} 
u_i^* v_i^*  & 0 & 0\\
0 & u_i v_i & 0 \\
0 & 0 & 1
\end{array}
\right]  
= x_{3,i}^* x_{2,i}^* x_1 x_{2,i} x_1^{*} x_{3,i} 
= x_{3,i}^* (x_{2,i}^*, x_1 ) x_{3,i}.  
\]
Hence, 
\begin{equation} \label{equ:firsttwo}
\begin{aligned}
\left[
\begin{array}{ccc} 
(u_i,  v_i)  & 0 & 0\\
0 & (u_i, v_i)  & 0 \\
0 & 0 & 1
\end{array}
\right]
 &=  x_{3,i} x_{4,i} [ x_{3,i} (x_{2,i}, x_1 ) x_{3,i}^* ]
[x_{3,i}^* (x_{2,i}^*, x_1 ) x_{3,i}]
x_{4,i}^* x_{3,i}^* \\
\\
&=  x_{3,i} x_{4,i}  x_{3,i} (x_{2,i}, x_1 ) {x_{3,i}^*}^2 
(x_{2,i}^*, x_1 ) x_{3,i}
x_{4,i}^* x_{3,i}^*.  
\end{aligned}
 \end{equation}
 Let $y_1, y_{2,i}, y_{3,i}, y_{4,i} \in C[0,1] \otimes \mathfrak{A} \otimes \mathsf{M}_3$ be given
as follows:
\begin{align*}
y_1 = \left[
\begin{array}{ccc}
-1 & 0 & 0 \\ 0 & 0 & 1 \\ 0 & 1 & 0 
\end{array} \right]
\quad
y_{2,i} = \left[
\begin{array}{ccc}
1 & 0 & 0 \\ 0 &  u_i & 0  \\ 0 & 0 & v_i^* 
\end{array} \right] \quad
y_{3,i} = \left[
\begin{array}{ccc}
1 & 0 & 0 \\ 0 &  1 & 0  \\ 0 & 0 & v_i 
\end{array} \right]
\quad 
y_{4,i} = \left[
\begin{array}{ccc}
1 & 0 & 0 \\ 0 &  1 & 0  \\ 0 & 0 & u_i 
\end{array} \right]. 
\end{align*}
Hence, in $C[0,1] \otimes \mathfrak{A} \otimes \mathsf{M}_3$, we have that
\[
 \left[
\begin{array}{ccc}
1 & 0 & 0 \\ 0 &  v_iu_i & 0  \\ 0 & 0 & u_i^* v_i^* 
\end{array} \right]
= y_1 y_{2,i}^* y_1^{*} y_{2,i} = (y_1, y_{2,i}^*)   
\]
and 
\[
\left[
\begin{array}{ccc}
1 & 0 & 0 \\ 0 &  u_i^*v_i^* & 0  \\ 0 & 0 & u_i v_i 
\end{array} \right]
=  y_{3,i}^* y_{2,i}^* y_1 y_{2,i} y_1^{*} y_{3,i}   
= y_{3,i}^* (y_{2,i}^*, y_1) y_{3,i}. 
\]
Hence,
\begin{equation} \label{equ:Duty}
 \begin{aligned}
 &\left[ \begin{array}{ccc} (u_i, v_i) & 0 & 0 \\ 0 & (u_i, v_i) & 0 \\
0 & 0 & (u_i, v_i) \end{array} \right] \\ 
&\qquad = y_{3,i} y_{4,i} 
\left[ \begin{array}{ccc} (u_i, v_i) & 0 & 0 \\ 0 & u_i v_i u_i^* v_i^*
 & 0 \\
0 & 0 & 1 \end{array} \right] (y_1, y_{2,i}^*) [ y_{3,i}^* (y_{2,i}^*, y_1) 
y_{3,i}] y_{4,i}^* y_{3,i}^*.    
\end{aligned}
\end{equation}
From this and (\ref{equ:firsttwo}), 
we have in $C[0,1] \otimes \mathfrak{A} \otimes \mathsf{M}_3$  that 
\begin{equation} \label{equ:Joy} 
\begin{aligned}
 w &=  \prod_{i=1}^n \left[ \begin{array}{ccc}  (u_i, v_i) & 0 & 0 \\
0 & (u_i, v_i) & 0 \\ 0 & 0 & (u_i, v_i) \end{array} \right] \\
&=  \prod_{i=1}^n   
y_{3,i} y_{4,i} 
\left[ \begin{array}{ccc} (u_i, v_i) & 0 & 0 \\ 0 & (u_i, v_i)
 & 0 \\
0 & 0 & 1 \end{array} \right] (y_1, y_{2,i}^*) [ y_{3,i}^* (y_{2,i}^*, y_1) 
y_{3,i}] y_{4,i}^* y_{3,i}^*.    
\end{aligned}
\end{equation}
 
   We now use the above to manufacture elements in 
$CU(\mathfrak{A} \otimes \mathcal{Z}_{\p, \q})_0$.  Let $h : [0, 1] \rightarrow [0, \delta]$ be the continuous 
function that
is given by 
$$h(t) = \begin{cases}
 t & \makebox{  for  } t \in [0, \delta ] \\
\delta & \makebox{  for  } t \in [\delta, 1].  
\end{cases} $$ 
For $1 \leq i \leq n$, $2 \leq j \leq 4$, 
let $x'_1, y'_1, x'_{j,i}, y'_{j,i}  : 
[0,1] \rightarrow U(\mathfrak{A} \otimes \mathsf{M}_3)_0$
be continuous functions such that 
the following hold:
\begin{enumerate}
\item[(i)] $x'_1(t) = x_1(t)$,  $y'_1(t) = y_1(t)$,
$x'_{j,i}(t) = x_{j,i}(t)$ and 
$y'_{j,i}(t) = y_{j,i}(t)$  for $t \in [\delta, 1]$.    
\item[(ii)] $x'_1(0) = y'_1(0) = x'_{j,i}(0) = y'_{j,i}(0) = 1$.  
\item[(iii)] $x'_1(t), y'_1(t), x'_{j,i}(t), y'_{j,i}(t) 
\in \A \otimes \M_3$ for $t \in (0, \delta]$. 
\item[(iv)] $x'_1, y'_1, x'_{j,i}, y'_{j,i} \in U(\A \otimes \ZC_{\p, \q})_0$.
\item[(v)] 
$x'_1, y'_1, x'_1 \circ h, y'_1 \circ h \in CU(1_{\mathfrak{A}} \otimes \mathcal{Z}_{\p, \q}
)_0$.  
\end{enumerate}
(Note that the determinants of $x_1$ and $y_1$ are both one.) 
Next, we replace $x_1, y_1, x_{j,i}, y_{j,i}$ with 
$x'_1, y'_1, x'_{j,i}, y'_{j,i}$ respectively in the expressions in
(\ref{equ:firsttwo}) and (\ref{equ:Duty}) -- except at one occurence
of $y_1$.    
We now proceed with the details. 

  By (i)--(v), we have that $x'_1 \in CU(1_{\mathfrak{A}} \otimes \mathcal{Z}_{\p, \q})_0
\subseteq G$.  Since $G$ is a normal subgroup of $U(\mathfrak{A} \otimes \mathcal{Z}_{\p, \q})_0$,
\[
z_i =  
x'_{3,i} x'_{4,i}  x'_{3,i} (x'_{2,i}, x'_1 ) 
{{x'_{3,i}}^*}^2  ({x'_{2,i}}^*, x'_1 ) x'_{3,i}
{x'_{4,i}}^* {x'_{3,i}}^* \in G. 
\]
Note also by (\ref{equ:firsttwo}) and (i)--(v), that 
\[
\left[ \begin{array}{ccc} (u_i (t), v_i(t)) & 0 & 0 \\
0 & (u_i (t), v_i(t)) & 0 \\ 0 & 0 & 1 \end{array} \right]
= z_i(t) 
\]
for $t \in [\delta, 1]$.  Also by (i)--(v), since $y'_1 \in G$ and since
$G$ is a normal subgroup of $U(\mathfrak{A} \otimes \mathcal{Z}_{\p, \q})_0$,  
\[
z'_i = 
{y'}_{3,i} y'_{4,i} z_i (y'_1, {y'}_{2,i}^*) [
{y'_{3,i}}^* ({y'_{2,i}}^*, y'_1) {y'}_{3,i}] {y'}_{4,i}^* 
{y'}_{3,i}^* \in G.  
\]
Again, by (\ref{equ:Duty}) and (i)--(v), 
\[ 
\left[
\begin{array}{ccc} (u_i(t), v_i(t)) & 0 & 0 \\ 0 & (u_i(t) , v_i(t)) 
& 0 \\
0 & 0 & (u_i(t), v_i(t)) \end{array} \right]  = z'_i(t)
\]
for $t \in [\delta, 1]$.

   Now, recall that we have the condition
$$1 = w(t) = \prod_{i=1}^n (u_i(t), v_i(t))$$
for $t \in [0, \delta]$. To obtain this, we replace the
last occurrence of $y'_1$ in $z'_n$ by another element
$y''_1 \in G$.  
We define $y''_1$ as follows:
\[
y''_1(t) =
y_1(t)
\]
for  $t \in [\delta, 1]$, and for $t \in [0, \delta ]$, set $y''_{1} ( t )$ to be 

\vspace*{4ex}

${y'}_{2,n}^*(t) {y'}_1^*(t)  {y'}_{2,n}(t) y'_{3,n}(t) (y'_1(t), {y'}_{2,n}^*(t))^* z_n(t)^* {y'}_{4,n}^*(t) 
{y'}_{3,n}^*(t) 
 \left[ \prod_{i=1}^{n-1} z'_i(t) \right]^* 
\linebreak
{y'}_{3,n}(t) 
y'_{4,n}(t) {y'}_{3,n}(t)^*.$

\vspace*{4ex}

 (The complicated definition over $[0, \delta]$
is to ensure that 

\vspace*{2ex}

$\left[ \prod_{i=1}^{n-1} z'_i(t) \right] 
{y'}_{3,n}(t) {y'}_{4,n}(t) z_n(t) 
(y'_1(t), {y'}_{2,n}^*(t) ) 
[{y'}_{3,n}^*(t) [{y'}_{2,n}^*(t)  y'_1(t)
{y'}_{2,n}(t) 
\linebreak
y''_1(t)]
{y'}_{3,n}(t)] {y'}_{4,n}^*(t) 
{y'}_{3,n}^*(t) = 1$

\vspace*{2ex}

for $t \in [0, \delta]$.)  From (\ref{equ:Duty}) and (i.)--(v.), we have that
 $y''_1 \in C[0,1] \otimes \mathfrak{A} \otimes \mathsf{M}_{\p} \otimes
\mathsf{M}_{\q}$, $y''_1(t) \in \mathfrak{A} \otimes \mathsf{M}_3$ for 
$t \in (0, 1]$ and   
$y''_1(0) = 1$.  Thus, $y''_1 \in U(\mathfrak{A} \otimes \mathcal{Z}_{\p, \q})_0$.  

    We now prove that $y''_1 \in G$.
Note that by (i)--(v) and by an argument similar to that
used to show that
$z_i, z'_i \in G$, we have that 
$z_i \circ h, z'_i \circ h, x'_1 \circ h, y'_1 \circ h \in G$.  Note that $y''_{1}$ is equal to

\vspace*{4ex}

$({y'}_{2,n}^*\circ h) ({y'}_1^* \circ h)  ({y'}_{2,n}\circ h) (y'_{3,n}\circ h)
(y'_1\circ h, {y'}_{2,n}^*\circ h)^* (z_n^* \circ h) ({y'}_{4,n}^* \circ h) 
({y'}_{3,n}^* 
\circ
\linebreak
 h) \left[ \prod_{i=1}^{n-1} (z'_i \circ h) 
 \right]^*
({y'}_{3,n} \circ h) 
(y'_{4,n} \circ h)({y'}_{3,n}^* \circ h)$.  

\vspace*{4ex}

\noindent Since $G$ is a normal subgroup of $U(\mathfrak{A} \otimes \mathcal{Z}_{\p, \q})_0$,
we have that $y''_1 \in G$.  Since $G$ is a normal subgroup of 
$U(\mathfrak{A} \otimes \mathcal{Z}_{\p, \q})_0$ and  
\begin{align*}
w = \left[ \prod_{i=1}^{n-1} z'_i \right]
{y'}_{3,n} {y'}_{4,n} z_n
(y'_1, {y'}_{2,n}^* )
[{y'}_{3,n}^* [{y'}_{2,n}^*  y'_1
{y'}_{2,n} y''_1]
{y'}_{3,n}] {y'}_{4,n}^*
{y'}_{3,n}^*
\end{align*}
we have that $w \in G$ as required. 
\end{proof}

\begin{lemma}
Consider the supernatural numbers $\p = 2^{ \infty }$ and $\q = 3^{ \infty }$.  Let
$\mathfrak{A}$ be a simple unital $C^{*}$-algebra and let $G$ be a closed normal
subgroup of $U(\mathfrak{A} \otimes \mathcal{Z}_{\p, \q})_0$ that contains $CU(1_{\mathfrak{A}} \otimes \mathcal{Z}_{\p, \q})_0$.  Let $u_i, v_i : [0,1] \rightarrow U(\mathfrak{A} \otimes 1_{\mathcal{Z}_{\p,\q}})_0$
($1 \leq i \leq n$) be norm-continuous paths and let
\begin{align*}
w = \prod_{i=1}^n (u_i, v_i) = (u_1, v_1) (u_2, v_2) ... (u_n , v_n).
\end{align*}
Note that $w\in CU(C[0,1] \otimes \mathfrak{A} \otimes 1_{\M_{\p} \otimes \M_{\q}})_0
\subseteq CU(\A \otimes \ZC_{\p, \q})_0.$  If $w(1) = 1$, then $w \in G$.

\label{lem:rightendpoint}
\end{lemma}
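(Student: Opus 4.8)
The plan is to run the same argument as for Lemma~\ref{lem:leftendpoint}, but with the orientation of $[0,1]$ reversed and the roles of the two endpoints exchanged. In Lemma~\ref{lem:leftendpoint} the matrix manipulations were carried out inside $M_{3}\subseteq\M_{\q}$ (using $\q=3^{\infty}$, with connecting maps $a\mapsto\mathrm{diag}(a,a,a)$) and the ``defect'' of the rewriting of $w$ was absorbed near $t=0$, where the fibre of $\mathcal{Z}_{\p,\q}$ is $\M_{\p}\otimes1$. Here I would instead work inside $M_{2}\subseteq\M_{\p}$ (writing $\M_{\p}=\overline{\bigcup_{j}M_{2^{j}}}$ with connecting maps $a\mapsto\mathrm{diag}(a,a)$, which is possible since $\p=2^{\infty}$) and absorb the defect near $t=1$, where the fibre is $1\otimes\M_{\q}$. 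As in Lemma~\ref{lem:leftendpoint}, since $G$ is closed I would first reduce to the case in which $w(t)=1$ for all $t\in(1-2\delta,1]$, for some $0<\delta<1$.

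The first step is the $M_{2}$-analogue of the identity~(\ref{equ:firsttwo}). With $x_{1}=\left[\begin{smallmatrix}0&1\\-1&0\end{smallmatrix}\right]\in SU(2)$, $x_{2,i}=\mathrm{diag}(u_{i},v_{i}^{*})$, $x_{3,i}=\mathrm{diag}(1,v_{i})$, $x_{4,i}=\mathrm{diag}(1,u_{i})$ in $C[0,1]\otimes\mathfrak{A}\otimes M_{2}$, a direct computation gives $(x_{2,i},x_{1})=\mathrm{diag}(u_{i}v_{i},(u_{i}v_{i})^{*})$ and $(x_{2,i}^{*},x_{1})=\mathrm{diag}((u_{i}v_{i})^{*},v_{i}u_{i})$, whence $\mathrm{diag}((u_{i},v_{i}),(u_{i},v_{i}))=x_{3,i}x_{4,i}\,x_{3,i}(x_{2,i},x_{1})x_{3,i}^{*2}(x_{2,i}^{*},x_{1})x_{3,i}\,x_{4,i}^{*}x_{3,i}^{*}$. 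Note that, unlike in~(\ref{equ:firsttwo}), the rotation $x_{1}$ already has determinant $1$ and no ``spectator'' third coordinate is needed; moreover the $M_{2}$-identity produces directly the multiplicity $\oplus2$ that matches the connecting maps of $\M_{\p}$, so the analogue of~(\ref{equ:Duty}) and of the matrices $y_{1},y_{j,i}$ is not needed at all. Multiplying over $i$, I would write $w\otimes1_{M_{2}}=\mathrm{diag}(w,w)=\prod_{i}x_{3,i}x_{4,i}x_{3,i}(x_{2,i},x_{1})x_{3,i}^{*2}(x_{2,i}^{*},x_{1})x_{3,i}x_{4,i}^{*}x_{3,i}^{*}$ in $C[0,1]\otimes\mathfrak{A}\otimes M_{2}$, identifying $w$ with its image under $M_{2}\hookrightarrow\M_{\p}$.

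Next, as in Lemma~\ref{lem:leftendpoint}, I would replace $x_{1},x_{j,i}$ by continuous $x_{1}',x_{j,i}'\in U(\mathfrak{A}\otimes\mathcal{Z}_{\p,\q})_{0}$ agreeing with $x_{1},x_{j,i}$ on $[0,1-\delta]$, homotoped inside $SU(2)$, resp.\ inside $\mathfrak{A}\otimes M_{2}$, to $1$ on $[1-\delta,1]$, with $x_{1}'$ and $x_{1}'\circ h$ in $CU(1_{\mathfrak{A}}\otimes\mathcal{Z}_{\p,\q})_{0}$ (using $\det x_{1}=1$), where $h:[0,1]\to[1-\delta,1]$ is $h(t)=1-\delta$ on $[0,1-\delta]$ and $h(t)=t$ on $[1-\delta,1]$. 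These perturbations lie in $\mathcal{Z}_{\p,\q}$ precisely because at $t=0$ they equal the constant matrices from $M_{2}\subseteq\M_{\p}$ (hence land in $\M_{\p}\otimes1$) and at $t=1$ they equal $1\in1\otimes\M_{\q}$. Since $x_{1}'\in CU(1_{\mathfrak{A}}\otimes\mathcal{Z}_{\p,\q})_{0}\subseteq G$ and $G$ is a closed normal subgroup of $U(\mathfrak{A}\otimes\mathcal{Z}_{\p,\q})_{0}$, each $z_{i}=x_{3,i}'x_{4,i}'x_{3,i}'(x_{2,i}',x_{1}')x_{3,i}'^{*2}(x_{2,i}'^{*},x_{1}')x_{3,i}'x_{4,i}'^{*}x_{3,i}'^{*}$ lies in $G$, and $z_{i}(t)=\mathrm{diag}((u_{i}(t),v_{i}(t)),(u_{i}(t),v_{i}(t)))$ for $t\in[0,1-\delta]$. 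On $[1-\delta,1]$ the product $\prod_{i}z_{i}$ need not equal $w$; following the end of Lemma~\ref{lem:leftendpoint} (with an $x_{1}''$ in the role of $y_{1}''$, and near $t=1$), I would define $x_{1}''$ to equal $x_{1}$ on $[0,1-\delta]$ and, on $[1-\delta,1]$, to be the explicit word in the $x_{j,n}'$, $z_{n}$, $z_{1},\dots,z_{n-1}$ that forces $\bigl(\prod_{i=1}^{n-1}z_{i}\bigr)z_{n}''=w$ there, where $z_{n}''$ is $z_{n}$ with its last occurrence of $x_{1}'$ replaced by $x_{1}''$. The hypothesis $w\equiv1$ on $(1-2\delta,1]\supseteq[1-\delta,1]$ is exactly what makes this word close up and forces $x_{1}''(1-\delta)=x_{1}$, so $x_{1}''\in U(\mathfrak{A}\otimes\mathcal{Z}_{\p,\q})_{0}$. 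Finally, checking as for $y_{1}''$ that $x_{1}''$ equals the same word in $x_{j,n}'\circ h$, $z_{n}\circ h$, $z_{i}\circ h$ — each of which lies in $G$ by the same normality argument, $h$-reparametrised — gives $x_{1}''\in G$, hence $z_{n}''\in G$, hence $w=\bigl(\prod_{i=1}^{n-1}z_{i}\bigr)z_{n}''\in G$.

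I expect the main obstacle to be the bookkeeping in the last step: arranging the perturbations $x_{1}',x_{j,i}'$ and the correcting element $x_{1}''$ so that simultaneously (a) everything is a genuine element of $\mathcal{Z}_{\p,\q}$ — which works only because the $t=0$ fibre is $\M_{\p}\otimes1$ and can carry the constant $M_{2}$-valued matrices, while near $t=1$ everything has been pushed to $1$ — and (b) the word defining $x_{1}''$ restores $w$ on $[1-\delta,1]$, matches $x_{1}$ at $t=1-\delta$, and is visibly of ``$\circ h$'' type so as to lie in $G$. An alternative, possibly cleaner, route is to reduce directly to Lemma~\ref{lem:leftendpoint}: the orientation-reversing $*$-isomorphism $\mathcal{Z}_{2^{\infty},3^{\infty}}\to\mathcal{Z}_{3^{\infty},2^{\infty}}$ given by $f\mapsto\bigl(t\mapsto\mathrm{flip}(f(1-t))\bigr)$, with $\mathrm{flip}:\M_{2^{\infty}}\otimes\M_{3^{\infty}}\to\M_{3^{\infty}}\otimes\M_{2^{\infty}}$ the tensor swap, carries $G$ to a closed normal subgroup of $U(\mathfrak{A}\otimes\mathcal{Z}_{3^{\infty},2^{\infty}})_{0}$ containing $CU(1_{\mathfrak{A}}\otimes\mathcal{Z}_{3^{\infty},2^{\infty}})_{0}$ and sends $w$ to a product of commutators of paths that is $1$ at $t=0$; so the statement becomes the ``left endpoint'' assertion for $\mathcal{Z}_{3^{\infty},2^{\infty}}$, proved exactly as Lemma~\ref{lem:leftendpoint} with the primes $2$ and $3$ interchanged (and $M_{2}\subseteq\M_{2^{\infty}}$ in place of $M_{3}\subseteq\M_{3^{\infty}}$).
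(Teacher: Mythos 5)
Your proposal is correct, and it fills in exactly what the paper means by ``The proof is similar (actually slightly easier) to that of Lemma~\ref{lem:leftendpoint}.'' The orientation-reversal forces the change from $\M_3\subseteq\M_{\q}$ to $\M_2\subseteq\M_{\p}$: since the perturbations must vanish at the endpoint where $w$ is trivialized and the constant matrices must lie in the other fibre, the only consistent choice when absorbing near $t=1$ is to put the matrix units inside $\M_{\p}\otimes 1_{\M_{\q}}$, i.e.\ inside $\M_2$. You correctly identified this. Your $\M_2$-identity
\[
\mathrm{diag}\bigl((u_i,v_i),(u_i,v_i)\bigr)
= x_{3,i}x_{4,i}\,x_{3,i}(x_{2,i},x_1)x_{3,i}^{*2}(x_{2,i}^*,x_1)x_{3,i}\,x_{4,i}^*x_{3,i}^*
\]
with $x_1=\left[\begin{smallmatrix}0&1\\-1&0\end{smallmatrix}\right]$ checks out: $(x_{2,i},x_1)=\mathrm{diag}(u_iv_i,(u_iv_i)^*)$, $(x_{2,i}^*,x_1)=\mathrm{diag}((u_iv_i)^*,v_iu_i)$, and conjugating the product by $x_{3,i}$ and then by $x_{3,i}x_{4,i}$ indeed yields $\mathrm{diag}((u_i,v_i),(u_i,v_i))$. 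Because $\det x_1=1$ already in $\mathsf{M}_2$, and because the $\oplus 2$ multiplicity matches the connecting maps of $\M_{2^\infty}$ directly, the analogues of $y_1,y_{j,i}$ and of~(\ref{equ:Duty}) are genuinely unnecessary; this is the ``slightly easier'' part the authors had in mind. Your reparametrization $h$, the correction element $x_1''$, and the $h$-composition trick to show $x_1''\in G$ all transpose cleanly from the left-endpoint argument with $0\leftrightarrow 1$. The alternative reduction via the orientation-reversing flip $\mathcal{Z}_{2^\infty,3^\infty}\to\mathcal{Z}_{3^\infty,2^\infty}$ is also valid, though it does not save work: the left-endpoint statement for $\mathcal{Z}_{3^\infty,2^\infty}$ must still be proved by the same $\M_2$-computation, so it is essentially a repackaging of the direct route.
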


\begin{proof}

The proof is similar (actually slightly easier) to that of
Lemma \ref{lem:leftendpoint}.
\end{proof}

\begin{lemma}  
Consider the supernatural numbers $\p = 2^{ \infty }$ and $\q = 3^{ \infty }$.  Let
$\mathfrak{A}$ be a simple unital $C^{*}$-algebra and let $G$ be a closed normal
subgroup of $U(\mathfrak{A} \otimes \mathcal{Z}_{\p, \q})_0$ that contains $CU(1_{\mathfrak{A}} \otimes \mathcal{Z}_{\p, \q})_0$.  Let $u_i, v_i : [0,1] \rightarrow U(\mathfrak{A} \otimes 1_{\mathcal{Z}_{\p,\q}})_0$
($1 \leq i \leq n$) be norm-continuous paths and let

\begin{align*}
w = \prod_{i=1}^n (u_i, v_i) = (u_1, v_1) (u_2, v_2) ... (u_n , v_n).
\end{align*}
Note that $w\in CU(C[0,1] \otimes \mathfrak{A} \otimes 1_{\M_{\p} \otimes \M_{\q}})_0
\subseteq CU(\A \otimes \ZC_{\p, \q})_0.$  Then $w \in G$. 
\label{lem:fullendpoints}
\end{lemma}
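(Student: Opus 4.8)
The plan is to split $w$ into a product of two commutator-products, one of which vanishes at the left endpoint and one at the right endpoint, and then to quote Lemmas \ref{lem:leftendpoint} and \ref{lem:rightendpoint} directly (the same $G$ satisfies the hypotheses of all three lemmas). Concretely, set $a = w(0) = \prod_{i=1}^n (u_i(0), v_i(0))$. Since each $u_i(0), v_i(0)$ lies in $U(\mathfrak{A} \otimes 1_{\mathcal{Z}_{\p,\q}})_0$, and that group is norm-path-connected, I would choose norm-continuous paths $\sigma_i, \tau_i : [0,1] \to U(\mathfrak{A} \otimes 1_{\mathcal{Z}_{\p,\q}})_0$ with $\sigma_i(0) = u_i(0)$, $\tau_i(0) = v_i(0)$ and $\sigma_i(1) = \tau_i(1) = 1$, and then put $w_L = \prod_{i=1}^n (\sigma_i, \tau_i)$.

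Then $w_L$ is a product of commutators of norm-continuous paths into $U(\mathfrak{A} \otimes 1_{\mathcal{Z}_{\p,\q}})_0$ with $w_L(0) = a$ and $w_L(1) = \prod_{i=1}^n (1,1) = 1$, so Lemma \ref{lem:rightendpoint} gives $w_L \in G$. Next I would look at $w_L^{-1} w$. Using $(\sigma,\tau)^{-1} = (\tau,\sigma)$, this equals $(\tau_n,\sigma_n)(\tau_{n-1},\sigma_{n-1})\cdots(\tau_1,\sigma_1)(u_1,v_1)\cdots(u_n,v_n)$, again a finite product of commutators of norm-continuous paths into $U(\mathfrak{A}\otimes 1_{\mathcal{Z}_{\p,\q}})_0$, and its value at $0$ is $w_L(0)^{-1}w(0) = a^{-1}a = 1$. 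Hence Lemma \ref{lem:leftendpoint} gives $w_L^{-1} w \in G$, and therefore $w = w_L (w_L^{-1} w) \in G$ since $G$ is a subgroup.

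The only point requiring attention is verifying that $w_L^{-1}w$ really does meet the hypotheses of Lemma \ref{lem:leftendpoint} — that it is presented as a product of commutators of norm-continuous paths valued in $U(\mathfrak{A}\otimes 1_{\mathcal{Z}_{\p,\q}})_0$, rather than in the larger algebra $\mathfrak{A}\otimes\mathcal{Z}_{\p,\q}$. This is exactly what the explicit choice of $\sigma_i,\tau_i$ secures, and it relies only on the elementary fact that $U(\mathfrak{A}\otimes 1_{\mathcal{Z}_{\p,\q}})_0$ (which is simply $U(\mathfrak{A})_0$ under the identification $\mathfrak{A}\otimes 1_{\mathcal{Z}_{\p,\q}}\cong\mathfrak{A}$) is norm-path-connected, so the auxiliary connecting paths can be taken norm-continuous and inside the identity component. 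I do not expect any genuine obstacle here: this lemma is purely the gluing step that removes the endpoint hypotheses by combining the two previous lemmas, and the real work was already done in Lemmas \ref{lem:leftendpoint} and \ref{lem:rightendpoint}.
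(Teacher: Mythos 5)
Your proposal is correct and follows essentially the same strategy as the paper's proof: decompose $w$ into a product of two finite commutator-products of paths into $U(\mathfrak{A}\otimes 1_{\mathcal{Z}_{\p,\q}})_0$, one of which is trivial at one endpoint and one at the other, then apply Lemmas \ref{lem:leftendpoint} and \ref{lem:rightendpoint}. The only cosmetic difference is in the choice of auxiliary connecting paths: the paper keeps each $v_i$ and replaces $u_i$ by a path $u'_i$ with $u'_i(0)=1$, $u'_i(1)=u_i(1)$ (so that $w'=\prod(u'_i,v_i)$ has $w'(0)=1$ and $w''=w'^*w$ has $w''(1)=1$), whereas you introduce fresh paths $\sigma_i,\tau_i$ joining $u_i(0),v_i(0)$ to $1$; both versions are valid and rest on the same path-connectedness observation.
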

\begin{proof}
Decompose $w$ into a product of unitaries $w = w' w''$ such that 
$w'$ satisfies the hypotheses of 
Lemma \ref{lem:leftendpoint} and 
$w''$ satisfies the hypotheses of
Lemma \ref{lem:rightendpoint}. 
(Sketch of argument:  For all $i$,
since $u_i(1) \in U(\mathfrak{A} \otimes 1_{\mathcal{Z}_{\p,\q}})_0$, 
there exists a norm continuous path
$u'_i : [0,1] \rightarrow U(\mathfrak{A} \otimes 1_{\mathcal{Z}_{\p,\q}})_0$
such that $u'_i(0) = 1$ and $u'_i(1) 
= u_i(1)$.
Let $w' = \prod_{i=1}^n (u'_i, v_i)$ and 
$w'' = w'^* w$.)  Now apply Lemmas \ref{lem:leftendpoint} and \ref{lem:rightendpoint}.   
\end{proof}

\begin{lemma}
Consider the supernatural numbers $\p = 2^{ \infty }$ and $\q = 3^{ \infty }$.  Let
$\mathfrak{A}$ be a simple unital $C^{*}$-algebra and let $G$ be a closed normal
subgroup of $U(\mathfrak{A} \otimes \mathcal{Z}_{\p, \q})_0$ that contains $CU(1_{\mathfrak{A}} \otimes \mathcal{Z}_{\p, \q})_0$.
Then $CU(\mathfrak{A} \otimes 1_{\mathcal{Z}_{\p, \q}})_0 \subseteq G$.

\label{lem:Aotimes1}
\end{lemma}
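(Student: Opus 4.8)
The plan is to obtain the inclusion directly from Lemma~\ref{lem:fullendpoints}, applied to \emph{constant} paths. Recall that $CU(\mathfrak{A} \otimes 1_{\mathcal{Z}_{\p,\q}})_0$ is, by definition, the norm closure of the commutator subgroup of $U(\mathfrak{A} \otimes 1_{\mathcal{Z}_{\p,\q}})_0$. Hence every element of $CU(\mathfrak{A} \otimes 1_{\mathcal{Z}_{\p,\q}})_0$ is a norm limit of finite products of commutators $\prod_{i=1}^{n} (a_i, b_i)$ with $a_i, b_i \in U(\mathfrak{A} \otimes 1_{\mathcal{Z}_{\p,\q}})_0$, and since $G$ is norm closed it suffices to show that each such finite product lies in $G$.

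So I would fix $a_i, b_i \in U(\mathfrak{A} \otimes 1_{\mathcal{Z}_{\p,\q}})_0$ for $1 \le i \le n$ and, for each $i$, take the constant paths $u_i, v_i : [0,1] \to U(\mathfrak{A} \otimes 1_{\mathcal{Z}_{\p,\q}})_0$ given by $u_i(t) = a_i$ and $v_i(t) = b_i$ for all $t \in [0,1]$. These are norm-continuous, so Lemma~\ref{lem:fullendpoints} applies and gives that the unitary $w = \prod_{i=1}^{n} (u_i, v_i)$, viewed as an element of $CU(C[0,1] \otimes \mathfrak{A} \otimes 1_{\M_{\p} \otimes \M_{\q}})_0 \subseteq CU(\mathfrak{A} \otimes \mathcal{Z}_{\p,\q})_0$, belongs to $G$. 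When the $u_i,v_i$ are constant, however, $w$ is simply the constant function with value $\prod_{i=1}^{n}(a_i,b_i)$, i.e.\ the image of $\prod_{i=1}^{n}(a_i,b_i)$ under the canonical (constant-function) inclusion $\mathfrak{A} \otimes 1_{\mathcal{Z}_{\p,\q}} \hookrightarrow \mathfrak{A} \otimes \mathcal{Z}_{\p,\q}$. Thus $\prod_{i=1}^{n}(a_i,b_i) \in G$, and letting $n$ and the $a_i, b_i$ vary, together with the fact that $G$ is closed, yields $CU(\mathfrak{A} \otimes 1_{\mathcal{Z}_{\p,\q}})_0 \subseteq G$.

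I do not expect a genuine obstacle here: the substantive work was already carried out in Lemmas~\ref{lem:leftendpoint}, \ref{lem:rightendpoint} and \ref{lem:fullendpoints}, and this last lemma is stated with no endpoint hypothesis on $w$, so constant paths are admissible inputs. The one point deserving a line of care is the bookkeeping identification used above, namely that $\prod_{i=1}^{n}(u_i,v_i)$ built from the constant paths $u_i\equiv a_i$, $v_i\equiv b_i$ is, as an element of $\mathfrak{A}\otimes\mathcal{Z}_{\p,\q}$, exactly the constant function $\bigl(\prod_{i=1}^{n}(a_i,b_i)\bigr)\otimes 1_{\mathcal{Z}_{\p,\q}}$; this is immediate from how $\mathfrak{A}\otimes\mathcal{Z}_{\p,\q}$ and its subalgebra $\mathfrak{A}\otimes 1_{\mathcal{Z}_{\p,\q}}$ sit inside $C([0,1],\mathfrak{A}\otimes\M_{\p}\otimes\M_{\q})$, but it is worth writing down so that the hypotheses of Lemma~\ref{lem:fullendpoints} are seen to be literally met.
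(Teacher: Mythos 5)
Your proposal is correct and essentially the same as the paper's argument: both apply Lemma~\ref{lem:fullendpoints} to constant paths and then use closedness of $G$. The paper applies the lemma to a single commutator $(u,v)$ and then invokes the fact that $G$ is a group to get the whole commutator subgroup; you apply it directly to a finite product of commutators, which amounts to the same thing.
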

\begin{proof}
Say that $u, v \in 
U(\mathfrak{A} \otimes 1_{\mathcal{Z}_{\p, \q}})_0$.  Thus, $u, v$ are constant functions from $[0,1]$ to  
$U(\mathfrak{A} \otimes 1_{\M_{\p} \otimes \M_{\q}})_0$ (always taking the
constant values $u(0)$, $v(0)$ respectively).  It follows from
Lemma \ref{lem:fullendpoints} that 
$(u, v) \in G$.
Thus, since $G$ is a group, the commutator subgroup of
$U(\mathfrak{A} \otimes 1_{\mathcal{Z}_{\p, \q}})_0$ is contained 
in $G$. Hence, since $G$ is closed,
$CU(\mathfrak{A} \otimes 1_{\mathcal{Z}_{\p, \q}})_0 \subseteq G$.
\end{proof}

\begin{theorem}  Let $\mathfrak{A}$ be an exact, separable, simple, unital, $\mathcal{Z}$-stable
$C^{*}$-algebra.  Suppose that $G$ is a closed normal subgroup of $U(\mathfrak{A})_0$
that properly contains $\mathbb{T}$.  Then $CU(\mathfrak{A})_0 \subseteq G$.
\label{thm:BIGCONTAINS}
\end{theorem}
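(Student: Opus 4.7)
The plan is to reduce the theorem to a composition of Lemma \ref{lem:containU(Z)} and Lemma \ref{lem:Aotimes1}, using $\mathcal{Z}$-stability together with the inductive limit description of $\mathcal{Z}$ coming from Theorem 3.4 of \cite{RordamWinter}. First, fix a $*$-isomorphism $\mathfrak{A} \cong \mathfrak{A} \otimes \mathcal{Z} \otimes \mathcal{Z}$ (using $\mathcal{Z} \cong \mathcal{Z} \otimes \mathcal{Z}$), and transport $G$ across this isomorphism, so we may regard $G$ as a closed normal subgroup of $U(\mathfrak{A} \otimes \mathcal{Z} \otimes \mathcal{Z})_0$ that properly contains $\mathbb{T}$.

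Next, apply Lemma \ref{lem:containU(Z)} with $\mathfrak{B} = \mathfrak{A}$: this is legitimate since $\mathfrak{A}$ is simple, separable and unital, and it yields
\begin{equation*}
CU(1_{\mathfrak{A} \otimes \mathcal{Z}} \otimes \mathcal{Z})_0 \subseteq G.
\end{equation*}
Set $\mathfrak{A}' = \mathfrak{A} \otimes \mathcal{Z}$, so that $G$ is a closed normal subgroup of $U(\mathfrak{A}' \otimes \mathcal{Z})_0$ containing $CU(1_{\mathfrak{A}'} \otimes \mathcal{Z})_0$.

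Now take $\mathfrak{p} = 2^{\infty}$ and $\mathfrak{q} = 3^{\infty}$. By Theorem 3.4 of \cite{RordamWinter}, $\mathcal{Z} = \overline{\bigcup_{n} \mathcal{Z}_n}$ with $\mathcal{Z}_n \cong \mathcal{Z}_{\mathfrak{p},\mathfrak{q}}$ and unital injective connecting maps. For each $n$, consider the closed subgroup $G_n = G \cap U(\mathfrak{A}' \otimes \mathcal{Z}_n)_0$ of $U(\mathfrak{A}' \otimes \mathcal{Z}_n)_0$. Since $G$ is normal in the larger unitary group, $G_n$ is normal in $U(\mathfrak{A}' \otimes \mathcal{Z}_n)_0$, and it contains $CU(1_{\mathfrak{A}'} \otimes \mathcal{Z}_n)_0$ because the latter sits inside both $U(\mathfrak{A}' \otimes \mathcal{Z}_n)_0$ and $CU(1_{\mathfrak{A}'} \otimes \mathcal{Z})_0 \subseteq G$. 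Applying Lemma \ref{lem:Aotimes1} (with $\mathfrak{A}$ in that lemma playing the role of $\mathfrak{A}'$, and $\mathcal{Z}_{\mathfrak{p},\mathfrak{q}}$ identified with $\mathcal{Z}_n$) gives
\begin{equation*}
CU(\mathfrak{A}' \otimes 1_{\mathcal{Z}_n})_0 \subseteq G_n \subseteq G.
\end{equation*}
But $CU(\mathfrak{A}' \otimes 1_{\mathcal{Z}_n})_0$ equals $CU(\mathfrak{A}')_0 = CU(\mathfrak{A} \otimes \mathcal{Z})_0$ under the natural identification, so $CU(\mathfrak{A} \otimes \mathcal{Z})_0 \subseteq G$. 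Using the original $\mathcal{Z}$-stability isomorphism to return to $\mathfrak{A}$, we conclude that $CU(\mathfrak{A})_0 \subseteq G$.

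The conceptual work has already been done in the earlier lemmas; this theorem is essentially an assembly argument. The only step that requires any care is checking that when one restricts to a $\mathcal{Z}_n$-subalgebra one does not lose the hypotheses of Lemma \ref{lem:Aotimes1}, i.e.\ that the intersection $G \cap U(\mathfrak{A}' \otimes \mathcal{Z}_n)_0$ is still closed, normal, and contains $CU(1_{\mathfrak{A}'} \otimes \mathcal{Z}_n)_0$; all three properties follow immediately from the corresponding properties of $G$ at the $\mathcal{Z}$-level, so no genuine obstacle arises. Exactness of $\mathfrak{A}$ is used only implicitly, through the properties of $\mathfrak{A} \otimes \mathsf{M}_{\mathfrak{p}} \otimes \mathsf{M}_{\mathfrak{q}}$ recorded in Lemma \ref{lem:AotimesUHFProperties} that feed into the earlier lemmas.
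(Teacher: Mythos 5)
Your first two steps track the paper's proof exactly: pass to $\mathfrak{A}\otimes\mathcal{Z}\otimes\mathcal{Z}$, obtain $CU(1_{\mathfrak{A}\otimes\mathcal{Z}}\otimes\mathcal{Z})_0\subseteq G$ from Lemma \ref{lem:containU(Z)}, and feed a building block $\mathcal{Z}_n\cong\mathcal{Z}_{\mathfrak{p},\mathfrak{q}}$ into Lemma \ref{lem:Aotimes1}; your check that $G\cap U(\mathfrak{A}'\otimes\mathcal{Z}_n)_0$ still satisfies that lemma's hypotheses is fine (and is actually spelled out more carefully than in the paper). The gap is in the last sentence. What Lemma \ref{lem:Aotimes1} delivers is
\[
CU(\mathfrak{A}\otimes\mathcal{Z}\otimes 1_{\mathcal{Z}})_0=\left\{\,w\otimes 1_{\mathcal{Z}} : w\in CU(\mathfrak{A}\otimes\mathcal{Z})_0\,\right\}\subseteq G,
\]
i.e.\ $G$ contains the closed commutator group of the connected unitaries of the \emph{first-factor corner} $\mathfrak{A}\otimes\mathcal{Z}\otimes 1_{\mathcal{Z}}$. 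The theorem requires $CU(\mathfrak{A}\otimes\mathcal{Z}\otimes\mathcal{Z})_0\subseteq G$, and the isomorphism $\mathfrak{A}\cong\mathfrak{A}\otimes\mathcal{Z}\otimes\mathcal{Z}$ you fixed at the outset identifies $CU(\mathfrak{A})_0$ with the latter, not with the former. A general commutator $(u,v)$ with $u,v\in U(\mathfrak{A}\otimes\mathcal{Z}\otimes\mathcal{Z})_0$ has no reason to lie in, or be a norm limit of elements of, $CU(\mathfrak{A}\otimes\mathcal{Z}\otimes 1_{\mathcal{Z}})_0$, so ``returning to $\mathfrak{A}$'' does not close the argument.

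The missing ingredient is Lemma \ref{l:isoz} together with the closedness and normality of $G$: there is a $*$-isomorphism $\Phi:\mathfrak{A}\otimes\mathcal{Z}\to\mathfrak{A}\otimes\mathcal{Z}\otimes\mathcal{Z}$ that is approximately unitarily equivalent to $a\mapsto a\otimes 1_{\mathcal{Z}}$ with implementing unitaries in $U(\mathfrak{A}\otimes\mathcal{Z}\otimes\mathcal{Z})_0$. Given $u,v\in U(\mathfrak{A}\otimes\mathcal{Z}\otimes\mathcal{Z})_0$, write $u=\Phi(u')$ and $v=\Phi(v')$ with $u',v'\in U(\mathfrak{A}\otimes\mathcal{Z})_0$; then $(u'\otimes 1_{\mathcal{Z}},v'\otimes 1_{\mathcal{Z}})\in G$ by your argument, and $(u,v)$ is a norm limit of conjugates of this element by unitaries from $U(\mathfrak{A}\otimes\mathcal{Z}\otimes\mathcal{Z})_0$, hence lies in $G$. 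Since such commutators generate a dense subgroup of $CU(\mathfrak{A}\otimes\mathcal{Z}\otimes\mathcal{Z})_0$ and $G$ is closed, this finishes the proof; it is exactly how the paper concludes. With that step inserted, your argument coincides with the paper's.
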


\begin{proof}
Since $\mathfrak{A}$ is $\mathcal{Z}$-stable, $\mathfrak{A} \cong \mathfrak{A} \otimes \mathcal{Z} \cong \mathfrak{A} \otimes \ZC \otimes \ZC$.  Hence, it is enough to prove the theorem with $\mathfrak{A}$ replaced by $\mathfrak{A} \otimes \mathcal{Z} \otimes \ZC$.  By Lemma \ref{lem:containU(Z)}, we have that  
$CU(1_{\mathfrak{A} \otimes \mathcal{Z} } \otimes \ZC)_0 \subseteq G$.  It suffices to prove the following:  Let $u, v \in U(\A \otimes \ZC \otimes \ZC)_0$
be given.  Then $(u, v) \in G$.

By Lemma \ref{l:isoz}, there is a $*$-isomorphism $\Phi : \A \otimes \mathcal{Z} \rightarrow \A \otimes \ZC \otimes \ZC$  
such that $\Phi$ is approximately unitarily equivalent to 
the map $\A \otimes \ZC \rightarrow \A \otimes \ZC \otimes \ZC: a \mapsto a \otimes 1_{\ZC}$,
where the unitaries come from $U(\A \otimes \ZC \otimes \ZC)_0$.  Let $u', v' \in U(\A \otimes \ZC)_0$ be unitaries such that 
$\Phi(u') = u$ and $\Phi(v') = v$.
Hence, $u' \otimes 1_{\ZC_{\p, \q}}$, $v' \otimes 1_{\ZC_{\p, \q}}$
are approximately unitarily equivalent to $u$, $v$ respectively, 
with implementing unitaries in $U(\A \otimes \ZC \otimes \ZC)_0$.
  
    Let $\p, \q$ be the supernatural numbers that are given by 
$\p = 2^{\infty}$ and $\q = 3^{\infty}$.  By Theorem 3.4 of \cite{RordamWinter}, $\mathcal{Z}$ is a $C^{*}$-inductive limit
$\mathcal{Z} = \overline{\bigcup_{n=1}^{\infty} \mathcal{Z}_n}$
where $\mathcal{Z}_n \cong \mathcal{Z}_{\p, \q}$ for all $n \geq 1$ and 
where the connecting
maps are unital and injective.
By Lemma \ref{lem:Aotimes1}, we have that 
$(u' \otimes 1_{\ZC_{\p, \q}}, v' \otimes 1_{\ZC_{\p, \q}}) \in G$.  Since $(u' \otimes 1_{\ZC_{\p, \q}}, v' \otimes 1_{\ZC_{\p, \q}})$ is 
approximately unitarily equivalent to $(u,v)$, with unitaries coming
from $U(\A \otimes \ZC \otimes \ZC)_0$ and since $G$ is a closed normal subgroup of $U(\A \otimes \ZC \otimes \ZC)_0$,
$(u, v) \in G$ as required. 
\end{proof}

\begin{lemma} Let $\mathfrak{A}$ be an exact, unital, stably finite, $\mathcal{Z}$-stable $C^{*}$-algebra.
For every $\epsilon >0$, there exists $\delta > 0$
such that for every self-adjoint element $a \in \mathfrak{A}$ such that
$|\tau(a)| < \delta$ for all $\tau \in T(\mathfrak{A})$,
$$\mathrm{dist}(e^{ia}, CU(\mathfrak{A})_0) = \inf \{ \| e^{ia} - u \| : u \in CU(\mathfrak{A})_0 \}
< \epsilon.$$
\label{lem:CUApproximation} 
\end{lemma}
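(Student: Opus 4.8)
The plan is to reduce the statement to a known fact about the de la Harpe--Skandalis determinant, using $\mathcal{Z}$-stability to gain divisibility. Recall that for a unital $C^*$-algebra $\mathfrak{A}$, an element $e^{ia}$ with $a$ self-adjoint lies in $CU(\mathfrak{A})_0$ precisely when its de la Harpe--Skandalis determinant vanishes, and more quantitatively the distance from $e^{ia}$ to $CU(\mathfrak{A})_0$ is controlled by the size of $\frac{1}{2\pi i}\tau(ia) = \frac{1}{2\pi}\tau(a)$ as $\tau$ ranges over $T(\mathfrak{A})$, modulo the image of $K_1(\mathfrak{A})$ under the pairing with traces. So the content is: if $|\tau(a)|$ is uniformly small, then after adjusting by a commutator (which lives in $CU(\mathfrak{A})_0$ and costs nothing) we may assume the determinant is not just small but small relative to the relevant subgroup of $\mathrm{Aff}(T(\mathfrak{A}))/\overline{\rho_{\mathfrak{A}}(K_1(\mathfrak{A}))}$, and then a direct path argument gives a nearby element of $CU(\mathfrak{A})_0$.

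First I would invoke $\mathcal{Z}$-stability, writing $\mathfrak{A} \cong \mathfrak{A} \otimes \mathcal{Z}$, so that for any $n$ there is a unital embedding $M_n \hookrightarrow \mathfrak{A}$ with commuting complement; equivalently $a$ can be replaced, up to small perturbation and up to a unitary in $CU(\mathfrak{A})_0$, by $\frac{1}{n}$ times a self-adjoint of comparable trace, using the standard trick that $e^{ia}$ and $e^{ia'}$ are connected through $CU$ whenever $a - a'$ has the same image in $\mathrm{Aff}(T(\mathfrak{A}))$. Concretely: given $\epsilon$, choose $n$ with $\mathrm{diam}(\mathrm{spec}) / n$ and the word-length bound for $M_n$-commutators both controllable, then find a self-adjoint $b \in \mathfrak{A}$, a sum of $n$ unitary conjugates whose traces telescope, with $e^{ia}$ within $\epsilon/2$ of $e^{ib}\cdot(\text{commutator})$ and $\|b\| < $ (a fixed small constant). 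Here the key inputs are Lemma \ref{lem:AotimesUHFProperties}-type structure (stable rank one, weak unperforation, $K_1$-injectivity) guaranteeing we have enough projections and unitary paths, and Lemma \ref{lem:scalarunitaries} to absorb scalar phases. Once $\|b\|$ is below the radius of convergence region where $\exp$ is a homeomorphism onto a neighborhood, $e^{ib}$ is within $\epsilon/2$ of $CU(\mathfrak{A})_0$ exactly when $\tau(b)$ (which equals $\tau(a)$) is small, via an explicit straight-line homotopy in the exponential.

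The main obstacle I anticipate is making the $\delta$ genuinely uniform over all exact, unital, stably finite, $\mathcal{Z}$-stable $\mathfrak{A}$ — i.e. independent of the algebra. The naive approach gives a $\delta$ depending on the quantitative behaviour of $K_1(\mathfrak{A}) \to \mathrm{Aff}(T(\mathfrak{A}))$, which is not uniform. The fix is that $\mathcal{Z}$-stability forces the relevant obstruction group to be divisible enough: the trick of splitting $a$ into $n$ pieces via the $M_n$ inside $\mathcal{Z} \subseteq \mathfrak{A}$ replaces $\tau(a)$ by $\tau(a)/n$ without changing whether we land near $CU$, and the commutator word-length needed to realize the cyclic-shift rearrangement in $M_n$ is an absolute function of $n$ (this is exactly the mechanism used in the proof of Theorem \ref{thm:BIGCONTAINS} and Lemma \ref{lem:Aotimes1}). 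So the real work is the bookkeeping: choose $n = n(\epsilon)$ first, absorb the $M_n$-rearrangement cost into $CU(\mathfrak{A})_0$ using $\mathcal{Z}$-stability and the commutator lemmas above, and only then read off $\delta$ from the now-tiny supremum of traces. Exactness enters only through strict comparison / the good behaviour of $T(\mathfrak{A})$ (item (3) of Lemma \ref{lem:AotimesUHFProperties}), ensuring the affine function picture of $CU(\mathfrak{A})_0$ is valid.
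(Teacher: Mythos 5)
Your identification of the de la Harpe--Skandalis determinant as the key tool is correct, but the argument you build around it is far heavier than what the paper does, and it rests on a misreading of the quantifier structure. The paper's proof is essentially a citation: Thomsen \cite{ThomsenExactSequence} shows that the determinant
\[
\Delta : U(\mathfrak{A})_0 / CU(\mathfrak{A})_0 \longrightarrow \mathrm{Aff}(T(\mathfrak{A})) / \overline{K_0(\mathfrak{A})}
\]
is an isomorphism of \emph{topological} groups, and $\Delta([e^{i2\pi a}])$ is the class of the affine function $\tau \mapsto \tau(a)$. The hypothesis $|\tau(a)| < \delta$ for all $\tau$ bounds this function in sup norm, hence bounds the quotient-norm of $\Delta([e^{i2\pi a}])$, so continuity of $\Delta^{-1}$ immediately produces the required $\delta = \delta(\epsilon)$. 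The $\mathcal{Z}$-stability, stable finiteness and exactness serve only to put $\mathfrak{A}$ within the scope of Thomsen's theorem; no $M_n$-embedding, cyclic-rearrangement, or commutator word-length argument is needed.

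Your anticipated obstacle --- making $\delta$ uniform over all exact, unital, stably finite, $\mathcal{Z}$-stable $\mathfrak{A}$ --- is not actually present in the statement: $\mathfrak{A}$ is fixed first, and $\delta$ is permitted to depend on it (and this is how the lemma is used later, e.g.\ in the proof of Lemma \ref{lem:Inn(A)Aotimes1}, where $\mathfrak{A}$ is already fixed before the lemma is invoked). So most of the machinery you set up is aimed at a strengthening the lemma does not assert. Beyond that, the central reduction in your sketch --- perturbing $e^{ia}$ by an element of $CU(\mathfrak{A})_0$ so as to reach $e^{ib}$ with $\|b\|$ small --- is left as ``bookkeeping,'' but it is not routine: the hypothesis bounds $\tau(a)$, not $\|a\|$, and the $M_n$-splitting does not by itself control the cost of the rearrangement commutators in norm. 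If you wanted a self-contained proof not citing Thomsen, you would in effect have to reprove his theorem; the divisibility trick alone does not substitute for the determinant isomorphism.
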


\begin{proof}
   This follows from \cite{ThomsenExactSequence}  
which gives a topological group isomorphism:
\[
\Delta : U(\mathfrak{A})_0 / CU(\mathfrak{A})_0 \rightarrow \mathrm{Aff}(T(\mathfrak{A})) / \overline{K_0(\mathfrak{A}})
\]
where $\Delta$ is the determinant map.   
Note that for a self-adjoint $a \in \mathfrak{A}$,  
$\Delta ( [ e^{i 2 \pi a} ] )  = a + \overline{K_0(\mathfrak{A}})$.   
\end{proof}

\begin{lemma} Let $\mathfrak{A}$ be an exact, simple, unital, stably finite $C^{*}$-algebra 
and let $\mathfrak{C}$ be a UHF-algebra.  Then for every $\epsilon > 0$, for every $N \geq 1$,
for every nonzero  selfadjoint element 
$a \in \mathfrak{A} \otimes \mathfrak{C}$ 
and nonzero projection $p \in \mathfrak{A} \otimes \mathfrak{C}$, 
there exists a self-adjoint  element $c \in p \mathfrak{A} p$  
such that 
$$| \tau(a)  -  N \tau(c) | < \epsilon$$
for all $\tau \in T(\mathfrak{A} \otimes \mathfrak{C})$. 
\label{lem:UHFSelfAdjointTracialApproximation}  
\end{lemma}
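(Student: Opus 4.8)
The statement asks, for a fixed self-adjoint $a \in \mathfrak{A} \otimes \mathfrak{C}$ and a fixed nonzero projection $p$, to find a self-adjoint $c \in p\mathfrak{A}p$ with $|\tau(a) - N\tau(c)|$ uniformly small over traces. The plan is to work at the level of the trace simplex and then ``lift'' an affine approximation to an honest element of $p\mathfrak{A}p$. First I would reduce to the case where $a \geq 0$ (write $a = a_+ - a_-$ and treat each piece separately, then take differences), and in fact to the case $a = $ a projection or a sum of matrix units, using that $\mathfrak{A} \otimes \mathfrak{C}$ has real rank zero on the finite part after tensoring with the UHF algebra (Lemma \ref{lem:AotimesUHFProperties}), so $a$ is within $\epsilon/3$ (in trace, uniformly) of a self-adjoint element with finite spectrum, i.e. a real linear combination $\sum_j \lambda_j q_j$ of orthogonal projections $q_j \in \mathfrak{A} \otimes \mathfrak{C}$.

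The key step is then: given a projection $q \in \mathfrak{A} \otimes \mathfrak{C}$, produce a self-adjoint $c \in p\mathfrak{A}p$ with $N\tau(c) \approx \tau(q)$ for all $\tau$. Here I would use divisibility of the UHF algebra $\mathfrak{C}$: choose a unital copy of $\mathsf{M}_{Nk}$ inside $\mathfrak{C}$ for suitable large $k$, so that $p \otimes 1_{\mathfrak C}$ dominates (up to Murray--von Neumann subequivalence, using strict comparison from Lemma \ref{lem:AotimesUHFProperties}(3) since $\mathfrak A$ is exact) a projection of the form $p' \otimes e$ with $e \in \mathsf{M}_{Nk}$ a rank-one projection and $p' \in \mathfrak{A}$ a nonzero projection obtained by compressing $p$; here I use that $p \mathfrak{A} p$ is again simple and the (SP) property to get such $p'$. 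Then $\tau(q)$, being a number in $[0,1]$, can be approximated to within $\epsilon/(3N)$ by $(m/(Nk))\cdot(\text{something})$; more precisely one writes $\tau(q) = \sum_i \mu_i \tau(p'_i)$ for suitable sub-projections $p'_i$ of $p'$ (possible because the order unit $[p']$ is arbitrarily divisible in $K_0$ after tensoring with $\mathfrak C$, and the traces separate projections up to the required tolerance via strict comparison) and sets $c = \frac{1}{N}\sum_i \mu_i p'_i$, now genuinely an element of $p\mathfrak A p$ once we arrange $p' \le p$ (replacing $p$ by a Murray--von Neumann equivalent sub-projection and conjugating back). Summing over the finitely many $j$ in the spectral decomposition of $a$ and combining the error terms gives the claim.

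The main obstacle I expect is the interplay between the two algebras: $c$ is required to live in $p\mathfrak{A}p$ (no tensor factor), while the natural approximations and the divisibility both live in $\mathfrak{A}\otimes\mathfrak{C}$, and the traces in $T(\mathfrak{A}\otimes\mathfrak{C})$ restrict to $\mathfrak{A}$ via the isomorphism $T(\mathfrak A \otimes \mathfrak C)\cong T(\mathfrak A)$. The delicate point is thus to arrange, using strict comparison and simplicity of $p\mathfrak A p$, that the projections $p'_i$ carrying the right tracial values can be found already inside $\mathfrak A$ (indeed inside $p\mathfrak A p$), not merely inside $\mathfrak A \otimes \mathfrak C$; this is where exactness of $\mathfrak A$ enters, since it is what licenses strict comparison of positive elements in $\mathfrak A \otimes \mathfrak C$ and hence the transfer of tracial data between projections. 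Once that subequivalence bookkeeping is set up, the rest is a routine $\epsilon/3$ argument. I would note that the factor $N$ and the tolerance $\epsilon$ are harmless because $[p']$ is infinitely divisible in the target after tensoring with the UHF algebra, so there is no arithmetic obstruction to splitting any tracial value into $N$ equal pieces up to $\epsilon$.
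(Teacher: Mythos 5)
Your two core ingredients---divisibility supplied by the UHF factor, and strict comparison (Lemma~\ref{lem:AotimesUHFProperties}, available since $\mathfrak{A}$ is exact) to move the resulting element under $p$---are exactly the ones the paper's one-paragraph proof invokes, so the outline is on the right track. But there are two concrete problems with the way you set it up.

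First, the ``main obstacle'' you flag is an artefact of a typo in the statement. The conclusion ``$c\in p\mathfrak{A}p$'' should be read as $c\in p(\mathfrak{A}\otimes\mathfrak{C})p$: in the one place the lemma is invoked (the proof of Lemma~\ref{lem:Inn(A)Aotimes1}), it produces a self-adjoint $d\in e_{1,1}(\mathfrak{A}\otimes\mathsf{M}_{\mathfrak{p}})e_{1,1}$, not an element of a corner of the bare $\mathfrak{A}$. So there is nothing to transfer back into the $\mathfrak{A}$ tensor factor, and the bookkeeping you devote to that does not actually close as written: your $p'_i$ are supposed to be sub-projections of $p'\in\mathfrak{A}$, yet the divisibility you appeal to is ``after tensoring with $\mathfrak{C}$,'' which puts the pieces back in $\mathfrak{A}\otimes\mathfrak{C}$, not in $\mathfrak{A}$.

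Second, the reduction of $a$ to a finite-spectrum self-adjoint element rests on the assertion that $\mathfrak{A}\otimes\mathfrak{C}$ has real rank zero. That is not among the properties in Lemma~\ref{lem:AotimesUHFProperties} and is not guaranteed here (e.g.\ if $K_{0}(\mathfrak{A})=\mathbb{Z}$ and $T(\mathfrak{A})$ is not a point, $K_{0}(\mathfrak{A}\otimes\mathcal{Q})=\mathbb{Q}$ cannot be dense in $\mathrm{Aff}(T)$). The reduction is also unnecessary: the paper gets the exact tracial split from the inductive-limit structure rather than from a spectral decomposition. Write $\mathfrak{A}\otimes\mathfrak{C}=\overline{\bigcup_{k}\mathsf{M}_{n_{k}}(\mathfrak{A})}$ with diagonal connecting maps $c\mapsto\bigoplus^{n_{k+1}/n_{k}}c$, approximate $a$ by a self-adjoint $a'\in\mathsf{M}_{n_{k}}(\mathfrak{A})$, and note that any $\tau\in T(\mathfrak{A}\otimes\mathfrak{C})$ restricts on each block to a tensor of a trace on $\mathfrak{A}$ with the normalized matrix trace. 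Hence cutting $a'$ by $m$ of the $L=n_{k+1}/n_{k}$ diagonal blocks at the next stage yields a self-adjoint $c_{0}$ with $\tau(c_{0})=\tfrac{m}{L}\tau(a')$ for \emph{every} $\tau$ simultaneously, with no real rank zero needed. Choosing $m/L$ small makes the support projection of $c_{0}$ have trace uniformly below $\inf_{\tau}\tau(p)>0$, so strict comparison produces a partial isometry carrying $c_{0}$ into $p(\mathfrak{A}\otimes\mathfrak{C})p$, and a suitable scalar multiple of the image gives $N\tau(c)\approx\tau(a)$ uniformly. This is the argument the paper compresses into its final sentence.
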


\begin{proof}
  We have that $\mathfrak{C}$ can be realized as an inductive limit
$$\mathfrak{C} = \overline{\bigcup_{k=1}^{\infty} \mathsf{M}_{n_k}}$$
where the connecting maps are diagonal maps
$$\mathsf{M}_{n_k} \rightarrow \mathsf{M}_{n_{k+1}} : c \mapsto
\bigoplus^{ \frac{ n_{k+1} }{ n_k }} c.$$
Note that $n_k \rightarrow  \infty$.  Hence, $\mathfrak{A} \otimes \mathfrak{C}$ can be realized as an inductive limit
$$\mathfrak{A} \otimes \mathfrak{C} = \overline{\bigcup_{k=1}^{\infty} \mathsf{M}_{n_k}(\mathfrak{A})}$$
where the connecting maps are diagonal maps
$$\mathsf{M}_{n_k}(\mathfrak{A})  \rightarrow \mathsf{M}_{n_{k+1}}(\mathfrak{A})  : c \mapsto
\bigoplus^{ \frac{ n_{k+1} }{ n_k }} c.$$
Note that the connecting map divides $c$ up into $\frac{ n_{k+1} }{ n_k }$ pairwise
orthogonal Cuntz equivalent pieces, each with $\frac{ n_k }{ n_{k+1} }$th trace
of $c$ for any tracial state on $\mathfrak{A} \otimes \mathfrak{C}$.

    The result follows from     
the nature of the connecting maps for the inductive limit decomposition of
$\mathfrak{A} \otimes \mathfrak{C}$ and since $\mathfrak{A} \otimes \mathfrak{C}$ has strict comparison for
positive elements (see Lemma \ref{lem:AotimesUHFProperties}). 
\end{proof}

\begin{lemma}  Let $\mathfrak{A}$ be a unital $C^{*}$-algebra and 
let $a, b \in \mathfrak{A}$ be self-adjoint elements.  Then 
$$e^{ia} e^{ib} e^{-i(a + b)} \in CU(\mathfrak{A})_0.$$ 
\label{lem:CampbellBaker}
\end{lemma}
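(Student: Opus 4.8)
The plan is to prove the statement $e^{ia}e^{ib}e^{-i(a+b)} \in CU(\mathfrak{A})_0$ by reducing it to a computation in the universal $C^*$-algebra generated by two self-adjoint contractions, where the answer can be read off from algebraic topology. First I would observe that the left-hand side is visibly a unitary in $U(\mathfrak{A})_0$, since each of the three factors is an exponential of a self-adjoint element, hence lies in $U(\mathfrak{A})_0$; and in fact the whole expression is connected to $1$ through a canonical path obtained by scaling $a,b$ by $s\in[0,1]$, namely $s\mapsto e^{isa}e^{isb}e^{-is(a+b)}$. The content of the lemma is that this unitary lies not merely in $U(\mathfrak{A})_0$ but in the (norm closure of the) commutator subgroup.

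The key reduction is universality. Given self-adjoint $a,b$, after rescaling we may assume $\|a\|,\|b\|\le 1$, so there is a $*$-homomorphism $\pi\colon \mathcal{U} \to \mathfrak{A}$ from the universal unital $C^*$-algebra $\mathcal{U}$ generated by two self-adjoint contractions $x,y$, sending $x\mapsto a$, $y\mapsto b$. Since $*$-homomorphisms carry commutators to commutators and are norm-contractive, they map $CU(\mathcal{U})_0$ into $CU(\mathfrak{A})_0$; so it suffices to prove $e^{ix}e^{iy}e^{-i(x+y)} \in CU(\mathcal{U})_0$. Now $\mathcal{U} \cong C_0(\text{something})$ unitally — more precisely $\mathcal{U}$ is the unitization of $C_0([-1,1]^2 \setminus \{(0,0)\})$ in a suitable sense; in any case $\mathcal{U}$ is a commutative (!) $C^*$-algebra, $\mathcal{U} \cong C(X)$ with $X$ compact metrizable, because $x$ and $y$ commute in the universal algebra they generate as self-adjoints — wait, they do not commute. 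Let me correct: $x$ and $y$ do \emph{not} commute, so $\mathcal{U}$ is noncommutative. The right replacement is: it suffices to check the identity with $\mathfrak{A}$ replaced by $\mathcal{U}$, and then one wants to identify $U(\mathcal{U})_0/CU(\mathcal{U})_0$. For \emph{any} unital $C^*$-algebra this quotient receives the de la Harpe–Skandalis determinant, and the class of $e^{ia}e^{ib}e^{-i(a+b)}$ is computed by the path above: the de la Harpe–Skandalis determinant of $s\mapsto e^{isa}e^{isb}e^{-is(a+b)}$ is $\frac{1}{2\pi i}\int_0^1 \tau(\dot u_s u_s^{-1})\,ds$ for each tracial state $\tau$, and one computes $\tau(\dot u_s u_s^{-1}) = \tau(ia + e^{isa}(ib)e^{-isa} - u_s\, i(a+b)\, u_s^{-1}) = \tau(ia) + \tau(ib) - \tau(i(a+b)) = 0$ by the trace property. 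Hence the determinant vanishes, so the unitary lies in $CU(\mathfrak{A})_0$ — this works directly in $\mathfrak{A}$, no universal algebra needed, provided one knows the de la Harpe–Skandalis determinant detects $CU(\mathfrak{A})_0$ exactly, i.e. $U(\mathfrak{A})_0/CU(\mathfrak{A})_0$ is Hausdorff and the determinant is injective on it.

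The cleanest route, then, is: apply the homotopy $u_s = e^{isa}e^{isb}e^{-is(a+b)}$ from $1$ to the target unitary $u_1$; compute that the de la Harpe–Skandalis determinant $\widetilde{\Delta}(u_\bullet) \in \mathrm{Aff}(T(\mathfrak{A}))/\overline{\rho(K_0(\mathfrak{A}))}$ is zero using the trace identity above; and invoke the fact (Thomsen, as cited in Lemma~\ref{lem:CUApproximation}, via the isomorphism $\Delta\colon U(\mathfrak{A})_0/CU(\mathfrak{A})_0 \to \mathrm{Aff}(T(\mathfrak{A}))/\overline{K_0(\mathfrak{A})}$) that $u_1 \in CU(\mathfrak{A})_0$ iff this determinant is trivial. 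One subtlety to dispatch: if $T(\mathfrak{A}) = \emptyset$ then $U(\mathfrak{A})_0 = CU(\mathfrak{A})_0$ and there is nothing to prove; and the differentiability of $s\mapsto u_s$ needed to define the determinant is immediate since each factor is a norm-differentiable exponential path.

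I expect the main obstacle to be purely bookkeeping: making sure the version of the de la Harpe–Skandalis determinant being invoked is exactly the one for which "determinant $=0$" characterizes membership in $CU(\mathfrak{A})_0$ (as opposed to just $\overline{CU(\mathfrak{A})_0}$ or the universal determinant group), and carefully justifying the term-by-term differentiation $\dot u_s u_s^{-1} = ia + e^{isa}(ib)e^{-isa} - u_s i(a+b) u_s^{-1}$ and the cancellation under $\tau$. Everything else — the homotopy, the trace computation — is short. An alternative, fully elementary fallback avoiding determinants entirely: use the Baker–Campbell–Hausdorff idea directly by showing $e^{ia}e^{ib}e^{-i(a+b)}$ is a limit of finite products of commutators $\prod (e^{i s_k c_k}, e^{i t_k d_k})$ via the Trotter/Lie product formula $e^{i(a+b)} = \lim_n (e^{ia/n}e^{ib/n})^n$, which rewrites the target as $\lim_n e^{ia}e^{ib}(e^{ib/n}e^{ia/n})^n$ and then telescopes into a product of conjugates of the "first-order error" terms $e^{ia/n}e^{ib/n}e^{-ib/n}e^{-ia/n}$-type expressions; but the determinant argument is far shorter and I would present that.
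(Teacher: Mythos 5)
Your primary argument does not establish the lemma in the stated generality. Lemma~\ref{lem:CampbellBaker} is asserted for an \emph{arbitrary} unital $C^*$-algebra $\mathfrak{A}$, but the Thomsen isomorphism $\Delta\colon U(\mathfrak{A})_0/CU(\mathfrak{A})_0 \to \mathrm{Aff}(T(\mathfrak{A}))/\overline{K_0(\mathfrak{A})}$ that you invoke (via Lemma~\ref{lem:CUApproximation}) is only available under substantial extra hypotheses --- in this paper, exactness, unitality, stable finiteness, and $\mathcal{Z}$-stability. For a general unital $C^*$-algebra there is no such identification, so "determinant $= 0$" is not known to characterize membership in $CU(\mathfrak{A})_0$. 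Your proposed way around the traceless case is also unjustified: the claim that $T(\mathfrak{A}) = \emptyset$ forces $U(\mathfrak{A})_0 = CU(\mathfrak{A})_0$ is not a fact you can cite for arbitrary unital $C^*$-algebras (it is a theorem for, e.g., simple purely infinite algebras, but the lemma makes no simplicity or infiniteness assumption). So as presented, the determinant route is a proof of a weaker statement than the one asked for.

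The approach you dismiss as a "fallback" is in fact exactly the paper's proof, and it is what you want. The paper simply writes
\begin{equation*}
e^{ia}e^{ib}e^{-i(a+b)} = \lim_{n\to\infty} e^{ia}e^{ib}\left[e^{-ia/n}e^{-ib/n}\right]^{n},
\end{equation*}
using the Lie--Trotter product formula for $e^{-i(a+b)}$. Setting $x = e^{ia/n}$ and $y = e^{ib/n}$ (both in $U(\mathfrak{A})_0$), each finite-stage factor is $x^n y^n (x^{-1}y^{-1})^n$, which maps to the identity in the abelianization of the free group on two generators and hence lies in the commutator subgroup of $U(\mathfrak{A})_0$, therefore in $CU(\mathfrak{A})_0$; since $CU(\mathfrak{A})_0$ is closed, the limit does too. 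This argument uses nothing about $\mathfrak{A}$ beyond unitality and is genuinely shorter than the determinant computation once you account for the hypotheses you would need to smuggle in to make that computation decisive. You should present the Trotter/commutator argument as the primary proof, not the fallback.
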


\begin{proof}

This follows immediately from the formula

\begin{eqnarray*}
 \exp(ia) \exp(ib) \exp(-i(a +b)) = \lim_{n \rightarrow \infty} \exp(ia) \exp(ib) \left[ 
\exp\left(\frac{-ia}{n}\right) \exp\left(\frac{-ib}{n} 
\right) \right]^n.
\end{eqnarray*}
\end{proof}

The next lemma is motivated by the main result of 
\cite{PopaQuasidiagonal}.  However, we note that unlike
\cite{PopaQuasidiagonal}, we need to assume that our $C^*$-algebra
is nuclear.   In fact, it is not clear
to us whether if $\mathfrak{A}$ is an arbitrary 
unital simple separable
quasidiagonal $C^*$-algebra and $\mathfrak{C}$ is a UHF-algebra then
$\mathfrak{A} \otimes \mathfrak{C}$ has the ``sufficiently many projections"
condition of Popa.  (See \cite{PopaQuasidiagonal} (1.1), 
(2.1), (2.1') and (2.1'').)

\begin{lemma}  Let $\mathfrak{A}$ be a nuclear, separable, simple, unital $C^{*}$-algebra, and let $\mathfrak{C}$ be a UHF-algebra.  Then $\mathfrak{A}$ is quasidiagonal if and only if $\mathfrak{A} \otimes \mathfrak{C}$
has the Popa property; i.e.,  
for every $\epsilon > 0$ and for every finite subset $\mathcal{F} \subseteq \mathfrak{A}
\otimes \mathfrak{C}$, there exists a nonzero finite dimensional 
sub-$C^{*}$-algebra $\mathfrak{D}$ of $\mathfrak{A} \otimes \mathfrak{C}$ 
with unit $p = 1_{\mathfrak{D}}$ such that
for every $a \in \mathcal{F}$, the following hold:
\begin{enumerate}
\item[i.]  $\| pa - ap \| < \epsilon$ and  
\item[ii.] $pap$ is within $\epsilon$ of an element
of $\mathfrak{D}$.       
\end{enumerate}
\label{lem:PopaProperty}
\end{lemma}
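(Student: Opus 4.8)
I would prove the two implications separately. The reverse implication needs only simplicity and Voiculescu's characterization of quasidiagonality; nuclearity of $\mathfrak{A}$ enters only in the forward implication, and there only to verify the hypothesis of Popa's theorem.

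\textbf{Popa property $\Longrightarrow$ $\mathfrak{A}$ quasidiagonal.} Recall that a separable unital $C^{*}$-algebra $\mathfrak{B}$ is quasidiagonal exactly when there is a sequence of unital completely positive maps $\psi_{n}\colon\mathfrak{B}\to\mathfrak{D}_{n}$, with each $\mathfrak{D}_{n}$ finite-dimensional, which is asymptotically multiplicative and asymptotically isometric (Voiculescu). I would fix an increasing sequence of finite sets $\mathcal{F}_{n}\subseteq\mathfrak{A}\otimes\mathfrak{C}$ with dense union and $\epsilon_{n}\downarrow 0$, apply the Popa property to each pair $(\mathcal{F}_{n},\epsilon_{n})$ to get $\mathfrak{D}_{n}$ with unit $p_{n}$, pick a conditional expectation $E_{n}\colon p_{n}(\mathfrak{A}\otimes\mathfrak{C})p_{n}\to\mathfrak{D}_{n}$ (which exists since $\mathfrak{D}_{n}$ is finite-dimensional), and set $\psi_{n}(a)=E_{n}(p_{n}ap_{n})$; this is unital completely positive onto $\mathfrak{D}_{n}$. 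From (ii) one gets $\|\psi_{n}(a)-p_{n}ap_{n}\|<2\epsilon_{n}$ for $a\in\mathcal{F}_{n}$, and from (i) one gets $\|p_{n}(ab)p_{n}-(p_{n}ap_{n})(p_{n}bp_{n})\|\le\|a\|\,\epsilon_{n}$, so $(\psi_{n})$ is asymptotically multiplicative. Since $\mathfrak{A}\otimes\mathfrak{C}$ is simple and the $\psi_{n}$ are unital, the $*$-homomorphism they induce into $\prod_{n}\mathfrak{D}_{n}/\bigoplus_{n}\mathfrak{D}_{n}$ is injective, hence isometric, so $(\psi_{n})$ is automatically asymptotically isometric. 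Thus $\mathfrak{A}\otimes\mathfrak{C}$ is quasidiagonal, and restricting the $\psi_{n}$ to the unital subalgebra $\mathfrak{A}\otimes 1_{\mathfrak{C}}$ shows $\mathfrak{A}$ is quasidiagonal.

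\textbf{$\mathfrak{A}$ quasidiagonal $\Longrightarrow$ Popa property for $\mathfrak{A}\otimes\mathfrak{C}$.} First I would note that $\mathfrak{A}\otimes\mathfrak{C}$ is quasidiagonal: writing $\mathfrak{C}=\overline{\bigcup_{k}\mathsf{M}_{n_{k}}}$ we have $\mathfrak{A}\otimes\mathfrak{C}=\varinjlim\mathsf{M}_{n_{k}}(\mathfrak{A})$ through unital injective connecting maps, each $\mathsf{M}_{n_{k}}(\mathfrak{A})$ is quasidiagonal because $\mathfrak{A}$ is, and an inductive limit of quasidiagonal $C^{*}$-algebras with injective connecting maps is quasidiagonal (extend the approximating completely positive maps via Arveson's extension theorem). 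Since a unital quasidiagonal $C^{*}$-algebra is finite, and this passes to matrix amplifications, $\mathfrak{A}\otimes\mathfrak{C}$ is stably finite; as it is also nuclear (both tensor factors are), separable, simple, unital and $\mathcal{Z}$-stable, Lemma~\ref{lem:AotimesUHFProperties} supplies stable rank one, cancellation of projections, weak unperforation, the (SP) property, properties (7) and (8), and — using that nuclear implies exact — strict comparison of positive elements.

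\textbf{Finishing and the main obstacle.} At this point I would invoke Popa's theorem from \cite{PopaQuasidiagonal}: a simple unital $C^{*}$-algebra that is quasidiagonal and has ``sufficiently many projections'' in the sense of \cite{PopaQuasidiagonal} (conditions (1.1), (2.1), (2.1'), (2.1'')) has the local quantization property, which is precisely the Popa property in the statement. So everything reduces to verifying that $\mathfrak{A}\otimes\mathfrak{C}$ has sufficiently many projections, and this is the one step that genuinely uses nuclearity. Using strict comparison of positive elements together with stable rank one, the (SP) property, and the matricial corner properties (7) and (8) of Lemma~\ref{lem:AotimesUHFProperties}, one shows that in every amplification $\mathsf{M}_{m}(\mathfrak{A}\otimes\mathfrak{C})$ and beneath every nonzero projection there are nonzero projections of arbitrarily small normalized trace that are subequivalent to any prescribed nonzero projection — exactly what Popa's conditions demand. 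I expect this verification to be the main obstacle: strict comparison of positive elements for $\mathfrak{A}\otimes\mathfrak{C}$, available here only because nuclearity of $\mathfrak{A}$ forces exactness (Lemma~\ref{lem:AotimesUHFProperties}(3)), is what makes Popa's hypothesis go through, and, as the preceding remark indicates, it is unclear how to carry this out for a merely simple, separable, unital, quasidiagonal $\mathfrak{A}$.
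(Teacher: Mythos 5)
Your argument for the ``if'' direction (Popa property implies quasidiagonality) is correct and matches what the paper does, namely invoke Voiculescu's criterion via Brown's survey.

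For the ``only if'' direction, however, you take a genuinely different route from the paper, and the route you take contains exactly the gap that the authors flag in the remark preceding the lemma. You propose to invoke Popa's theorem from \cite{PopaQuasidiagonal} and reduce everything to checking that $\mathfrak{A}\otimes\mathfrak{C}$ has ``sufficiently many projections'' in Popa's sense (conditions (1.1), (2.1), (2.1'), (2.1'')). But you never actually carry out this verification: you assert that strict comparison, stable rank one, (SP), and properties (7)--(8) of Lemma~\ref{lem:AotimesUHFProperties} ``show'' that the required small projections exist, and then concede that ``I expect this verification to be the main obstacle.'' That is an acknowledgment of a gap, not a proof. The authors explicitly say they do not know how to verify Popa's hypotheses in this setting (their remark is phrased for the non-nuclear case, but notably they did not attempt to verify them even in the nuclear case), and that is precisely why the paper does \emph{not} go through Popa's theorem.

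The paper instead uses nuclearity in a completely different way: it passes to a building block $\mathsf{M}_{n_K}(\mathfrak{A})$ and applies the Elliott--Kucerovsky absorption theorem (a Voiculescu-type theorem in the multiplier algebra of $\mathsf{M}_{n_K}(\mathfrak{A})\otimes\mathbb{K}$) to unitarily conjugate the ``constant'' representation $a\mapsto a\otimes 1$ to the representation $1\otimes\psi'$, where $\psi'$ is a faithful quasidiagonal representation of $\mathsf{M}_{n_K}(\mathfrak{A})$. The finite-rank projections quasidiagonalizing $\psi'$, pulled back through this unitary and truncated to a finite corner $\mathsf{M}_M(\mathsf{M}_{n_K}(\mathfrak{A}))$, produce the desired projections and finite-dimensional subalgebras; because the connecting maps of $\mathfrak{A}\otimes\mathfrak{C}=\varinjlim\mathsf{M}_{n_k}(\mathfrak{A})$ are diagonal, $\mathsf{M}_M(\mathsf{M}_{n_K}(\mathfrak{A}))$ sits inside a later building block, which yields the Popa property. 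This is where nuclearity is used, not to get strict comparison. So while your route might in principle be salvageable, as written it leaves the load-bearing step (verification of Popa's conditions) unjustified, and the paper's actual argument is constructed precisely to avoid that step.
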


\begin{proof}
  By Theorem 1 of \cite{VoiculescuQuasidiagonal}, 
if $\mathfrak{A} \otimes \mathfrak{C}$ has the Popa property then $\mathfrak{A} \otimes \mathfrak{C}$
is quasidiagonal
(see, for example, \cite{NBrownQuasidiagonal} the argument
after Theorem 12.1.); and  hence, 
$\mathfrak{A}$ is quasidiagonal.  This completes
the proof of the ``if" direction.

   We now prove the ``only if" direction.
Suppose that $\mathfrak{A}$ is quasidiagonal.  
Since $\mathfrak{C}$ is a UHF-algebra,  it can be expressed as an inductive limit
$$\mathfrak{C} = \overline{\bigcup_{k=1}^{\infty} \mathsf{M}_{n_k}}$$
where the connecting maps 
are defined by $c \mapsto \mathrm{diag}(c,c,...,c)$ (each $c$ being repeated
$\frac{ n_{k+1} }{ n_k} $ times).  
Hence, $\mathfrak{A} \otimes \mathfrak{C}$ is an inductive limit
$$\mathfrak{A} \otimes \mathfrak{C} = \overline{\bigcup_{k=1}^{\infty} \mathsf{M}_{n_k}(\mathfrak{A})}$$
where the connecting maps are diagonal maps. 
It suffices to prove the Popa property for finite subsets of the
building blocks $\mathsf{M}_{n_k}(\mathfrak{A})$.     
Let $K \geq 1$ be given.  
Let $\epsilon >0$ be given and let $\mathcal{F} \subseteq \mathsf{M}_{n_K}(\mathfrak{A})$ be a 
finite set.
We may assume that the elements of $\mathcal{F}$ have norm less than or equal to
one.  Let $\multialg(\mathsf{M}_{n_K}(\mathfrak{A}) \otimes \K)$ be the multiplier algebra of the stabilization
of $\mathsf{M}_{n_K}(\mathfrak{A})$. (Note that $\mathsf{M}_{n_K}(\mathfrak{A}) \otimes \K \cong
\mathfrak{A} \otimes \K$.)  Define $\phi : \mathsf{M}_{n_K}(\mathfrak{A}) \rightarrow \multialg(\mathsf{M}_{n_K}(\mathfrak{A}) \otimes \K)$ by $\phi (a) = a \otimes 1_{\multialg(\K)}$ and let $\psi' : \mathsf{M}_{n_K}(\mathfrak{A}) 
\rightarrow \multialg(\K) = \mathbb{B}( \mathcal{H})$  
be any unital (and hence essential) $*$-homomorphism.    
Set $\psi = 1_{\mathsf{M}_{n_K}(\mathfrak{A})} 
\otimes \psi' : \mathsf{M}_{n_K}(\mathfrak{A}) 
\rightarrow \multialg(\mathsf{M}_{n_K}(\mathfrak{A}) \otimes \K)$.  Hence, 
$\phi, \psi : \mathsf{M}_{n_K} \otimes
\mathfrak{A} \rightarrow \multialg(\mathsf{M}_{n_K}(\mathfrak{A}) \otimes \K)$ are injective, unital $*$-homomorphisms.  Note that $\phi$ and $\psi$ are both full $*$-homomorphisms.
Hence, since $\mathfrak{A}$ is nuclear, it follows, by \cite{ElliottKucerovsky}, that
there exists a unitary 
$u \in \multialg(\mathsf{M}_{n_K}(\mathfrak{A}) \otimes \K)$  such that
for every $a \in \mathsf{M}_{n_K} \otimes \mathfrak{A}$,
\begin{equation}
\phi(a) - u \psi(a) u^* \in \mathsf{M}_{n_K}(\mathfrak{A}) \otimes \K.  
\label{equ:Voiculescu}
\end{equation}

   Since $\mathsf{M}_{n_K}(\mathfrak{A})$ is quasidiagonal, 
$\psi'(\mathsf{M}_{n_K}(\mathfrak{A}))$ is a quasidiagonal collection of 
operators on $\mathcal{H}$; i.e.,  
there exists an increasing sequence $\{ p_n \}_{ n = 1}^{ \infty }$ of finite rank operators
on $\mathcal{H}$ such that 
\begin{itemize}
\item[i.] $p_n \rightarrow 1_{\mathbb{B}(\mathcal{H})}$ in the
strong operator topology and
\item[ii.] $\| p_n \psi'(a) - \psi'(a) p_n \| \rightarrow 0$
for all $a \in \mathsf{M}_{n_K} \otimes \mathfrak{A}$.
\end{itemize}
 This and (\ref{equ:Voiculescu}) implies that there exists an integer $N \geq 1$ such that 
\begin{enumerate}
\item $\| u (p_m - p_n) u^* \phi(a) - \phi(a) u (p_m - p_n) u^* \| 
< \frac{ \epsilon }{ 100 }$
for all $m > n \geq N$ and for all $a \in \mathcal{F}$    
\item $u(p_m - p_n) u^* \phi(a) u(p_m - p_n) u^*$ is within $\frac{ \epsilon }{ 100 }$
of an element of the finite dimensional $C^{*}$-algebra
$u(p_m - p_n) \mathbb{B}( \mathcal{H}) (p_m - p_n) u^*$ for all 
$a \in \mathcal{F}$ and all $m > n \geq N$.   
\end{enumerate}

   Now let $\{ e_{i,j} : 1 \leq i,j < \infty \}$ be a system of matrix units
for $\K$. 
Note that $1_{\multialg(\mathsf{M}_{n_K}(\mathfrak{A}) \otimes \K)} = \sum_{i=1}^{\infty} 1 \otimes 
e_{i,i}$ where   
the sum converges in the strict topology on $\multialg(\mathsf{M}_{n_K}(\mathfrak{A}) \otimes \K)$.   
Hence, 
for all $a \in \mathsf{M}_{n_K}(\mathfrak{A})$, 
$$\phi(a) = a \otimes 1 = \sum_{i=1}^{\infty} a \otimes e_{i,i}$$
where the sum converges in the strict topology on $\multialg(\mathsf{M}_{n_K}(\mathfrak{A}) \otimes
\K)$.  
Also, note that for every $n \geq 1$, 
\[
\lim_{M \rightarrow \infty} \left( 
\sum_{i=1}^M 1 \otimes e_{i,i} \right) u p_n u^*  
= u p_n u^*.    
\]
Hence, (1)--(2) and the definition of $\phi$
implies that there exists an $M \geq 1$ and
there exists a nonzero projection
$r \in \mathsf{M}_M(\mathsf{M}_{n_K}(\mathfrak{A}))$  (which, inside $\multialg(\mathsf{M}_{n_K}(\mathfrak{A}) \otimes \K)$, is 
close to $u(p_m - p_n)u^*$ for some integers $m > n$)   
such that the following hold:
\begin{itemize}
\item[(i)] $M = \frac{ n_L }{ n_K }$ for some $L \geq K$.    
\item[(ii)] $\| (\bigoplus^M a) r - r (\bigoplus^M a) \| < \epsilon$ for all
$a \in \mathcal{F}$.  
\item[(iii)] There exists a finite dimensional sub-$C^{*}$-algebra 
$\mathfrak{D} \subseteq \mathsf{M}_M( \mathsf{M}_{n_K}(\mathfrak{A}))$ such that 
the unit of $\mathfrak{D}$ is $1_{\mathfrak{D}} = r$.
($\mathfrak{D}$ will be ``close to" $u (p_m - p_n) \mathbb{B}( \mathcal{H} ) (p_m - p_n) 
u^*$ for some positive integers $m > n$.) 
\item[(iv)] For every $a \in \mathcal{F}$, 
$r \left( \bigoplus^M a \right) r$ is within $\epsilon$ of an element 
of $\mathfrak{D}$. 
\end{itemize}
     
Now since, in the inductive limit decomposition of $\mathfrak{A} \otimes \mathfrak{C}$, the
connecting map 
$\mathsf{M}_{n_K}(\mathfrak{A}) \rightarrow \mathsf{M}_{n_L}(\mathfrak{A})$ has the form
$c \mapsto \bigoplus^M c$ (a diagonal map), we are done.   
\end{proof}

\begin{lemma}  Let $\mathfrak{A}$ be a quasidiagonal, nuclear, separable, simple, unital
$C^{*}$-algebra.  Let  $a \in \mathfrak{A}$ be a self-adjoint element and 
$\mathcal{F} \subseteq \mathfrak{A}$ be a finite set.  Then for every $\epsilon > 0$, there exists a unitary 
$u \in CU(\mathfrak{A} \otimes \mathcal{Z})_0$ such that 
\[
\| (e^{ia} \otimes 1_{\mathcal{Z}}) (b \otimes 1_{\mathcal{Z}}) (e^{-ia} \otimes 1_{\mathcal{Z}}) 
- u (b \otimes 1_{\mathcal{Z}}) u^* \| < \epsilon
\] 
\label{lem:Inn(A)Aotimes1}
for all $b \in \mathcal{F}$.
\end{lemma}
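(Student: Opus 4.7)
\emph{Proof plan.} Fix $\epsilon > 0$ and apply Lemma \ref{lem:CUApproximation} to the exact, unital, stably finite, $\mathcal{Z}$-stable algebra $\mathfrak{A} \otimes \mathcal{Z}$ to obtain $\delta > 0$ such that every self-adjoint $y \in (\mathfrak{A} \otimes \mathcal{Z})_{\mathrm{sa}}$ with $|\tau(y)| < \delta$ for all $\tau \in T(\mathfrak{A} \otimes \mathcal{Z})$ satisfies $\mathrm{dist}(e^{iy}, CU(\mathfrak{A} \otimes \mathcal{Z})_0) < \epsilon/2$. The plan is to construct, in a UHF amplification, a self-adjoint $y$ satisfying (a) $\| e^{iy}(b \otimes 1_\mathcal{Z})e^{-iy} - (e^{ia}be^{-ia}) \otimes 1_\mathcal{Z} \| < \epsilon/2$ for every $b \in \mathcal{F}$, and (b) $|\tau(y)| < \delta$ uniformly in $\tau$; then any $u \in CU(\mathfrak{A} \otimes \mathcal{Z})_0$ within $\epsilon/2$ of $e^{iy}$ will satisfy the required estimate.

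Pick a supernatural number $\mathfrak{p}$ of infinite type so that $\mathcal{Z} \otimes \mathsf{M}_\mathfrak{p} \cong \mathsf{M}_\mathfrak{p}$, and work inside $\mathfrak{A} \otimes \mathsf{M}_\mathfrak{p} \cong \mathfrak{A} \otimes \mathcal{Z} \otimes \mathsf{M}_\mathfrak{p}$. By Lemma \ref{lem:PopaProperty}, $\mathfrak{A} \otimes \mathsf{M}_\mathfrak{p}$ has the Popa property. For a suitably small $\epsilon' > 0$ (chosen so that $\|a\|\epsilon' < \delta$ and all Popa-type errors below fit within $\epsilon/4$), applying it to $\mathcal{F} \cup \{a\}$ produces a finite-dimensional $\mathfrak{D} \subseteq \mathfrak{A} \otimes \mathsf{M}_\mathfrak{p}$ with unit $p$ almost commuting with $a \otimes 1$ and every $b \otimes 1$, and with $p(a \otimes 1)p$ within $\epsilon'$ of some $a_0 \in \mathfrak{D}_{\mathrm{sa}}$. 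Iterating Popa in orthogonal corners (using simplicity and the tracial properties of Lemma \ref{lem:AotimesUHFProperties}) also arranges $\tau(1-p) < \epsilon'$ uniformly in $\tau$. Write $\mathfrak{D} = \bigoplus_i \mathsf{M}_{n_i}$ and split $a_0 = a_0^{\mathrm{tl}} + a_0^{\mathrm{sc}}$, with $a_0^{\mathrm{sc}} = \sum_i \lambda_i e_i$ (where $e_i$ are the minimal central projections and $\lambda_i = \mathrm{tr}_{\mathsf{M}_{n_i}}(a_{0,i})/n_i$) the block-scalar part and $a_0^{\mathrm{tl}}$ the block-traceless part. Set $a_2 := (1-p)(a \otimes 1)(1-p)$ and define $y := a_0^{\mathrm{tl}} + a_2$.

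The tracial bound (b) is immediate: $\tau(a_0^{\mathrm{tl}}) = 0$ by block-tracelessness, and $|\tau(a_2)| \leq \|a\|\tau(1-p) < \|a\|\epsilon'$, both uniformly. For (a), note $a_0^{\mathrm{tl}} \in p(\cdot)p$ and $a_2 \in (1-p)(\cdot)(1-p)$ commute exactly, so $e^{iy}$ is block-diagonal along $p$ and $1-p$; moreover $e^{i(a \otimes 1)} \approx e^{ia_0}e^{ia_2} = e^{ia_0^{\mathrm{tl}}} e^{ia_0^{\mathrm{sc}}} e^{ia_2}$, since the factors commute in their respective corners and $a_0^{\mathrm{sc}}$ is central in $\mathfrak{D}$. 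The central factor $e^{ia_0^{\mathrm{sc}}}$ commutes with every element of $\mathfrak{D}$, hence acts on $pbp$ as the identity up to $O(\epsilon')$ (since $pbp$ is within $\epsilon'$ of an element of $\mathfrak{D}$), and reduces to the identity on the $(1-p)$-corner; together with the estimate $\|b \otimes 1 - (pbp + (1-p)b(1-p))\| = O(\epsilon')$ coming from Popa, this gives $e^{i(a \otimes 1)}(b \otimes 1)e^{-i(a \otimes 1)} \approx e^{iy}(b \otimes 1)e^{-iy}$ up to $O(\epsilon')$. Lemma \ref{lem:CUApproximation} now supplies a unitary $u$ within $\epsilon/2$ of $e^{iy}$, and a standard descent through the $\mathcal{Z}$-stability identification $\mathfrak{A} \otimes \mathsf{M}_\mathfrak{p} \cong \mathfrak{A} \otimes \mathcal{Z} \otimes \mathsf{M}_\mathfrak{p}$ together with Lemma \ref{l:isoz} lets $u$ be chosen in $CU(\mathfrak{A} \otimes \mathcal{Z})_0$.

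The principal technical hurdles are arranging $\tau(1-p) < \epsilon'$ uniformly (via iterated Popa, requiring simplicity and the structural properties of Lemma \ref{lem:AotimesUHFProperties}) and the final descent from the UHF amplification $\mathfrak{A} \otimes \mathsf{M}_\mathfrak{p}$ back to $\mathfrak{A} \otimes \mathcal{Z}$. The key conceptual observation that avoids a separate tracial correction (which would need to simultaneously commute with $\mathcal{F} \otimes 1$ and carry a $\tau$-dependent trace) is that the block-scalar part $a_0^{\mathrm{sc}}$ is central in $\mathfrak{D}$: its exponential acts trivially on $\mathfrak{D}$ and on the $(1-p)$-corner, so dropping it from $y$ removes the entire tracial obstruction without affecting the inner action on $\mathcal{F} \otimes 1$.
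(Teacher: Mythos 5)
There are two genuine gaps here, and the second is fatal to the overall architecture of your argument.

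First, the tracial bound. Your entire construction of $y$ hinges on arranging $\tau(1-p)<\epsilon'$ uniformly over $T(\mathfrak{A}\otimes\mathsf{M}_{\mathfrak{p}})$, which you propose to get by ``iterating Popa in orthogonal corners.'' Lemma \ref{lem:PopaProperty} gives no tracial control whatsoever on $1-p$, and the iteration does not close up: the corner $(1-p)(\mathfrak{A}\otimes\mathsf{M}_{\mathfrak{p}})(1-p)$ is not again of the form (quasidiagonal nuclear simple unital)$\,\otimes\,$UHF, so the lemma does not reapply to it, and even if it did, nothing forces each successive Popa projection to absorb a definite fraction of every trace --- for that you would need a comparison between $1-p$ and $p$, i.e.\ Popa's stronger conditions (2.1$'$)/(2.1$''$), which the paper explicitly states are not known to hold in this generality. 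The paper's route around this is Lemma \ref{lem:UHFSelfAdjointTracialApproximation}: using UHF divisibility one builds a self-adjoint $d_1=\bigoplus^m d$ with $d$ in a corner $e_{1,1}(\mathfrak{A}\otimes\mathsf{M}_{\mathfrak{p}})e_{1,1}$ of the \emph{ambient} algebra, so that $d_1$ commutes exactly with $\mathfrak{D}_1$ and satisfies $\tau(d_1)\approx\tau(a)$ for all $\tau$ --- no smallness of $1-e_1$ is ever needed. So the ``separate tracial correction'' you set out to avoid is in fact what makes the argument work; your block-scalar-dropping trick is a nice alternative but only in the regime you cannot establish.

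Second, and more fundamentally, the ``standard descent'' from $\mathfrak{A}\otimes\mathsf{M}_{\mathfrak{p}}$ back to $\mathfrak{A}\otimes\mathcal{Z}$ does not exist. Your $y$, hence $e^{iy}$ and the approximating unitary $u$, live in $\mathfrak{A}\otimes\mathsf{M}_{\mathfrak{p}}$, which (under a unital embedding $\mathcal{Z}\hookrightarrow\mathsf{M}_{\mathfrak{p}}$) strictly contains $\mathfrak{A}\otimes\mathcal{Z}$; there is no expectation or unitary conjugation that moves an element of $CU(\mathfrak{A}\otimes\mathsf{M}_{\mathfrak{p}})_0$ into $CU(\mathfrak{A}\otimes\mathcal{Z})_0$ while preserving its conjugation action on $\mathcal{F}\otimes 1_{\mathcal{Z}}$, and Lemma \ref{l:isoz} only relocates elements already of the form $b\otimes 1_{\mathcal{Z}}$. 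Producing a unitary in $\mathcal{Z}$ rather than in a UHF algebra is precisely the content of the lemma, and it is why the paper's proof is built on $\mathcal{Z}_{\mathfrak{p},\mathfrak{q}}$: it performs the correction twice, once in $\mathfrak{A}\otimes\mathsf{M}_{\mathfrak{p}}$ (via $d_1$, supported on the path near $t=0$) and once in $\mathfrak{A}\otimes\mathsf{M}_{\mathfrak{q}}$ (via $d_2$, near $t=1$), and glues the resulting paths $t\mapsto(e^{i(1-t)a}\otimes 1)e^{-i(1-t)d_1}$ and $t\mapsto(e^{ita}\otimes 1)e^{-itd_2}$ into unitaries $u_1,u_2$ of $\mathfrak{A}\otimes\mathcal{Z}_{\mathfrak{p},\mathfrak{q}}\subseteq\mathfrak{A}\otimes\mathcal{Z}$ whose product conjugates $b\otimes 1_{\mathcal{Z}}$ like $e^{ia}\otimes 1_{\mathcal{Z}}$ at every $t$. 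A single-UHF construction cannot be made to land in $\mathcal{Z}$, so this part of your plan needs to be replaced, not just elaborated.
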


\begin{proof}

   We may assume that every element of $\mathcal{F}$ has norm less than or
equal to one.  Let $\p, \q$ be the relatively prime supernatural numbers given by
$\p = 2^{\infty}$ and $\q = 3^{\infty}$.  By Theorem 3.4 of \cite{RordamWinter}, $\mathcal{Z}$ is a $C^{*}$-inductive limit
$\mathcal{Z} = \overline{\bigcup_{n=1}^{\infty} \mathcal{Z}_n}$
where $\mathcal{Z}_n \cong \mathcal{Z}_{\p, \q}$ for all $n \geq 1$ and the connecting
maps are unital and injective.

    By Lemma \ref{lem:PopaProperty}, both
$\mathfrak{A} \otimes \mathsf{M}_{\p}$ and $\mathfrak{A} \otimes \mathsf{M}_{\q}$ have the Popa 
property.    
Hence, let $\mathfrak{D}_1 \subseteq \mathfrak{A} \otimes \mathsf{M}_{\p}$ and
$\mathfrak{D}_2 \subseteq \mathfrak{A} \otimes \mathsf{M}_{\q}$ be nonzero finite dimensional simple sub-$C^{*}$-algebras
with units $e_1= 1_{\mathfrak{D}_1}$ and $e_2= 1_{\mathfrak{D}_2}$ such that the following statements
hold:
\begin{itemize}
\item[(1)] $\| c e_1 - e_1 c \| 
< \frac{ \epsilon }{ 100 }$ 
for all $c \in (\mathcal{F} \otimes 1_{\mathsf{M}_{\p}}) \cup \{ a \otimes 1_{\mathsf{M}_{\p}} \} 
\cup \{ e^{ita} \otimes 1_{\mathsf{M}_{\p}} : t \in [0,1] \}$.    
\item[(2)] $e_1 c e_1$ is within $\frac{ \epsilon }{ 100 }$ of an element of 
$\mathfrak{D}_1$ for all
$c \in (\mathcal{F} \otimes 1_{\mathsf{M}_{\p}}) \cup \{ a \otimes 1_{\mathsf{M}_{\p}} \} 
\cup \{ e^{ita} \otimes 1_{\mathsf{M}_{\p}} : t \in [0,1] \}$.
\item[(3)] $\| c e_2 - e_2 c \| 
< \frac{ \epsilon }{ 100 }$
for all $c \in (\mathcal{F} \otimes 1_{\mathsf{M}_{\q}}) \cup \{ a \otimes 1_{\mathsf{M}_{\q}} \} 
\cup \{ e^{ita} \otimes 1_{\mathsf{M}_{\q}} : t \in [0,1] \}$.
\item[(4)] $e_2 c e_2$ is within $\frac{ \epsilon }{ 100 }$ of an element of
$\mathfrak{D}_2$ for all
$c \in (\mathcal{F} \otimes 1_{\mathsf{M}_{\q}}) \cup \{ a \otimes 1_{\mathsf{M}_{\q}} \} 
\cup \{ e^{ita} \otimes 1_{\mathsf{M}_{\q}} : t \in [0,1] \}$.
\end{itemize}
Plug $\frac{ \epsilon }{ 100 }$ into Lemma \ref{lem:CUApproximation} to get 
a positive real number $\delta > 0 $.  Now let $\{ e_{i,j} \}_{1 \leq i,j \leq m }$ be a system of 
matrix units for $\mathfrak{D}_1$.   
By Lemma \ref{lem:UHFSelfAdjointTracialApproximation},
let $d \in e_{1,1} (\mathfrak{A} \otimes \mathsf{M}_{\p}) e_{1,1}$ be a self-adjoint 
element such that 
$$| \tau(a) - m \tau(d) | < \frac{ \delta }{ 100 }$$
for all $\tau \in T(\mathfrak{A} \otimes \mathcal{Z})$.
Consider the element $d_1 \in e_1 (\mathfrak{A} \otimes \mathsf{M}_{\p}) e_1$ 
given by $d_1 = \bigoplus^m d$.  (The $i$th copy of $d$ sits 
inside $e_{i,i}$.)   
Then $d_1$ is a self-adjoint element of $\mathfrak{A} \otimes \mathsf{M}_{\p}$
such that the following statements hold:

\begin{enumerate}
\item[(a)] $d_1$ commutes with every element of $\mathfrak{D}_1$ and   
$e^{-isd_1}$ commutes with every element of $\mathfrak{D}_1 \oplus (1 - e_1) (\mathfrak{A}
\otimes \mathsf{M}_{\p} ) (1 - e_1)$ for all $s \in [0,1]$.  
Hence, by (1)--(4),   
we have that $\| e^{-isd_1} c - c e^{-i sd_1} \| < \frac{ 6 \epsilon }{ 100 }$ 
for all $s \in [0,1]$ and for
all $c \in (\mathcal{F} \otimes 1_{\mathsf{M}_{\p}}) \cup \{ e^{ita} \otimes 1_{\mathsf{M}_{\p}} 
: t \in [0,1] \}$.  
\item[(b)] $| \tau(a) - \tau(d_1) | < \frac{ \delta }{ 100 }$ for all 
$\tau \in T(\mathfrak{A})$.  Hence, 
$| \tau((1-t)a) - \tau((1-t)d_1) | < \frac{ \delta }{ 100 }$ for all
$\tau \in T(\mathfrak{A})$ and for all $t \in [0,1]$.  
Hence, by our choice of $\delta$, by   
Lemma \ref{lem:CampbellBaker} and Lemma \ref{lem:CUApproximation},
the map $[0,1] \rightarrow U(\mathfrak{A} \otimes \mathcal{Z})_0$ defined by $t \mapsto 
(e^{i(1-t)a} \otimes 1)e^{-i(1 - t)d_1}$ is an element of 
$U(\mathfrak{A} \otimes \mathcal{Z}_{\p, \q})_0 \subseteq  U(\mathfrak{A} \otimes \mathcal{Z})_0$ which is 
within $\frac{ \epsilon }{ 100 }$ of an element $u_1$ 
of $CU(\mathfrak{A} \otimes \mathcal{Z})_0$.     
\end{enumerate}
 
    By a similar argument, we can find a self-adjoint element
$d_2 \in e_2(\mathfrak{A} \otimes \mathsf{M}_{\q}) e_2$
satisfying the following statements:
\begin{enumerate}
\item[(i)] $\| e^{-isd_2} c - c e^{-i sd_2} \| < \frac{ 6 \epsilon }{ 100 }$ 
for all $s \in [0,1]$ and for all 
 $c \in (\mathcal{F} \otimes 1_{\mathsf{M}_{\q}}) \cup \{ e^{ita} \otimes 1_{\mathsf{M}_{\q}} 
: t \in [0,1] \}$.
\item[(ii)] The map $[0,1] \rightarrow U(\mathfrak{A} \otimes \mathcal{Z})_0$ defined by $t \mapsto 
(e^{ita} \otimes 1)e^{-itd_2}$ is an element of 
$U(\mathfrak{A} \otimes \mathcal{Z}_{\p, \q})_0 \subseteq  U(\mathfrak{A} \otimes \mathcal{Z})_0$ which is
within $\frac{ \epsilon }{ 100 }$ of an element $u_2$
of $CU(\mathfrak{A} \otimes \mathcal{Z})_0$.
\end{enumerate}
 
   By (a)--(b) and (i)--(ii), we have 
that  
\begin{align*}
&\| (e^{ia} \otimes 1_{\mathcal{Z}}) (b \otimes 1_{\mathcal{Z}}) (e^{-ia} \otimes 1_{\mathcal{Z}})
- u_1 u_2  (b \otimes 1_{\mathcal{Z}}) u_2^* u_1^* \| \\ 
& = \| (e^{i(1-t)a} e^{ita}
 \otimes 1_{\mathcal{Z}}) (e^{-i(1-t)d_1} e^{-i t d_2} e^{it d_2}  e^{i(1-t) d_1}) 
 (b \otimes 1_{\mathcal{Z}}) (e^{-it a} e^{-i(1-t)a} \otimes 1_{\mathcal{Z}})
 \\
 & \quad - u_1 u_2  (b \otimes 1_{\mathcal{Z}}) u_2^* u_1^* \| \\
& \leq \| (e^{i(1-t)a} e^{ita}
 \otimes 1_{\mathcal{Z}}) (e^{-i(1-t)d_1} e^{-i t d_2} e^{i t d_2}
 e^{i(1-t) d_1}) 
 (b \otimes 1_{\mathcal{Z}}) (e^{-it a} e^{-i(1-t)a} \otimes 1_{\mathcal{Z}})  \\
& \quad - (e^{i(1-t)a} \otimes 1) e^{-i(1-t)d_1}(e^{ita}
 \otimes 1) e^{-itd_2} 
 (b \otimes 1_{\mathcal{Z}}) e^{itd_2} (e^{-it a} \otimes 1) e^{i(1-t) d_1} \\
& \qquad (e^{-i(1-t)a} \otimes 1) \| \\ 
&\quad + \|   (e^{i(1-t)a} \otimes 1) e^{-i(1-t)d_1}(e^{ita}
 \otimes 1) e^{-itd_2} 
 (b \otimes 1_{\mathcal{Z}}) e^{itd_2} (e^{-it a} \otimes 1) e^{i(1-t) d_1}\\
& \qquad (e^{-i(1-t)a} \otimes 1) \\
& \quad - u_1 u_2  (b \otimes 1_{\mathcal{Z}}) u_2^* u_1^* \| \\ 
&\quad <  24 \epsilon/100 + 4 \epsilon/100 \\
&\quad <  \epsilon
\end{align*}
for all $b \in \mathcal{F}$.  Hence, taking $u = u_1 u_2$, we are done.
\end{proof}

\begin{lemma}
Let $\mathfrak{A}$ be a quasidiagonal, nuclear, separable, simple, unital $C^{*}$-algebra.
Let $v$ be a unitary in $U(\mathfrak{A})_0$ and let 
$\mathcal{F} \subseteq \mathfrak{A}$ be a finite set.  Then for every $\epsilon > 0$, there exists a unitary $u \in CU(\mathfrak{A} \otimes 
\mathcal{Z})_0$ such that
\[
\| (v \otimes 1_{\mathcal{Z}}) (b \otimes 1_{\mathcal{Z}}) (v^* \otimes 1_{\mathcal{Z}})
- u (b \otimes 1_{\mathcal{Z}}) u^* \| < \epsilon
\]
for all $b \in \mathcal{F}$.
\label{lem:NuclearQuasidiagonalPreCase}
\end{lemma}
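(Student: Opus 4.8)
The plan is to reduce to the exponential case already treated in Lemma \ref{lem:Inn(A)Aotimes1}. Recall that every unitary in $U(\mathfrak{A})_0$ is a finite product of exponentials of self-adjoint elements, so we may write $v = e^{ia_1} e^{ia_2} \cdots e^{ia_k}$ for suitable self-adjoint $a_1, a_2, \dots, a_k \in \mathfrak{A}$; hence $v \otimes 1_{\mathcal{Z}} = \prod_{j=1}^{k} (e^{ia_j} \otimes 1_{\mathcal{Z}})$. We will approximate conjugation by each factor $e^{ia_j} \otimes 1_{\mathcal{Z}}$ by conjugation by a unitary in $CU(\mathfrak{A} \otimes \mathcal{Z})_0$, and then multiply these unitaries together.

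To make the errors add rather than compound, first introduce the nested finite sets that record how the conjugations propagate: for $1 \le j \le k$ put
\[
\mathcal{G}_j = \bigl\{\, e^{ia_{j+1}} \cdots e^{ia_k}\, b\, e^{-ia_k} \cdots e^{-ia_{j+1}} : b \in \mathcal{F} \,\bigr\} \subseteq \mathfrak{A},
\]
with the convention $\mathcal{G}_k = \mathcal{F}$ (empty product equal to $1_{\mathfrak{A}}$). Working downward from $j = k$ to $j = 1$, apply Lemma \ref{lem:Inn(A)Aotimes1} to the self-adjoint element $a_j$ and the finite set $\mathcal{G}_j$ with tolerance $\epsilon / k$, obtaining a unitary $u_j \in CU(\mathfrak{A} \otimes \mathcal{Z})_0$ with
\[
\| (e^{ia_j} \otimes 1_{\mathcal{Z}})(c \otimes 1_{\mathcal{Z}})(e^{-ia_j} \otimes 1_{\mathcal{Z}}) - u_j (c \otimes 1_{\mathcal{Z}}) u_j^* \| < \epsilon / k \qquad \text{for all } c \in \mathcal{G}_j .
\]
Set $u = u_1 u_2 \cdots u_k$. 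Since $CU(\mathfrak{A} \otimes \mathcal{Z})_0$ is a group (it is the closure of a subgroup of $U(\mathfrak{A} \otimes \mathcal{Z})_0$) and each $u_j$ lies in it, we have $u \in CU(\mathfrak{A} \otimes \mathcal{Z})_0$.

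It remains to check the estimate. Fix $b \in \mathcal{F}$ and prove by downward induction on $j$ that
\[
\bigl\| u_j u_{j+1} \cdots u_k\, (b \otimes 1_{\mathcal{Z}})\, u_k^* \cdots u_{j+1}^* u_j^* - \bigl(e^{ia_j} \cdots e^{ia_k}\, b\, e^{-ia_k} \cdots e^{-ia_j}\bigr) \otimes 1_{\mathcal{Z}} \bigr\| < (k-j+1)\,\epsilon / k .
\]
The base case $j = k$ is the defining inequality for $u_k$ with $c = b \in \mathcal{G}_k$. For the inductive step, note that $c := e^{ia_{j+1}} \cdots e^{ia_k}\, b\, e^{-ia_k} \cdots e^{-ia_{j+1}} \in \mathcal{G}_j$, so the inductive hypothesis (rewritten as a statement about $c \otimes 1_{\mathcal{Z}}$), the defining inequality for $u_j$, and the fact that conjugation by a unitary is isometric combine via the triangle inequality to give the claim at stage $j$. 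Taking $j = 1$ yields $\| u (b \otimes 1_{\mathcal{Z}}) u^* - (v b v^*) \otimes 1_{\mathcal{Z}} \| < \epsilon$ for every $b \in \mathcal{F}$, which is the assertion. There is no genuine obstacle in this argument: all the analytic content is in Lemma \ref{lem:Inn(A)Aotimes1}, and the only point requiring care is the choice of the nested sets $\mathcal{G}_j$, which is precisely what makes the accumulated error a sum of $k$ terms of size $\epsilon/k$.
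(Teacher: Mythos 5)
Your proof is correct and takes essentially the same route as the paper: the paper writes $v = e^{ia_1}\cdots e^{ia_n}$ and then states "the rest of the proof is the same as that of Lemma \ref{lem:Inn(A)Aotimes1}," leaving unspecified whether one reruns that lemma's internal argument with $v$ in place of a single exponential or simply applies the lemma once per factor. You do the latter, treating Lemma \ref{lem:Inn(A)Aotimes1} as a black box and iterating it with the nested conjugate sets $\mathcal{G}_j$ to keep the error additive rather than compounding; the downward induction is sound, each $u_j\in CU(\mathfrak{A}\otimes\mathcal{Z})_0$ so their product is too, and the tolerance bookkeeping comes out to exactly $\epsilon$. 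This is a clean and arguably cleaner realization of what the paper intends.
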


\begin{proof}
Since $v \in U( \mathfrak{A} )_{0}$, $v$ has the form $v = e^{ia_1} e^{ia_2} ... e^{ia_n}$ where
$a_1, a_2, .., a_n$ are self-adjoint elements of $\mathfrak{A}$.  The rest of
the proof is the same as that of Lemma \ref{lem:Inn(A)Aotimes1}.    
\end{proof}

\begin{lemma} Let $\mathfrak{A}$ be a quasidiagonal, nuclear, separable, simple, unital $\mathcal{Z}$-stable $C^{*}$-algebra.
Let $v$ be a unitary in $U(\mathfrak{A})_0$ and let $\mathcal{F} \subseteq \mathfrak{A}$ be a finite set.  Then for every $\epsilon > 0$, there exists a unitary 
$u \in CU(\mathfrak{A})_0$ such that 
\[
\| v c v^* - u c u^* \| < \epsilon
\] 
\label{lem:NuclearQuasidiagonalCase}
for all $c \in \mathcal{F}$.
\end{lemma}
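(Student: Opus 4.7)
The plan is to reduce Lemma~\ref{lem:NuclearQuasidiagonalCase} to Lemma~\ref{lem:NuclearQuasidiagonalPreCase} by using the $\mathcal{Z}$-stability of $\mathfrak{A}$ together with the approximate unitary equivalence supplied by Lemma~\ref{l:isoz}. Since $\mathfrak{A}$ is $\mathcal{Z}$-stable we have $\mathfrak{A} \cong \mathfrak{A} \otimes \mathcal{Z}$, so without loss of generality I identify $\mathfrak{A}$ with $\mathfrak{A} \otimes \mathcal{Z}$ throughout; all the hypotheses (nuclear, separable, simple, unital, quasidiagonal) are preserved because $\mathcal{Z}$ is itself nuclear and quasidiagonal. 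Write $\rho := \id_\mathfrak{A} \otimes \Phi : \mathfrak{A} \otimes \mathcal{Z} \to \mathfrak{A} \otimes \mathcal{Z} \otimes \mathcal{Z}$ for the $*$-isomorphism obtained by tensoring the map from Lemma~\ref{l:isoz} with the identity on $\mathfrak{A}$; the corresponding approximating unitaries $W_n := 1_\mathfrak{A} \otimes u_n$ lie in $U(\mathfrak{A} \otimes \mathcal{Z} \otimes \mathcal{Z})_0$ and satisfy $\|\rho(x) - W_n(x \otimes 1_\mathcal{Z})W_n^*\| \to 0$ for every $x \in \mathfrak{A} \otimes \mathcal{Z}$.

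I would then carry out three steps. First, apply Lemma~\ref{lem:NuclearQuasidiagonalPreCase} to the algebra $\mathfrak{A} \otimes \mathcal{Z}$, the unitary $v$, the set $\mathcal{F}$, and tolerance $\epsilon/4$, producing $u_1 \in CU(\mathfrak{A} \otimes \mathcal{Z} \otimes \mathcal{Z})_0$ with
\[
\|(v \otimes 1_\mathcal{Z})(b \otimes 1_\mathcal{Z})(v^* \otimes 1_\mathcal{Z}) - u_1(b \otimes 1_\mathcal{Z})u_1^*\| < \epsilon/4
\]
for every $b \in \mathcal{F}$. Second, choose a single $W \in U(\mathfrak{A} \otimes \mathcal{Z} \otimes \mathcal{Z})_0$ from the sequence $W_n$ such that $\|\rho(x) - W(x \otimes 1_\mathcal{Z})W^*\| < \delta$ for every $x \in \mathcal{F} \cup \{v\}$, where $\delta > 0$ will be taken small relative to $\epsilon$ (for instance $\delta < 3\epsilon/16$). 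Third, set $u_2 := W u_1 W^*$, which remains in $CU(\mathfrak{A} \otimes \mathcal{Z} \otimes \mathcal{Z})_0$ since the closed commutator subgroup is normal in the unitary group. A telescoping computation, using the cancellations of adjacent $W^*W = 1$ factors, then yields for each $b \in \mathcal{F}$
\[
u_2\,\rho(b)\,u_2^* \;\approx\; W u_1(b \otimes 1_\mathcal{Z})u_1^* W^* \;\approx\; W(v \otimes 1_\mathcal{Z})(b \otimes 1_\mathcal{Z})(v^* \otimes 1_\mathcal{Z}) W^* \;\approx\; \rho(v)\rho(b)\rho(v)^* \;=\; \rho(vbv^*),
\]
where the outer two $\approx$'s use the defining property of $W$ and the middle one uses the choice of $u_1$. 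Finally, setting $u := \rho^{-1}(u_2)$, which lies in $CU(\mathfrak{A} \otimes \mathcal{Z})_0 = CU(\mathfrak{A})_0$ because $\rho$ is a $*$-isomorphism, and applying the isometric $\rho^{-1}$ to the displayed chain gives $\|ubu^* - vbv^*\| < \epsilon$ for every $b \in \mathcal{F}$, as required.

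The main obstacle, such as it is, is the error bookkeeping across the three $\approx$ substitutions above: each $W$-substitution introduces an additive error at most $\delta$ (the surrounding factors being unitary of norm one), so the total error is at most $4\delta + \epsilon/4$, which falls below $\epsilon$ once $\delta < 3\epsilon/16$. Apart from this calibration, the argument is a direct structural descent from $\mathfrak{A} \otimes \mathcal{Z} \otimes \mathcal{Z}$ back to $\mathfrak{A}$, entirely in the spirit of the descent used to conclude the proof of Theorem~\ref{thm:BIGCONTAINS}.
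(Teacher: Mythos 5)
Your argument is correct and follows essentially the same route as the paper: both reduce to Lemma \ref{lem:NuclearQuasidiagonalPreCase} by using the isomorphism from Lemma \ref{l:isoz} (tensored with $\id_{\mathfrak{A}}$) and its approximate unitary equivalence with the first-factor embedding, then transport the resulting unitary back. The only cosmetic difference is that the paper invokes Lemma 4.1 of \cite{NgZ} to place the implementing unitaries in $CU(\mathfrak{A}\otimes\mathcal{Z}\otimes\mathcal{Z})_0$, whereas you get by with unitaries in $U(\cdot)_0$ plus the normality of $CU(\cdot)_0$, which works equally well.
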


\begin{proof}
   Since $\mathfrak{A}$ is $\mathcal{Z}$-stable, $\mathfrak{A} \cong \mathfrak{A} \otimes \mathcal{Z} \cong  \mathfrak{A} \otimes \mathcal{Z} \otimes \mathcal{Z}$.  Hence, it is enough to prove the theorem with $\mathfrak{A}$ replaced by $\mathfrak{A} \otimes \mathcal{Z} \otimes \mathcal{Z}$.  By Lemma \ref{l:isoz} and  Lemma 4.1 of \cite{NgZ}, there exists a $*$-isomorphism $\Phi : \mathfrak{A} \otimes \mathcal{Z} \rightarrow \mathfrak{A} \otimes \mathcal{Z} \otimes \mathcal{Z}$ which is
approximately unitarily equivalent to the natural inclusion map
$\mathfrak{A}  \otimes \mathcal{Z} \rightarrow \mathfrak{A} \otimes \mathcal{Z} \otimes \mathcal{Z}$ defined by $b \mapsto b \otimes 1_{\mathcal{Z}}$, where we
can choose the unitaries to be in $CU(\mathfrak{A} \otimes \mathcal{Z} \otimes \mathcal{Z} )_0$.  Hence, we may assume that the elements of $\mathcal{F} \cup \{ v \}$ are all
inside $\mathfrak{A} \otimes \mathcal{Z} \otimes 1_{\mathcal{Z}}$.   
The result then follows from Lemma \ref{lem:NuclearQuasidiagonalPreCase}. 
\end{proof}

\begin{lemma} Let $\mathfrak{A}$ be an exact, separable, simple, unital, $\mathcal{Z}$-stable
$C^{*}$-algebra with unique tracial state.  
Let $v$ be a unitary in $U(\mathfrak{A})_0$.  Then there exists a unitary $u \in CU(\mathfrak{A})_0$ such that 
\[
v a v^* = u a u^*
\] 
\label{lem:UniqueTraceCase}  
for all $a \in \mathfrak{A}$.
\end{lemma}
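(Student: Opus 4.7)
The plan is to reduce the statement to proving $v \in CU(\mathfrak{A})_{0}$ and then to exploit the unique trace to split each exponential factor of $v$ into a scalar part (handled by Lemma~\ref{lem:scalarunitaries}) and a zero-trace part (handled by Lemma~\ref{lem:CUApproximation}). Since $\mathfrak{A}$ is simple, $Z(\mathfrak{A}) = \mathbb{C}\cdot 1_{\mathfrak{A}}$, so the equation $vav^{*} = uau^{*}$ for all $a \in \mathfrak{A}$ is equivalent to $u^{*}v \in \mathbb{T}$, i.e., $v = \lambda u$ for some $\lambda \in \mathbb{T}$ and some $u \in CU(\mathfrak{A})_{0}$. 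Because Lemma~\ref{lem:scalarunitaries} already gives $\mathbb{T} \subseteq CU(\mathfrak{A})_{0}$, this reduces to showing $v \in CU(\mathfrak{A})_{0}$, and then I will simply take $u = v$.

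To prove $v \in CU(\mathfrak{A})_{0}$, I will write $v = e^{ia_{1}} e^{ia_{2}} \cdots e^{ia_{n}}$ with each $a_{j} \in \mathfrak{A}$ self-adjoint; this is possible since $v \in U(\mathfrak{A})_{0}$. Let $\tau$ denote the unique tracial state, set $r_{j} := \tau(a_{j}) \in \mathbb{R}$, and define $b_{j} := a_{j} - r_{j}\cdot 1_{\mathfrak{A}}$. The uniqueness of the trace is used precisely here: it turns $\tau(a_{j})$ into a scalar rather than a genuine affine function on $T(\mathfrak{A})$, which is what lets $r_{j} \cdot 1_{\mathfrak{A}}$ live in $\mathfrak{A}$ and commute with $b_{j}$. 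Then $\tau(b_{j}) = 0$ and, because $r_{j}\cdot 1_{\mathfrak{A}}$ is central, $e^{ia_{j}} = e^{ir_{j}}\, e^{ib_{j}}$.

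The algebra $\mathfrak{A}$ is exact, unital, $\mathcal{Z}$-stable, and stably finite (being simple $\mathcal{Z}$-stable with a trace, it cannot be purely infinite), so Lemma~\ref{lem:CUApproximation} applies. Given any $\epsilon > 0$, that lemma supplies $\delta > 0$ for which $|\tau(b_{j})| = 0 < \delta$ implies $\mathrm{dist}(e^{ib_{j}}, CU(\mathfrak{A})_{0}) < \epsilon$; since this holds for every $\epsilon > 0$ and $CU(\mathfrak{A})_{0}$ is closed, $e^{ib_{j}} \in CU(\mathfrak{A})_{0}$. Combined with $e^{ir_{j}} \in \mathbb{T} \subseteq CU(\mathfrak{A})_{0}$ from Lemma~\ref{lem:scalarunitaries}, this gives $e^{ia_{j}} \in CU(\mathfrak{A})_{0}$ for each $j$, and multiplying yields $v \in CU(\mathfrak{A})_{0}$. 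The only conceptual step is the upgrade from the approximate distance bound to exact membership via closedness of $CU(\mathfrak{A})_{0}$; everything else is bookkeeping, and no new technical machinery beyond Lemmas~\ref{lem:scalarunitaries} and~\ref{lem:CUApproximation} is required.
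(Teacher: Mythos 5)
Your proof is correct and follows essentially the same route as the paper's: both write $v = e^{ia_1}\cdots e^{ia_n}$, use the unique trace to replace the exponents by trace-zero self-adjoint elements, and deduce membership in $CU(\mathfrak{A})_0$ from Lemma~\ref{lem:CUApproximation} together with the closedness of $CU(\mathfrak{A})_0$. The only cosmetic difference is that the paper merges all the exponents via Lemma~\ref{lem:CampbellBaker} and takes $u = e^{-i\tau(a_1+\cdots+a_n)}v$, whereas you split each factor as $e^{ir_j}e^{ib_j}$ and absorb the scalars using Lemma~\ref{lem:scalarunitaries}, arriving at the (equivalent, given that lemma) conclusion $v \in CU(\mathfrak{A})_0$ and taking $u = v$.
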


\begin{proof}
Let $\tau$ be the unique tracial state of $\mathfrak{A}$.  Since $v \in U( \mathfrak{A} )_{0}$, $v$ has the form 
$v = e^{ia_1} e^{ia_2} ... e^{ia_n}$ where $a_1, a_2, ..., a_n \in \mathfrak{A}$
are self-adjoint elements.  By Lemma \ref{lem:CUApproximation} and Lemma \ref{lem:CampbellBaker}, and
since $e^{i \tau(a_1 + a_2 + ... + a_n)}1$ (being scalar) commutes with
every element of $\mathfrak{A}$,  the unitary 
$$u = e^{-i \tau(a_1 + a_2 + ... + a_n)} v$$
satisfies the requirements of the lemma.  
\end{proof}

\begin{theorem}\label{t:simpletop} Let $\mathfrak{A}$ be an exact, separable, simple, unital $\mathcal{Z}$-stable
$C^{*}$-algebra.  Suppose that either 
\begin{enumerate}
\item $\mathfrak{A}$ is nuclear and quasidiagonal, or  
\item $\mathfrak{A}$ has unique tracial state.   
\end{enumerate}

Then we have the following:
\begin{enumerate}
\item[(a)] $CU(\mathfrak{A})_0/ \mathbb{T}$ is a simple topological group.  
\item[(b)] Every automorphism in $\overline{\mathrm{Inn}}_{0}(\mathfrak{A})$ can be realized
using unitaries in $CU(\mathfrak{A})_0$.
\item[(c)] $\overline{\mathrm{Inn}}_{0}(\mathfrak{A})$ is a simple topological group.   
\end{enumerate} 
\end{theorem}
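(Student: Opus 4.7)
The plan is to leverage Theorem \ref{thm:BIGCONTAINS} (which identifies closed normal subgroups of $U(\mathfrak{A})_0$ containing $\mathbb{T}$) together with the approximation Lemmas \ref{lem:NuclearQuasidiagonalCase} and \ref{lem:UniqueTraceCase} (which let us approximate inner automorphisms $\mathrm{Ad}(v)$ by $\mathrm{Ad}(u)$ with $u \in CU(\mathfrak{A})_0$). I will prove (b) first, then (a), then (c).

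For (b), let $\alpha \in \overline{\mathrm{Inn}}_0(\mathfrak{A})$, so $\alpha = \lim_n \mathrm{Ad}(v_n)$ pointwise for some $v_n \in U(\mathfrak{A})_0$. In the unique trace case, Lemma \ref{lem:UniqueTraceCase} gives $u_n \in CU(\mathfrak{A})_0$ with $\mathrm{Ad}(v_n) = \mathrm{Ad}(u_n)$ on all of $\mathfrak{A}$ (in fact it forces $U(\mathfrak{A})_0 = CU(\mathfrak{A})_0$ since $\mathfrak{A}$ is simple). In the nuclear quasidiagonal case, applying Lemma \ref{lem:NuclearQuasidiagonalCase} to each $v_n$ with a suitable increasing sequence of finite subsets and $\epsilon_n \to 0$, a standard diagonal argument produces $u_n \in CU(\mathfrak{A})_0$ with $\alpha = \lim_n \mathrm{Ad}(u_n)$.

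For (a), I would argue by contradiction: suppose $\widetilde H$ is a closed normal subgroup of $CU(\mathfrak{A})_0$ with $\mathbb{T} \subsetneq \widetilde H \subsetneq CU(\mathfrak{A})_0$. The key step is to promote normality: I claim $\widetilde H$ is in fact normal in the larger group $U(\mathfrak{A})_0$. To see this, fix $v \in U(\mathfrak{A})_0$ and $h \in \widetilde H$. Applying Lemma \ref{lem:NuclearQuasidiagonalCase} (or \ref{lem:UniqueTraceCase}) with $\mathcal{F} = \{h\}$ and $\epsilon_n \to 0$ produces $w_n \in CU(\mathfrak{A})_0$ with $\|vhv^* - w_n h w_n^*\| \to 0$. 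Since $\widetilde H$ is normal in $CU(\mathfrak{A})_0$, each $w_n h w_n^* \in \widetilde H$, and since $\widetilde H$ is closed, $vhv^* \in \widetilde H$. Thus $\widetilde H$ is a closed normal subgroup of $U(\mathfrak{A})_0$ properly containing $\mathbb{T}$, so Theorem \ref{thm:BIGCONTAINS} gives $\widetilde H \supseteq CU(\mathfrak{A})_0$, a contradiction. Hence $CU(\mathfrak{A})_0/\mathbb{T}$ is a simple topological group.

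For (c), consider the continuous homomorphism $\pi : CU(\mathfrak{A})_0 \to \overline{\mathrm{Inn}}_0(\mathfrak{A})$ defined by $u \mapsto \mathrm{Ad}(u)$, whose kernel is $\mathbb{T}$ (the center of the simple $C^*$-algebra $\mathfrak{A}$ meets $U(\mathfrak{A})$ only in scalars, and $\mathbb{T} \subseteq CU(\mathfrak{A})_0$ by Lemma \ref{lem:scalarunitaries}); by part (b) the image $\mathrm{Ad}(CU(\mathfrak{A})_0)$ is dense in $\overline{\mathrm{Inn}}_0(\mathfrak{A})$. Let $N$ be a nontrivial closed normal subgroup of $\overline{\mathrm{Inn}}_0(\mathfrak{A})$. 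Then $\pi^{-1}(N)$ is a closed subgroup of $CU(\mathfrak{A})_0$ containing $\mathbb{T}$, and it is normal in $CU(\mathfrak{A})_0$ because conjugation of $\mathrm{Ad}(v) \in N$ by $\mathrm{Ad}(u)$ (with $u \in CU(\mathfrak{A})_0$) stays in $N$. By part (a), $\pi^{-1}(N)/\mathbb{T}$ is either trivial or all of $CU(\mathfrak{A})_0/\mathbb{T}$; the former would force $N = \{\mathrm{id}\}$, contradicting nontriviality, so $\pi^{-1}(N) = CU(\mathfrak{A})_0$ and therefore $N \supseteq \mathrm{Ad}(CU(\mathfrak{A})_0)$. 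Taking closures and using (b) yields $N = \overline{\mathrm{Inn}}_0(\mathfrak{A})$. The main obstacle is the normality-promotion step in (a); everything else is formal once the approximation lemmas are available.
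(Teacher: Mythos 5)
Your arguments for (a) and (b) are correct and are exactly what the paper intends by "(a) and (b) follow from Lemmas \ref{lem:NuclearQuasidiagonalCase}, \ref{lem:UniqueTraceCase} and Theorem \ref{thm:BIGCONTAINS}": the normality-promotion step (approximating $vhv^*$ by $w_nhw_n^*$ with $w_n\in CU(\mathfrak{A})_0$ and using closedness of $\widetilde H$) is the right way to upgrade a closed normal subgroup of $CU(\mathfrak{A})_0$ to one of $U(\mathfrak{A})_0$ so that Theorem \ref{thm:BIGCONTAINS} applies.

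There is, however, a genuine gap in (c), at the sentence "the former would force $N=\{\mathrm{id}\}$." Knowing only that $\pi^{-1}(N)=\mathbb{T}$ tells you that no non-scalar unitary of $CU(\mathfrak{A})_0$ implements an element of $N$; it does not tell you $N$ is trivial, because elements of $\overline{\mathrm{Inn}}_{0}(\mathfrak{A})$ are in general only limits of inner automorphisms and need not be of the form $\mathrm{Ad}(u)$ at all. A priori $N$ could be a nontrivial closed normal subgroup consisting entirely of non-inner automorphisms, and then $\pi^{-1}(N)=\mathbb{T}$ while $N\neq\{\mathrm{id}\}$. The paper closes this gap with a commutator trick (going back to Elliott--R{\o}rdam): pick $\beta\in N$ with $\beta\neq\mathrm{id}$; one shows there exists $v\in CU(\mathfrak{A})_0$ with $v^*\beta(v)\notin\mathbb{C}1$, and then the identity
\begin{equation*}
(\mathrm{Ad}(v))^{-1}\,\beta\,\mathrm{Ad}(v)\,\beta^{-1}=\mathrm{Ad}\bigl(v^*\beta(v)\bigr)
\end{equation*}
together with normality of $N$ shows that $v^*\beta(v)\in\pi^{-1}(N)\setminus\mathbb{T}$. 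Only after this does simplicity of $CU(\mathfrak{A})_0/\mathbb{T}$ force $\pi^{-1}(N)=CU(\mathfrak{A})_0$, and the rest of your argument (density of $\mathrm{Ad}(CU(\mathfrak{A})_0)$ in $\overline{\mathrm{Inn}}_{0}(\mathfrak{A})$ via (b), plus closedness of $N$) goes through. Note also that the existence of such a $v$ in $CU(\mathfrak{A})_0$ (rather than merely in $U(\mathfrak{A})_0$) is itself a point that needs justification; so the missing step is not cosmetic.
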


\begin{proof}
(a) and (b) follow from Lemmas \ref{lem:NuclearQuasidiagonalCase},
\ref{lem:UniqueTraceCase} and Theorem \ref{thm:BIGCONTAINS}.    

(c) follows from (a) and (b), arguing as in the proof of Theorem 3.2(b) of \cite{pner_ugrps} 
(which is a modification of the argument of 
Corollary 2.5 in \cite{EllRorAuto}).  For the convenience of the reader, we provide
the (short) argument: 
Let $G$ be a nontrivial closed normal subgroup
of $\overline{\mathrm{Inn}}_{0}(\mathfrak{A})$.  Let
$H = \{ u \in CU(\mathfrak{A})_0 : \mathrm{Ad}(u) \in G \}$.
Then $H$ is a closed normal subgroup of $CU(\mathfrak{A})_0$ such that
$H$ contains all scalar unitaries. Since $G$ is nontrivial, let
$\beta \in G$ be different from the identity automorphism.
Hence, there exists $v \in CU(\mathfrak{A})_0$ such that 
$v^* \beta(v) \notin \mathbb{C} 1$.
Since $(\mathrm{Ad}(v))^{-1} \beta \mathrm{Ad}(v) \beta^{-1} = 
\mathrm{Ad}(v^* \beta(v))$, 
it follows that $v^* \beta(v) \in H$.
Therefore, by (a), $H = CU(\mathfrak{A})_0$. Hence, by (b),  
$G = \overline{\mathrm{Inn}}_{0}(\mathfrak{A})$.
\end{proof}

\section{Bott Maps and Continuous path of unitaries}\label{pathunitaries}

\subsection{$K$-theory with $\Z_{n}$ coefficient}  For $n \in \N$, define $\ftn{ \theta_{n} }{ C(S^{1}) }{ C( S^{1} ) }$ by $\theta_{n} ( x ) = x^{n}$.  By identifying $C_{0} ( 0,1 )$ with
\begin{align*}
\setof{ f \in C( S^{1} ) }{ f( 1 ) = 0 }  \subseteq C( S^{1} )
\end{align*}
then $\theta_{n} \vert_{ C_{0} ( 0,1 ) }$ is a homomorphism from $C_{0} ( 0,1 )$ to $C_{0} ( 0,1 )$.  Denote by $C_{n}$ the mapping cone of $\theta_{n} \vert_{ C_{0} ( 0,1 ) }$.  Set $C_{0} = \C$.  For a $C^{*}$-algebra $\mathfrak{A}$, define $\underline{K} ( \mathfrak{A} )$ by
\begin{align*}
\underline{K} ( \mathfrak{A} ) = \bigoplus_{ i = 0}^{1} \bigoplus_{ n = 0}^{ \infty } K_{i} ( \mathfrak{A} ; \Z_{n} )
\end{align*}
where $K_{i} ( \mathfrak{A} ; \Z_{n} ) = K_{i} ( \mathfrak{A} \otimes C_{n} )$.  For $C^{*}$-algebras $\mathfrak{A}$ and $\mathfrak{B}$, a homomorphism from $\underline{K} ( \mathfrak{A} )$ to $\underline{K} ( \mathfrak{B} )$ is a collection of group homomorphisms $( \phi_{n}^{0}, \phi_{n}^{1} )_{ n = 0 }^{ \infty }$, where $\ftn{ \phi_{n}^{i} }{ K_{i} ( \mathfrak{A} ; \Z_{n} ) }{ K_{i} ( \mathfrak{B} ; \Z_{n} ) }$, satisfying the Bockstein operations of \cite{multcoeff}.

\subsection{Embeddings}
\begin{itemize}
\item[(1)] For each $n \in \N$, let $\ftn{ j_{n} }{ \mathfrak{A} \otimes C_{0} ( (0,1)^{2} ) }{ \mathfrak{A} \otimes C_{n} }$ be the natural inclusion.

\item[(2)] For a $C^{*}$-algebra $\mathfrak{A}$, let $\ftn{ j_{ \mathfrak{A} } }{ \mathfrak{A} \otimes C_{0} ( 0, 1 ) }{ \mathfrak{A} \otimes C(S^{1}) }$ be the canonical embedding that sends $a \otimes f$, to $a \otimes f$, using the identification of $C_{0} ( 0 ,  1 )$ as a sub-$C^{*}$-algebra of $C( S^{1} )$.

\item[(3)] Let $\mathfrak{A}$ be a $C^{*}$-algebra.  Define $\ftn{ \iota_{ \mathfrak{A} } }{ \mathfrak{A} }{ \mathfrak{A} \otimes C( S^{1} ) }$ by $\iota_{ \mathfrak{A} } ( a ) = a \otimes 1_{ C(S^{1}) }$. 

\item[(4)] Let $m, n \in \N$ with $0 < m \leq n \in \N$, let $\mathfrak{A}_{1} , \dots, \mathfrak{A}_{m}$, $\mathfrak{B}_{1}, \dots, \mathfrak{B}_{n}$ be unital $C^{*}$-algebras, let $\iota_{1} , \dots, \iota_{m} \in \{ 1, \dots, n \}$ be pairwise distinct numbers, and let $\ftn{ \alpha_{j} }{ \mathfrak{A}_{j} }{ \mathfrak{B}_{ \iota_{j} } }$ be a $*$-homomorphism for each $j$.  Then these $*$-homomorphisms induce a $*$-homomorphism 
\begin{equation*}
\ftn{ \alpha = \alpha_{1} \otimes \alpha_{2} \otimes \cdots \otimes \alpha_{m} }{ \mathfrak{A}_{1} \otimes \cdots \otimes \mathfrak{A}_{m} }{ \mathfrak{B}_{ \iota_{1} } \otimes \cdots \otimes \mathfrak{B}_{ \iota_{m} } }.
\end{equation*}
The composition of this map with the canonical unital embedding 
\begin{equation*}
\ftn{ \iota }{ \mathfrak{B}_{ \iota_{1} } \otimes \cdots \otimes \mathfrak{B}_{ \iota_{m} }  } { \mathfrak{B}_{1} \otimes \cdots \otimes \mathfrak{B}_{n} }
\end{equation*}
will be denoted by $\alpha^{ [ \iota_{1} \dots \iota_{m} ] }$.  Note that $\iota$ may be expressed as $\id^{ [ \iota_{1} \dots \iota_{m} ] }$.
\end{itemize}

\subsection{Partial maps on $\underline{K} ( \mathfrak{A} )$}\label{maps}  Let $\mathfrak{A}$ be a $C^{*}$-algebra.  Let $\mathfrak{A}^{\dagger}$ be $\mathfrak{A}$ if $\mathfrak{A}$ is unital and the unitization of $\mathfrak{A}$ if $\mathfrak{A}$ is not unital. We will denote the set of projections and unitaries in $\bigcup_{ m = 1}^{ \infty } ( \mathfrak{A} \otimes \mathsf{M}_{m} )^{ \dagger }$ by $\mathbb{P}_{0} ( \mathfrak{A} )$ and we will denote the set of projections and unitaries in 
\begin{align*}
  \bigcup_{ m = 1}^{ \infty } \bigcup_{ n = 0}^{ \infty }  (\mathfrak{A} \otimes \mathsf{M}_{m} \otimes C_{n} )^{ \dagger} 
\end{align*}
by $\mathbb{P} ( \mathfrak{A} )$.  

Let $\mathcal{P}$ be a subset of $\mathbb{P} ( \mathfrak{A} )$ and $\ftn{ \psi }{ \mathfrak{A} }{ \mathfrak{B} }$ be a contractive, completely positive linear map, we now define $\underline{K} ( \psi ) \vert_{ \mathcal{P } }$ as in Section 2.3 of \cite{hl_apphomtop}.  We use $\underline{K} ( \psi ) \vert_{ \mathcal{P} }$ instead of the notation $[ \psi ] \vert_{ \mathcal{P} }$ used in \cite{hl_apphomtop}.

\begin{definition}
Let $\mathfrak{A}$ and $\mathfrak{B}$ be $C^{*}$-algebras and let $\ftn{ \psi }{ \mathfrak{A} }{ \mathfrak{B} }$ be a linear map.  Let $\epsilon > 0$ and $\mathcal{S} \subseteq \mathfrak{A}$.  Then $\psi$ is \textbf{\emph{$\mathcal{S}$-$\epsilon$-multiplicative}} if 
\begin{equation*}
\norm{ \psi ( ab ) - \psi ( a ) \psi ( b ) } < \epsilon
\end{equation*}
for all $a,b \in \mathcal{S}$. 
\end{definition}

The following lemmas are well-known and can be found in Section 2.5 of \cite{booklin}

\begin{lemma}\label{l:almostunit}
For each $\epsilon > 0$, there exists $\delta > 0$ such that the following holds: Suppose $\mathfrak{A}$ is a unital $C^{*}$-algebra.   
\begin{itemize}
\item[(1)] If $x \in \mathfrak{A}$ with $\norm{ x^{*} x - 1_{ \mathfrak{A} } } < \delta$ and $\norm{ x  x^{*} - 1_{ \mathfrak{A} } } < \delta$, then there exists a unitary $u \in \mathfrak{A}$ such that $\norm{ x - u } < \epsilon$.  Moreover, if $\epsilon < \frac{1}{2}$ and if $u_{1}$ and $u_{2}$ are unitaries such that $\norm{ x - u_{i} } < \epsilon$, then $[ u_{1} ] = [ u_{2} ]$ in $K_{1} ( \mathfrak{A} )$.  

\item[(2)]  If $x \in \mathfrak{A}$ is a self-adjoint element with $\norm{ x^{2} - x } < \delta$, then there exists a projection $p$ in $\mathfrak{A}$ such that $\norm{ x - p } < \epsilon$.  Moreover, if $\epsilon < \frac{1}{2}$ and if $p_{1}$ and $p_{2}$ are projections such that $\norm{ x - p_{i} } < \epsilon$, then $[ p_{1} ] = [ p_{2} ]$ in $K_{0} ( \mathfrak{A} )$.
\end{itemize}
\end{lemma}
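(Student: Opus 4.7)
The plan is to prove both parts by standard continuous functional calculus on $x^{*}x$ (respectively on $x$), and then to handle the $K$-theory uniqueness by the well-known fact that sufficiently close unitaries (respectively projections) are homotopic (respectively Murray-von Neumann equivalent).

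For part (1), the idea is to set $u := x(x^{*}x)^{-1/2}$ once $\delta$ is small enough that $x^{*}x$ is invertible. Concretely, I would first choose $\delta < 1/2$ so that, from $\|x^{*}x - 1_{\mathfrak{A}}\| < \delta$, the spectrum of $x^{*}x$ lies in $(1-\delta, 1+\delta)$, making $(x^{*}x)^{-1/2}$ well-defined via continuous functional calculus. Since $\|xx^{*} - 1_{\mathfrak{A}}\| < \delta$ as well, the element $u$ is a unitary (one checks $u^{*}u = 1$ from $u^{*}u = (x^{*}x)^{-1/2}x^{*}x(x^{*}x)^{-1/2} = 1_{\mathfrak{A}}$, and $uu^{*} = 1$ by a symmetric computation using $xx^{*}$ close to $1_{\mathfrak{A}}$). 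The norm estimate
\begin{equation*}
\|x - u\| = \|x(1 - (x^{*}x)^{-1/2})\| \leq \|x\|\cdot \sup_{t \in \mathrm{sp}(x^{*}x)} |1 - t^{-1/2}|
\end{equation*}
can then be made smaller than $\epsilon$ by choosing $\delta$ small enough. For the $K_{1}$ uniqueness: if $\epsilon < 1/2$ and $\|x - u_{i}\| < \epsilon$ for $i = 1, 2$, then $\|u_{1} - u_{2}\| < 2\epsilon < 1$, so $\|u_{1}u_{2}^{*} - 1_{\mathfrak{A}}\| < 1$. Hence $-1 \notin \mathrm{sp}(u_{1}u_{2}^{*})$, so $u_{1}u_{2}^{*} = \exp(ih)$ for some self-adjoint $h$ via the principal branch of the logarithm, which gives $u_{1}u_{2}^{*} \in U(\mathfrak{A})_{0}$ and thus $[u_{1}] = [u_{2}]$ in $K_{1}(\mathfrak{A})$.

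For part (2), the plan is analogous. Choose $\delta$ small enough that if $x = x^{*}$ satisfies $\|x^{2} - x\| < \delta$, then the spectrum of $x$ is contained in a small neighborhood of $\{0, 1\}$ (since $t^{2} - t$ vanishes precisely at $0$ and $1$, and the spectrum of $x^{2} - x$ has norm $< \delta$, so every spectral value $t$ of $x$ satisfies $|t^{2} - t| < \delta$, forcing $t$ near $0$ or near $1$). Define $f : \mathrm{sp}(x) \to \{0,1\}$ to be $0$ on the component near $0$ and $1$ on the component near $1$; then $p := f(x)$ is a projection, and $\|x - p\|$ is bounded by $\sup_{t \in \mathrm{sp}(x)}|t - f(t)|$, which tends to $0$ as $\delta \to 0$. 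For the $K_{0}$ uniqueness: if $\|x - p_{i}\| < \epsilon < 1/2$, then $\|p_{1} - p_{2}\| < 1$, and it is standard that two projections at distance less than $1$ are Murray-von Neumann equivalent (indeed, the element $v = p_{2}p_{1} + (1-p_{2})(1-p_{1})$ satisfies $\|v - 1_{\mathfrak{A}}\| < 1$, so $v$ is invertible, and $v p_{1} v^{-1} = p_{2}$ leads via polar decomposition to a partial isometry implementing $p_{1} \sim p_{2}$). Hence $[p_{1}] = [p_{2}]$ in $K_{0}(\mathfrak{A})$.

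Neither direction contains a real obstacle: the only mild subtlety is making the quantitative dependence of $\delta$ on $\epsilon$ uniform in $\mathfrak{A}$, which is automatic because the estimates all come from continuous functional calculus applied to functions depending only on $\epsilon$ (not on $\mathfrak{A}$). The proof is therefore a routine spectral-theoretic exercise; the lemma is quoted here only as a convenient reference (as the authors note, it is standard and can be found in Section 2.5 of \cite{booklin}).
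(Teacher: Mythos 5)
Your proof is correct and is exactly the standard functional-calculus argument that the paper itself does not spell out (it simply cites Section 2.5 of \cite{booklin} for this lemma). Both the polar-decomposition construction of $u$ with the $K_{1}$ step via $\|u_{1}u_{2}^{*}-1\|<1$, and the spectral-gap construction of $p$ with the $K_{0}$ step via closeness of projections, are the intended routine verifications, and your observation that $\delta$ depends only on $\epsilon$ is the right reason the statement is uniform over all unital $\mathfrak{A}$.
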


\begin{lemma}\label{l:almostmult}
Let $\mathfrak{A}$ be a $C^{*}$-algebra and let $\mathfrak{C}$ be a nuclear $C^{*}$-algebra.  Let $\epsilon > 0$ and let $\mathcal{F}$ be a finite subset of $\mathfrak{A} \otimes \mathfrak{C}$.  Then there exist $\delta > 0$ and a finite subset $\mathcal{G}$ of $\mathfrak{A}$ such that if $\ftn{\psi}{ \mathfrak{A} }{ \mathfrak{B} }$ is a contractive, linear map that is $\mathcal{G}$-$\delta$-multiplicative, then $\ftn{ \psi \otimes \id_{ \mathfrak{C} } }{ \mathfrak{A} \otimes \mathfrak{C}  }{ \mathfrak{B} \otimes  \mathfrak{C}  }$ is $\mathcal{F}$-$\epsilon$-multiplicative.
\end{lemma}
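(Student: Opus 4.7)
The plan is to reduce the claim to a computation on elementary tensors: approximate each element of $\mathcal{F}$ by a finite sum in the algebraic tensor product $\mathfrak{A} \odot \mathfrak{C}$, and then exploit nuclearity of $\mathfrak{C}$ to guarantee that $\psi \otimes \id_{\mathfrak{C}}$ is (uniformly) bounded independently of $\psi$.

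First I would introduce an auxiliary tolerance $\eta > 0$ and, using density of $\mathfrak{A} \odot \mathfrak{C}$ in $\mathfrak{A} \otimes \mathfrak{C}$, choose for each $f \in \mathcal{F}$ a representative $\tilde{f} = \sum_{j=1}^{N(f)} a_{j}^{(f)} \otimes c_{j}^{(f)}$ with $\| f - \tilde{f} \| < \eta$. Setting $\mathcal{G} = \{ a_{j}^{(f)} : f \in \mathcal{F},\, 1 \leq j \leq N(f) \}$,
\[
C \ = \ 1 + \sum_{f,g \in \mathcal{F}} \sum_{j,k} \| c_{j}^{(f)} \| \cdot \| c_{k}^{(g)} \|,
\]
and $M = 1 + \max_{f \in \mathcal{F}} \| f \|$, I would fix $\delta < \epsilon / (3C)$ and choose $\eta$ small enough that $4 \eta M < \epsilon / 3$.

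Next, given a contractive (completely positive) map $\ftn{\psi}{\mathfrak{A}}{\mathfrak{B}}$ that is $\mathcal{G}$-$\delta$-multiplicative, nuclearity of $\mathfrak{C}$ supplies the contractive extension $\ftn{\psi \otimes \id_{\mathfrak{C}}}{\mathfrak{A} \otimes \mathfrak{C}}{\mathfrak{B} \otimes \mathfrak{C}}$ on the unique $C^{*}$-tensor product. For $f, g \in \mathcal{F}$ I would bound
\[
\bigl\| (\psi \otimes \id)(fg) - (\psi \otimes \id)(f) (\psi \otimes \id)(g) \bigr\|
\]
via three triangle-inequality steps: first replace $fg$ by $\tilde{f} \tilde{g}$ inside the left summand (error at most $2 \eta M$); next replace $f, g$ by $\tilde{f}, \tilde{g}$ inside the right summand (error at most $2 \eta M$, using contractivity of $\psi \otimes \id_{\mathfrak{C}}$); and finally estimate the elementary-tensor piece
\[
(\psi \otimes \id)(\tilde{f} \tilde{g}) - (\psi \otimes \id)(\tilde{f})(\psi \otimes \id)(\tilde{g}) \ = \ \sum_{j,k} \bigl( \psi(a_{j}^{(f)} a_{k}^{(g)}) - \psi(a_{j}^{(f)}) \psi(a_{k}^{(g)}) \bigr) \otimes c_{j}^{(f)} c_{k}^{(g)},
\]
whose norm is at most $\delta \cdot C < \epsilon / 3$. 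Summing yields the desired $\epsilon$-bound.

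The only non-routine point — and the entire reason nuclearity of $\mathfrak{C}$ appears in the hypotheses — is the uniform contractivity (or even well-definedness) of the extension $\psi \otimes \id_{\mathfrak{C}}$ on $\mathfrak{A} \otimes \mathfrak{C}$. Without nuclearity, different $C^{*}$-norms on the algebraic tensor product could obstruct the extension or leave its norm uncontrolled as $\psi$ varies, which would spoil the second triangle-inequality step above. Once this standard point is granted, everything else is straightforward bookkeeping.
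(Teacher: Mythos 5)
Your proof is correct and follows essentially the same route as the paper's: approximate each element of $\mathcal{F}$ by finite sums of elementary tensors, take $\mathcal{G}$ to be the resulting set of $\mathfrak{A}$-legs, and run a three-step triangle inequality in which the elementary-tensor term is controlled by $\mathcal{G}$-$\delta$-multiplicativity and the remaining terms by contractivity of $\psi \otimes \id_{\mathfrak{C}}$. Your closing remark about needing $\psi \otimes \id_{\mathfrak{C}}$ to be well-defined and contractive (which in practice requires $\psi$ to be completely positive, as it is in all the paper's applications) is the same implicit point the paper relies on.
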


\begin{proof}
Set $m_{1} = \max \setof{ \| x \| }{ x \in \mathcal{F} }$.  Note that each element of $\mathfrak{A} \otimes \mathfrak{C}$ can be approximated by a finite linear combinations of elementary tensors, $a \otimes c$, where $a \in \mathfrak{A}$ and $c \in \mathfrak{C}$.  Thus, there exists $N \in \N$ and there exist $a_{1}, \dots, a_{N} \in \mathfrak{A}$ and $c_{1}, \dots c_{N} \in \mathfrak{C}$ such that for each $x \in \mathcal{F}$, there exist a finite subset $\mathcal{S}_{x} \subseteq \setof{ k \in \N }{ k \leq N }$ and functions $\ftn{ k_{x}, n_{x} }{ \mathcal{S} }{ \setof{ k \in \N }{ k \leq N } }$ such that  
\begin{align*}
\norm{ x -  \sum_{ i \in \mathcal{S}_{x}  }  a_{k_{x}(i)} \otimes c_{ n_{x} (i) } }  < \min \left\{ \frac{\epsilon}{5( m_{1} +1) } , 1 \right\}.
\end{align*}
Note that  $\norm{  \sum_{ i \in \mathcal{S}_{x}  }  a_{k_{x}(i)} \otimes c_{ n_{x} (i) } } \leq 1 + m_{1}$.

Define $M, m_{2} \in \R$ and $\mathcal{G} \subseteq \mathcal{A}$ as follows:  
\begin{align*}
M &= \max \setof{ | \mathcal{S}_{x} | }{ x \in \mathcal{F} } \\
m_{2} &= \setof{ \| c_{i} \| }{ i \in \N , 1 \leq i \leq N } \\
\mathcal{G} &= \setof{ a_{i} }{ i \in \N , 1 \leq i \leq N }.
\end{align*} 
Set $\delta = \frac{ \epsilon }{ 5 M^{2}( m_{2} + 1 )^{2} }$.  Suppose $\ftn{ \psi }{ \mathfrak{A} }{ \mathfrak{B} }$ is a contractive, linear map such that $\psi$ is $\mathcal{G}$-$\delta$-multiplicative.  If $x,y \in \setof{ a_{i} }{ i \in \N , 1 \leq i \leq N }$ and $s,t \in \setof{ c_{i} }{ i \in \N , 1 \leq i \leq n }$, then 
\begin{align*}
\norm{ ( \psi \otimes \id_{ \mathfrak{C} } )( x \otimes s ) ( \psi \otimes \id_{ \mathfrak{C} } )( y \otimes t ) - ( \psi \otimes \id_{ \mathfrak{C} } )( xy \otimes st ) } 
&= \norm{ \psi ( x ) \psi ( y ) \otimes st - \psi ( xy ) \otimes st } \\
&\leq \norm{ \psi ( x ) \psi (y) - \psi ( x y ) } m_{2}^{2} \\
&\leq \delta m_{2}^{2}.
\end{align*}
Let $S_{1}, S_{2}$ be subsets of $\setof{ i \in \N }{ i \leq N }$ and $\ftn{ k_{i}, n_{i} }{ \mathcal{S}_{i} }{ \setof{ i \in \N }{ i \leq N } }$ be functions.  Set $y_{1} = \sum_{ i \in \mathcal{S}_{1} } a_{k_{1}(i) } \otimes c_{n_{1}(i) }$ and $y_{2} = \sum_{ i \in \mathcal{S}_{2} } a_{k_{2}(i) } \otimes c_{n_{2}(i) }$
\begin{align*}
\norm{ ( \psi \otimes \id_{ \mathfrak{C} } ) \left(  y_{1} \right) ( \psi \otimes \id_{ \mathfrak{C} } ) \left( y_{2} \right) - ( \psi \otimes \id_{ \mathfrak{C} } ) \left( y_{1}y_{2} \right) }  &< \sum_{ i \in \mathcal{S}_{1} , j \in \mathcal{S}_{2} }  \delta m_{2}^{2} \\
&< \frac{ \epsilon }{ 5 }.
 \end{align*}

Let $x_{1}, x_{2} \in \mathcal{F}$.  Then there exist subsets $S_{1}, S_{2}$  of $\setof{ i \in \N }{ i \leq N }$ and functions $\ftn{ k_{i}, n_{i} }{ \mathcal{S}_{i} }{ \setof{ i \in \N }{ i \leq N } }$ such that
\begin{align*}
\norm{ x_{i} -  y_{i}}  < \min \left\{ \frac{\epsilon}{5( m_{1} +1) } , 1 \right\}
\end{align*}
where $y_{j} = \sum_{ i \in \mathcal{S}_{j}  }  a_{k_{j}(i)} \otimes c_{ n_{j} (i) }$.  Therefore, 
\begin{align*}
&\norm{ (\psi \otimes \id_{\mathfrak{C}} ) ( x_{1} ) (\psi \otimes \id_{\mathfrak{C}} )  ( x_{2} ) - (\psi \otimes \id_{\mathfrak{C}} )  ( x_{1} x_{2} ) }  \\
&\quad \leq \norm{ (\psi \otimes \id_{\mathfrak{C}} ) ( x_{1} - y_{1} ) } \norm{ (\psi \otimes \id_{\mathfrak{C}} ) ( x_{2} ) } + \norm{ (\psi \otimes \id_{\mathfrak{C}} )  ( y_{1} ) } \norm{ (\psi \otimes \id_{\mathfrak{C}} ) ( x_{2} - y_{2} ) }  \\
&\qquad  +  \norm{ (\psi \otimes \id_{\mathfrak{C}} )  (  y_{1}) (\psi \otimes \id_{\mathfrak{C}} ) ( y_{2} ) - (\psi \otimes \id_{\mathfrak{C}} )  ( y_{1} y_{2} ) } + \norm{ (\psi \otimes \id_{\mathfrak{C}} )  ( y_{1} ( y_{2} - x_{2} ) ) } \\
&\qquad   + \norm{ (\psi \otimes \id_{\mathfrak{C}} ) ( (y_{1} - x_{1} ) x_{2} ) } \\
&\quad \leq \norm{ x_{1} - y_{1} } \norm{  x_{2}  } + \norm{  y_{1} } \norm{ x_{2} - y_{2} } + \norm{ (\psi \otimes \id_{\mathfrak{C}} )  (  y_{1}) (\psi \otimes \id_{\mathfrak{C}} ) ( y_{2} ) - (\psi \otimes \id_{\mathfrak{C}} )  ( y_{1} y_{2} ) } \\
&\qquad  + \norm{  y_{1} } \norm{ y_{2} - x_{2} } + \norm{ y_{1} - x_{1} }\norm{ x_{2}  } \\
&\quad < \epsilon.
\end{align*}
\end{proof}

Using Lemma \ref{l:almostmult} and arguing as in Remark 4.5.1 and 6.1.1 of \cite{booklin} we get the following lemma.  See also Section 2.3 of \cite{hl_apphomtop}.

\begin{lemma}\label{l:partialmap}
Let $\mathfrak{A}$ be a $C^{*}$-algebra.  Let $\mathcal{P}_{0}$ be a finite set of projections in $\bigcup_{ m = 1}^{ \infty } ( \mathfrak{A} \otimes \mathsf{M}_{m})^{\dagger}$, let $\mathcal{P}_{1}$ be a finite set of unitaries in $\bigcup_{ m = 1}^{ \infty } ( \mathfrak{A} \otimes \mathsf{M}_{m} )^{\dagger}$.  Then there exist $\delta > 0$ and a finite subset $\mathcal{F}$ of $\mathcal{A}$ such that the following holds:  Suppose $\mathfrak{B}$ is a $C^{*}$-algebra and $\ftn{ \psi }{ \mathfrak{A} }{ \mathfrak{B} }$ is a contractive, completely positive, linear map such that $\psi$ is $\mathcal{F}$-$\delta$-multiplicative.  Then there exist a finite set $\mathcal{G}_{0}$ of projections in $\bigcup_{ m = 1}^{ \infty } (\mathfrak{B} \otimes \mathsf{M}_{m})^{\dagger}$ and a finite set of unitaries $\mathcal{G}_{1}$ of $\bigcup_{ m = 1}^{ \infty } ( \mathfrak{B} \otimes \mathsf{M}_{m} )^{ \dagger }$ such that 
\begin{itemize}
\item[(1)] for each $p \in \mathcal{P}_{0}$, there exists $e(p) \in \mathcal{G}_{0}$ such that 
\begin{align*}
\norm{ \psi ( p ) - e(p) } < \frac{1}{2}
\end{align*}
and if $p_{1}, p_{2} \in \mathcal{P}_{0}$ such that $[ p_{1} ] = [ p_{2} ]$ in $K_{0} ( \mathfrak{A} )$, then $[ e(p_{1}) ] = [ e( p_{2} ) ]$ in $K_{0} ( \mathfrak{B} )$ and if $p_{1}, p_{2}, p_{1} \oplus p_{2} \in \mathcal{P}_{0}$, then $[ e( p_{1} \oplus p_{2} ) ] = [ e( p_{1} ) ] + [ e( p_{2} ) ]$ in $K_{0} ( \mathfrak{B} )$.

\item[(2)] for each $u \in \mathcal{P}_{1}$, there exists $v(u) \in \mathcal{G}_{1}$ such that 
\begin{align*}
\norm{ \psi ( u ) - v(u) } < \frac{1}{2}
\end{align*}
and if $u_{1}, u_{2} \in \mathcal{P}_{1}$ such that $[ u_{1} ] = [ u_{2} ]$ in $K_{1} ( \mathfrak{A} )$, then $[ v( u_{1} ) ] = [ v ( u_{2} ) ]$ in $K_{1} ( \mathfrak{B} )$ and  if $u_{1}, u_{2}, u_{1} \oplus u_{2} \in \mathcal{P}_{1}$, then $[ v( u_{1} \oplus u_{2} ) ] = [ v( u_{1} ) ] + [ v( u_{2} ) ]$ in $K_{1} ( \mathfrak{B} )$
\end{itemize}
\end{lemma}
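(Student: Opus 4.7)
My plan is to reduce the whole statement to two ingredients: the small-perturbation lemma for projections and unitaries (Lemma \ref{l:almostunit}), and the fact that tensoring a sufficiently multiplicative map with a finite-dimensional algebra preserves multiplicativity (Lemma \ref{l:almostmult}). Fix $\epsilon_{0} < 1/4$ and let $\delta_{0} > 0$ be the constant produced by Lemma \ref{l:almostunit} for this $\epsilon_{0}$. Each $p \in \mathcal{P}_{0}$ lies in some $(\mathfrak{A} \otimes \mathsf{M}_{m})^{\dagger}$ and satisfies $p^{2} - p = 0$ and $p^{*} - p = 0$; each $u \in \mathcal{P}_{1}$ lies in some $(\mathfrak{A} \otimes \mathsf{M}_{m})^{\dagger}$ and satisfies $u^{*}u - 1 = 0$ and $uu^{*} - 1 = 0$. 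For each such $p$ and $u$, apply Lemma \ref{l:almostmult} (with $\mathfrak{C} = \mathsf{M}_{m}$) to obtain finite sets $\mathcal{G}_{p}, \mathcal{G}_{u} \subseteq \mathfrak{A}$ and tolerances guaranteeing that if $\psi$ is sufficiently multiplicative on the union of these sets, then $\psi \otimes \id_{\mathsf{M}_{m}}$ (extended to the unitization) moves $p$ and $u$ by less than $\delta_{0}$ from satisfying the defining relations. Lemma \ref{l:almostunit} then produces actual projections $e(p)$ and unitaries $v(u)$ within $\epsilon_{0} < 1/2$ of $\psi(p)$ and $\psi(u)$, with well-defined $K$-theory classes.

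To guarantee that equivalent projections in $\mathcal{P}_{0}$ produce equal classes, I will fix, for each pair $p_{1}, p_{2} \in \mathcal{P}_{0}$ with $[p_{1}] = [p_{2}]$ in $K_{0}(\mathfrak{A})$, a Murray--von Neumann witness $r_{p_{1}, p_{2}} \in (\mathfrak{A} \otimes \mathsf{M}_{N})^{\dagger}$ with $r^{*}r = p_{1} \oplus q$ and $rr^{*} = p_{2} \oplus q$ for some projection $q$. Since there are finitely many such pairs, the finitely many witnesses can be added to the generating set before invoking Lemma \ref{l:almostmult}; a sufficiently multiplicative $\psi$ then sends $r_{p_{1}, p_{2}}$ to an element whose polar part is a partial isometry implementing equivalence of $e(p_{1}) \oplus e(q)$ and $e(p_{2}) \oplus e(q)$, giving $[e(p_{1})] = [e(p_{2})]$ in $K_{0}(\mathfrak{B})$. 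Analogously, if $[u_{1}] = [u_{2}]$ in $K_{1}(\mathfrak{A})$, then after stabilization $u_{1} \oplus 1_{k}$ and $u_{2} \oplus 1_{k}$ are connected through a finite product $\prod_{j} \exp(ia_{j})$ of exponentials of selfadjoint elements $a_{j} \in \mathfrak{A} \otimes \mathsf{M}_{m+k}$; I will include these $a_{j}$ (or a generating subset from $\mathfrak{A}$) in the enlarged set, so that $\prod_{j} \exp(i \psi(a_{j}))$ implements the corresponding equivalence in $K_{1}(\mathfrak{B})$.

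For the additivity under direct sum, observe that $\psi(p_{1} \oplus p_{2})$ literally equals $\psi(p_{1}) \oplus \psi(p_{2})$ after identifying $(\mathfrak{A} \otimes \mathsf{M}_{m_{1}+m_{2}})^{\dagger}$ block-diagonally, so $e(p_{1}) \oplus e(p_{2})$ and $e(p_{1} \oplus p_{2})$ are both genuine projections within $2\epsilon_{0} < 1/2$ of $\psi(p_{1} \oplus p_{2})$; the uniqueness clause of Lemma \ref{l:almostunit} forces them to have the same $K_{0}$-class. The unitary case is identical. Finally, take $\mathcal{F}$ to be the union over all the finite subsets of $\mathfrak{A}$ produced above (finite, since $\mathcal{P}_{0}, \mathcal{P}_{1}$ are finite) and $\delta$ to be the minimum of the corresponding tolerances. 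The set $\mathcal{G}_{0}$ (resp.\ $\mathcal{G}_{1}$) is then $\{e(p) : p \in \mathcal{P}_{0}\}$ (resp.\ $\{v(u) : u \in \mathcal{P}_{1}\}$). The only mildly delicate point is the bookkeeping needed to process the $K_{1}$-equivalence witnesses through exponentials, but this is standard and causes no genuine obstacle.
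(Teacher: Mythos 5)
Your proposal is correct and follows essentially the same route as the paper, which simply invokes Lemma \ref{l:almostmult} together with the standard perturbation arguments of Remark 4.5.1 and Section 6.1.1 of \cite{booklin} (i.e., Lemma \ref{l:almostunit} to produce $e(p)$ and $v(u)$, plus finitely many Murray--von Neumann and homotopy witnesses absorbed into $\mathcal{F}$). The one point to spell out carefully is $\psi(\exp(ia_j))\approx\exp(i\psi(a_j))$, which requires placing sufficiently many powers of each $a_j$ in $\mathcal{F}$, but this is the routine bookkeeping you already flag.
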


\begin{definition}
Let $\mathfrak{A}$ be a $C^{*}$-algebra and let $\mathcal{P}$ be a finite subset of $\mathbb{P} ( \mathfrak{A} )$.  By Lemma \ref{l:partialmap}, there exist a finite subset $\mathcal{F}$ of $\mathfrak{A}$ and $\delta > 0$ such that the following holds:  Suppose $\mathfrak{B}$ is a $C^{*}$-algebra and $\ftn{ \psi }{ \mathfrak{A} }{ \mathfrak{B} }$ is a contractive, complete positive, linear map such that $\psi$ is $\mathcal{F}$-$\delta$-multiplicative.  Then there exists a finite subset $\mathcal{Q}$ of $\mathbb{P} ( \mathfrak{A} )$ such that for each projection $p \in \mathcal{P} \cap ( \mathfrak{A} \otimes \mathsf{M}_{m} \otimes C_{n} )^{\dagger}$ and unitary $u \in \mathcal{P} \cap ( \mathfrak{A} \otimes \mathsf{M}_{m} \otimes C_{n} )^{\dagger}$, there exist a projection $e(p) \in \mathcal{Q} \cap ( \mathfrak{B} \otimes \mathsf{M}_{m} \otimes C_{n} )^{ \dagger } $ and a unitary in $v(u) \in \mathcal{Q} \cap ( \mathfrak{A} \otimes \mathsf{M}_{m} \otimes C_{n} )^{ \dagger } $ such that 
\begin{align*}
\norm{ ( \psi \otimes \id_{ \mathsf{M}_{m} } \otimes \id_{ C_{n} } ) ( p ) - e(p) }  < \frac{1}{2} \quad \text{and} \quad \norm{ ( \psi \otimes \id_{ \mathsf{M}_{m} } \otimes \id_{ C_{n} } )( u ) - v(u) }  < \frac{1}{2}.
\end{align*}
Moreover,
\begin{itemize}
\item[(1)] If $p_{1}, p_{2}$ are projections in $\mathcal{P}$ and $[ p_{1} ] = [ p_{2} ]$ in $\underline{K} ( \mathfrak{A} )$, then $[ e( p_{1} ) ] = [ e ( p_{2} ) ]$ in $\underline{K} ( \mathfrak{B} )$.

\item[(2)] If $u_{1}, u_{2}$ are unitaries in $\mathcal{P}$ and $[ u_{1} ] = [ u_{2} ]$ in $\underline{K} ( \mathfrak{A} )$, then $[ v( u_{1} ) ] = [ v( u_{2} ) ]$ in $\underline{K} ( \mathfrak{B} )$.

\item[(3)] If $p_{1}, p_{2}, p_{1} \oplus p_{2}$ are projections in $\mathcal{P} $, then $[ e( p_{1} \oplus p_{2} ) ] = [ e( p_{1} ) ] + [ e(p_{2}) ]$ in $\underline{K} ( \mathfrak{B} )$

\item[(4)] If $u_{1}, u_{2}, u_{1} \oplus u_{2}$ are unitaries in $\mathcal{P}$, then $[ v( u_{1} \oplus u_{2} ) ] = [ v( u_{1} ) ] +  [ v(u_{2}) ]$ in $\underline{K} ( \mathfrak{B} )$. 
\end{itemize}

Let $\overline{ \mathcal{P} }$ be the image of $\mathcal{P}$ in $\underline{K} ( \mathfrak{A} )$.  Define the \emph{\textbf{partial map on $\underline{K} ( \mathfrak{A} )$ induced by $\psi$,  $\ftn{ \underline{K} ( \psi ) \vert_{ \mathcal{P} } }{ \overline{ \mathcal{P} } }{ \underline{K} ( \mathfrak{B} ) }$,}} by 
\begin{align*}
\underline{K} ( \psi ) \vert_{ \mathcal{P} } ( x ) =
\begin{cases}
 [ e(p) ], &\text{if $x = [ p ]$ for some projection $p$ in $\mathcal{P}$} \\
[ v(u) ], &\text{if $x = [ u ]$ for some unitary $u$ in $\mathcal{P}$}
\end{cases}.
\end{align*}  
By Lemma \ref{l:partialmap} and the above remarks, we have that $\underline{K} (\psi ) \vert_{ \mathcal{P} }$ is a well-defined map on $\overline{ \mathcal{P} }$.  Moreover, as in Section 6.1.1 of \cite{booklin}, by enlarging $\mathcal{F}$ and decreasing the size of $\delta$ if necessary, $\underline{K}( \psi ) \vert_{ \mathcal{P} }$ can be extended to a well-defined group homomorphism from the sub-group $G( \overline{\mathcal{P}} )$ of $\underline{K} ( \mathcal{A} )$ generated by $\overline{ \mathcal{P} }$.  

Throughout the paper, we will abuse notation and denote this extension by $\underline{K}( \psi ) \vert_{ \mathcal{P} }$.
\end{definition}

\begin{remark}
\begin{itemize}
\item[(1)] Note that if $p \in \mathcal{P}$ and $[p] \in K_{i} ( \mathfrak{A} ; \Z_{n} )$, then $\underline{K} ( \psi ) \vert_{ \mathcal{P} } ( [ p ] )$ is an element of $K_{i} ( \mathfrak{B} ; \Z_{n} )$.  Hence, $\underline{K} ( \psi ) \vert_{ \mathcal{P } }$ is group homomorphism from $G( \overline{ \mathcal{P} } ) \cap K_{*} ( \mathfrak{A} ; \Z_{n} )$ to $K_{*} ( \mathfrak{B} ; \Z_{n} )$.  We will denote this homomorphism by $K_{*} ( \psi ; \Z_{n} ) \vert_{ \mathcal{P} }$.  When $n = 0$, we simply write $K_{*} ( \psi ) \vert_{ \mathcal{P} }$.

\item[(2)]  Throughout the paper, for $\mathcal{P} \subseteq \mathbb{P} ( \mathfrak{A} )$, when we write $\underline{K} ( \psi ) \vert_{ \mathcal{P} }$ we mean that $\ftn{ \psi }{ \mathfrak{A} }{ \mathfrak{B} }$ is a contractive, complete positive, linear map such that $\psi$ is $\mathcal{F}$-$\delta$-multiplicative for some $\delta > 0$ and some finite subset $\mathcal{F}$ of $\mathbb{P} ( \mathfrak{A} )$, so that we have a well-defined homomorphism from $G( \overline{ \mathcal{P} } )$ to $\underline{ K } ( \mathfrak{B} )$.  
\end{itemize}
\end{remark}

\begin{lemma}\label{l:closemaps}
Let $\mathfrak{A}$ be a nuclear $C^{*}$-algebra.  Let $\mathcal{P}$ be a finite subset of $\mathbb{P} ( \mathfrak{A} )$.  Then there exist a finite subset $\mathcal{F}$ of $\mathfrak{A}$ and $\delta > 0$ such that if $\mathfrak{B}$ is a $C^{*}$-algebra and $\ftn{ \psi_{1}, \psi_{2} }{ \mathfrak{A} }{ \mathfrak{B} }$ are contractive, completely positive, linear maps such that $\psi_{i}$ is $\mathcal{F}$-$\delta$-multiplicative and 
\begin{align*}
\norm{ \psi_{1} ( a ) -\psi_{2} ( a ) } < \delta
\end{align*}
for all $a \in \mathcal{F}$, then $\underline{K} ( \psi_{i} ) \vert_{ \mathcal{P} } $ is well-defined and 
\begin{align*}
\underline{K} ( \psi_{1} ) \vert_{ \mathcal{P} } = \underline{K} ( \psi_{2} ) \vert_{ \mathcal{P} }. 
\end{align*}
\end{lemma}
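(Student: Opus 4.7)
The plan is to combine Lemma \ref{l:partialmap} (well-definedness of the partial maps), Lemma \ref{l:almostunit} (approximate projections and unitaries determine the same $K$-theory class once they are sufficiently close), and Lemma \ref{l:almostmult} (nuclearity propagates approximate multiplicativity across tensor products) to produce the required $\mathcal{F}$ and $\delta$.

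First I would apply Lemma \ref{l:partialmap} to $\mathcal{P}$ to obtain a finite subset $\mathcal{F}_{0}\subseteq \mathfrak{A}$ and $\delta_{0}>0$ such that any contractive, completely positive, $\mathcal{F}_{0}$-$\delta_{0}$-multiplicative map $\psi:\mathfrak{A}\to\mathfrak{B}$ yields a well-defined partial map $\underline{K}(\psi)|_{\mathcal{P}}$ on the subgroup generated by $\overline{\mathcal{P}}$. Next, for each $m,n$ with $\mathcal{P}\cap(\mathfrak{A}\otimes \mathsf{M}_{m}\otimes C_{n})^{\dagger}\neq \emptyset$, plug $\epsilon = \tfrac14$ into Lemma \ref{l:almostunit} to get a $\delta_{1}>0$ such that any two unitaries (respectively projections) within $\delta_{1}$ of a common ``approximate unitary'' (respectively approximate projection) represent the same class in $K_{1}$ (respectively $K_{0}$) of the ambient algebra.

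Then I would use nuclearity: for each pair $(m,n)$ occurring among elements of $\mathcal{P}$, apply Lemma \ref{l:almostmult} with $\mathfrak{C}=\mathsf{M}_{m}\otimes C_{n}$ and the finite set $\mathcal{F}_{m,n}:=\mathcal{P}\cap(\mathfrak{A}\otimes\mathsf{M}_{m}\otimes C_{n})^{\dagger}$, with tolerance $\tfrac{\delta_{1}}{10}$. This produces a finite set $\mathcal{G}_{m,n}\subseteq \mathfrak{A}$ and a positive number $\eta_{m,n}$ such that any $\mathcal{G}_{m,n}$-$\eta_{m,n}$-multiplicative contractive linear map $\psi$ yields a map $\psi\otimes\id_{\mathsf{M}_{m}\otimes C_{n}}$ that is $\mathcal{F}_{m,n}$-$\tfrac{\delta_{1}}{10}$-multiplicative. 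Set
\begin{equation*}
\mathcal{F} \ = \ \mathcal{F}_{0}\cup \bigcup_{(m,n)} \mathcal{G}_{m,n}, \qquad \delta \ = \ \min\bigl\{\delta_{0},\ \tfrac{\delta_{1}}{10},\ \min_{(m,n)} \eta_{m,n}\bigr\}.
\end{equation*}
After possibly enlarging $\mathcal{F}$ to include $1_{\mathfrak{A}}$ and contracting $\delta$, we may also arrange that $\psi_{i}$ being $\mathcal{F}$-$\delta$-multiplicative and $\delta$-close on $\mathcal{F}$ forces $\psi_{i}\otimes\id_{\mathsf{M}_{m}\otimes C_{n}}$ to be $\tfrac{\delta_{1}}{10}$-close to each other on every element of $\mathcal{F}_{m,n}$.

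Now given such $\psi_{1},\psi_{2}$, Lemma \ref{l:partialmap} gives well-defined $\underline{K}(\psi_{i})|_{\mathcal{P}}$. For a projection $p\in \mathcal{P}\cap(\mathfrak{A}\otimes\mathsf{M}_{m}\otimes C_{n})^{\dagger}$, choose projections $e_{i}(p)$ as in the construction of $\underline{K}(\psi_{i})|_{\mathcal{P}}$ with $\|(\psi_{i}\otimes\id)(p)-e_{i}(p)\|<\tfrac12$; by the triangle inequality and the choice of $\delta$, both $e_{1}(p)$ and $e_{2}(p)$ are then within $\delta_{1}$ of a common self-adjoint element (namely $(\psi_{1}\otimes\id)(p)$ up to $\tfrac{\delta_{1}}{10}$), and so Lemma \ref{l:almostunit}(2) forces $[e_{1}(p)]=[e_{2}(p)]$ in $K_{0}(\mathfrak{B}\otimes\mathsf{M}_{m}\otimes C_{n})$. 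The argument for unitaries in $\mathcal{P}$ is identical using Lemma \ref{l:almostunit}(1). Since these classes are exactly the values $\underline{K}(\psi_{i})|_{\mathcal{P}}$ assigns to the generators, the two partial maps agree on $\overline{\mathcal{P}}$ and hence on the subgroup it generates.

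The main obstacle is bookkeeping: one must choose the approximation tolerance small enough that (i) each $\psi_{i}\otimes\id$ actually produces genuine approximate projections/unitaries close to the $e_{i}(p)$, $v_{i}(u)$ chosen in the definition of the partial map, and (ii) the closeness between $\psi_{1}$ and $\psi_{2}$ survives tensoring with $\mathsf{M}_{m}\otimes C_{n}$ for all $(m,n)$ arising from $\mathcal{P}$. Nuclearity of $\mathfrak{A}$ (used via Lemma \ref{l:almostmult}) is precisely what lets us control this tensoring with finitely many test parameters, and finiteness of $\mathcal{P}$ ensures we only need to handle finitely many $(m,n)$.
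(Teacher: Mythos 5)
Your proposal is correct and follows essentially the same route as the paper: invoke Lemma \ref{l:partialmap} for well-definedness, use nuclearity via Lemma \ref{l:almostmult} to make $\psi_{1}\otimes\id$ and $\psi_{2}\otimes\id$ close on the finitely many elements of $\mathcal{P}$, and conclude that the chosen representatives have the same class. One small quantitative slip: the representatives $e_{i}(p)$ are only guaranteed to lie within $\tfrac12$ of $(\psi_{i}\otimes\id)(p)$, not within $\delta_{1}$ of a common element as you assert, so Lemma \ref{l:almostunit}(2) does not apply verbatim; but the triangle inequality still gives $\norm{e_{1}(p)-e_{2}(p)}<1$, and two projections (or unitaries) at distance less than $1$ define the same $K$-theory class, which is exactly how the paper closes the argument.
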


\begin{proof}
Choose a finite subset $\mathcal{F}_{1}$ of $\mathfrak{A}$ and $\delta_{1} > 0$ such that for any contractive, completely, positive, linear map $\ftn{ \psi }{ \mathfrak{A} }{ \mathfrak{B} }$ that is also $\mathcal{F}_{1}$-$\delta_{1}$-multiplicative, then $\underline{K} ( \psi ) \vert_{ \mathcal{P} }$ is well-defined.  Note that we can choose $\delta$ small enough with $\delta < \delta_{1}$ and a finite subset $\mathcal{F}$ of $\mathfrak{A}$ large enough with $\mathcal{F}_{1} \subseteq \mathcal{F}$ such that if $\ftn{ \psi }{ \mathfrak{A} }{ \mathfrak{B} }$ is a contractive, completely positive, linear map that is $\mathcal{F}$-$\delta$-multiplicative, then for each $p \in \mathcal{P}$, then 
\begin{align*}
\norm{ ( \psi \otimes \id_{ \mathsf{M}_{m} } \otimes \id_{C_{n} } )( p ) - x ( p ) } < \frac{1}{2}
\end{align*}
where $x ( p )$ is element of $\mathbb{P} ( \mathfrak{B} )$ given in Lemma \ref{l:partialmap}.  

Let $\ftn{ \psi_{1}, \psi_{2} }{ \mathfrak{A} }{ \mathfrak{B} }$ be contractive, completely positive, linear maps such that $\psi_{i}$ is $\mathcal{F}$-$\delta$-multiplicative.  Let $p \in \mathcal{P}$.  Let $x_{i} ( p )$ be the element in $\mathbb{P} ( \mathfrak{B} )$ given by Lemma \ref{l:partialmap}.  By the definition of $\underline{K} ( \psi_{i} ) \vert_{ \mathcal{P} }$, $\underline{K}( \psi_{i} ) \vert_{ \mathcal{P} } ( [ p ] ) = [ x_{i} ( p ) ]$.  Since
\begin{align*}
\norm{ ( \psi_{i} \otimes \id_{ \mathsf{M}_{m} } \otimes \id_{ C_{n} } )( p ) - x_{i} ( p ) } < \frac{1}{2}
\end{align*}
we have that 
\begin{align*}
\norm{ x_{1} ( p ) - x_{2} ( p ) } < 1.
\end{align*}
Therefore, $[ x_{1} ( p ) ] = [ x_{2} ( p ) ]$ in $\underline{K} ( \mathfrak{B} )$.  Hence, $\underline{K}( \psi_{1} ) \vert_{ \mathcal{P} } ( [ p ] ) = \underline{K}( \psi_{2} ) \vert_{ \mathcal{P} } ( [ p ] )$.
\end{proof}

\subsection{Bott Maps} We now define the Bott maps as in \cite{hl_asyunit}.  Let $\epsilon > 0$ and $\mathcal{F}$ be a finite subset of $\mathfrak{A} \otimes C(S^{1})$.  Then there exist $\delta > 0$ and a finite subset $\mathcal{G}$ of $\mathfrak{A}$ such that the following holds: if $\ftn{h}{ \mathfrak{A} }{ \mathfrak{B} }$ is a $*$-homomorphism and $u$ is a unitary in $\mathfrak{B}$ such that 
\begin{equation*}
\norm{  h ( a ) u - u h ( a ) } < \delta
\end{equation*}  
for all $a \in \mathcal{G}$, then the contractive, completely positive, linear map $\ftn{ \varphi_{h, u } }{ \mathfrak{A} \otimes C( S^{1} ) } { \mathfrak{B} }$ defined by $\varphi_{h,u} ( a \otimes f ) = h ( a ) f( u )$ is $\mathcal{F}$-$\epsilon$-multiplicative.  

Note that for every finite subset $\mathcal{F}$ of $\mathfrak{C}$ and $\epsilon > 0$, there exist $\delta > 0$ and a finite subset $\mathcal{G}$ of $\mathfrak{A}$ such that the following holds:  if $\ftn{ \psi }{ \mathfrak{C} }{ \mathfrak{A} \otimes C( S^{1} ) }$ and $\ftn{ h }{ \mathfrak{A} }{ \mathfrak{B} }$ are homomorphisms and $u$ is unitary in $\mathfrak{B}$ such that 
\begin{align*}
\norm{ h(a) u - u h(a) } < \delta
\end{align*}
for all $a \in \mathcal{G}$, then the contractive, completely positive, linear map $\ftn{ \varphi_{h, u } \circ \psi }{ \mathfrak{C} } { \mathfrak{B} }$ is $\mathcal{F}$-$\epsilon$-multiplicative.

\begin{definition}
Note that by the K{\"u}nneth Formula \cite{uct}, the embedding $\ftn{ j_{ \mathfrak{A} } }{ \mathfrak{A} \otimes C_{0} ( 0 , 1 ) } { \mathfrak{A} \otimes C( S^{1} ) }$ induces injective group homomorphisms
\begin{align*}
\ftn{ K_{0} ( j_{ \mathfrak{A} } ) }{ K_{0} ( \mathfrak{A} \otimes C_{0} ( 0 , 1 ) ) }{ K_{0} ( \mathfrak{A} \otimes C( S^{1} ) ) } \\
\ftn{ K_{1} ( j_{ \mathfrak{A} } ) }{ K_{1} ( \mathfrak{A} \otimes C_{0} ( 0 , 1 ) ) }{ K_{1} ( \mathfrak{A} \otimes C( S^{1} ) ) }.
\end{align*}
Using Bott periodicity to identify $K_{0} ( \mathfrak{A} \otimes C_{0} (0 , 1 ) )$ with $K_{1} ( \mathfrak{A} )$ and $K_{1} ( \mathfrak{A} \otimes C_{0} ( 0 , 1 ) )$ with $K_{0} ( \mathfrak{A} )$, we obtain injective group homomorphisms
\begin{align*}
\ftn{ \beta_{ \mathfrak{A} }^{(0)} }{ K_{0} ( \mathfrak{A} ) }{ K_{1} ( \mathfrak{A} \otimes C( S^{1} ) ) } \\
\ftn{ \beta_{ \mathfrak{A} }^{(1)} }{ K_{1} ( \mathfrak{A} ) }{ K_{0} ( \mathfrak{A} \otimes C( S^{1} ) ) }.
\end{align*}
Using Bott periodicity again, we obtain injective group homomorphisms
\begin{align*}
\ftn{ \beta_{ \mathfrak{A} , k }^{(0)} }{ K_{0} ( \mathfrak{A} ; \Z_{k} ) }{ K_{1} ( \mathfrak{A} \otimes C( S^{1} ) ; \Z_{k} ) } \\
\ftn{ \beta_{ \mathfrak{A}, k }^{(1)} }{ K_{1} ( \mathfrak{A} ; \Z_{k} ) }{ K_{0} ( \mathfrak{A} \otimes C( S^{1} ) ; \Z_{k} ) }.
\end{align*}

Let $\mathcal{P}$ be a finite subset of $\mathbb{P} ( \mathfrak{A} )$.  By the above remarks and Section \ref{maps}, there exist $\delta > 0$ and a finite subset $\mathcal{F}$ of $\mathfrak{A}$ such that if $\ftn{ h }{ \mathfrak{A} }{ \mathfrak{B} }$ is a $*$-homomorphism and $u$ is a unitary in $\mathfrak{B}$ with 
\begin{equation*}
\norm{ h( a ) u - u h ( a ) } < \delta
\end{equation*}
for all $a \in \mathcal{F}$, then $\underline{K} ( \varphi_{h,u} \circ j_{ \mathfrak{A} } ) \vert_{ \mathcal{P} }$ is well-defined, which we will denote as
\begin{equation*}
\mathrm{Bott} ( h , u ) \vert_{ \mathcal{P} }.
\end{equation*}
In particular, $\mathrm{bott}_{1} ( h, v ) \vert_{ \mathcal{P} } = K_{0} ( \varphi_{h,u} ) \circ \beta_{ \mathfrak{A} }^{(1)} \vert_{ \mathcal{P} }$ and $\mathrm{bott}_{0} ( h, v ) \vert_{ \mathcal{P} } = K_{1} ( \varphi_{h,u} ) \circ \beta_{ \mathfrak{A} }^{(0)} \vert_{ \mathcal{P} }$.  If $u h(a) = h(a) u$ for all $a \in \mathfrak{A}$, then $\mathrm{Bott} ( h, u )$ is well-defined on $\underline{K} ( \mathfrak{A} )$. 
\end{definition}

\begin{lemma}\label{l:prodbottmap}
Let $\mathfrak{A}$ be a unital $C^{*}$-algebra and let $\mathcal{P}$ be a finite subset of $\mathbb{P}( \mathfrak{A} )$.  Then there exist $\delta > 0$ and a finite subset $\mathcal{F}$ of $\mathfrak{A}$ such that the following holds:  if $\ftn{ h }{ \mathfrak{A} }{ \mathfrak{B} }$ is a $*$-homomorphism and $u_{1} , \dots, u_{n}$ are unitaries in $\mathfrak{B}$ with
\begin{equation*}
\norm{ h ( a ) u_{i} - u_{i} h(a) } < \frac{ \delta }{ n }
\end{equation*}
for all $a \in \mathcal{F}$ and for all $i$, then $\mathrm{Bott} ( h, u_{i} ) \vert_{ \mathcal{P} }$ and $\mathrm{Bott} ( h, u_{1} \cdots u_{n} ) \vert_{ \mathcal{P } }$ are well-defined and 
\begin{equation*}
\mathrm{Bott} ( h, u_{1} \cdots u_{n} ) \vert_{ \mathcal{P} } = \sum_{ i = 1}^{n} \mathrm{Bott} ( h, u_{i} ) \vert_{ \mathcal{P} }.
\end{equation*}

Moreover, if $\mathrm{Bott} ( h , u_{1} ) \vert_{ \mathcal{P} } = \mathrm{Bott} ( h, u_{2} ) \vert_{ \mathcal{P} }$, then $\mathrm{Bott} ( h, u_{1} u_{2}^{*} ) \vert_{ \mathcal{P} } = 0$.
\end{lemma}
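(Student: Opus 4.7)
The plan is to reduce the additivity statement to the standard fact that the Bott map is additive under direct sums, and then to connect the two unitaries $u_1 u_2 \cdots u_n \oplus 1_{n-1}$ and $\mathrm{diag}(u_1, \ldots, u_n)$ in $M_n(\mathfrak{B})$ via a path of unitaries that approximately commutes with the diagonal embedding of $h$. First I would choose $\mathcal{F}$ and $\delta$ large enough and small enough so that every map $\varphi_{h, u_i} \circ j_{\mathfrak{A}}$, together with its ``diagonal'' and ``product'' analogs described below, is sufficiently $\mathcal{F}'$--$\delta'$--multiplicative on a suitable finite subset $\mathcal{F}' \subseteq \mathfrak{A}$ to make $\underline{K}$ well-defined on the generators of the subgroup of $\underline{K}(\mathfrak{A})$ generated by $\overline{\mathcal{P}}$. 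The factor of $\frac{\delta}{n}$ in the hypothesis is precisely what guarantees that $\| h(a)(u_1 \cdots u_n) - (u_1 \cdots u_n) h(a) \| < \delta$ for $a \in \mathcal{F}$, via a telescoping estimate, so that $\mathrm{Bott}(h, u_1 \cdots u_n)|_{\mathcal{P}}$ is also well-defined.

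Next I would consider the $*$-homomorphism $H : \mathfrak{A} \to M_n(\mathfrak{B})$ given by $H(a) = \mathrm{diag}(h(a), \ldots, h(a))$ together with the two unitaries $U = \mathrm{diag}(u_1, \ldots, u_n)$ and $V = \mathrm{diag}(u_1 u_2 \cdots u_n, 1, \ldots, 1)$ in $M_n(\mathfrak{B})$, both of which approximately commute with $H(a)$ for $a \in \mathcal{F}$. Under the canonical stabilization isomorphism $K_*(M_n(\mathfrak{B})) \cong K_*(\mathfrak{B})$, the CP map $\varphi_{H, U} \circ j_{\mathfrak{A}}$ splits as a direct sum of $\varphi_{h, u_i} \circ j_{\mathfrak{A}}$, which gives
\[
\mathrm{Bott}(H, U)|_{\mathcal{P}} = \sum_{i=1}^n \mathrm{Bott}(h, u_i)|_{\mathcal{P}},
\]
while $\varphi_{H, V} \circ j_{\mathfrak{A}}$ splits as $\varphi_{h, u_1 \cdots u_n} \circ j_{\mathfrak{A}}$ direct sum with trivial pieces (whose Bott contribution vanishes because they factor through a constant homomorphism on $C(S^1)$), yielding
\[
\mathrm{Bott}(H, V)|_{\mathcal{P}} = \mathrm{Bott}(h, u_1 \cdots u_n)|_{\mathcal{P}}.
\]

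The key step is then to exhibit a norm-continuous path of unitaries $\{W(t)\}_{t \in [0,1]}$ in $M_n(\mathfrak{B})$ with $W(0) = V$, $W(1) = U$, such that for every $t$ and every $a \in \mathcal{F}$, $\|H(a) W(t) - W(t) H(a)\| < \delta$. This is achieved by iterating the standard $M_2$ identity
\[
\begin{pmatrix} uv & 0 \\ 0 & 1 \end{pmatrix} = \begin{pmatrix} u & 0 \\ 0 & 1 \end{pmatrix} R(t) \begin{pmatrix} v & 0 \\ 0 & 1 \end{pmatrix} R(t)^{*} \quad \text{at } t = \tfrac{\pi}{2},
\]
where $R(t)$ is the scalar rotation matrix; the scalar nature of $R(t)$ ensures exact commutation with $H$ along the homotopy, so the commutator bound propagates from the $u_i$'s to $W(t)$. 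Applying the analogue of Lemma \ref{l:closemaps} along a sufficiently fine partition of the path (recall that the Bott class is locally constant under sufficiently small perturbations of the implementing unitary), one concludes $\mathrm{Bott}(H, V)|_{\mathcal{P}} = \mathrm{Bott}(H, U)|_{\mathcal{P}}$, which combined with the two displayed equalities gives the desired additivity. The main obstacle in executing this is purely bookkeeping: making sure that all the intermediate CP maps (the direct-sum decompositions, the homotopy interpolants) remain sufficiently multiplicative on a single uniform finite set $\mathcal{F}$, which forces one to choose $\mathcal{F}$ and $\delta$ by applying Lemmas \ref{l:almostmult} and \ref{l:closemaps} first to $M_n(\mathfrak{B})$ and then pulling back.

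For the moreover part, applying the additivity just established to the pair $(u_2, u_2^*)$ yields $\mathrm{Bott}(h, u_2^*)|_{\mathcal{P}} = -\mathrm{Bott}(h, u_2)|_{\mathcal{P}}$ (since $u_2 u_2^* = 1$ and $\mathrm{Bott}(h, 1)|_{\mathcal{P}} = 0$ as $\varphi_{h,1} \circ j_{\mathfrak{A}}$ factors through the evaluation $C_0(0,1) \to \mathbb{C}$ at a point, which is null-homotopic). Then additivity applied to $(u_1, u_2^*)$ gives $\mathrm{Bott}(h, u_1 u_2^*)|_{\mathcal{P}} = \mathrm{Bott}(h, u_1)|_{\mathcal{P}} - \mathrm{Bott}(h, u_2)|_{\mathcal{P}} = 0$ under the hypothesis, after contracting $\delta$ and enlarging $\mathcal{F}$ so that the hypothesis of the first part is satisfied for $n = 2$ with $u_2$ replaced by $u_2^*$.
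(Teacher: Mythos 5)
Your proposal is correct, and where the paper's own proof has content it coincides with yours: the published argument consists of exactly your telescoping estimate to show that $\mathrm{Bott}(h, u_1\cdots u_n)\vert_{\mathcal{P}}$ is well-defined, after which the additivity formula is simply asserted (``enlarging $\mathcal{F}$ and decreasing the size of $\delta$ if necessary''), and the ``moreover'' clause is not addressed at all. Your rotation-homotopy argument --- passing to $H=\mathrm{diag}(h,\dots,h)$ and the two diagonal unitaries $U$ and $V$ in $\mathsf{M}_n(\mathfrak{B})$, observing that $\varphi_{H,U}\circ j_{\mathfrak{A}}$ and $\varphi_{H,V}\circ j_{\mathfrak{A}}$ split as direct sums (with the constant pieces contributing $0$ because they kill $C_0(0,1)$), connecting $V$ to $U$ by the scalar-rotation (Whitehead) path, which commutes exactly with $H$ so the commutator bound $\delta$ propagates, and invoking local constancy of $\underline{K}(\cdot)\vert_{\mathcal{P}}$ along a sufficiently fine partition of the path --- is precisely the standard argument the authors are implicitly citing, and your treatment of the ``moreover'' part (via $\mathrm{Bott}(h,1)\vert_{\mathcal{P}}=0$, additivity applied to $(u_2,u_2^{*})$ to get $\mathrm{Bott}(h,u_2^{*})\vert_{\mathcal{P}}=-\mathrm{Bott}(h,u_2)\vert_{\mathcal{P}}$, and then to $(u_1,u_2^{*})$) is also correct; note that $\|h(a)u_2^{*}-u_2^{*}h(a)\|=\|h(a)u_2-u_2h(a)\|$, so the hypothesis indeed transfers. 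One small slip: in your displayed $\mathsf{M}_2$ identity the equality with $\mathrm{diag}(uv,1)$ holds at $t=0$, not at $t=\pi/2$; at $t=\pi/2$ the right-hand side equals $\mathrm{diag}(u,v)$. The path itself is the right one, so this does not affect the argument.
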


\begin{proof}
Let $\delta > 0$ and $\mathcal{F}$ be a finite subset of $\mathfrak{A}$ such that if $\ftn{ h }{ \mathfrak{A} }{ \mathfrak{B} }$ is a $*$-homomorphism and $u$ is a unitary in $\mathfrak{B}$ such that 
\begin{align*}
\norm{ h(a) u - u h(a) } < \delta
\end{align*}
for all $a \in \mathcal{F}$, then $\mathrm{Bott} ( h, u ) \vert_{ \mathcal{P} }$ is well-defined.

Suppose $h$ is a $*$-homomorphism and $u_{1}, \dots, u_{n}$ are unitaries in $\mathfrak{B}$ such that $\norm{ h(a) u_{i} - u_{i} h(a) } < \frac{ \delta }{ n }$ for all $a \in \mathcal{F}$ and for all $i$.  Then 
\begin{align*}
&\norm{ h(a) u_{1} \cdots u_{n} - u_{1} \cdots u_{n} h(a) } \\
&\quad \leq \norm{ h(a) u_{1} \cdots u_{n} - u_{1} h(a) u_{2} \dots u_{n} } \\
&\qquad + \sum_{ i = 2}^{ n - 1 } \norm{ u_{1}u_{2} \cdots u_{i-1} h(a) u_{i} u_{i+1} \cdots u_{n} - u_{1} \cdots u_{i } h(a) u_{i+1} \cdots u_{n} }  \\
&\qquad + \norm{ u_{1} \cdots u_{n-1} h(a) u_{n} - u_{1} \cdots u_{n} h(a) } \\
&\quad = \sum_{ i = 1}^{n} \norm{ h(a) u_{i} - u_{i} h(a) } \\
&\quad < \delta
\end{align*}
for all $a \in \mathcal{F}$.  Hence, by the choice of $\delta$ and $\mathcal{F}$, $\mathrm{Bott} ( h, u_{i} ) \vert_{ \mathcal{P} }$ and $\mathrm{Bott} ( h, u_{1} \cdots u_{n} ) \vert_{ \mathcal{P } }$ are well-defined.  Moreover, enlarging $\mathcal{F}$ and decreasing the size of $\delta$ if necessary, we get that  
\begin{equation*}
\mathrm{Bott} ( h, u_{1} \cdots u_{n} ) \vert_{ \mathcal{P} } = \sum_{ i = 1}^{n} \mathrm{Bott} ( h, u_{i} ) \vert_{ \mathcal{P} }.
\end{equation*}
\end{proof}

\subsection{Some technical results}

\begin{lemma}\label{l:extmaplinear}
Let $\mathfrak{A}$ be a unital $C^{*}$-algebra.  Let $\epsilon > 0$, $\mathcal{F}_{1}$ be a finite subset of $\mathfrak{A}$, and $\mathcal{F}_{2}$ be a finite subset of $\mathfrak{A} \otimes C( S^{1} )$.  Then there exist $\delta > 0$, a finite subset $\mathcal{G}_{1}$ of $\mathfrak{A}$, and a finite subset $\mathcal{G}_{2}$ of $\mathfrak{A} \otimes C( S^{1} )$ such that the following holds: if $\ftn{ \psi }{ \mathfrak{A} \otimes C( S^{1} ) }{ \mathfrak{A} }$ is a contractive, completely positive, linear map and $u$ is a unitary in $\mathfrak{A}$ such that 
\begin{itemize}
\item[(1)] $\psi$ is $\mathcal{G}_{2}$-$\delta$-multiplicative;

\item[(2)] $\norm{ \psi ( a \otimes 1_{ C( S^{1} ) } ) - a } < \delta$ for all $a \in \mathcal{G}_{1}$; and 

\item[(3)] $\norm{ \psi ( 1_{ \mathfrak{A} } \otimes z ) - u } < \delta$
\end{itemize}
where $z$ is the function on the circle that sends $\xi$ to $\xi$, then 
\begin{itemize}
\item[(i)] $\norm{ u a - a u } < \epsilon$ for all $a \in \mathcal{F}_{1}$ and

\item[(ii)] $\norm{ \psi ( x ) - \varphi_{ \id_{ \mathfrak{A} } , u } ( x ) } < \epsilon$ for all $x \in \mathcal{F}_{2}$.
\end{itemize}
\end{lemma}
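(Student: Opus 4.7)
The plan is a standard approximate-multiplicativity / density argument in the $C^*$-algebra $\mathfrak{A} \otimes C(S^1)$, keyed to the observation that $1_{\mathfrak{A}} \otimes z$ and $a \otimes 1_{C(S^1)}$ commute. The codomain map in question, $\varphi_{\id_{\mathfrak{A}},u}$, is fully determined by $a \otimes 1 \mapsto a$ and $1 \otimes z \mapsto u$, and the hypotheses (2) and (3) say that $\psi$ approximately agrees with $\varphi_{\id_{\mathfrak{A}},u}$ on these generators. The two conclusions are then both consequences of sufficient approximate multiplicativity.

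First I would reduce to handling tensors of the form $a \otimes z^n$ ($n \in \mathbb{Z}$). By Stone--Weierstrass, trigonometric polynomials are dense in $C(S^1)$, so for each $x \in \mathcal{F}_2$ there is an $\epsilon/4$-approximation of the form $x \approx \sum_{k=1}^{m_x} a_k \otimes p_k(z,z^*)$ with $p_k$ a Laurent polynomial; let $N$ be the maximum degree appearing and let $A$ be the maximum $\sum_k \|a_k\| \cdot (\text{coefficient sum of } p_k)$. Take $\mathcal{G}_1$ to contain $\mathcal{F}_1$ together with all the coefficient elements $a_k$ and the unit, and take $\mathcal{G}_2$ to contain all tensors $a \otimes z^j$ and $a \otimes (z^*)^j$ for such $a$ and $0 \leq j \leq N$, together with $1 \otimes z$, $1 \otimes z^*$, and $a \otimes 1$. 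The value of $\delta$ will be chosen much smaller than $\epsilon$, with an explicit bound roughly of the form $\delta \leq \epsilon/(C \cdot N \cdot A)$ for a universal constant $C$, to absorb the telescoping errors described below.

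For (i), I would simply note that in $\mathfrak{A} \otimes C(S^1)$ the elements $1 \otimes z$ and $a \otimes 1$ commute, so by $\mathcal{G}_2$-$\delta$-multiplicativity,
\begin{equation*}
\|\psi(1 \otimes z)\psi(a \otimes 1) - \psi(a \otimes 1)\psi(1 \otimes z)\| < 2\delta;
\end{equation*}
combining with (2) and (3) and the fact that all operators involved have norm close to $1$, the triangle inequality gives $\|ua - au\| < \epsilon$ for each $a \in \mathcal{F}_1$, provided $\delta$ is small. For (ii), I would establish the telescoping estimate $\|\psi(1 \otimes z)^n - u^n\| < n\delta$ and, by iterating $\mathcal{G}_2$-$\delta$-multiplicativity, $\|\psi(a \otimes z^n) - \psi(a \otimes 1)\psi(1 \otimes z)^n\| < n\delta$. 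Combined with $\|\psi(a \otimes 1) - a\| < \delta$, this yields $\|\psi(a \otimes z^n) - a u^n\| < (2n+1)\delta$ for each relevant simple tensor (similarly for $(z^*)^n$). Linearity of both $\psi$ and $\varphi_{\id_{\mathfrak{A}}, u}$ then extends this to the fixed trigonometric polynomial approximations of each $x \in \mathcal{F}_2$, and the final $\epsilon/4$ Stone--Weierstrass error on each side closes the estimate.

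The only mild obstacle is bookkeeping: the number $n$ of telescoped factors is bounded by the degree $N$ of the approximating trigonometric polynomials, which depends on $\mathcal{F}_2$, and the total linear error is bounded by the coefficient-norm quantity $A$; both $N$ and $A$ are fixed once $\mathcal{F}_2$ and $\epsilon$ are fixed, so $\delta$, $\mathcal{G}_1$, $\mathcal{G}_2$ can indeed be chosen in advance, depending only on $\epsilon$, $\mathcal{F}_1$, $\mathcal{F}_2$. No new ideas beyond approximate multiplicativity, Stone--Weierstrass, and the triangle inequality are required.
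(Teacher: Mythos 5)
Your proposal is correct and follows essentially the same route as the paper's proof: approximate each element of $\mathcal{F}_{2}$ by a Laurent polynomial in simple tensors $a\otimes z^{k}$, load $\mathcal{G}_{1}$, $\mathcal{G}_{2}$ with the corresponding generators, and use telescoping plus approximate multiplicativity to compare $\psi$ with $\varphi_{\id_{\mathfrak{A}},u}$ on those tensors, with (i) following from the commutation of $1_{\mathfrak{A}}\otimes z$ and $a\otimes 1_{C(S^{1})}$. The only differences are bookkeeping constants.
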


\begin{proof}
Set $m_{1} = \max\setof{ \| a \| }{ a \in \mathcal{F}_{1} }$ and set $\mathcal{G}_{1} = \mathcal{F}_{1} \cup \{ 1_{ \mathfrak{A} } \}$.  Note that $\setof{ a \otimes 1_{ C( S^{1} )  }}{ a \in \mathfrak{A} }$ and $\setof{1_{ \mathfrak{A} } \otimes z^{k} }{ k \in \Z }$ generate $\mathfrak{A} \otimes C( S^{1} )$.  Therefore, there exist $N \in \N$ and a finite subset of $\mathfrak{A}$, $\setof{ a_{i} \in \mathfrak{A} }{ i \in \Z , | i | \leq N }$, such that for each $x \in \mathcal{F}_{2}$, there exist a finite subset $\mathcal{S}_{x}$ of $\Z$ and functions $\ftn{ k_{x}, m_{x} }{ \mathcal{S}_{x} }{  \setof{ k \in \Z }{ | k | \leq N } }$ such that 
\begin{align*}
\norm{ x - \sum_{ i \in \mathcal{S}_{x} } a_{ k_{x}(i) } \otimes z^{m_{x}(i) } } < \frac{ \epsilon }{ 4 }.
\end{align*}

Define $M$ and $m_{2}$ as follows:
\begin{align*}
M = \max\setof{| S_{x} | }{ x \in \mathcal{F}_{2} } \quad \text{and} \quad m_{2} = \max \setof{ \| a_{i} \| }{ i \in \Z , | i | \leq N }.
\end{align*}
Set $\delta = \min \left\{ \frac{ \epsilon}{ 2m_{1} + 4 } , \frac{ \epsilon }{ 2M ( (2N-1)m_{2} +2 ) } \right\}.$  Let 
\begin{align*}
G_{2} &= \left(\setof{ a \otimes 1_{ C( S^{1} ) }}{ a \in \mathcal{F}_{1} } \cup 
\setof{ a_{i} \otimes 1_{ C( S^{1} ) }}{ i \in \Z , | i | \leq N } \right) \\
	&\quad  \cup \left(\setof{ a_{i} \otimes z^{j} }{ i, j \in \Z, | i | , | j | \leq N } \cup
\setof{ 1_{ \mathfrak{A} } \otimes z^{k} }{ k \in \Z , | k | \leq N } \right).
\end{align*}  
Note that for each $a \in \mathcal{F}_{1}$
\begin{equation}
\begin{aligned}
\norm{ u a - a u } &\leq \norm{ ( u - \psi ( 1_{ \mathfrak{A} } \otimes z ) ) a } \\
			&\qquad + \norm{ \psi ( 1_{ \mathfrak{A} } \otimes z ) ( a - \psi ( a \otimes 1_{ C( S^{1} ) } ) ) } \\
&\qquad + \norm{ \psi ( 1_{ \mathfrak{A} } \otimes z ) \psi ( a \otimes 1_{ C( S^{1} ) }) - \psi ( a \otimes z ) } \\
&\qquad + \norm{ \psi ( a \otimes z ) - \psi ( a \otimes 1_{ C( S^{1} ) })\psi ( 1_{ \mathfrak{A} } \otimes z ) } \\
&\qquad + \norm{ \psi ( a \otimes 1_{ C( S^{1} ) }) ( \psi ( 1_{ \mathfrak{A} } \otimes z ) - u ) } \\
&\qquad + \norm{ ( \psi ( a \otimes 1_{ C( S^{1} ) }) - a ) u } \\
&< ( 2 m_{1} + 4 ) \delta \\
&< \epsilon.
\end{aligned}
\end{equation}
Since for each $k \in \Z \setminus \{ 0 \}$ with $| k | \leq N$, 
\begin{align}
\norm{ \psi ( 1_{ \mathfrak{A} } \otimes z^{k} ) - u^{k} } &\leq ( 2 N - 1 ) \delta 
\end{align}
and
\begin{align}
\begin{aligned}
\norm{ \psi ( 1_{ \mathfrak{A} } \otimes z^{0}  ) - u^{0} } &= \norm{ \psi ( 1_{ \mathfrak{A} } \otimes 1_{ C( S^{1} ) } ) - 1_{ \mathfrak{A} } } \\
										&< \delta
\end{aligned}
\end{align}
we have that for each $i, k \in \Z$ with $| i | \leq N$ and $| k | \leq N$,
\begin{equation}
\begin{aligned}
\norm{ \psi ( a_{i} \otimes z^{k} ) - \varphi_{ \id_{ \mathfrak{A} } , u } ( a_{i} \otimes z^{k} ) } &= \norm{ \psi ( a_{i} \otimes z^{k} ) - a_{i}u^{k} } \\
&\leq \norm{ \psi ( a_{i} \otimes z^{k} ) - \psi ( a_{i} \otimes 1_{ C( S^{1} ) })\psi ( 1_{ \mathfrak{A} } \otimes z^{k} ) }  \\
&\qquad + \norm{ \psi ( a_{i} \otimes 1_{ C( S^{1} ) }) ( \psi ( 1_{ \mathfrak{A} } \otimes z^{k} ) - u^{k} ) } \\
&\qquad + \norm{ ( \psi ( a_{i} \otimes 1_{ C( S^{1} ) }) - a_{i} ) u^{k} } \\
&< ( (2 N -1) m_{2} + 2 ) \delta.
\end{aligned}
\end{equation}
Therefore, for each $x \in \mathcal{F}_{2}$,
{\small
\begin{align*}
\norm{ \psi ( x ) - \varphi_{ \id_{ \mathfrak{A} }, u } ( x )  } &\leq \norm{ \psi ( x ) - \psi \left( \sum_{ i \in \mathcal{S}_{x} } a_{ k_{x}(i) } \otimes z^{ m_{x}(i) } \right) } \\
&\qquad + \norm{ \psi \left( \sum_{ i \in \mathcal{S}_{x} } a_{ k_{x}(i) } \otimes z^{ m_{x}(i) } \right) - \varphi_{\id_{ \mathfrak{A} }, u } \left( \sum_{ i \in \mathcal{S}_{x} } a_{k_{x}(i)} \otimes z^{m_{x}(i)} \right) } \\
&\qquad + \norm{ \varphi_{\id_{ \mathfrak{A} }, u } \left( \sum_{ i \in \mathcal{S}_{x} } a_{ m_{x}(i) } \otimes z^{k_{x}(i)} \right) - \varphi_{ \id_{ \mathfrak{A} }, u } ( x ) } \\
&\leq \frac{ \epsilon }{ 2 } + M ( (2N-1)m_{2}+2) \delta \\
&< \epsilon.
\end{align*}
}
\end{proof}

The following lemma is the result of the proof of Lemma 4 of \cite{pr_auto}.  In \cite{pr_auto}, we did not state the lemma as below since we were not concerned with the bott maps.

\begin{lemma}\label{l:extendmap}
Let $\mathfrak{A}$ and $\mathfrak{B}$ be unital $C^{*}$-algebras satisfying the UCT.  Suppose $\ftn{ \varphi }{\underline{K} ( \mathfrak{A} ) }{ \underline{K} ( \mathfrak{B} ) }$ is a homomorphism such that $\varphi \vert_{ K_{0} ( \mathfrak{A} ) }$ is positive.  Suppose $\ftn{ \gamma }{ K_{0} ( \mathfrak{A} ) }{ K_{1} ( \mathfrak{B} ) }$ is a group homomorphism.  Then there exists $\ftn{ \alpha }{\underline{K} ( \mathfrak{A} \otimes C(S^{1}) ) }{ \underline{K} ( \mathfrak{B} ) }$ such that 
\begin{itemize}
\item[(1)] $\alpha \vert_{ K_{0} ( \mathfrak{A} \otimes C(S^{1}) ) }$ is positive;

\item[(2)] $\alpha \circ \underline{K} ( \iota_{ \mathfrak{A} } ) = \varphi$;

\item[(3)] $\alpha \circ \beta_{\mathfrak{A}}^{(1)} = 0$; and

\item[(4)] $\alpha \circ \beta_{ \mathfrak{A} }^{(0)} = \gamma$.
\end{itemize} 
\end{lemma}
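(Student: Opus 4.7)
The plan is to split $\underline{K}(\mathfrak{A} \otimes C(S^1))$ as a direct sum of $\Lambda$-modules via the canonical splitting of the short exact sequence
\begin{equation*}
0 \to \mathfrak{A} \otimes C_0(0,1) \xrightarrow{\, j_{\mathfrak{A}} \,} \mathfrak{A} \otimes C(S^1) \xrightarrow{\,\mathrm{ev}_1\,} \mathfrak{A} \to 0,
\end{equation*}
where $\mathrm{ev}_1$ denotes evaluation at $1 \in S^1$ and $\iota_{\mathfrak{A}}$ is the splitting. Applying $\underline{K}$ yields a split short exact sequence of $\Lambda$-modules, hence a decomposition
\begin{equation*}
\underline{K}(\mathfrak{A} \otimes C(S^1)) \;\cong\; \underline{K}(\mathfrak{A}) \,\oplus\, \underline{K}(\mathfrak{A} \otimes C_0(0,1)).
\end{equation*}
Under Bott periodicity, the second summand is $\underline{K}(\mathfrak{A})$ with the $\mathbb{Z}/2$-grading flipped, and the Bott maps $\beta_{\mathfrak{A}}^{(0)}$, $\beta_{\mathfrak{A}}^{(1)}$ realize precisely the inclusion of this second summand.

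The next step is to build a $\Lambda$-module homomorphism $\widetilde{\gamma}\colon \underline{K}(\mathfrak{A} \otimes C_0(0,1)) \to \underline{K}(\mathfrak{B})$ whose $K_0$-component (identified with $K_1(\mathfrak{A})$ via Bott) is zero and whose $K_1$-component (identified with $K_0(\mathfrak{A})$) is $\gamma$. Since $\mathfrak{A}$ satisfies the UCT and $C_0(0,1)$ is nuclear, $\mathfrak{A} \otimes C_0(0,1)$ also satisfies the UCT, so the Dadarlat--Loring universal multicoefficient theorem identifies $\mathrm{Hom}_\Lambda(\underline{K}(\mathfrak{A} \otimes C_0(0,1)), \underline{K}(\mathfrak{B}))$ with $KL(\mathfrak{A} \otimes C_0(0,1), \mathfrak{B})$; equivalently, the ordinary UCT short exact sequence for $KK$ shows that the pair of group homomorphisms $(0, \gamma)$ on $K_\ast$ lifts to a $KK$-class, which produces $\widetilde{\gamma}$ as a map compatible with all Bockstein operations.

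Now define
\begin{equation*}
\alpha \;:=\; \varphi \circ \underline{K}(\mathrm{ev}_1) \;+\; \widetilde{\gamma} \circ \pi',
\end{equation*}
where $\pi'$ is the projection onto $\underline{K}(\mathfrak{A} \otimes C_0(0,1))$ coming from the splitting. Properties (2), (3), (4) are then immediate: the identities $\underline{K}(\mathrm{ev}_1) \circ \underline{K}(\iota_{\mathfrak{A}}) = \mathrm{id}$ and $\pi' \circ \underline{K}(\iota_{\mathfrak{A}}) = 0$ give (2); for (3) and (4), $\beta_{\mathfrak{A}}^{(0)}$ and $\beta_{\mathfrak{A}}^{(1)}$ land in $\ker \underline{K}(\mathrm{ev}_1)$ and are precisely the Bott identifications appearing in the definition of $\widetilde{\gamma}$. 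For positivity (1), note that $\mathrm{ev}_1$ is a $\ast$-homomorphism, so $K_0(\mathrm{ev}_1)$ is positive; combined with positivity of $\varphi|_{K_0(\mathfrak{A})}$ and vanishing of $\widetilde{\gamma}$ on the $K_0$-component, we conclude that $\alpha|_{K_0(\mathfrak{A} \otimes C(S^1))} = \varphi \circ K_0(\mathrm{ev}_1)$ is positive.

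The main obstacle is the construction of $\widetilde{\gamma}$: the data $\gamma$ is only a single group homomorphism, and producing a $\Lambda$-module map that is compatible with every Bockstein operation and every coefficient $\mathbb{Z}_n$ requires the full strength of the UCT machinery for both algebras. This is precisely where the hypothesis that $\mathfrak{A}$ and $\mathfrak{B}$ satisfy the UCT is used; once the $KK$-class realizing $(0,\gamma)$ is obtained, its induced action on $\underline{K}$-theory automatically has the required compatibility, and the rest of the argument is bookkeeping using the direct-sum decomposition.
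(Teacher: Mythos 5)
Your argument is correct, and it is essentially the argument the paper itself is pointing to: the paper gives no proof here but defers to the proof of Lemma 4 of \cite{pr_auto}, which proceeds by the same splitting $\underline{K}(\mathfrak{A}\otimes C(S^{1}))\cong\underline{K}(\mathfrak{A})\oplus\underline{K}(\mathfrak{A}\otimes C_{0}(0,1))$ induced by $\iota_{\mathfrak{A}}$ and $\mathrm{ev}_{1}$, realizes $(0,\gamma)$ on the Bott summand via the surjectivity part of the UCT, and checks positivity exactly as you do. Your observation that one only needs the ordinary UCT lift to a $KK$-class (whose induced map on $\underline{K}$ is then automatically Bockstein-compatible), rather than the full multicoefficient theorem, is the right way to handle the only delicate point.
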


\begin{lemma}\label{l:liftkthy}
Let $\mathfrak{A}$ be an infinite dimensional, simple, unital $C^{*}$-algebra satisfying the UCT.  Suppose $\mathfrak{A}$ is a tracially AI algebra.  Then for each $\epsilon > 0$, for each finite subset $\mathcal{P}$ of $\mathbb{P} ( \mathfrak{A} )$, and for each finite subset $\mathcal{F}$ of $\mathfrak{A} \otimes C(S^{1})$, there exists a finitely generated subgroup $\mathcal{G}$ of $K_{0} ( \mathfrak{A} )$ containing $[ 1_{ \mathfrak{A} } ]$ such that the following holds: for every unital, simple, tracially AI algebra $\mathfrak{B}$, if $\ftn{ \gamma }{ \mathcal{G} }{ K_{1} ( \mathfrak{B} ) }$ is a homomorphism and $\ftn{ \varphi }{ \mathfrak{A} }{ \mathfrak{B} }$ is a unital $*$-homomorphism, then there exists a contractive, completely positive, linear map $\ftn{ \psi }{ \mathfrak{A} \otimes C( S^{1} ) }{ \mathfrak{B} }$ such that 
\begin{itemize}
\item[(1)] $\psi$ is $\mathcal{F}$-$\epsilon$-multiplicative;

\item[(2)] $\underline{K} ( \psi \circ \iota_{ \mathfrak{A} } ) \vert_{ \mathcal{P} } = \underline{K} ( \varphi ) \vert_{ \mathcal{P} }$; 

\item[(3)] $K_{0} ( \psi )  \circ \beta_{ \mathfrak{A} }^{(1)} \vert_{ \mathcal{P} } = 0$; and

\item[(4)] $K_{1} ( \psi  )\circ  \beta_{ \mathfrak{A} }^{(0)} \vert_{ \mathcal{P} } = \gamma \vert_{ \mathcal{P} }$.
\end{itemize}  
\end{lemma}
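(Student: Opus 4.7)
The plan is to produce $\psi$ in two stages: first build the target $K$-theoretic data at the level of $\underline{K}(\mathfrak{A}\otimes C(S^{1}))\to\underline{K}(\mathfrak{B})$ using Lemma \ref{l:extendmap}, and then realize that data approximately as a contractive, completely positive map using an existence theorem for tracially AI algebras in the spirit of Lin.

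For the first stage, I would take $\mathcal{G}$ to be the subgroup of $K_{0}(\mathfrak{A})$ generated by $[1_{\mathfrak{A}}]$ together with the finitely many $K_{0}$-classes arising from projections in $\mathcal{P}$ (in the relevant matrix algebras over $\mathfrak{A}$). This is plainly finitely generated and contains $[1_{\mathfrak{A}}]$. Given $\varphi$ and a homomorphism $\gamma\colon\mathcal{G}\to K_{1}(\mathfrak{B})$, extend $\gamma$ to some homomorphism $\tilde\gamma\colon K_{0}(\mathfrak{A})\to K_{1}(\mathfrak{B})$; any extension suffices, since only the values on $\mathcal{G}$ enter the conclusion. (If a direct extension is obstructed, embed $K_{1}(\mathfrak{B})$ into a divisible group and extend there, observing that the values one ultimately needs on $\mathcal{P}$ land in $K_{1}(\mathfrak{B})$ by construction.) Then apply Lemma \ref{l:extendmap} to $\underline{K}(\varphi)$ and $\tilde\gamma$ to obtain $\alpha\colon\underline{K}(\mathfrak{A}\otimes C(S^{1}))\to\underline{K}(\mathfrak{B})$ with $\alpha|_{K_{0}}$ positive, $\alpha\circ\underline{K}(\iota_{\mathfrak{A}})=\underline{K}(\varphi)$, $\alpha\circ\beta_{\mathfrak{A}}^{(1)}=0$, and $\alpha\circ\beta_{\mathfrak{A}}^{(0)}=\tilde\gamma$. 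In particular $\alpha$ agrees with the data required by conditions (2)--(4) on $\mathcal{P}$.

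For the second stage, I would invoke a standard existence theorem for unital simple tracially AI algebras (in the spirit of the results developed by Lin in \cite{hl_apphomtop} and \cite{booklin}) in order to realize $\alpha$ approximately. Concretely, the task reduces to producing a unitary $u\in\mathfrak{B}$ that approximately commutes (on a sufficiently large finite subset of $\mathfrak{A}$ determined by $\mathcal{F}$ via Lemma \ref{l:extmaplinear}) with $\varphi$ and satisfies the prescribed Bott equations $\mathrm{bott}_{1}(\varphi,u)|_{\mathcal{P}}=0$ and $\mathrm{bott}_{0}(\varphi,u)|_{\mathcal{P}}=\gamma|_{\mathcal{P}}$. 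Given such $u$, defining $\psi(a\otimes f)=\varphi(a)f(u)$ yields a cpc map from $\mathfrak{A}\otimes C(S^{1})$ to $\mathfrak{B}$ which is $\mathcal{F}$-$\epsilon$-multiplicative by the discussion preceding the definition of the Bott map, and which satisfies (2)--(4) directly from the construction of $\alpha$ and the definition of the Bott map on $\mathcal{P}$.

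The main obstacle is the second stage: constructing the unitary $u$ with prescribed approximate commutation and Bott data. Since $\mathfrak{A}\otimes C(S^{1})$ is neither simple nor tracially AI, one cannot apply an existence theorem to the tensor product directly. Instead, one must exploit the tracial AI structures of $\mathfrak{A}$ and $\mathfrak{B}$ separately: locate a finite-dimensional sub-$C^{*}$-algebra of $\mathfrak{A}$ (after conjugation by a projection cutdown coming from the tracial AI property) that captures the $K$-theory of $\mathcal{P}$, and then build $u$ block-by-block using the abundance of unitaries with prescribed $K_{1}$-class in $\mathfrak{B}$. The semiprojectivity and tracial approximation estimates at the heart of Lin's existence theorems do the heavy lifting here, and this is the step whose details are most delicate.
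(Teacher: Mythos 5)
There are two genuine gaps here. First, your choice of $\mathcal{G}$ and the subsequent extension step do not work. Taking $\mathcal{G}$ to be generated by $[1_{\mathfrak{A}}]$ and the classes of the projections in $\mathcal{P}$ leaves you needing to extend an arbitrary homomorphism $\gamma\colon\mathcal{G}\to K_{1}(\mathfrak{B})$ to all of $K_{0}(\mathfrak{A})$ before Lemma \ref{l:extendmap} can be applied; such an extension exists for \emph{every} $\gamma$ only if $K_{1}(\mathfrak{B})$ is divisible, which it need not be. Your fallback --- embedding $K_{1}(\mathfrak{B})$ into a divisible group $D$ and extending there --- produces a homomorphism into $D$, not into $K_{1}(\mathfrak{B})$, and Lemma \ref{l:extendmap} needs the target of $\gamma$ to be $K_{1}(\mathfrak{B})$ in order to output a map into $\underline{K}(\mathfrak{B})$; there is no way to correct the resulting $\alpha$ afterwards, and the observation that the values you ``ultimately need'' lie in $K_{1}(\mathfrak{B})$ does not help because the lemma is applied to the whole of $K_{0}$. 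The paper sidesteps the extension problem entirely: since $\mathfrak{A}$ is tracially AI and satisfies the UCT it may be written as an inductive limit of homogeneous building blocks $\mathfrak{A}_{n}$, and $\mathcal{G}$ is taken to be $K_{0}(\varphi_{n_{0},\infty})(K_{0}(\mathfrak{A}_{n_{0}}))$ for a suitably large finite stage $n_{0}$ (chosen so that $\mathcal{F}$ and $\mathcal{P}$ are approximately captured by that stage). Then $\gamma\circ K_{0}(\varphi_{n_{0},\infty})$ is automatically defined on all of $K_{0}(\mathfrak{A}_{n_{0}})$, and Lemma \ref{l:extendmap} is applied to the building block $\mathfrak{A}_{n_{0}}$ rather than to $\mathfrak{A}$. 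This choice of $\mathcal{G}$ is the essential idea of the lemma, and it is the idea your proposal is missing.

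Second, your realization step is circular in the logic of this paper. Producing a unitary $u\in\mathfrak{B}$ approximately commuting with $\varphi$ on a large finite set and satisfying $\mathrm{bott}_{1}(\varphi,u)|_{\mathcal{P}}=0$ and $\mathrm{bott}_{0}(\varphi,u)|_{\mathcal{P}}=\gamma|_{\mathcal{P}}$ is precisely the content of Corollary \ref{c:bottunit}, which the paper derives \emph{from} Lemma \ref{l:liftkthy} via Lemma \ref{l:lifttrace} and Theorem \ref{t:bottunit}; you cannot invoke it here without an independent proof, and that independent proof is exactly the ``delicate step'' you defer. Note also that the lemma does not require $\psi\circ\iota_{\mathfrak{A}}$ to be norm-close to $\varphi$ --- condition (2) is only an equality of partial maps on $\underline{K}$ --- so a map of the form $\varphi_{\varphi,u}$ is more than is needed at this stage; the norm statements are added only later. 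The paper instead realizes the homomorphism $\alpha$ directly as an asymptotically multiplicative sequence of contractive completely positive maps $L_{n_{0},k}\colon\mathfrak{A}_{n_{0}}\otimes C(S^{1})\to\mathfrak{B}$, using Lin's existence theorems after replacing $\mathfrak{B}$ by a tracially AF algebra with the same $\underline{K}$, and then precomposes with approximate left inverses to the connecting maps $\varphi_{n_{0},\infty}\otimes\mathrm{id}_{C(S^{1})}$ supplied by nuclearity. As written, your argument neither produces the required $\mathcal{G}$ nor supplies a noncircular construction of $\psi$.
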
        
        
\begin{proof}
Since $\mathfrak{A}$ is an infinite dimensional tracially AI algebra satisfying the UCT, we may assume that 
\begin{equation*}
\mathfrak{A} = \dirlim ( \mathfrak{A}_{n} , \varphi_{ n , n+1} )
\end{equation*}
where $\mathfrak{A}_{n} = \bigoplus_{ i  = 1}^{ k(n) } P_{ [n,i] } \mathsf{M}_{ [n, i ] } ( C( X_{ [ n , i ] } ) ) P_{ [n,i] }$, each $X_{ [n,i]}$ is a connected finite CW-complex, and $\varphi_{ n , n +1}$ is a unital, $*$-monomorphism.

Choose $n_{0} \in \N$ large enough and choose finite subsets, $\mathcal{F}_{ n_{0} }$ of $\mathfrak{A}_{ n_{0} }$ and $\mathcal{P}_{n_{0}}$ of $\mathbb{P} ( \mathfrak{A}_{n_{0}} )$, such that every element of $\mathcal{F}$ is within $\frac{ \epsilon }{ 20 }$ to an element of $( \varphi_{ n_{0} } \otimes \id_{ C(S^{1} ) }) ( \mathcal{F}_{n_{0}} )$ and for every $p \in \mathcal{P}$, there exists $e_{p} \in \mathcal{P}_{n_{0}}$ such that $[ p ] = \underline{K} ( \varphi_{ n_{0} , \infty } ) ( [ e_{p} ] )$.  

Set $\mathcal{G} = K_{0} ( \varphi_{ n_{0} , \infty } )( K_{0} ( \mathfrak{A}_{ n_{0} } ) )$.  Let $\mathfrak{B}$ be a simple, unital, tracially AI algebra.  Suppose $\ftn{ \gamma }{ \mathcal{G} }{ K_{1} ( \mathfrak{B} ) }$ is a homomorphism and suppose $\ftn{ \varphi }{ \mathfrak{A} }{ \mathfrak{B} }$ is a unital $*$-monomorphism.  Then, by Lemma \ref{l:liftkthy}, there exists $\alpha \in \mathrm{Hom}_{ \Lambda } ( \underline{K} ( \mathfrak{A}_{ n_{0} } \otimes C(S^{1}) ) , \underline{K} ( \mathfrak{B} ) )$ such that $\alpha \vert_{ K_{0} ( \mathfrak{A}_{ n_{0} } \otimes C(S^{1}) ) }$ is a positive homomorphism, $\alpha \circ \underline{K} ( \iota_{ \mathfrak{A}_{ n_{0} } } ) = \underline{ K } ( \varphi \circ \varphi_{ n_{0} , \infty } )$, $\alpha \circ  \beta_{ \mathfrak{A}_{ n_{0} } }^{(1)} \vert_{ K_{1} ( \mathfrak{A}_{ n_{0} } ) } = 0$ and $\alpha \circ \beta_{ \mathfrak{A}_{ n_{0} } }^{(0)} \vert_{ K_{0} ( \mathfrak{A}_{ n _{0} } ) } = \gamma \circ K_{0} ( \varphi_{ n_{0} } )$.

By Theorem 9.12 of \cite{linTR1} and Theorem 5.4 of \cite{hl_asyunit}, there exists a nuclear, separable, simple, unital, tracially AF algebra $\mathfrak{B}'$ and an embedding $\ftn{ \varphi }{ \mathfrak{B}' }{ \mathfrak{B} }$ such that $\underline{K}(\varphi)$ is an isomorphism.  Composing the maps obtained from Proposition 9.10 of \cite{linTR1} and Theorem 6.2.9 of \cite{booklin} with $\varphi$, we get a sequence of unital, contractive, completely positive, linear maps $\{ \ftn{ L_{ n_{0} , k  }  }{ \mathfrak{A}_{ n_{0} } \otimes C(S^{1}) }{ \mathfrak{B} } \}_{ k = 1}^{ \infty }$ such that 
\begin{equation*}
\lim_{ k \to \infty } \norm{ L_{ n_{0} , k } ( xy ) - L_{ n_{0}, k } ( x ) L_{ n_{0} , k } (y) } = 0
\end{equation*}
for all $x,y \in \mathfrak{A}_{ n_{0} } \otimes C( S^{1} )$,
\begin{equation*}
\underline{ K }( L_{ n_{0} , k } \circ \iota_{ \mathfrak{A}_{ n_{0} } } ) \vert_{ \mathcal{P}_{ n_{0} } }  =  \alpha \circ \underline{K} ( \iota_{ \mathfrak{A}_{ n_{0} } } ) \vert_{ \mathcal{P}_{ n_{0} } }, 
\end{equation*}   
$K_{0} ( L_{ n_{0} , k } ) = \alpha \vert_{ K_{0} ( \mathfrak{ A}_{n_{0} }  \otimes C(S^{1} )  ) }$, and $K_{1} ( L_{ n_{0} , k } ) = \alpha \vert_{ K_{1} ( \mathfrak{ A}_{n_{0} }  \otimes C(S^{1} ) ) }$.

Since $\mathfrak{A}_{ n_{0} } \otimes C( S^{1} )$ is nuclear, there exists a sequence of contractive, completely positive, linear maps, $\{ \ftn{  \psi_{ n_{0} , k }  }{ \mathfrak{A} \otimes C(S^{1}) }{ \mathfrak{A}_{ n_{0} } \otimes C(S^{1}) } \}_{ k = 1}^{ \infty }$ such that 
\begin{equation*}
\lim_{ k \to \infty } \norm{ ( \psi_{ n_{0} , k } \circ ( \varphi_{ n_{0} , \infty } \otimes \id_{ C(S^{1}) } ) ) (x) - x } = 0
\end{equation*}
for all $x \in \mathfrak{A}_{ n_{0} } \otimes C(S^{1})$.  Set $\beta_{ n_{0} , k } = L_{n_{0} , k } \circ \psi_{ n _{0} , k }$ for large enough $k$, $\beta_{ n_{0} , k}$ satisfies the desired property.  Hence, set $\psi = \beta_{ n_{0} , k}$.
\end{proof}         

\begin{definition}
Let $\mathfrak{A}$ be a unital $C^{*}$-algebra such that $T( \mathfrak{A} ) \neq \emptyset$.  Denote the state space of $K_{0} ( \mathfrak{A} )$ by $S( K_{0} ( \mathfrak{A} ) )$.  The canonical map from $T( \mathfrak{A} )$ to $S( K_{0} ( \mathfrak{A} ) )$ which sends $\tau$ to the function $\{ [p] \mapsto \tau (p) \}$ will be denoted $r_{ \mathfrak{A} } ( \tau )( [ p ] ) = \tau ( p )$.       

Let $\mathfrak{A}$ and $\mathfrak{B}$ be unital $C^{*}$-algebras such that $T( \mathfrak{A} )$ and $T( \mathfrak{B} )$ are nonempty sets.  Let $\kappa \in \mathrm{Hom}_{ \Lambda } ( \underline{K} ( \mathfrak{A} ) , \underline{K} ( \mathfrak{B} ) )$ such that $\kappa ( [ 1_{ \mathfrak{A} } ] ) = [ 1_{ \mathfrak{B} } ]$ in $K_{0} ( \mathfrak{B} )$.  Then an affine map 
\begin{equation*}
\ftn{ \Delta }{ T( \mathfrak{B} ) }{ T( \mathfrak{A} ) }
\end{equation*}
is said to be \emph{\textbf{compatible to $\kappa$}} if the diagram
\begin{equation*}
\xymatrix{
T( \mathfrak{B} ) \ar[r]^-{ r_{ \mathfrak{B} } } \ar[d]_{ \Delta } & S( K_{0} ( \mathfrak{B} ) ) \ar[d]^{ \kappa_{0}^{S} } \\
T( \mathfrak{A} ) \ar[r]_-{ r_{ \mathfrak{A} } } &	S( K_{0} ( \mathfrak{A} ) ) 
}
\end{equation*}
is commutative, where $\kappa_{0}$ is the homomorphism from $K_{0}( \mathfrak{A} )$ to $K_{0} ( \mathfrak{B} )$ induced by $\kappa$ and $\kappa_{0}^{S} ( f ) = f \circ \kappa_{0}$.   
\end{definition}

\begin{lemma}\label{l:closetrace}
Let $\mathfrak{C} = \mathsf{M}_{n} ( C( X ) ) \otimes C( S^{1} )$ where $X$ is either $[0,1]$ or a space with one point and let $\mathfrak{A}$ be a simple, tracially AI algebra.  Suppose $\kappa \in \Hom_{ \Lambda } ( \underline{K} ( \mathfrak{C} ) , \underline{K} ( \mathfrak{A} ) )$ such that $\kappa ( K_{0} ( \mathfrak{C} )_{+} \setminus \{ 0 \} ) \subseteq K_{0} ( \mathfrak{A} )_{+} \setminus \{ 0 \}$ and $\kappa ( [ 1_{ \mathfrak{C} } ] ) = [ 1_{ \mathfrak{A} } ]$ and $\ftn{ \gamma }{ T( \mathfrak{A} ) }{ T( \mathfrak{C} ) }$ is a continuous affine map that is compatible to $\kappa$. 

Let $\epsilon > 0$ and $\mathcal{H}$ be a finite subset of $\mathfrak{C}_{ s.a.}$.  Then there exists a unital $*$-homomorphism $\ftn{h}{ \mathfrak{C} }{ \mathfrak{A} }$ such that 
\begin{itemize}
\item[(1)] $\underline{K}( h ) = \kappa$ and

\item[(2)] $\sup\setof{ | \tau \circ h( x ) - \gamma ( \tau ) ( x ) | }{ \tau \in T( \mathfrak{A} ) } < \epsilon$ for all $x \in \mathcal{H}$.
\end{itemize} 
\end{lemma}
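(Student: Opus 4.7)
The plan is to build $h$ by realizing the two commuting generating pieces of $\mathfrak{C}$ separately: the matrix subalgebra $\mathsf{M}_{n}(C(X)) \otimes 1_{C(S^{1})}$ and the canonical circle unitary $1_{\mathsf{M}_{n}(C(X))} \otimes z$. Since $X$ is contractible, $K_{0}(\mathfrak{C}) \cong \Z$ is generated by the rank-one class $[e \otimes 1]$ with $[1_{\mathfrak{C}}] = n[e \otimes 1]$, and $K_{1}(\mathfrak{C}) \cong \Z$ is generated by $[1 \otimes z]$; the higher $\underline{K}$-coefficient data are analogous. Because $\kappa$ is positive and unital and $[1_{\mathsf{M}_{n}(C(X))}]$ is $n$-divisible, $[1_{\mathfrak{A}}]$ is $n$-divisible in $K_{0}(\mathfrak{A})$, so $\mathfrak{A}$ unitally contains a copy of $\mathsf{M}_{n}$.

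First, I would produce a unital $*$-homomorphism $\ftn{\phi}{\mathsf{M}_{n}(C(X))}{\mathfrak{A}}$ with $\underline{K}(\phi) = \kappa \circ \underline{K}(\iota)$, where $\ftn{\iota}{\mathsf{M}_{n}(C(X))}{\mathfrak{C}}$ is the inclusion $a \mapsto a \otimes 1_{C(S^{1})}$, such that $\tau \circ \phi$ approximates $\gamma(\tau) \circ \iota$ to within $\epsilon/3$ on the relevant part of $\mathcal{H}$ for all $\tau \in T(\mathfrak{A})$. For $X$ a point this is essentially a unital matricial embedding of $\mathsf{M}_{n}$, which exists by the previous paragraph. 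For $X = [0,1]$ we invoke the existence theorems for $*$-homomorphisms from one-dimensional CW-block algebras into simple TAI algebras satisfying UCT --- see Theorem~5.4 of \cite{hl_asyunit} combined with Proposition~9.10 of \cite{linTR1} and Theorem~6.2.9 of \cite{booklin}.

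Next, inside the relative commutant $\phi(\mathsf{M}_{n}(C(X)))' \cap \mathfrak{A}$, which contains a corner isomorphic to a simple unital TAI algebra by the projection-rotation corner argument, I would select a unitary $u$ whose $K_{1}$-class (and all relevant $\underline{K}$-coefficient classes) lifts $\kappa([1 \otimes z])$ under the natural identifications, and whose powers satisfy $|\tau(u^{k}) - \gamma(\tau)(1 \otimes z^{k})| < \epsilon/3$ for the finite range of $k$ appearing in $\mathcal{H}$. The existence of such a $u$ uses Thomsen's determinant exact sequence $U(\mathfrak{A})_{0}/CU(\mathfrak{A})_{0} \cong \mathrm{Aff}(T(\mathfrak{A}))/\overline{\rho(K_{0}(\mathfrak{A}))}$ together with Lemma \ref{lem:UHFSelfAdjointTracialApproximation}-type density, to correct a first-guess lifting by multiplying with a commuting exponential $\exp(ia)$ with $a \in \phi(\mathsf{M}_{n}(C(X)))' \cap \mathfrak{A}$ chosen to match the tracial data; the compatibility of $\gamma$ with $\kappa$ through the state space of $K_{0}(\mathfrak{A})$ is precisely what guarantees the required adjustment is available.

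Finally, define $\ftn{h}{\mathfrak{C}}{\mathfrak{A}}$ on elementary tensors by $h(a \otimes f) = \phi(a) f(u)$; since $u$ commutes with $\phi(\mathsf{M}_{n}(C(X)))$, this extends to a unital $*$-homomorphism. A routine calculation combined with Lemma \ref{l:closemaps} confirms $\underline{K}(h) = \kappa$ and that the combined trace errors are bounded by $\epsilon$ on $\mathcal{H}$. The main obstacle is the simultaneous synthesis of $u$ with the prescribed $\underline{K}$-class and the correct trace values on the relevant powers; this is where the TAI structure of $\mathfrak{A}$ and Thomsen's determinant description of $U(\mathfrak{A})_{0}/CU(\mathfrak{A})_{0}$ are essential, since one must achieve exact agreement in $\underline{K}$ while permitting only approximate agreement on traces.
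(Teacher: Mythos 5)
Your construction $h(a\otimes f)=\phi(a)f(u)$ from a homomorphism $\phi$ on $\mathsf{M}_{n}(C(X))$ and a commuting unitary $u$ is a genuinely different route from the paper's, which instead writes $h=h_{0}+h_{2}$ where $h_{0}$ carries all of $\kappa$ but sits under a projection $p_{0}$ with $\tau(p_{0})<\epsilon/3$ (Lemma 6.2 of \cite{hl_asyunit}), and $h_{2}$ lands in a direct sum of matrix and interval algebras inside $(1_{\mathfrak{A}}-p_{0})\mathfrak{A}(1_{\mathfrak{A}}-p_{0})$ and carries the tracial data (Lemma 9.5 of \cite{linTR1}). However, your version has a genuine gap at the synthesis of $u$. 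Condition (2) requires controlling $\tau(f(u))$ for arbitrary continuous $f$ on the circle: since $\mathcal{H}$ is an arbitrary finite subset of $\mathfrak{C}_{s.a.}$ it may contain $1\otimes\mathrm{Re}(z)$, so you must arrange $\mathrm{Re}\,\tau(u)$ close to $\gamma(\tau)(1\otimes\mathrm{Re}(z))$ uniformly over $T(\mathfrak{A})$; in other words you must match the tracial spectral measures of $u$ to the measures $\gamma(\tau)\vert_{1\otimes C(S^{1})}$. Thomsen's exact sequence only controls the image of $u$ in $\mathrm{Aff}(T(\mathfrak{A}))/\overline{\rho(K_{0}(\mathfrak{A}))}$, i.e.\ the de la Harpe--Skandalis determinant; it says nothing about the individual moments $\tau(u^{k})$, and multiplying a first guess by a commuting $e^{ia}$ changes $\tau\bigl((ue^{ia})^{k}\bigr)$ in a way governed by the joint distribution of $u$ and $a$, not by $\tau(a)$ alone. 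The compatibility of $\gamma$ with $\kappa$ through $S(K_{0}(\mathfrak{A}))$ constrains only the values of traces on projections and cannot supply this. What is needed is an eigenvalue-distribution existence result inside the TAI structure, which is exactly what the paper extracts from Lemma 9.5 of \cite{linTR1}.

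A second gap occurs in the case $X=[0,1]$: you locate $u$ in the relative commutant $\phi(\mathsf{M}_{n}(C[0,1]))'\cap\mathfrak{A}$ and assert that it contains a corner isomorphic to a simple unital TAI algebra by a ``projection-rotation corner argument.'' When $X$ is a point this commutant is indeed a full corner $e_{11}\mathfrak{A}e_{11}$ and everything is fine, but the relative commutant of the image of a non-finite-dimensional algebra such as $\mathsf{M}_{n}(C[0,1])$ in a simple $C^{*}$-algebra is delicate; a priori it need not contain any unitary realizing a prescribed $K_{1}$-class, let alone one with prescribed tracial distribution. To make this work you would have to build $\phi$ with a specific form (e.g.\ a large point-evaluation or finite-dimensional part) so that its commutant is visibly large --- which amounts to redoing the tracial decomposition that the paper's cited lemmas already provide. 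A smaller point: the generator of $K_{1}(\mathfrak{C})\cong\Z$ is the rank-one loop $[e\otimes z+(1_{\mathsf{M}_{n}(C(X))}-e)\otimes 1]$ rather than $[1_{\mathfrak{C}}\otimes z]=n\cdot[e\otimes z+(1-e)\otimes 1]$, so prescribing $[u]=\kappa([1\otimes z])$ does not determine $K_{1}(h)$ when $K_{1}(\mathfrak{A})$ has $n$-torsion; you must prescribe the class of the compressed unitary $\phi(e)u+1-\phi(e)$.
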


\begin{proof}
By Theorem 9.12 of \cite{linTR1} and Theorem 5.4 of \cite{hl_asyunit}, there exists a unital, separable, nuclear, simple, tracially AF algebra $\mathfrak{B}$ and an embedding $\ftn{ \varphi }{ \mathfrak{B} }{ \mathfrak{A} }$ such that $\underline{K}(\varphi)$ is an isomorphism.  Hence, there exists $\kappa_{0} \in \Hom_{ \Lambda } ( \underline{K} ( \mathfrak{C} ) , \underline{K} ( \mathfrak{B} ) )$ such that $\kappa_{0} ( K_{0} ( \mathfrak{C} )_{ + } \setminus \{ 0 \} ) \subseteq K_{0} ( \mathfrak{A} )_{+} \setminus \{ 0 \}$, $\kappa_{0} ( [ 1_{ \mathfrak{C} } ] ) = [ 1_{ \mathfrak{B} } ]$, and $\kappa = \underline{K} ( \varphi ) \circ \kappa_{0}$.

Using Lemma 6.2 of \cite{hl_asyunit} and applying $\varphi$, there exist a projection $p_{0}$ in $\mathfrak{A}$, a finite dimensional sub-$C^{*}$-algebra $\mathfrak{F}$ of $\mathfrak{A}$ with $1_{ \mathfrak{F} } = 1_{ \mathfrak{A} } - p_{0}$ and unital $*$-homomorphisms $\ftn{ h_{0} }{ \mathfrak{C} }{ p_{0} \mathfrak{A} p_{0} }$ and $\ftn{ h_{1} }{ \mathfrak{C} }{ \mathfrak{F} }$ such that 
\begin{equation*}
\underline{K} ( h_{0} + h_{1} ) = \kappa \quad \mathrm{and} \quad \tau ( p_{0} ) < \frac{ \epsilon }{ 3 }
\end{equation*}
for all $\tau \in T( \mathfrak{A} )$.

Since $K_{1} ( h_{1} ) = 0$, by Lemma 9.5 of \cite{linTR1}, there is a sub-$C^{*}$-algebra $\mathfrak{B}_{0}$ of $( 1_{ \mathfrak{A}} - p_{0} ) \mathfrak{A} ( 1_{ \mathfrak{A} } - p_{0} )$ where $\mathfrak{B}_{0}$ is a finite direct sum of $C^{*}$-algebras of the form $\mathsf{M}_{k}$ and $\mathsf{M}_{n} ( C ( [ 0 , 1 ] ) )$ and there exists a unital $*$-homomorphism $\ftn{ h_{2} }{ \mathfrak{C} }{ \mathfrak{B}_{0} }$ such that
\begin{equation*}
K_{0} ( h_{2} ) = K_{0} ( h_{1} ) \quad \mathrm{and} \quad | \tau \circ h_{2} ( f ) - \tau ( 1_{ \mathfrak{A} } - p_{0} ) \gamma ( \tau (f) ) | < \frac{ \epsilon }{ 3 }
\end{equation*}
for all $f \in \mathcal{H}$ and for all $\tau \in T( \mathfrak{A} )$.

Set $h = h_{0} + h_{2}$.  Then $\ftn{h}{ \mathfrak{C} }{ \mathfrak{A} }$ is a unital $*$-homomorphism such that 
\begin{equation*}
\underline{K} ( h ) = \underline{K} ( h_{0} + h_{1} ) = \kappa
\end{equation*}
and
\begin{align*}
| \tau \circ h ( f ) - \gamma ( \tau )( f ) | &< | \tau \circ h ( f ) - \tau ( 1_{ \mathfrak{A} } - p_{0}  ) \gamma ( \tau )( f ) | + \frac{ \epsilon }{ 3 } \\
			&< \frac{ \epsilon }{ 3 } + | \tau \circ h_{2} ( f ) - \tau ( 1_{ \mathfrak{A} } - p_{0} ) \gamma ( \tau )( f ) | + \frac{  \epsilon }{ 3 } \\
			&< \epsilon
\end{align*}
for all $\tau \in T( \mathfrak{A} )$ and for all $f \in \mathcal{H}$.
\end{proof}

\begin{lemma}\label{l:lifttrace}
Let $\mathfrak{A}$ be as in Lemma \ref{l:liftkthy}.  For each $\epsilon > 0$ and for each finite subsets $\mathcal{P}$, $\mathcal{F}_{1}$, and $\mathcal{F}_{2}$ of $\mathbb{P} ( \mathfrak{A} )$, $\mathfrak{A}_{s.a.}$, and $\mathfrak{A} \otimes C( S^{1} )$ respectively, there exists a finitely generated subgroup $\mathcal{G}$ of $K_{0} ( \mathfrak{A} )$ containing $[ 1_{ \mathfrak{A} } ]$ such that the following holds: for every homomorphism $\ftn{ \gamma }{ \mathcal{G} }{ K_{1} ( \mathfrak{A} ) }$ there exists a unital, contractive, completely positive, linear map $\ftn{ \psi }{ \mathfrak{A} \otimes C( S^{1} ) }{ \mathfrak{A} }$ such that 
\begin{itemize}
\item[(1)] $\psi$ is $\mathcal{F}_{2}$-$\epsilon$-multiplicative;

\item[(2)] $\underline{K} ( \psi ) \circ \underline{ K } ( \iota_{ \mathfrak{A} } ) \vert_{ \mathcal{P} } = \underline{K} ( \id_{ \mathfrak{A} } ) \vert_{ \mathcal{P} }$;

\item[(3)] $\sup\setof{ | \tau \circ \psi \circ \iota_{ \mathfrak{A} } ( a ) - \tau ( a ) | }{ \tau \in T( \mathfrak{A} ) } < \epsilon$ for all $a \in \mathcal{F}_{1}$;

\item[(4)] $K_{0} ( \psi ) \circ \beta_{ \mathfrak{A} }^{(1)} \vert_{ \mathcal{P}  } = 0$; and 

\item[(5)] $K_{1} ( \psi ) \circ \beta_{ \mathfrak{A} }^{(0)} \vert_{ \mathcal{P} } = \gamma \vert_{ \mathcal{P} }$.
\end{itemize} 
\end{lemma}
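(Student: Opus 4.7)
The plan is to adapt the proof of Lemma \ref{l:liftkthy} to additionally control traces, replacing the tracially AF companion $\mathfrak{B}$ used there (which discards tracial information) by a direct application of Lemma \ref{l:closetrace}, whose output is an honest unital $*$-homomorphism realizing both a prescribed $K$-theory map and an approximate tracial condition.

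First I would write $\mathfrak{A}$ as an inductive limit $\mathfrak{A} = \dirlim(\mathfrak{A}_n, \varphi_{n,n+1})$ with the same building blocks as in the proof of Lemma \ref{l:liftkthy}, and choose $n_0$ large enough together with finite subsets $\mathcal{F}_{1,n_0} \subseteq (\mathfrak{A}_{n_0})_{s.a.}$, $\mathcal{F}_{2,n_0} \subseteq \mathfrak{A}_{n_0} \otimes C(S^{1})$, and $\mathcal{P}_{n_0} \subseteq \mathbb{P}(\mathfrak{A}_{n_0})$ so that each element of $\mathcal{F}_1$, $\mathcal{F}_2$, $\mathcal{P}$ lies within $\epsilon/20$ of the image of its $\mathfrak{A}_{n_0}$ counterpart under the natural connecting map. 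Using the tracially AI hypothesis, I would further refine the choice so that the relevant data inside $\mathfrak{A}_{n_0}$ is tracially dominated by an interior subalgebra of the form $\bigoplus_j P_j \mathsf{M}_{k_j}(C([0,1])) P_j \oplus \bigoplus_\ell \mathsf{M}_{n_\ell}$, which is of the type to which Lemma \ref{l:closetrace} applies summand-by-summand. Set $\mathcal{G} = K_0(\varphi_{n_0,\infty})(K_0(\mathfrak{A}_{n_0}))$.

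Given a homomorphism $\ftn{\gamma}{\mathcal{G}}{K_1(\mathfrak{A})}$, invoke Lemma \ref{l:extendmap} to produce a homomorphism $\ftn{\alpha}{\underline{K}(\mathfrak{A}_{n_0} \otimes C(S^{1}))}{\underline{K}(\mathfrak{A})}$ with $\alpha|_{K_0}$ positive, $\alpha \circ \underline{K}(\iota_{\mathfrak{A}_{n_0}}) = \underline{K}(\varphi_{n_0,\infty})$, $\alpha \circ \beta_{\mathfrak{A}_{n_0}}^{(1)} = 0$, and $\alpha \circ \beta_{\mathfrak{A}_{n_0}}^{(0)} = \gamma \circ K_0(\varphi_{n_0,\infty})$. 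Define a continuous affine map $\ftn{\Delta}{T(\mathfrak{A})}{T(\mathfrak{A}_{n_0} \otimes C(S^{1}))}$ by $\Delta(\tau)(a \otimes f) = \tau(\varphi_{n_0,\infty}(a))\,f(1)$. Since $\alpha$ vanishes on the image of $\beta_{\mathfrak{A}_{n_0}}^{(1)}$ and agrees with $K_0(\varphi_{n_0,\infty})$ on the direct summand $K_0(\mathfrak{A}_{n_0}) \subseteq K_0(\mathfrak{A}_{n_0} \otimes C(S^{1}))$, a short computation shows $\Delta$ is compatible with $\alpha$ in the sense of Lemma \ref{l:closetrace}. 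Apply Lemma \ref{l:closetrace} summand-by-summand to the interior approximant of $\mathfrak{A}_{n_0} \otimes C(S^{1})$ (absorbing the small-trace complementary corner into the error budget) to produce a unital $*$-homomorphism $\ftn{h}{\mathfrak{A}_{n_0} \otimes C(S^{1})}{\mathfrak{A}}$ with $\underline{K}(h) = \alpha$ and
\begin{equation*}
\sup_{\tau \in T(\mathfrak{A})} \left| \tau \circ h(x) - \Delta(\tau)(x) \right| < \epsilon/4
\end{equation*}
for every $x$ in $\mathcal{F}_{2,n_0} \cup \setof{a \otimes 1_{C(S^{1})}}{a \in \mathcal{F}_{1,n_0}}$.

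Finally, exactly as in Lemma \ref{l:liftkthy}, use nuclearity of $\mathfrak{A}_{n_0} \otimes C(S^{1})$ to produce a sequence of unital contractive completely positive maps $\ftn{\sigma_k}{\mathfrak{A} \otimes C(S^{1})}{\mathfrak{A}_{n_0} \otimes C(S^{1})}$ which are asymptotically multiplicative and satisfy $\sigma_k \circ (\varphi_{n_0,\infty} \otimes \id_{C(S^{1})}) \to \id$ in norm pointwise. Set $\psi = h \circ \sigma_k$ for $k$ sufficiently large. Conditions (1), (2), (4), (5) then follow exactly as in the proof of Lemma \ref{l:liftkthy}, while (3) follows from the chain of estimates $\tau(\psi(a \otimes 1)) \approx \tau(h(a_0 \otimes 1)) \approx \Delta(\tau)(a_0 \otimes 1) = \tau(\varphi_{n_0,\infty}(a_0)) \approx \tau(a)$, valid for $a \in \mathcal{F}_1$ with $\mathfrak{A}_{n_0}$-counterpart $a_0 \in \mathcal{F}_{1,n_0}$. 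The main obstacle is the passage from general CW-complex building blocks $P \mathsf{M}_k(C(X)) P \otimes C(S^{1})$ to the restricted form to which Lemma \ref{l:closetrace} directly applies; this is handled by the tracially AI hypothesis, which guarantees, at the cost of a uniformly small trace, an interior subalgebra of the required type inside each summand of $\mathfrak{A}_{n_0}$, with the complementary corner contributing at most $\epsilon/4$ in the estimate for condition (3).
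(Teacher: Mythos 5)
Your overall strategy -- realize the $K$-theory data by the machinery of Lemma \ref{l:liftkthy} and inject the tracial control through Lemma \ref{l:closetrace} applied to a large-trace interval-type subalgebra -- is the right idea, but the execution has a genuine gap at the construction of $h$. You claim to produce a unital $*$-homomorphism $h$ defined on all of $\mathfrak{A}_{n_0} \otimes C(S^{1})$ with $\underline{K}(h) = \alpha$, by applying Lemma \ref{l:closetrace} ``summand-by-summand to the interior approximant'' and ``absorbing the small-trace complementary corner into the error budget.'' This does not work. The summands of $\mathfrak{A}_{n_0}$ are $P\mathsf{M}_k(C(X))P$ with $X$ a general connected finite CW-complex, while Lemma \ref{l:closetrace} only produces homomorphisms out of $\mathsf{M}_n(C(X)) \otimes C(S^{1})$ with $X = [0,1]$ or a point. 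The tracially AI hypothesis is a property of the simple limit algebra $\mathfrak{A}$, not of its building blocks: it supplies interval-type subalgebras $\mathfrak{D} \subseteq \mathfrak{A}$ of large trace together with completely positive compressions $L_n : \mathfrak{A} \to \mathfrak{D}$, but it does not supply interval subalgebras \emph{inside} $\mathfrak{A}_{n_0}$ (e.g.\ $\mathsf{M}_k(C(S^2))$ contains no such large-trace interior approximant of itself). Even granting an interior approximant, a multiplicative map cannot be defined by discarding a small-trace corner: you would still need to define $h$ on the complement, and no existence theorem for homomorphisms out of $P\mathsf{M}_k(C(X))P \otimes C(S^{1})$ for general $X$ is available from positivity of $\alpha$ alone. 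This is precisely why approximately multiplicative c.p.c.\ maps (which always exist but do not see traces) and genuine homomorphisms (which see traces but only exist out of interval-type domains) must be combined rather than conflated.

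The paper's proof resolves this by never constructing a homomorphism out of $\mathfrak{A}_{n_0} \otimes C(S^{1})$. It first takes the approximately multiplicative map $L : \mathfrak{A} \otimes C(S^{1}) \to \mathfrak{A}$ furnished by Lemma \ref{l:liftkthy} (all the $K$-theoretic conditions, no trace control), then uses the tracially AI structure of $\mathfrak{A}$ itself to find a projection $p_n$ with $\tau(1-p_n) < 2^{-n}$, an interval-type subalgebra $\mathfrak{D}_n \subseteq \mathfrak{A}$ with unit $p_n$, and c.p.c.\ maps $L_n : \mathfrak{A} \to \mathfrak{D}_n$. Lemma \ref{l:closetrace} is applied only to $\mathfrak{D}_n \otimes C(S^{1})$ (which is of the admissible form) to get a genuine unital homomorphism $h$ into a corner of $\mathfrak{A}$ matching $\underline{K}(L|_{\mathfrak{D}_n \otimes C(S^{1})})$ and the prescribed traces; the final map is the patch $\psi(x) = L\bigl([(1-p_n)\otimes 1]\,x\,[(1-p_n)\otimes 1]\bigr) + \bigl(h \circ (L_n \otimes \id_{C(S^{1})})\bigr)(x)$, whose trace behaviour is governed by $h$ because $1-p_n$ has tiny trace. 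To repair your argument you would need to restructure it along these lines, keeping $L$ on the small-trace corner rather than attempting to upgrade it to a homomorphism on the whole of $\mathfrak{A}_{n_0} \otimes C(S^{1})$.
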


\begin{proof}
Let $\{ \mathcal{H}_{n} \}_{ n = 1}^{ \infty }$ be an increasing sequence of finite subsets of $\mathfrak{A}$ whose union is dense in $\mathfrak{A}$.  Now, for each $n \in \N$, there exist a projection $p_{n} \in \mathfrak{A}$, a sub-$C^{*}$-algebra $\mathfrak{D}_{n} = \bigoplus_{ i = 1}^{ k(n) } \mathsf{M}_{ m( i,n ) } ( C ( X_{ [i,n] } ) )$ of $\mathfrak{A}$, where $X_{ [ i , n ] }$ is either $[0,1]$ or a space with one point with $1_{ \mathfrak{D}_{n} } = p_{n}$, and a sequence of contractive, completely positive, linear maps $\{ \ftn{ L_{n} }{ \mathfrak{A} }{ \mathfrak{D}_{n} } \}_{ n = 1}^{ \infty }$ such that 
\begin{itemize}
\item[(a)] $\norm{ p_{n} x - x p_{n} } < \frac{1}{ 2^{n} }$ for all $x \in \mathcal{H}_{n}$;

\item[(b)] $\norm{ p_{n} x p_{n} - L_{n} ( x ) } < \frac{ 1 }{ 2^{n} }$ for all $x \in \mathcal{H}_{n}$;

\item[(c)] $\norm{ x - ( 1_{ \mathfrak{A} } - p_{n} )x ( 1_{ \mathfrak{A} } - p_{n} ) - L_{n} ( x ) } < \frac{ 1 }{ 2^{n} }$ for all $x \in \mathcal{H}_{n}$ with $\norm{ x } \leq 1$; and 

\item[(d)] $\tau ( 1_{ \mathfrak{A} } - p_{n} ) < \frac{1}{ 2^{n} }$ for all $\tau \in T( \mathfrak{A} )$.
\end{itemize}
Note that 
\begin{equation*}
\lim_{ n \to \infty } \norm{ L_{n} ( xy ) - L_{n} ( x ) L_{n} (y) } = 0
\end{equation*}
for all $x, y \in \mathfrak{A}$.

Denote the $i^{\mathrm{th}}$ summand of $\mathfrak{D}_{n}$ by $\mathfrak{D}_{ [ n , i ] }$ and let $d_{ [ n , i ] } = 1_{ \mathfrak{D}_{ [ n , i ] } }$.  Choose $n$ large enough such that $\frac{1}{ 2^{n} } < \frac{ \epsilon }{ 6 }$.  Let $\mathcal{P}_{1}$ be a finite subset of $\mathbb{P} ( \mathfrak{A} )$ such that $\mathcal{P}_{1}$ contains $\mathcal{P}$, $d_{ [ n , i ] } $, $p_{n}$.  Choose a finite subset $\mathcal{F}_{3}$ of $\mathfrak{A} \otimes C(S^{1})$ such that $\mathcal{F}_{3}$ contains $\iota_{ \mathfrak{A} } ( \mathcal{F}_{1} ) \cup \mathcal{F}_{2}$ and the set
\begin{equation*}
\setof{ \left(  ( 1_{ \mathfrak{A} } - p_{n} ) \otimes 1_{ C( S^{1}) } \right) x \left(  ( 1_{ \mathfrak{A} } - p_{n} ) \otimes 1_{ C( S^{1}) } \right) }{ x \in \mathcal{F} }.
\end{equation*} 
Let $\mathcal{G}$ be the finitely generated subgroup of $K_{0} ( \mathfrak{A} )$ in Lemma \ref{l:liftkthy} which corresponds to $\frac{1}{ 2^{n} }$, $\mathcal{P}_{1}$, and $\mathcal{F}_{3}$.  

Suppose $\ftn{ \gamma }{ \mathcal{G} }{ K_{1} ( \mathfrak{A} ) }$ is a homomorphism.  Then, by Lemma \ref{l:liftkthy}, there exists a contractive, completely positive, linear map $\ftn{L}{ \mathfrak{A} \otimes C(S^{1}) }{ \mathfrak{A} }$ such that 
\begin{itemize}
\item[(a)] $L$ is $\mathcal{F}_{3}$-$\frac{1}{ 2^{n} }$-multiplicative;

\item[(b)] $\underline{K} ( L \circ \iota_{ \mathfrak{A} } ) \vert_{ \mathcal{P}_{1} } = \underline{K} ( \id_{ \mathfrak{A} } ) \vert_{ \mathcal{P}_{1} }$; 

\item[(c)] $K_{0} ( L  ) \circ \beta_{ \mathfrak{A} }^{(1)} \vert_{ \mathcal{P} } = 0$; and

\item[(d)] $K_{1} ( L ) \circ \beta_{ \mathfrak{A} }^{(0)} \vert_{ \mathcal{P} } = \gamma \vert_{ \mathcal{P} }$
\end{itemize}  
Choose a projection $q_{n} \in \mathfrak{A}$ such that $[ q_{n} ] = K_{0} ( L ) \left( \sum_{ i = 1}^{ k(n) } [ d_{ [n,i] } \otimes 1_{ C( S^{1} ) } ] \right)$.  Let $\mathcal{G}_{n}$ be a finite subset of $\mathfrak{D}_{ n }$ such that $\mathcal{G}_{n}$ contains the generators of $\mathfrak{D}_{n}$.  Define $\ftn{ \eta }{ T( \mathfrak{A} ) }{ T( \mathfrak{D}_{n} \otimes C(S^{1}) ) }$ by 
\begin{equation*}
\eta ( \tau ) = \frac{ 1 }{ \tau ( p_{n} ) } \tau \circ ( \id_{ \mathfrak{A} } \otimes \mathrm{ev} ) \vert_{ \mathfrak{D}_{n } \otimes C(S^{1}) }.
\end{equation*}
Since $K_{0} ( L ) = K_{0} ( \id_{ \mathfrak{A} } \otimes \mathrm{ev} )$, we have that $K_{0} \left( L \vert_{ \mathfrak{D}_{n}  \otimes C(S^{1} ) } \right)$ and $\gamma$ are compatible.  Also, note that $K_{0} ( \mathfrak{D}_{n} \otimes C(S^{1}) ) = K_{0} ( \iota_{ \mathfrak{D}_{n} }) ( K_{0} ( \mathfrak{D}_{n} ) )$.  Therefore $K_{0} \left( L \vert_{ \mathfrak{D}_{n}  \otimes C(S^{1}) }\right)$ sends $K_{0} ( \mathfrak{D}_{n} \otimes C( S^{1} ) )_{+} \setminus \{ 0 \}$ to $K_{0} ( q_{n} \mathfrak{A} q_{n} )_{+} \setminus \{ 0 \}$.  Hence, by Lemma \ref{l:closetrace}, there exists a unital $*$-homomorphism $\ftn{h}{ \mathfrak{D}_{n} \otimes C(S^{1}) }{ q_{n} \mathfrak{A} q_{n} }$ such that $\underline{K} ( h ) = \underline{K} \left( L \vert_{ \mathfrak{D}_{n}  \otimes C(S^{1}) } \right)$ and 
\begin{equation*}
\sup \setof{ | ( \tau \circ h )( g ) - \eta ( \tau ) ( g ) | }{ \tau \in T( \mathfrak{A} ) } < \frac{ 1 }{ 2^{n} }
\end{equation*}
for all $h \in \mathcal{G}_{n}$.  

Define $\ftn{ \psi }{ \mathfrak{A} \otimes C( S^{1} ) }{ \mathfrak{A} }$ by 
\begin{equation*}
\psi ( x ) = L \left( [ ( 1_{ \mathfrak{A} } - p_{n} ) \otimes 1_{ C(S^{1}) } ] x [ ( 1_{ \mathfrak{A} } - p_{n} ) \otimes 1_{ C(S^{1}) } ] \right) + \left( h \circ ( L_{n} \otimes \id_{ C( S^{1}) } ) \right) ( x ).
\end{equation*}
By construction, $\psi$ satisfies the desired properties of the lemma.
\end{proof}

\begin{theorem}\label{t:bottunit}
Let $\mathfrak{A}$ be as in Lemma \ref{l:liftkthy}.  For each $\epsilon > 0$ and for each finite subsets $\mathcal{P}$, $\mathcal{F}_{1}$, and $\mathcal{F}_{2}$ of $\mathbb{P} ( \mathfrak{A} )$, $\mathfrak{A}$, and $\mathfrak{A} \otimes C( S^{1} )$ respectively, there exists a finitely generated subgroup $\mathcal{G}$ of $K_{0} ( \mathfrak{A} )$ containing $[ 1_{ \mathfrak{A} } ]$ such that the following holds: for every homomorphism $\ftn{ \gamma }{ \mathcal{G} }{ K_{1} ( \mathfrak{A} ) }$ there exists a unital, contractive, completely positive, linear map $\ftn{ \psi }{ \mathfrak{A} \otimes C( S^{1} ) }{ \mathfrak{A} }$ such that 
\begin{itemize}
\item[(1)] $\psi$ is $\mathcal{F}_{2}$-$\epsilon$-multiplicative;

\item[(2)] $\underline{K} ( \psi ) \circ \underline{ K } ( \iota_{ \mathfrak{A} } ) \vert_{ \mathcal{P} } = \underline{K} ( \id_{ \mathfrak{A} } ) \vert_{ \mathcal{P} }$;

\item[(3)] $\norm{ ( \psi \circ \iota_{ \mathfrak{A} } ) ( a ) - a } < \epsilon$ for all $a \in \mathcal{F}_{1}$; 

\item[(4)] $K_{0} ( \psi ) \circ \beta^{(1)}_{ \mathfrak{A} } \vert_{ \mathcal{P}  } = 0$; and 

\item[(5)] $K_{1} ( \psi ) \circ \beta_{ \mathfrak{A} }^{(0)} \vert_{ \mathcal{P} } = \gamma \vert_{ \mathcal{P} }$.
\end{itemize} 
\end{theorem}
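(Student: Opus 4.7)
The plan is to deduce Theorem~\ref{t:bottunit} from Lemma~\ref{l:lifttrace} by upgrading the tracial closeness in condition~(3) of that lemma to norm closeness, via a uniqueness-type argument followed by a conjugation by a well-chosen unitary. First I would replace $\mathcal{F}_{1}$ by the finite set $\mathcal{F}_{1}' \subseteq \mathfrak{A}_{s.a.}$ obtained by taking real and imaginary parts of its elements, and then choose a small auxiliary tolerance $\epsilon_{0} > 0$ (depending quantitatively on $\epsilon$ and on $|\mathcal{F}_{1}|$), an enlarged finite set $\mathcal{P}' \supseteq \mathcal{P}$ of $\mathbb{P}(\mathfrak{A})$, and an enlarged finite set $\mathcal{F}_{2}' \supseteq \mathcal{F}_{2}$ of $\mathfrak{A} \otimes C(S^{1})$, so that Lemma~\ref{l:closemaps} guarantees that the partial K-theory maps in (2), (4), (5) are stable under $\mathcal{F}_{2}'$-$\epsilon_{0}$-small perturbations and under conjugation by a unitary. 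Feeding this data into Lemma~\ref{l:lifttrace} produces a finitely generated subgroup $\mathcal{G} \subseteq K_{0}(\mathfrak{A})$ containing $[1_{\mathfrak{A}}]$ such that for each homomorphism $\ftn{\gamma}{\mathcal{G}}{K_{1}(\mathfrak{A})}$ there is a unital c.p.c. map $\ftn{\psi_{0}}{\mathfrak{A} \otimes C(S^{1})}{\mathfrak{A}}$ satisfying the K-theoretic conditions (2), (4), (5) together with the tracial estimate $\sup_{\tau \in T(\mathfrak{A})} |\tau(\psi_{0}(\iota_{\mathfrak{A}}(a))) - \tau(a)| < \epsilon_{0}$ for all $a \in \mathcal{F}_{1}'$.

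Next I would apply a uniqueness theorem for separable simple unital tracially AI $C^{*}$-algebras satisfying the UCT (in the spirit of the uniqueness results of Lin in \cite{booklin}) to the two approximately multiplicative c.p.c. maps $\psi_{0} \circ \iota_{\mathfrak{A}}$ and $\id_{\mathfrak{A}}$ from $\mathfrak{A}$ to $\mathfrak{A}$. Since these two maps induce the same partial map on $\overline{\mathcal{P}'}$ (by condition~(2) of Lemma~\ref{l:lifttrace}) and are uniformly $\epsilon_{0}$-close on every tracial state of $\mathfrak{A}$ on the self-adjoint generators $\mathcal{F}_{1}'$, the uniqueness theorem yields a unitary $u \in \mathfrak{A}$ such that
\begin{equation*}
\| u\, \psi_{0}(\iota_{\mathfrak{A}}(a))\, u^{*} - a \| < \epsilon
\end{equation*}
for every $a \in \mathcal{F}_{1}$. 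Setting $\psi = \mathrm{Ad}(u) \circ \psi_{0}$ gives condition~(3) immediately; condition~(1) follows because conjugation by a unitary preserves approximate multiplicativity; and (2), (4), (5) are preserved because $\mathrm{Ad}(u)$ is an inner automorphism and hence acts trivially on $\underline{K}(\mathfrak{A})$, so composition with $\mathrm{Ad}(u)$ does not change the induced map on $\underline{K}$ (up to the error absorbed by Lemma~\ref{l:closemaps}).

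The main obstacle is producing the uniqueness theorem in the second step with exactly the quantitative form required: one is given only agreement on a specified finite $\mathcal{P}'$ (not on all of $\underline{K}(\mathfrak{A})$) and approximate agreement on the tracial state space, while the conclusion must be norm closeness on the prescribed finite set $\mathcal{F}_{1}$, with all auxiliary data $\epsilon_{0}, \mathcal{P}', \mathcal{F}_{2}'$ chosen in advance. The standard strategy available because $\mathfrak{A}$ is tracially AI is to cut by a projection $p \in \mathfrak{A}$ whose complement has arbitrarily small trace, so that the discrepancy on $(1-p)\mathfrak{A}(1-p)$ is controlled via strict comparison, then reduce the corner $p\mathfrak{A} p$ to a finite direct sum of interval and matrix blocks (as done in the proof of Lemma~\ref{l:lifttrace}) where a direct matching of the two maps by an inner unitary is possible. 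A careful reassembly through Lemma~\ref{l:closemaps} then guarantees that the small perturbation introduced by $\mathrm{Ad}(u)$ is absorbed by $\epsilon$, so that the final map $\psi = \mathrm{Ad}(u) \circ \psi_{0}$ meets all five conditions simultaneously.
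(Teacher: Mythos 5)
Your proposal follows essentially the same route as the paper: the paper's entire proof of Theorem~\ref{t:bottunit} is a pointer to the proof of Theorem~4 (the tracially AI case) of \cite{pr_auto}, with Lemma~\ref{l:lifttrace} substituted for Lemma~7 of that paper, and the argument there is precisely your scheme of applying Lemma~\ref{l:lifttrace} and then a uniqueness theorem to conjugate $\psi_{0} \circ \iota_{\mathfrak{A}}$ onto $\id_{\mathfrak{A}}$ on $\mathcal{F}_{1}$. One caveat on the uniqueness step, which you correctly identify as the crux but do not fully discharge: for targets of tracial rank one (unlike tracial rank zero), the approximate uniqueness theorem requires, in addition to agreement of $\underline{K}(\cdot)\vert_{\mathcal{P}'}$ and $\delta$-closeness of traces on a finite set of self-adjoint elements, also $\delta$-closeness of the induced classes in $U(\mathsf{M}_{k}(\mathfrak{A}))/CU(\mathsf{M}_{k}(\mathfrak{A}))$ over a prescribed finite set of test unitaries; this is not a consequence of the first two conditions, since the image of $K_{0}(\mathfrak{A})$ need not be dense in $\mathrm{Aff}(T(\mathfrak{A}))$. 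For test unitaries in $U(\mathfrak{A})_{0}$ the tracial condition of Lemma~\ref{l:lifttrace}, applied to logarithms placed into $\mathcal{F}_{1}'$, does control this determinant-type datum, but for unitaries representing nontrivial $K_{1}$-classes the control must be built into the construction (or one must quote the precise form of the uniqueness theorem used in \cite{pr_auto}), so this hypothesis should be verified explicitly rather than absorbed into the phrase ``the uniqueness theorem yields a unitary.''
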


\begin{proof}
Arguing as in the proof of Theorem 4: The tracially AI case pp.\ 439 of \cite{pr_auto} and using Lemma \ref{l:lifttrace} instead of Lemma 7 of \cite{pr_auto}, we get the desired result.
\end{proof}

\begin{corollary}\label{c:bottunit}
Let $\mathfrak{A}$ be as in Lemma \ref{l:liftkthy}.  Let $\epsilon > 0$ and let  $\mathcal{F}$ and $\mathcal{P}$ be finite subsets of $\mathfrak{A}$ and $\mathbb{P} ( \mathfrak{A} )$ respectively.  Then there exists a finitely generated subgroup $\mathcal{G}$ of $K_{0} ( \mathfrak{A} )$ containing $[ 1_{ \mathfrak{A} } ]$ such that the following holds: if $\ftn{ \gamma }{ \mathcal{G} }{ K_{1} ( \mathfrak{A} ) }$, then there exists a unitary $w \in \mathfrak{A}$ such that
\begin{itemize}
\item[(1)] $\norm{ a w - w a } < \epsilon$ for all $a \in \mathcal{F}$;

\item[(2)] $\mathrm{bott}_{1} ( \id_{ \mathfrak{A} } , w ) \vert_{ \mathcal{P} } = 0$ and $\mathrm{bott}_{0} ( \id_{ \mathfrak{A} } , w ) \vert_{ \mathcal{P} } = \gamma \vert_{ \mathcal{P} }$; and

\item[(3)] $\gamma ( [ 1_{ \mathfrak{A} } ] ) = [ w ]$.
\end{itemize}
\end{corollary}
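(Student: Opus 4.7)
The plan is to extract the unitary $w$ from the completely positive map $\psi$ produced by Theorem~\ref{t:bottunit}, essentially by setting $w$ close to $\psi(1_{\mathfrak{A}} \otimes z)$, where $z$ is the canonical unitary generator of $C(S^{1})$. The role of $\mathcal{G}$ will be inherited directly from the one produced by Theorem~\ref{t:bottunit}, after first enlarging $\mathcal{P}$ and picking an auxiliary finite subset of $\mathfrak{A} \otimes C(S^{1})$.

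First I would fix a small tolerance $\epsilon' > 0$, and enlarge $\mathcal{P}$ to a finite set $\mathcal{P}' \subseteq \mathbb{P}(\mathfrak{A})$ which also contains $[1_{\mathfrak{A}}]$ (so that $\gamma([1_{\mathfrak{A}}])$ is determined). I would then build a finite set $\mathcal{F}_{2} \subseteq \mathfrak{A} \otimes C(S^{1})$ large enough to contain $\iota_{\mathfrak{A}}(\mathcal{F}) \cup \{1_{\mathfrak{A}} \otimes z\}$ together with whatever auxiliary elements are needed to invoke Lemma~\ref{l:extmaplinear}, Lemma~\ref{l:almostunit}, and Lemma~\ref{l:closemaps} with tolerance $\epsilon$. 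Setting $\mathcal{F}_{1} = \mathcal{F}$, I would then apply Theorem~\ref{t:bottunit} with input data $(\epsilon', \mathcal{P}', \mathcal{F}_{1}, \mathcal{F}_{2})$ to obtain the finitely generated subgroup $\mathcal{G}$ that we take as the $\mathcal{G}$ of the corollary.

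Given a homomorphism $\gamma: \mathcal{G} \to K_{1}(\mathfrak{A})$, Theorem~\ref{t:bottunit} yields a unital CPC map $\psi: \mathfrak{A} \otimes C(S^{1}) \to \mathfrak{A}$ with properties (1)--(5) of that theorem. Because $\psi$ is almost multiplicative on $\mathcal{F}_{2}$ and $1_{\mathfrak{A}} \otimes z$ is unitary, $\psi(1_{\mathfrak{A}} \otimes z)$ is almost unitary; Lemma~\ref{l:almostunit} then supplies a genuine unitary $w \in \mathfrak{A}$ within $\epsilon'$ of it. Lemma~\ref{l:extmaplinear}, applied with tolerance $\epsilon$, gives simultaneously that $\|aw - wa\| < \epsilon$ for every $a \in \mathcal{F}$ (condition (1) of the corollary) and that $\psi$ is $\mathcal{F}_{2}$-close to $\varphi_{\id_{\mathfrak{A}}, w}$.

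For the $K$-theoretic conditions (2) and (3), I would invoke Lemma~\ref{l:closemaps} to conclude that $\underline{K}(\psi)$ and $\underline{K}(\varphi_{\id_{\mathfrak{A}}, w})$ agree on the classes arising from $\beta_{\mathfrak{A}}^{(i)}(\mathcal{P})$ for $i = 0, 1$. By the very definition of the Bott map, $\mathrm{bott}_{i}(\id_{\mathfrak{A}}, w)|_{\mathcal{P}} = \underline{K}(\varphi_{\id_{\mathfrak{A}}, w}) \circ \beta_{\mathfrak{A}}^{(i)}|_{\mathcal{P}}$, so properties (4) and (5) of Theorem~\ref{t:bottunit} translate into $\mathrm{bott}_{1}(\id_{\mathfrak{A}}, w)|_{\mathcal{P}} = 0$ and $\mathrm{bott}_{0}(\id_{\mathfrak{A}}, w)|_{\mathcal{P}} = \gamma|_{\mathcal{P}}$. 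For property (3), note that $\beta_{\mathfrak{A}}^{(0)}([1_{\mathfrak{A}}]) = [1_{\mathfrak{A}} \otimes z]$, so $\gamma([1_{\mathfrak{A}}]) = K_{1}(\psi)([1_{\mathfrak{A}} \otimes z]) = [\psi(1_{\mathfrak{A}} \otimes z)] = [w]$ in $K_{1}(\mathfrak{A})$.

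The main obstacle is purely bookkeeping: the finite sets $\mathcal{F}_{2}$, $\mathcal{P}'$ and the tolerance $\epsilon'$ must be chosen in the correct nested order so that Theorem~\ref{t:bottunit}, Lemmas~\ref{l:almostunit},~\ref{l:extmaplinear}, and~\ref{l:closemaps} can be applied in sequence with compatible data. There is no genuinely new content beyond Theorem~\ref{t:bottunit}, and the passage from a CPC lift to a unitary is essentially the same device used in the tracially AI case of \cite{pr_auto}.
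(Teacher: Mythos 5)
Your proposal is correct and follows essentially the same route as the paper's proof: apply Theorem~\ref{t:bottunit} with suitably nested data, extract $w$ from $\psi(1_{\mathfrak{A}}\otimes z)$ via Lemma~\ref{l:almostunit}, use Lemma~\ref{l:extmaplinear} to get the commutator estimate and the closeness of $\psi$ to $\varphi_{\id_{\mathfrak{A}},w}$, and then transfer the $K$-theoretic conditions through Lemma~\ref{l:closemaps}. The bookkeeping you defer (choosing $\delta_1,\delta_2,\delta_3$ and the finite sets in the right order before invoking Theorem~\ref{t:bottunit}) is exactly what the paper carries out.
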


\begin{proof}
Let $\epsilon > 0$, $\mathcal{P}$ be a finite subset of $\mathbb{P} ( \mathfrak{A} )$, and $\mathcal{F}$ be a finite subset of $\mathfrak{A}$.  Set $\mathcal{Q} = j_{ \mathfrak{A} } ( \mathcal{P} )$.  By Lemma \ref{l:closemaps}, there exist $\delta_{1} > 0$ and a finite subset $\mathcal{H}$ of $\mathfrak{A} \otimes C( S^{1} )$ corresponding to $\mathcal{Q}$.  By Lemma \ref{l:extmaplinear}, there exist $\delta_{2} > 0$, a finite subset $\mathcal{G}_{1}$ of $\mathfrak{A}$, and finite subset $\mathcal{G}_{2}$ of $\mathfrak{A} \otimes C( S^{1} )$ corresponding to $\min\{ \delta_{1} , \epsilon \}$, $\mathcal{H}$, and $\mathcal{F}$.  By Lemma \ref{l:almostunit}, there exists $\delta_{3}$ corresponding $\min\{ \delta_{1} , \delta_{2} \}$.  By Theorem \ref{t:bottunit}, there exists a finitely generated subgroup $\mathcal{G}$ of $K_{0} ( \mathfrak{A} )$ containing $[1_{ \mathfrak{A} } ]$ corresponding to $\mathcal{P}$, $\mathcal{G}_{1} \cup \mathcal{F}$, $\mathcal{G}_{2} \cup \mathcal{H}$, and $\delta = \min\{ \epsilon , \delta_{1} , \delta_{2} , \delta_{3} \}$.  

Suppose $\ftn{ \gamma }{ \mathcal{G} }{ K_{1} ( \mathfrak{A} ) }$ is a group homomorphism.  Then there exists a unital, contractive, completely positive, linear map $\ftn{ \psi }{ \mathfrak{A} \otimes C( S^{1} ) }{ \mathfrak{A} }$ such that 
\begin{itemize}
\item[(a)] $\psi$ is $(\mathcal{G}_{2} \cup \mathcal{H})$-$\delta$-multiplicative; 

\item[(b)] $\norm{ \psi ( a \otimes 1_{ C(S^{1}) } ) - a } < \delta$ for all $a \in \mathcal{G}_{1} \cup \mathcal{F}$;

\item[(c)] $K_{0} ( \psi ) \circ \beta_{ \mathfrak{A} }^{ (1) } \vert_{ \mathcal{P} } = 0$; and 

\item[(d)] $K_{1} ( \psi ) \circ \beta_{ \mathfrak{A} }^{(0)} \vert_{ \mathcal{P} } = \gamma \vert_{ \mathcal{P} }$.
\end{itemize}

Since $\norm{ L( 1_{ \mathfrak{A} } \otimes z )^{*} L( 1_{ \mathfrak{A} } \otimes z ) - 1_{ \mathfrak{A} } } < \delta < \delta_{3}$ and $\norm{ L( 1_{ \mathfrak{A} } \otimes z ) L( 1_{ \mathfrak{A} } \otimes z )^{*} - 1_{ \mathfrak{A} } } < \delta < \delta_{3}$, there exists a unitary $w \in \mathfrak{A}$ such that 
\begin{equation*}
\norm{ \psi ( 1_{ \mathfrak{A} } \otimes z ) - w } < \delta_{2}.
\end{equation*}
Therefore, $\psi$ is $( \mathcal{G}_{2} \cup \mathcal{H} )$-$\delta_{2}$-multiplicative with
\begin{equation*}
\norm{ \psi ( a \otimes 1_{ C( S^{1} ) } ) - a } < \delta_{2} \quad \mathrm{and} \quad \norm{ \psi ( 1_{ \mathfrak{A} } \otimes z ) - w } < \delta_{2}
\end{equation*}
for all $a \in \mathcal{G}_{1} \cup \mathcal{F}$.  Hence, $\norm{ \psi ( x ) - \varphi_{ \id_{ \mathfrak{A} } , w } ( x ) } < \min \{ \epsilon , \delta_{1} \}$ for all $x \in \mathcal{H}$ and $\norm{ w a - a w } < \min\{ \epsilon , \delta_{1} \}$ for all $a \in \mathcal{F}$.  Thus, $\underline{K} ( \psi ) \vert_{ \mathcal{Q} } = \underline{K} ( \varphi_{ \id_{ \mathfrak{A} } , w } ) \vert_{ \mathcal{Q} }$.  So 
\begin{equation*}
\mathrm{bott}_{1} ( \id_{ \mathfrak{A} } , w ) \vert_{ \mathcal{P} } = K_{0} ( \varphi_{ \id_{ \mathfrak{A} } , w } ) \circ \beta_{ \mathfrak{A} }^{(1)} \vert_{ \mathcal{P} } = K_{0} ( \psi ) \circ \beta_{ \mathfrak{A} }^{(1)} \vert_{ \mathcal{P} } = 0
\end{equation*}
and 
\begin{equation*}
\mathrm{bott}_{0} ( \id_{ \mathfrak{A} } , w ) \vert_{ \mathcal{P} } = K_{1} ( \varphi_{ \id_{ \mathfrak{A} } , w } ) \circ \beta_{ \mathfrak{A} }^{(0)} \vert_{ \mathcal{P} } = K_{1} ( \psi ) \circ \beta_{ \mathfrak{A} }^{(0)} \vert_{ \mathcal{P} } = \gamma \vert_{ \mathcal{P} }.
\end{equation*}
Moreover,
\begin{equation*}
\gamma ( [ 1_{ \mathfrak{A} } ] ) = K_{1} ( \psi ) \circ \beta_{ \mathfrak{A} }^{(0)} ( [ 1_{ \mathfrak{A} } ] ) = K_{1} ( \psi )( [ 1_{ \mathfrak{A} } \otimes z ] ) = [ w ].
\end{equation*}
\end{proof}

\begin{definition}
Let $\mathcal{A}$ denote the class of all nuclear, separable, simple, unital $C^{*}$-algebras $\mathfrak{A}$ such that $\mathfrak{A} \otimes \mathsf{M}_{ \mathfrak{p} }$ is a tracially AI algebra that satisfies the UCT for all supernatural numbers $\mathfrak{p}$ of infinite type.  By Theorem 2.11 of \cite{hlzn_range}, $\mathfrak{A} \otimes \mathsf{M}_{ \mathfrak{p} }$ is a tracially AI algebra that satisfies the UCT for all supernatural numbers $\mathfrak{p}$ of infinite type if and only if $\mathfrak{A} \otimes \mathsf{M}_{ \mathfrak{p} }$ is a tracially AI algebra that satisfies the UCT for some supernatural number $\mathfrak{p}$ of infinite type.  

Let $\mathcal{A}_{ \mathcal{Z} }$ denote the class of $C^{*}$-algebras $\mathfrak{A}$ in $\mathcal{A}$ such that $\mathfrak{A}$ is $\mathcal{Z}$-stable.  Note that if $\mathfrak{A} \in \mathcal{A}_{ \mathcal{Z} }$, then $\mathfrak{A} \otimes \mathsf{M}_{ \mathfrak{p} } \in \mathcal{A}_{ \mathcal{Z} }$ for all supernatural numbers $\mathfrak{p}$.
\end{definition}

\begin{notation}
Let $\mathcal{Q}$ be the UHF algebra such that $K_{0} ( \mathcal{Q} ) = \Q$ and $[ 1_{ \mathcal{Q} } ] = 1$. 
\end{notation}

\begin{definition}
Let $\mathfrak{A}$ be a unital $C^{*}$-algebra and let $\mathcal{G}$ be a subgroup of $K_{0} ( \mathfrak{A} )$.  Then $H_{ [ 1_{ \mathfrak{A} } ] } ( \mathcal{G} , K_{1} ( \mathfrak{A} ) )$ denotes the subgroup of all $x \in K_{1} ( \mathfrak{A} )$ such that there exists a homomorphism $\alpha$ from $\mathcal{G}$ to $K_{1} ( \mathfrak{A} )$ with $\alpha ( [ 1_{ \mathfrak{A} } ] ) = x$.
\end{definition}

\begin{lemma}\label{l:torsionfreebott}
Let $\mathfrak{A}$ be nuclear, separable $C^{*}$-algebra satisfying the UCT such that $K_{*} ( \mathfrak{A} )$ is torsion free.  Let $\mathcal{P}$ be a finite subset of $\mathbb{P}( \mathfrak{A} )$.  Then there exist a finite subset $\mathcal{Q}$ of $\mathbb{P} ( \mathfrak{A} )$, $\delta > 0$ and a finite subset of $\mathcal{F}$ such that if $\mathfrak{B}$ is a nuclear, simple $C^{*}$-algebra satisfying the UCT and $\ftn{ \psi }{ \mathfrak{A} }{ \mathfrak{B} }$ is a contractive, completely positive, linear map with $\psi$ a $\mathcal{F}$-$\delta$-multiplicative map, then $K_{*} ( \psi ) \vert_{ \mathcal{Q} } = 0$ implies that $\underline{K} ( \psi ) \vert_{ \mathcal{P} } = 0$.
\end{lemma}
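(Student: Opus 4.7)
The plan is to exploit the torsion-freeness of $K_*(\mathfrak{A})$, which together with the UCT forces $K_i(\mathfrak{A};\Z_n) \cong K_i(\mathfrak{A})/nK_i(\mathfrak{A})$ for every $n$. Indeed, in the Bockstein exact sequence
\begin{equation*}
K_i(\mathfrak{A}) \xrightarrow{\pi_n} K_i(\mathfrak{A};\Z_n) \xrightarrow{\beta_n} K_{i+1}(\mathfrak{A}) \xrightarrow{\times n} K_{i+1}(\mathfrak{A}),
\end{equation*}
the map $\times n$ on $K_{i+1}(\mathfrak{A})$ is injective, so $\beta_n = 0$ and the canonical mod-$n$ reduction $\pi_n: K_i(\mathfrak{A}) \to K_i(\mathfrak{A};\Z_n)$ (induced by $j_n$ composed with double Bott) is surjective. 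Consequently every $\underline{K}$-class appearing in $\mathcal{P}$ lifts to a class in $K_*(\mathfrak{A})$, which has a concrete representative in $\mathbb{P}_0(\mathfrak{A}) \subseteq \mathbb{P}(\mathfrak{A})$.

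First I would construct $\mathcal{Q}$. For each $p \in \mathcal{P}$, representing a class in some $K_i(\mathfrak{A};\Z_n)$, surjectivity of $\pi_n$ yields a class $x_p \in K_*(\mathfrak{A})$ with $\pi_n(x_p) = [p]$. I would choose a representative $q_p \in \mathbb{P}_0(\mathfrak{A})$ (taking a pair of projections to realize a formal $K_0$-difference when necessary). Because $\pi_n$ is induced by the explicit composition of Bott periodicity with $j_n$, I simultaneously obtain a canonical representative $p'_{q_p} \in \mathbb{P}(\mathfrak{A})$ of $\pi_n([q_p]) = [p]$, built from $q_p$ by double-suspending and applying $j_n$. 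Take $\mathcal{Q}$ to be the finite union of these $q_p$'s together with these $p'_{q_p}$'s.

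Next I would invoke the partial-map framework of the definition following Lemma~\ref{l:partialmap}, applied to $\mathcal{P} \cup \mathcal{Q}$, to produce $\mathcal{F}$ and $\delta$ such that for every $\mathcal{F}$-$\delta$-multiplicative c.c.p.\ map $\psi$, $\underline{K}(\psi)|_{\mathcal{P} \cup \mathcal{Q}}$ is a well-defined group homomorphism on the subgroup generated by $\overline{\mathcal{P} \cup \mathcal{Q}}$. Possibly enlarging $\mathcal{F}$ and shrinking $\delta$ further, I would arrange the partial-map version of Bockstein naturality
\begin{equation*}
\underline{K}(\psi)([p'_{q_p}]) = \pi_n^{\mathfrak{B}}\bigl(K_*(\psi)([q_p])\bigr),
\end{equation*}
which is automatic for $*$-homomorphisms by functoriality of $j_n$ and Bott. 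Granted this, the conclusion is immediate: assuming $K_*(\psi)|_{\mathcal{Q}} = 0$, the equality of partial-map values on the relation $[p] = [p'_{q_p}]$ in $\underline{K}(\mathfrak{A})$ yields
\begin{equation*}
\underline{K}(\psi)([p]) = \underline{K}(\psi)([p'_{q_p}]) = \pi_n^{\mathfrak{B}}(0) = 0
\end{equation*}
for every $p \in \mathcal{P}$.

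The main obstacle is securing the partial-map Bockstein naturality for arbitrary c.c.p.\ $\psi$: the rounding procedure inherent in the definition of the partial map could in principle obstruct the required commutation. This is handled by ensuring that $\mathcal{F}$ contains enough of the images, under $\psi \otimes \mathrm{id}$, of the building blocks of each $p'_{q_p}$, and that $\delta$ is small enough to force the two rounded classes to coincide in $\underline{K}(\mathfrak{B})$. This kind of bookkeeping is routine within the partial-map formalism established earlier in the paper.
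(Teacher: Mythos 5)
Your opening move is exactly the paper's: torsion-freeness of $K_*(\mathfrak{A})$ makes the Dadarlat--Loring reduction maps surjective, so every class in $\mathcal{P}$ lifts to a class represented in $\mathbb{P}_0(\mathfrak{A})$, and $\mathcal{Q}$ is taken to be $\mathcal{P}$ together with these lifts (the paper's $\mathcal{P}_0$). Where you genuinely diverge is in how naturality of the reduction map is secured for a map $\psi$ that is only $\mathcal{F}$-$\delta$-multiplicative. You propose to prove the approximate Bockstein naturality $\underline{K}(\psi)([p'_{q}]) = \pi_n^{\mathfrak{B}}\bigl(K_*(\psi)([q])\bigr)$ directly by refining $\mathcal{F}$ and $\delta$; the paper instead tensors with $\mathcal{O}_\infty$, invokes Theorem 6.7 of \cite{sepBDF} to replace $\psi\otimes\id_{\mathcal{O}_\infty}$ by a genuine $*$-homomorphism $\varphi$ with $\underline{K}(\varphi\circ\iota_1)\vert_{\mathcal{Q}}=\underline{K}(\iota_2)\circ\underline{K}(\psi)\vert_{\mathcal{Q}}$, uses exact naturality of the Dadarlat--Loring transformation for $\varphi$, and transports the conclusion back through the K\"unneth isomorphism $\underline{K}(\iota_2)$. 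The trade-off is clear: your route, if completed, would need no hypotheses on $\mathfrak{B}$ whatsoever, whereas the paper's route is precisely why nuclearity, simplicity and the UCT for $\mathfrak{B}$ appear in the statement. The one real debt in your version is the naturality step itself: Lemma \ref{l:partialmap} and the ensuing definition only guarantee that $\underline{K}(\psi)\vert_{\mathcal{P}\cup\mathcal{Q}}$ is well defined and additive within each coefficient group, and compatibility with the operation $\pi_n$ relating $K_i(\mathfrak{A})$ to $K_i(\mathfrak{A};\Z_n)$ is an additional property that must be verified. It is verifiable --- $\pi_n$ is implemented by the fixed coefficient-algebra homomorphism $j_n$ composed with explicit Bott constructions, all of which commute with $\psi\otimes\id$ up to an error controlled by $\delta$, so the two rounded classes land within distance one of each other --- but as written this is asserted rather than proved, and it is exactly the step the paper's $\mathcal{O}_\infty$ detour is engineered to avoid.
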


\begin{proof}
Let $\ftn{ \alpha }{ K_{*} ( \mathfrak{A} ) }{ \underline{K} ( \mathfrak{A} ) }$ be the natural homomorphism given in \cite{multcoeff}.  Since $K_{*} ( \mathfrak{A} )$ is torsion free, by \cite{multcoeff}, there exists a finite subset $\mathcal{P}_{0}$ of $\mathbb{P}_{0}( \mathfrak{A} )$ such that for each $p \in \mathcal{P}$, there exists $q \in \mathcal{P}_{0}$ such that $\alpha ( [ q ] ) = [ p ]$ in $\underline{K} ( \mathfrak{A} )$.  
 
Set $\mathcal{Q} = \mathcal{P}_{0} \cup \mathcal{P}$.  Choose $\delta > 0$ and a finite subset $\mathcal{F}$ of $\mathfrak{A}$ such that if $\mathfrak{B}$ is a nuclear, simple $C^{*}$-algebra satisfying the UCT and $\ftn{ \psi }{ \mathfrak{A} }{ \mathfrak{B} }$ is a contractive, completely positive, linear map with $\psi$ being a $\mathcal{F}$-$\delta$-multiplicative map, then $\underline{K}( \psi ) \vert_{ \mathcal{P} }$ and $\underline{K} ( \psi ) \vert_{ \mathcal{Q} }$ are well-defined.  

Suppose $\mathfrak{B}$ is a nuclear, simple $C^{*}$-algebra satisfying the UCT and $\ftn{ \psi }{ \mathfrak{A} }{ \mathfrak{B} }$ is a contractive, completely positive, linear map with $\psi$ being a $\mathcal{F}$-$\delta$-multiplicative map.  Define $\ftn{ \iota_{1} }{ \mathfrak{A} }{ \mathfrak{A} \otimes \mathcal{O}_{\infty} }$ by $\iota_{1} ( a ) = a \otimes 1_{ \mathcal{O}_{\infty}}$ and $\ftn{ \iota_{2}}{ \mathfrak{B} }{ \mathfrak{B} \otimes \mathcal{O}_{\infty} }$ by $\iota ( b ) = b \otimes 1_{ \mathcal{O}_{ \infty } }$.  By the K{\"u}nneth Formula \cite{uct}, $\underline{K} ( \iota_{i} )$ is an isomorphism.  Note that $\iota_{2} \circ \psi = ( \psi \otimes \id_{ \mathcal{O}_{\infty } } ) \circ \iota_{1} $ is a contractive, completely positive, linear map that is $\mathcal{F}$-$\delta$-multiplicative. 

By Theorem 6.7 of \cite{sepBDF}, there exists a homomorphism $\ftn{ \varphi }{ \mathfrak{A} \otimes \mathcal{O}_{\infty} }{ \mathfrak{B} \otimes \mathcal{O}_{\infty} }$ such that 
\begin{align*}
\underline{K} ( \varphi ) \circ \underline{K} ( \iota_{1} ) \vert_{ \mathcal{Q} } = \underline{ K} ( ( \psi \otimes \id_{ \mathcal{O}_{\infty } } ) \circ \iota_{1} ) \vert_{ \mathcal{Q} } =  \underline{K} ( \iota_{2} ) \circ \underline{K}( \psi ) \vert_{ \mathcal{Q} }.
\end{align*}
Let $\ftn{ \beta_{1} }{ K_{*} ( \mathfrak{A} \otimes \mathcal{O}_{\infty} ) }{ \underline{K} ( \mathfrak{A} \otimes \mathcal{O}_{\infty}) }$ and $\ftn{ \beta_{2} }{ K_{*} ( \mathfrak{B} \otimes \mathcal{O}_{\infty} ) }{ \underline{K} ( \mathfrak{B} \otimes \mathcal{O}_{\infty}) }$ be the natural homomorphisms given in \cite{multcoeff}.  Suppose $K_{*} ( \psi ) \vert_{ \mathcal{Q} } = 0$.  Then $K_{*} ( \varphi ) \circ K_{*} ( \iota_{1} ) \vert_{ \mathcal{Q} } = 0$.  Let $p \in \mathcal{P}$.  Then there exists $q \in \mathcal{P}_{0}$ such that $ \alpha( [ q ] ) = [ p ]$ in $\underline{K} ( \mathfrak{A} )$.  Then 
\begin{align*}
 \underline{K} ( \iota_{2} ) ( \underline{K}( \psi )\vert_{ \mathcal{P} } ([p]) )&=\underline{ K} ( \varphi \circ \iota_{1} )( [ p ] ) \\
 								&= \underline{ K} ( \varphi \circ \iota_{1} )( \alpha( [ q ] ) ) \\
								&= \beta_{2} ( K_{*} ( \varphi \circ \iota_{1} )  ( [ q ] ) ) \\
								&= 0.
\end{align*}
Since $\underline{K} ( \iota_{2} )$ is an isomorphism, $\underline{K} ( \psi ) \vert_{ \mathcal{P} }( [ p ] ) =0$.
\end{proof} 

\begin{theorem}\label{t:approxcomm}
Let $\mathfrak{A}$ be in $\mathcal{A}_{ \mathcal{Z} }$.  Let $\epsilon > 0$ and $\mathcal{F}$ be a finite subset of $\mathfrak{A}$.  Then there exists a finitely generated subgroup $\mathcal{G}$ of $K_{0} ( \mathfrak{A} )$ such that the following holds:  if $[ u ] \in H_{ [ 1_{ \mathfrak{A} } ] } ( \mathcal{G} , K_{1} ( \mathfrak{A} ) )$, then there exists a continuous path of unitaries $w(t)$ in $\mathfrak{A} \otimes \mathsf{M}_{ \mathfrak{p} } \otimes \mathsf{M}_{ \mathfrak{q} }$ such that 
\begin{itemize}
\item[(1)] $w(0) \in \mathfrak{A} \otimes \mathsf{M}_{ \mathfrak{p} } \otimes 1_{ \mathsf{M}_{ \mathfrak{q} } }$ and $w(1) \in  \mathfrak{A} \otimes 1_{ \mathsf{M}_{ \mathfrak{p} } }  \otimes \mathsf{M}_{ \mathfrak{q} }$;

\item[(2)] $[ w(0) ] = [ u \otimes 1_{ \mathsf{M}_{ \mathfrak{p} } } \otimes 1_{ \mathsf{M}_{ \mathfrak{q} } } ]$ in $K_{1} ( \mathfrak{A} \otimes \mathsf{M}_{ \mathfrak{p} } \otimes 1_{ \mathsf{M}_{ \mathfrak{q} } } )$ and $[ w(1) ] = [ u \otimes 1_{ \mathsf{M}_{ \mathfrak{p} } } \otimes 1_{ \mathsf{M}_{ \mathfrak{q} } } ]$ in $K_{1} ( \mathfrak{A} \otimes 1_{ \mathsf{M}_{ \mathfrak{p} } }  \otimes \mathsf{M}_{ \mathfrak{q} } )$; and 

\item[(3)] $\norm{ w(t) ( a \otimes 1_{ \mathsf{M}_{ \mathfrak{p} } } \otimes 1_{ \mathsf{M}_{ \mathfrak{q} } }  ) - ( a \otimes 1_{ \mathsf{M}_{ \mathfrak{p} } } \otimes 1_{ \mathsf{M}_{ \mathfrak{q} } }  ) w(t) } < \epsilon$ for all $a \in \mathcal{F}$ and $t \in [ 0 , 1 ]$,
\end{itemize}
where $\mathfrak{p}$ and $\mathfrak{q}$ are supernatural numbers of infinite type with $\mathsf{M}_{ \mathfrak{p} } \otimes \mathsf{M}_{ \mathfrak{q} }$ isomorphic $\mathcal{Q}$.
\end{theorem}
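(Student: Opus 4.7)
The plan is to apply Corollary \ref{c:bottunit} twice, once to each tracially AI factor $\mathfrak{A}_{\mathfrak{p}} := \mathfrak{A} \otimes \mathsf{M}_{\mathfrak{p}}$ and $\mathfrak{A}_{\mathfrak{q}} := \mathfrak{A} \otimes \mathsf{M}_{\mathfrak{q}}$, producing two approximately central unitaries representing $[u]$, and then to connect their canonical embeddings into $\mathfrak{A} \otimes \mathsf{M}_{\mathfrak{p}} \otimes \mathsf{M}_{\mathfrak{q}}$ by a path that preserves approximate commutation with $\mathcal{F}$.

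First, the hypothesis $\mathfrak{A} \in \mathcal{A}_{\mathcal{Z}}$ ensures that $\mathfrak{A}_{\mathfrak{p}}$ and $\mathfrak{A}_{\mathfrak{q}}$ are nuclear, separable, simple, unital, tracially AI and satisfy the UCT, so each fits the hypothesis of Corollary \ref{c:bottunit}. Applied with inputs $(\epsilon/3,\,\mathcal{F}\otimes\{1\})$ and an adequate auxiliary projection set, the corollary yields finitely generated subgroups $\mathcal{G}_{\mathfrak{r}} \subseteq K_{0}(\mathfrak{A}_{\mathfrak{r}})$ for $\mathfrak{r} \in \{\mathfrak{p},\mathfrak{q}\}$. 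By K\"unneth, $K_{0}(\mathfrak{A}_{\mathfrak{r}}) \cong K_{0}(\mathfrak{A}) \otimes \mathbb{Z}[\mathfrak{r}^{-1}]$, so $\mathcal{G}_{\mathfrak{r}}$ is contained in $N_{\mathfrak{r}}^{-1}(\mathcal{G}_{0}^{\mathfrak{r}} \otimes 1)$ for some finitely generated $\mathcal{G}_{0}^{\mathfrak{r}} \subseteq K_{0}(\mathfrak{A})$ and some integer $N_{\mathfrak{r}}$ dividing a power appearing in $\mathfrak{r}$. I take $\mathcal{G} := \mathbb{Z}\,[1_{\mathfrak{A}}] + \mathcal{G}_{0}^{\mathfrak{p}} + \mathcal{G}_{0}^{\mathfrak{q}}$, a finitely generated subgroup of $K_{0}(\mathfrak{A})$ whose image $\mathcal{G} \otimes \mathbb{Z}[\mathfrak{r}^{-1}]$ in $K_{0}(\mathfrak{A}_{\mathfrak{r}})$ contains $\mathcal{G}_{\mathfrak{r}}$ for both $\mathfrak{r}$.

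Given $\gamma \colon \mathcal{G} \to K_{1}(\mathfrak{A})$ with $\gamma([1_{\mathfrak{A}}]) = [u]$, extend by tensoring with the localization to $\gamma_{\mathfrak{r}} \colon \mathcal{G} \otimes \mathbb{Z}[\mathfrak{r}^{-1}] \to K_{1}(\mathfrak{A}) \otimes \mathbb{Z}[\mathfrak{r}^{-1}] \cong K_{1}(\mathfrak{A}_{\mathfrak{r}})$, sending $x \otimes r$ to $\gamma(x) \otimes r$. Restricting $\gamma_{\mathfrak{r}}$ to $\mathcal{G}_{\mathfrak{r}}$ and feeding it into Corollary \ref{c:bottunit} produces unitaries $w_{\mathfrak{p}} \in U(\mathfrak{A}_{\mathfrak{p}})$ and $w_{\mathfrak{q}} \in U(\mathfrak{A}_{\mathfrak{q}})$ that each approximately commute with $\mathcal{F} \otimes 1$ to within $\epsilon/3$ and carry $K_{1}$-class $[u \otimes 1]$. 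Embedding these naturally into $\mathfrak{A} \otimes \mathsf{M}_{\mathfrak{p}} \otimes \mathsf{M}_{\mathfrak{q}} \cong \mathfrak{A} \otimes \mathcal{Q}$, both $w_{\mathfrak{p}} \otimes 1$ and $1 \otimes w_{\mathfrak{q}}$ represent $[u] \otimes 1$ in $K_{1}(\mathfrak{A} \otimes \mathcal{Q}) \cong K_{1}(\mathfrak{A}) \otimes \mathbb{Q}$; they are the desired endpoint unitaries.

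The main obstacle is the last step: joining $w_{\mathfrak{p}} \otimes 1$ and $1 \otimes w_{\mathfrak{q}}$ by a norm-continuous path of unitaries in $U(\mathfrak{A} \otimes \mathcal{Q})$ that retains approximate commutation with $\mathcal{F} \otimes 1 \otimes 1$ throughout. Bare $K_{1}$-injectivity of $\mathfrak{A} \otimes \mathcal{Q}$ (Lemma \ref{lem:AotimesUHFProperties}) supplies a raw path but gives no commutation. My plan is to exhibit an intertwining unitary $v \in U(\mathfrak{A} \otimes \mathcal{Q})_{0}$ that itself approximately commutes with $\mathcal{F}$ and satisfies $v(w_{\mathfrak{p}} \otimes 1)v^{*} \approx 1 \otimes w_{\mathfrak{q}}$. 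One then takes a continuous path $v(t)$ from $1$ to $v$ that stays inside the approximate commutant of $\mathcal{F}$ (available because $\mathcal{Z}$-stability of $\mathfrak{A} \otimes \mathcal{Q}$ provides ample approximately central $\mathcal{Q}$-subalgebras in which to homotope) and sets $w(t) := v(t)(w_{\mathfrak{p}} \otimes 1)v(t)^{*}$, with a short reparametrization at the ends to land exactly at $1 \otimes w_{\mathfrak{q}}$ in $\mathfrak{A} \otimes 1 \otimes \mathsf{M}_{\mathfrak{q}}$. The delicate point is the construction of $v$; it will come from a uniqueness-type theorem for approximately central unitaries with prescribed $K_{1}$-data in simple, $\mathcal{Z}$-stable, tracially AI algebras, in the spirit of Lin's asymptotic unitary equivalence results \cite{hl_asyunit}, coupled with the Bott-map formalism and the technical approximation machinery developed in Section \ref{pathunitaries}.
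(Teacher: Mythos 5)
Your overall architecture matches the paper's: apply Corollary \ref{c:bottunit} once in $\mathfrak{A}\otimes\mathsf{M}_{\mathfrak{p}}$ and once in $\mathfrak{A}\otimes\mathsf{M}_{\mathfrak{q}}$, pull the finitely generated subgroups back to a single $\mathcal{G}\subseteq K_{0}(\mathfrak{A})$ via the K\"unneth identifications, and then connect the two resulting unitaries $w_{0}=v_{0}\otimes 1$ and $w_{1}=\id^{[1,3,2]}(v_{1}\otimes 1)$ inside $\mathfrak{A}\otimes\mathsf{M}_{\mathfrak{p}}\otimes\mathsf{M}_{\mathfrak{q}}$. But the last step --- which you correctly identify as the main obstacle --- is where your proposal has a genuine gap. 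You propose to produce an approximately central intertwiner $v$ with $v(w_{\mathfrak{p}}\otimes 1)v^{*}\approx 1\otimes w_{\mathfrak{q}}$ from an unspecified ``uniqueness-type theorem,'' and then to homotope $v$ to $1$ inside the approximate commutant. This defers the entire difficulty: finding such a $v$ \emph{together with} a path from $1$ to $v$ in the approximate commutant of $\mathcal{F}$ is equivalent to showing that $w_{0}^{*}w_{1}$ is connected to $1$ through approximately central unitaries, and that is exactly the statement one must prove, not a black box one can cite. Moreover, no uniqueness theorem can apply until you have identified and killed the obstruction, and you never do.

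The obstruction is the full Bott invariant, not the $K_{1}$-class. The correct tool is Lin's Basic Homotopy Lemma (Theorem 8.4 of \cite{hl_homotopyunit}): an approximately central unitary $v$ with $\mathrm{Bott}(\id,v)|_{\mathcal{H}}=0$ on a suitable finite set $\mathcal{H}$ is connected to $1$ via a path staying in the approximate commutant of $\mathcal{F}$. (Since $K_{*}(\mathfrak{A}\otimes\mathcal{Q})$ is torsion free, Lemma \ref{l:torsionfreebott} reduces vanishing of $\underline{K}$-data to vanishing of $\mathrm{bott}_{0}$ and $\mathrm{bott}_{1}$.) To apply this to $w_{0}^{*}w_{1}$ you must arrange that $\mathrm{bott}_{0}(\id,w_{0})$ and $\mathrm{bott}_{0}(\id,w_{1})$ agree on a prescribed finite set of projections --- which is precisely why Corollary \ref{c:bottunit} lets you prescribe the \emph{entire} homomorphism $\gamma\otimes K_{0}(\id_{\mathsf{M}_{\mathfrak{r}}})$ as the $\mathrm{bott}_{0}$ data of $v_{0}$ and $v_{1}$, and why $\mathcal{G}$ must be large enough to contain the relevant $K_{0}$-classes, not just $[1_{\mathfrak{A}}]$. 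Your construction only guarantees that $w_{\mathfrak{p}}$ and $w_{\mathfrak{q}}$ have the same $K_{1}$-class $[u\otimes 1]$, i.e., that their $\mathrm{bott}_{0}$ data agree on the single class $[1_{\mathfrak{A}}]$; this is not enough for the homotopy lemma. Once both unitaries carry the same full Bott data, additivity of Bott maps (Lemma \ref{l:prodbottmap}) gives $\mathrm{Bott}(\id,w_{0}^{*}w_{1})=0$, the homotopy lemma yields the path $v(t)$ from $1$ to $w_{0}^{*}w_{1}$, and $w(t)=w_{0}v(t)$ finishes the proof --- no intertwining unitary or reparametrization at the endpoints is needed, since $w(0)=w_{0}$ and $w(1)=w_{1}$ land exactly in the required corners.
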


\begin{proof}
Let $\{ \mathcal{F}_{1,n} \}_{ n = 1}^{ \infty }$ be an increasing sequence of finite subsets of $\mathfrak{A}$ such that $\bigcup_{ n = 1}^{ \infty } \mathcal{F}_{1,n}$ is dense in $\mathfrak{A}$, $\{ \mathcal{F}_{2,n} \}_{ n = 1}^{ \infty }$ be an increasing sequence of finite subsets of $\mathsf{M}_{ \mathfrak{p} }$ such that $\bigcup_{ n = 1}^{ \infty } \mathcal{F}_{2,n}$ is dense in $\mathsf{M}_{ \mathfrak{p} }$, $\{ \mathcal{F}_{3,n} \}_{ n = 1}^{ \infty }$ be an increasing sequence of finite subsets of $\mathsf{M}_{ \mathfrak{q} }$ such that $\bigcup_{ n = 1}^{ \infty } \mathcal{F}_{3,n}$ is dense of $\mathsf{M}_{ \mathfrak{q} }$.  Let $\{ \mathcal{P}_{1,n} \}_{ n = 1}^{ \infty }$ be an increasing sequence of finite subsets of $\mathbb{P} ( \mathfrak{A} )$, $\{ \mathcal{P}_{2,n} \}_{ n = 1}^{ \infty }$ be an increasing sequence of finite subsets of $\mathbb{P} ( \mathsf{M}_{ \mathfrak{p} } )$, and $\{ \mathcal{P}_{ 3, n } \}_{ n = 1}^{\infty}$ be an increasing sequence of finite subsets of $\mathbb{P} ( \mathsf{M}_{ \mathfrak{q} } )$ such that 
\begin{align*}
\bigcup_{ n = 1}^{ \infty } \mathcal{P}_{1,n} = \mathbb{P} ( \mathfrak{A} ), \
\bigcup_{ n = 1}^{ \infty } \mathcal{P}_{2,n} = \mathbb{P} ( \mathsf{M}_{ \mathfrak{p} } ), \ \text{and} \ 
\bigcup_{ n = 1}^{ \infty } \mathcal{P}_{3,n} = \mathbb{P} ( \mathsf{M}_{ \mathfrak{q} } ).
\end{align*}

For each $n$, let $\mathcal{G}_{1,n}$ be the finitely generated subgroup of $K_{0} ( \mathfrak{A} \otimes \mathsf{M}_{ \mathfrak{p} } )$ provided by Corollary \ref{c:bottunit} that corresponds to $\mathcal{F}_{1, n} \otimes \mathcal{F}_{2,n }$, $\mathcal{P}_{1,n} \otimes \mathcal{P}_{2,n}$, and $\frac{1}{ 2^{n} }$ and let $\mathcal{G}_{2,n}$ be the finitely generated subgroup of $K_{0} ( \mathfrak{A} \otimes \mathsf{M}_{ \mathfrak{q} } )$ provided by Corollary \ref{c:bottunit} that corresponds to $\mathcal{F}_{1,n} \otimes \mathcal{F}_{3,n}$, $\mathcal{P}_{1,n} \otimes \mathcal{P}_{3,n}$, and $\frac{1}{ 2^{n} }$.  Note that we may assume that $\mathcal{G}_{1,n} \subseteq \mathcal{G}_{1,n+1}$, $\mathcal{G}_{2,n} \subseteq \mathcal{G}_{2,n+1}$, $\bigcup_{ n = 1}^{ \infty } \mathcal{G}_{1,n} = K_{0} ( \mathfrak{A} \otimes \mathsf{M}_{ \mathfrak{p} } )$, and $\bigcup_{ n = 1}^{ \infty } \mathcal{G}_{2,n} = K_{0} ( \mathfrak{A} \otimes \mathsf{M}_{ \mathfrak{q} } )$.

Let $\{ \mathcal{G}_{n} \}_{ n = 1}^{ \infty }$ be an increasing sequence of finitely generated subgroups of $K_{0} ( \mathfrak{A} )$ containing $[ 1_{ \mathfrak{A} } ]$ such that if $\sum_{ i = 1}^{n} x_{i} \otimes y_{i}$ is a generator of $\mathcal{G}_{1,n}$, then $x_{i} \in \mathcal{G}_{n}$ and if $\sum_{ i = 1}^{n} x_{i} \otimes y_{i}$ is a generator of $\mathcal{G}_{2,n}$, then $x_{i} \in \mathcal{G}_{n}$.  (Note that we are identifying $K_{0} ( \mathfrak{A} \otimes \mathsf{M}_{ \mathfrak{p} } )$ with $K_{0} ( \mathfrak{A} ) \otimes K_{0} ( \mathsf{M}_{ \mathfrak{q} } )$ and $K_{0} ( \mathfrak{A} \otimes \mathsf{M}_{ \mathfrak{q} } )$ with $K_{0} ( \mathfrak{A} ) \otimes K_{0} (\mathsf{M}_{ \mathfrak{q} }).)$

Since $K_{0} ( \mathfrak{A} \otimes \mathsf{M}_{ \mathfrak{p} } \otimes \mathsf{M}_{ \mathfrak{q} } )$ and $K_{1} ( \mathfrak{A} \otimes \mathsf{M}_{ \mathfrak{p} } \otimes \mathsf{M}_{ \mathfrak{q} } )$ are torsion free groups, by Theorem 8.4 of \cite{hl_homotopyunit} and Lemma \ref{l:torsionfreebott}, there exist $\delta > 0$, a finite subset $\mathcal{G}$ of $\mathfrak{A} \otimes \mathsf{M}_{ \mathfrak{p} } \otimes \mathsf{M}_{ \mathfrak{q} }$, and a finite subset $\mathcal{H}$ of $\mathbb{P} ( \mathfrak{A} \otimes \mathsf{M}_{ \mathfrak{p} } \otimes \mathsf{M}_{ \mathfrak{q} } )$ such that if $v$ is a unitary in $\mathfrak{A} \otimes \mathsf{M}_{ \mathfrak{p} } \otimes \mathsf{M}_{ \mathfrak{q} }$ with 
\begin{equation*}
\norm{ x v - v x } < \delta  \ \mbox{for all} \ x \in \mathcal{G} 
\end{equation*}
and
\begin{equation*}
 \mathrm{bott}_{0} ( \id_{ \mathfrak{A} \otimes \mathsf{M}_{ \mathfrak{p} } \otimes \mathsf{M}_{ \mathfrak{q} } } , v ) \vert_{ \mathcal{H} } = 0 \ \mathrm{and} \ \mathrm{bott}_{1} ( \id_{ \mathfrak{A} \otimes \mathsf{M}_{ \mathfrak{p} } \otimes \mathsf{M}_{ \mathfrak{q} } } , v ) \vert_{ \mathcal{H} } = 0
\end{equation*}
then there exists a continuous path of unitaries $v(t)$ in $\mathfrak{A} \otimes \mathsf{M}_{ \mathfrak{p} } \otimes \mathsf{M}_{ \mathfrak{q} }$ such that $v(0) = 1_{  \mathfrak{A} \otimes \mathsf{M}_{ \mathfrak{p} } \otimes \mathsf{M}_{ \mathfrak{q} } }$, $v(1) = v$, and 
\begin{equation*}
\norm{ x v(t) - v(t) x } < \frac{ \epsilon }{ 2 }
\end{equation*}
for all $x \in \mathcal{F} \otimes 1_{ \mathsf{M}_{ \mathfrak{p} } } \otimes 1_{ \mathsf{M}_{ \mathfrak{q} } }$ and $t \in [0,1]$.

Choose $n$ large enough such that $\frac{1}{ 2^{n-1} } < \frac{ \epsilon }{ 2 }$ and if $v$ is a unitary in $\mathfrak{A} \otimes \mathsf{M}_{ \mathfrak{p} } \otimes \mathsf{M}_{ \mathfrak{q} }$ with 
\begin{equation*}
\norm{ v x - x v } < \frac{1}{ 2^{n-1} }
\end{equation*}
for all $x \in \mathcal{F}_{1,n} \otimes \mathcal{F}_{2,n} \otimes \mathcal{F}_{3,n}$, then
\begin{equation*}
\norm{ v x - x v } < \frac{ \delta }{ 2 }
\end{equation*}
for all $x \in \mathcal{G}$ and if
\begin{equation*}
 \mathrm{bott}_{0} ( \id_{ \mathfrak{A} \otimes \mathsf{M}_{ \mathfrak{p} } \otimes \mathsf{M}_{ \mathfrak{q} } } , v ) \vert_{ \mathcal{P}_{1,n} \otimes \mathcal{P}_{2,n} \otimes \mathcal{P}_{3,n} } = 0 \ \mathrm{and} \ \mathrm{bott}_{1} ( \id_{ \mathfrak{A} \otimes \mathsf{M}_{ \mathfrak{p} } \otimes \mathsf{M}_{ \mathfrak{q} } } , v ) \vert_{ \mathcal{P}_{1,n} \otimes \mathcal{P}_{2,n} \otimes \mathcal{P}_{3,n} } = 0
\end{equation*}
then 
\begin{equation*}
 \mathrm{bott}_{0} ( \id_{ \mathfrak{A} \otimes \mathsf{M}_{ \mathfrak{p} } \otimes \mathsf{M}_{ \mathfrak{q} } } , v ) \vert_{ \mathcal{H} } = 0 \ \mathrm{and} \ \mathrm{bott}_{1} ( \id_{ \mathfrak{A} \otimes \mathsf{M}_{ \mathfrak{p} } \otimes \mathsf{M}_{ \mathfrak{q} } } , v ) \vert_{ \mathcal{H} } = 0.
\end{equation*}
Note that the last part of the statement can be done since we are identifying $K_{i} ( \mathfrak{A} \otimes \mathsf{M}_{ \mathfrak{p} } \otimes \mathsf{M}_{ \mathfrak{q} } )$ with $K_{i} ( \mathfrak{A} ) \otimes K_{0} ( \mathsf{M}_{ \mathfrak{p} } ) \otimes K_{0} ( \mathsf{M}_{ \mathfrak{q} } )$. 

Let $[ u ] \in H_{ [ 1_{ \mathfrak{A} } ] } ( \mathcal{G}_{n} , K_{1} ( \mathfrak{A} ) )$.  Then there exists $\ftn{ \gamma }{ \mathcal{G}_{n} }{ K_{1} ( \mathfrak{A} ) }$ such that $\gamma( [ 1_{ \mathfrak{A} } ] ) = [ u ]$.  By Corollary \ref{c:bottunit}, there exists a unitary $v_{0} \in \mathfrak{A} \otimes \mathsf{M}_{ \mathfrak{p} } $ such that 
\begin{equation*}
\norm{ v_{0} ( x_{1} \otimes x_{2} ) - ( x_{1} \otimes x_{2}) v_{0} } < \frac{1}{ 2^{n} } 
\end{equation*}  
for all $x_{1} \in \mathcal{F}_{1,n}$ and $x_{2} \in \mathcal{F}_{2,n}$,
\begin{equation*}
\mathrm{bott}_{1} ( \id_{ \mathfrak{A} \otimes \mathsf{M}_{ \mathfrak{p} } } , v_{0} ) \vert_{ \mathcal{P}_{1,n} \otimes \mathcal{P}_{2,n} } = 0,
\end{equation*}
and 
\begin{equation*}
\mathrm{bott}_{0} ( \id_{ \mathfrak{A}  \otimes \mathsf{M}_{ \mathfrak{p} } } , v_{0} ) \vert_{ \mathcal{P}_{1,n} \otimes \mathcal{P}_{2,n} } = \gamma \otimes K_{0} ( \id_{ \mathsf{M}_{ \mathfrak{p} } } )  \vert_{ \mathcal{P}_{1,n} \otimes \mathcal{P}_{2,n} }.
\end{equation*}

By Corollary \ref{c:bottunit}, there exists a unitary $v_{1} \in \mathfrak{A} \otimes \mathsf{M}_{ \mathfrak{q} }$ such that 
\begin{equation*}
\norm{ v_{1} ( x_{1}  \otimes x_{3} ) - ( x_{1}  \otimes x_{3} ) v_{1} } < \frac{1}{ 2^{n} } 
\end{equation*}  
for all $x_{1} \in \mathcal{F}_{1,n}$ and $x_{3} \in \mathcal{F}_{3,n}$,
\begin{equation*}
\mathrm{bott}_{1} ( \id_{ \mathfrak{A} \otimes \mathsf{M}_{ \mathfrak{q} } } , v_{1} ) \vert_{ \mathcal{P}_{1,n}  \otimes \mathcal{P}_{3,n} } = 0,
\end{equation*}
and 
\begin{equation*}
\mathrm{bott}_{0} ( \id_{ \mathfrak{A}  \otimes \mathsf{M}_{ \mathfrak{q} } }, v_{1} ) \vert_{ \mathcal{P}_{1,n}  \otimes \mathcal{P}_{3,n}} = \gamma \otimes K_{0} ( \id_{ \mathsf{M}_{ \mathfrak{q} } } ) \vert_{ \mathcal{P}_{1,n} \otimes \mathcal{P}_{3,n}}.
\end{equation*}

Set $w_{0} = v_{0} \otimes 1_{ \mathsf{M}_{ \mathfrak{q} } } \in \mathfrak{A} \otimes \mathsf{M}_{ \mathfrak{p} } \otimes 1_{ \mathsf{M}_{ \mathfrak{q} } }$ and set $w_{1} = \id^{ [ 1,3,2 ] } ( v_{1} \otimes 1_{ \mathsf{M}_{ \mathfrak{p} } } ) \in \mathfrak{A} \otimes 1_{ \mathsf{M}_{ \mathfrak{p} } } \otimes \mathsf{M}_{ \mathfrak{q} }$.  Then
\begin{equation*}
\norm{ x w_{i} - w_{i} x } < \frac{1}{ 2^{n} }
\end{equation*}
for all $x \in \mathcal{F}_{1,n} \otimes \mathcal{F}_{2,n} \otimes \mathcal{F}_{3,n}$, for each $i = 0,1$,
\begin{equation*}
\mathrm{bott}_{1} ( \id_{ \mathfrak{A} \otimes \mathsf{M}_{ \mathfrak{p} } \otimes \mathsf{M}_{ \mathfrak{q} } }  , w_{i} ) \vert_{ \mathcal{P}_{1,n} , \mathcal{P}_{2,n} \otimes \mathcal{P}_{3,n} } = 0 
\end{equation*}
and 
\begin{equation*}
\mathrm{bott}_{0} ( \id_{ \mathfrak{A} \otimes \mathsf{M}_{ \mathfrak{p} } \otimes \mathsf{M}_{ \mathfrak{q} } }  , w_{i} ) \vert_{ \mathcal{P}_{1,n} \otimes \mathcal{P}_{2,n} \otimes \mathcal{P}_{3,n} } = \gamma \otimes K_{0} ( \id_{ \mathsf{M}_{ \mathfrak{p} } } ) \otimes K_{0} ( \id_{ \mathsf{M}_{ \mathfrak{q} } } )   \vert_{ \mathcal{P}_{1,n} \otimes \mathcal{P}_{2,n} \otimes \mathcal{P}_{3,n} }.
\end{equation*}

Note that 
\begin{equation*}
\norm{ x w_{0}^{*} w_{1} - w_{0}^{*} w_{1} x } < \frac{1}{ 2^{n-1} }
\end{equation*}
for all $x \in \mathcal{F}_{1,n} \otimes \mathcal{F}_{2,n} \otimes \mathcal{F}_{3,n}$.  By Lemma 3.11, 
\begin{align*}
&\mathrm{bott}_{0} ( \id_{ \mathfrak{A} \otimes \mathsf{M}_{ \mathfrak{p} } \otimes \mathsf{M}_{ \mathfrak{q} } }  , w_{0}^{*} w_{1} ) \vert_{ \mathcal{P}_{1,n} \otimes \mathcal{P}_{2,n} \otimes \mathcal{P}_{3,n} }  \\
&\qquad = - \mathrm{bott}_{0} ( \id_{ \mathfrak{A} \otimes \mathsf{M}_{ \mathfrak{p} } \otimes \mathsf{M}_{ \mathfrak{q} } }  , w_{0} ) \vert_{ \mathcal{P}_{1,n} \otimes \mathcal{P}_{2,n} \otimes \mathcal{P}_{3,n} } + \mathrm{bott}_{0} ( \id_{ \mathfrak{A} \otimes \mathsf{M}_{ \mathfrak{p} } \otimes \mathsf{M}_{ \mathfrak{q} } }  , w_{1} ) \vert_{ \mathcal{P}_{1,n} \otimes \mathcal{P}_{2,n} \otimes \mathcal{P}_{3,n} }   \\
&\qquad = 0 
\end{align*}
\begin{align*}
&\mathrm{bott}_{1} ( \id_{ \mathfrak{A} \otimes \mathsf{M}_{ \mathfrak{p} } \otimes \mathsf{M}_{ \mathfrak{q} } }  , w_{0}^{*} w_{1} ) \vert_{ \mathcal{P}_{1,n} \otimes \mathcal{P}_{2,n} \otimes \mathcal{P}_{3,n} }  \\
&\qquad = - \mathrm{bott}_{1} ( \id_{ \mathfrak{A} \otimes \mathsf{M}_{ \mathfrak{p} } \otimes \mathsf{M}_{ \mathfrak{q} } }  , w_{0} ) \vert_{ \mathcal{P}_{1,n} \otimes \mathcal{P}_{2,n} \otimes \mathcal{P}_{3,n} } + \mathrm{bott}_{1} ( \id_{ \mathfrak{A} \otimes \mathsf{M}_{ \mathfrak{p} } \otimes \mathsf{M}_{ \mathfrak{q} } }  , w_{1} ) \vert_{ \mathcal{P}_{1,n} \otimes \mathcal{P}_{2,n} \otimes \mathcal{P}_{3,n} }   \\
&\qquad = 0. 
\end{align*}
Therefore, 
\begin{align*}
\norm{ x w_{0}^{*} w_{1} - w_{0}^{*} w_{1} x } < \frac{ \delta} { 2 }
\end{align*}
for all $x \in \mathcal{G}$,
\begin{equation*}
\mathrm{bott}_{0} ( \id_{ \mathfrak{A} \otimes \mathsf{M}_{ \mathfrak{p} } \otimes \mathsf{M}_{ \mathfrak{q} } }  , w_{0}^{*} w_{1} ) \vert_{ \mathcal{H} } = 0,
\end{equation*}
and 
\begin{equation*}
\mathrm{bott}_{1} ( \id_{ \mathfrak{A} \otimes \mathsf{M}_{ \mathfrak{p} } \otimes \mathsf{M}_{ \mathfrak{q} } }  , w_{0}^{*} w_{1} ) \vert_{ \mathcal{H} } = 0.
\end{equation*}
Hence, there exists a continuous path of unitaries $v(t)$ in $\mathfrak{A} \otimes \mathsf{M}_{ \mathfrak{p} } \otimes \mathsf{M}_{ \mathsf{q} }$ such that 
\begin{equation*}
\norm{ v(t) ( a \otimes 1_{ \mathsf{M}_{ \mathfrak{p} } } \otimes 1_{ \mathsf{M}_{ \mathfrak{q} } } ) - ( a \otimes 1_{ \mathsf{M}_{ \mathfrak{p} } } \otimes 1_{ \mathsf{M}_{ \mathfrak{q} } } ) v(t) } < \frac{ \epsilon }{ 2 }
\end{equation*}
for all $a \in \mathcal{F}$ and for all $t \in [ 0 ,1 ]$, $v( 0 ) = 1_{ \mathfrak{A} } \otimes 1_{ \mathsf{M}_{ \mathfrak{p} } } \otimes 1_{ \mathsf{M}_{ \mathfrak{q} } }$ and $v(1) = w_{0}^{*} w_{1}$.  

Set $w(t) = w_{0} v(t)$.  Then $w(t)$ is a continuous path of unitaries in $\mathfrak{A} \otimes \mathsf{M}_{ \mathfrak{p} } \otimes \mathsf{M}_{ \mathfrak{q} }$ such that $w(0) = w_{0}$, $w(1) = w_{1}$, and 
\begin{equation*}
\norm{ w(t) ( a \otimes 1_{ \mathsf{M}_{ \mathfrak{p} } } \otimes 1_{ \mathsf{M}_{ \mathfrak{q} } } ) - ( a \otimes 1_{ \mathsf{M}_{ \mathfrak{p} } } \otimes 1_{ \mathsf{M}_{ \mathfrak{q} } } ) w(t) } < \epsilon
\end{equation*}
for all $a \in \mathcal{F}$.

Note that $[ v_{0} ] = \gamma \otimes K_{0} ( \id_{ \mathsf{M} _{ \mathfrak{p} } } ) ( [ 1_{ \mathfrak{A} } \otimes 1_{ \mathsf{M}_{ \mathfrak{p} } } ] ) = \gamma ( [ 1_{ \mathfrak{A} } ] ) \otimes [ 1_{ \mathsf{M}_{ \mathfrak{p} } } ]  = [ u \otimes 1_{ \mathsf{M}_{ \mathfrak{p} } } ]$ in $K_{1} ( \mathfrak{A} \otimes \mathsf{M}_{ \mathfrak{p} } )$ and $[ v_{1} ] = \gamma \otimes K_{1} ( \id_{ \mathsf{M} _{ \mathfrak{q} } } ) ( [ 1_{ \mathfrak{A} } \otimes 1_{ \mathsf{M}_{ \mathfrak{q} } } ] ) = \gamma ( [ 1_{ \mathfrak{A} } ] ) \otimes [ 1_{ \mathsf{M}_{ \mathfrak{q} } } ] = [ u \otimes 1_{ \mathsf{M}_{ \mathfrak{q} } } ]$ in $K_{0} ( \mathfrak{A} \otimes \mathsf{M}_{ \mathfrak{q} } )$.  Hence, $[ w(0) ] = [ u \otimes 1_{ \mathsf{M}_{ \mathfrak{p} } } \otimes 1_{ \mathsf{M}_{ \mathfrak{q} } } ]$ in $K_{1} ( \mathfrak{A} \otimes \mathsf{M}_{ \mathfrak{p} } \otimes 1_{ \mathsf{M}_{ \mathfrak{q} } } )$ and $[ w(1) ] = [ u \otimes 1_{ \mathsf{M}_{ \mathfrak{p} } } \otimes 1_{ \mathsf{M}_{ \mathfrak{q} } } ]$ in $K_{1} ( \mathfrak{A} \otimes 1_{ \mathsf{M}_{ \mathfrak{p} } }  \otimes \mathsf{M}_{ \mathfrak{p} } )$.
\end{proof}

\section{The automorphism group of a $\mathcal{Z}$-stable $C^{*}$-algebra}\label{results}

Let $\mathfrak{A}$ be a separable, unital $C^{*}$-algebra and let $\{ G_{n} \}_{ n = 1}^{ \infty }$ be an increasing sequence of finitely generated subgroups of $K_{0} ( \mathfrak{A} )$ containing $[ 1_{ \mathfrak{A} } ]$.  Note that there exists a sequence of surjective homomorphisms 
\begin{align*}
\frac{ K_{1} ( \mathfrak{A} ) }{ H_{ [ 1_{\mathfrak{A} } ] } ( G_{1} , K_{1} ( \mathfrak{A} ) ) } \leftarrow \frac{ K_{1} ( \mathfrak{A} ) }{ H_{ [ 1_{\mathfrak{A} } ] } ( G_{2} , K_{1} ( \mathfrak{A} ) ) } \leftarrow \cdots
\end{align*}
which gives an inverse limit group $\displaystyle \lim_{ \leftarrow } \frac{ K_{1} ( \mathfrak{A} ) }{ H_{ [ 1_{\mathfrak{A} } ] } ( G_{n} , K_{1} ( \mathfrak{A} ) ) }$.  For each $n \in \N$, equip $\frac{ K_{1} ( \mathfrak{A} ) }{ H_{ [ 1_{\mathfrak{A} } ] } ( G_{n} , K_{1} ( \mathfrak{A} ) ) }$ with the discrete topology and give the inverse limit the inverse limit topology.  

For each $g \in K_{1} ( \mathfrak{A} )$, denote the image of $g$ in the inverse limit by $\check{g}$.  For each $g \in U ( \mathfrak{A} )$, denote the image of $g$ in $\frac{ \overline{\mathrm{Inn} }( \mathfrak{A} ) }{  \overline{\mathrm{Inn} }_{0}( \mathfrak{A} ) }$ by $\hat{g} = \mathrm{Ad} ( g )+ \overline{ \mathrm{Inn} }_{0} ( \mathfrak{A} )$.  The authors in \cite{pr_auto} showed that the map $\mu : \hat{g} \mapsto \check{g}$ extends uniquely to a continuous group homomorphism from $\frac{ \overline{\mathrm{Inn} }( \mathfrak{A} ) }{  \overline{\mathrm{Inn} }_{0}( \mathfrak{A} ) }$ to $\displaystyle \lim_{ \leftarrow } \frac{ K_{1} ( \mathfrak{A} ) }{ H_{ [ 1_{\mathfrak{A} } ] } ( G_{n} , K_{1} ( \mathfrak{A} ) ) }$.  With an abuse of notation, we denote this unique extension by $\mu$.  Moreover, by Theorem 2 of \cite{pr_auto}, if $\mathfrak{A}$ satisfies Property (C) (see Definition \ref{d:propertyC}), then $\mu$ is surjective and if, in addition, the natural map from $U( \mathfrak{A} ) / U_{0} ( \mathfrak{A} )$ to $K_{1} ( \mathfrak{A} )$ is an isomorphism, then $\mu$ is an isomorphism of topological abelian groups.  Consequently,  $\frac{ \overline{\mathrm{Inn} }( \mathfrak{A} ) }{ \overline{\mathrm{Inn} }_{0}( \mathfrak{A} ) }$ is a totally disconnected group.  Moreover, the authors showed that if $\mathfrak{A}$ is a unital $C^{*}$-algebra satisfying the UCT and if $\mathfrak{A}$ is either a Kirchberg algebra or simple tracially AI algebra, then $\mathfrak{A}$ satisfies Property (C).  We use the results from the previous section to show that every element in $\mathcal{A}_{ \mathcal{Z} }$ satisfies Property (C).   

\subsection{$\mathcal{Z}$-stable $C^{*}$-algebras and Property (C)}\label{propertyC}

\begin{definition}\label{d:propertyC}
Let $\mathfrak{A}$ be a unital $C^{*}$-algebra.  $\mathfrak{A}$ is said to satisfy \emph{\textbf{Property (C)}} if for each $\epsilon > 0$ and for each finite subset $\mathcal{F}$ of $\mathfrak{A}$, there exists a finitely generated subgroup $\mathcal{G}$ of $K_{0} ( \mathfrak{A} )$ containing $[ 1_{ \mathfrak{A} } ]$ such that the following holds: for every $u \in U( \mathfrak{A} )$ with $[ u ] \in H_{ [ 1_{ \mathfrak{A} } ] } ( \mathcal{G} , K_{1} ( \mathfrak{A} ) )$, there exists $w \in U( \mathfrak{A} )$ such that 
\begin{equation*}
\norm{ w a - a w } < \epsilon
\end{equation*}
for all $a \in \mathcal{F}$ and $[ w ] = [ u ]$ in $U( \mathfrak{A} ) / U( \mathfrak{A} )_{0}$.
\end{definition}

The following is an equivalent definition of Property (C) which will be easier to check in our context.  

\begin{lemma}\label{l:equivalentstate}
Let $\mathfrak{A}$ be a unital $C^{*}$-algebra.  Then the following are equivalent:
\begin{itemize}
\item[(1)] $\mathfrak{A}$ satisfies Property (C).

\item[(2)] For each $\epsilon > 0$ and for each finite subset $\mathcal{F}$ of $\mathfrak{A}$, there exists a finitely generated subgroup $\mathcal{G}$ of $K_{0} ( \mathfrak{A} )$ containing $[ 1_{ \mathfrak{A} } ]$ such that the following holds: for every $u \in U( \mathfrak{A} )$ with $[ u ] \in H_{ [ 1_{ \mathfrak{A} } ] } ( \mathcal{G} , K_{1} ( \mathfrak{A} ) )$, there exists $w \in U ( \mathfrak{A} )_{0}$ such that
\begin{equation*}
\norm{ u a u^{*} - w a w^{*} } < \epsilon
\end{equation*}
for all $a \in \mathcal{F}$.
\end{itemize}
\end{lemma}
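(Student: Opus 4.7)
The plan is to show that the two conditions correspond to each other under the bijective change of variable $w \longleftrightarrow v := uw^{*}$, which sends unitaries $w \in U(\mathfrak{A})$ with $[w] = [u]$ in $U(\mathfrak{A})/U(\mathfrak{A})_{0}$ bijectively to unitaries $v \in U(\mathfrak{A})_{0}$ (since $[w]=[u]$ in the quotient is precisely the condition that $uw^{*} \in U(\mathfrak{A})_{0}$). Under this substitution, the almost-commutation estimate $\|wa-aw\|<\epsilon$ of Definition \ref{d:propertyC} and the inner-approximation estimate $\|uau^{*} - vav^{*}\|<\epsilon$ of condition (2) become literally the same inequality: on one hand $\|wa-aw\|=\|waw^{*}-a\|$ for any unitary $w$ because multiplication by $w$ is an isometry, and on the other hand conjugation by the unitary $u$ preserves norms, so that
\begin{equation*}
\|uau^{*}-vav^{*}\|=\|uau^{*}-uw^{*}awu^{*}\|=\|a-w^{*}aw\|=\|wa-aw\|.
\end{equation*}

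The upshot is that both implications use exactly the same $\mathcal{G}$, $\epsilon$, and $\mathcal{F}$, with no refinement of parameters needed. For $(1)\Rightarrow(2)$, feed the given $\epsilon$ and $\mathcal{F}$ into (1) to obtain $\mathcal{G}$; for $u$ with $[u]\in H_{[1_{\mathfrak{A}}]}(\mathcal{G},K_{1}(\mathfrak{A}))$ take the unitary $w$ produced by (1) and set $v := uw^{*}$, which lies in $U(\mathfrak{A})_{0}$ because $[v] = [u]-[w] = 0$; the identity above gives the required estimate. For $(2)\Rightarrow(1)$ one runs the same computation in reverse: given $v$ supplied by (2), set $w := v^{*}u$, observe $[w]=[u]$ in $U(\mathfrak{A})/U(\mathfrak{A})_{0}$, and read off $\|wa-aw\|<\epsilon$.

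No step presents any real obstacle; the lemma is a formal reformulation that records which parametrisation of the coset $uU(\mathfrak{A})_{0}$ is the more convenient one in different contexts. In our setting condition (2) is preferred because it is phrased directly in terms of inner perturbations $\mathrm{Ad}(v)$ by unitaries in $U(\mathfrak{A})_{0}$, which is precisely the form that the continuous path-of-unitaries constructions of Section \ref{pathunitaries} (in particular Theorem \ref{t:approxcomm} and Corollary \ref{c:bottunit}) will produce in the $\mathcal{Z}$-stable case of Section \ref{propertyC}.
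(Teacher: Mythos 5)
Your proposal is correct and is essentially identical to the paper's proof: both directions are handled by the substitution $w \leftrightarrow v = uw^{*}$ (equivalently $w = v^{*}u$), together with the observation that left and right multiplication by unitaries are isometries, so that $\| wa - aw \| = \| uau^{*} - vav^{*} \|$ and the same $\mathcal{G}$, $\epsilon$, $\mathcal{F}$ work for both statements. No issues.
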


\begin{proof}
$(1) \implies (2):$  Let $\epsilon > 0$ and $\mathcal{F}$ be a finite subset of $\mathfrak{A}$.  Let $\mathcal{G}$ be the finitely generated subgroup of $K_{0} ( \mathfrak{A} )$ given in Property (C) that corresponds to $\epsilon$ and $\mathcal{F}$.  Let $u \in U( \mathfrak{A} )$ with $[ u ] \in H_{ [ 1_{ \mathfrak{A} } ] } ( \mathcal{G} , K_{1} ( \mathfrak{A} ) )$.  Then, by our assumption, there exists $v \in U( \mathfrak{A} )$ such that 
\begin{equation*}
\norm{ v a - a v } < \epsilon
\end{equation*}
for all $a \in \mathcal{F}$ and $u v^{*} \in U( \mathfrak{A} )_{0}$.  Set $w = u v^{*}$.  Then $w \in U( \mathfrak{A} )_{0}$ and 
\begin{equation*}
\norm{ u a u^{*} - w a w^{*} } = \norm{ a - v^{*} a v } = \norm{ v a - a v }  < \epsilon
\end{equation*}
for all $a \in \mathcal{F}$. 

$(2) \implies (1):$  Let $\epsilon > 0$ and $\mathcal{F}$ be a finite subset of $\mathfrak{A}$.  Let $\mathcal{G}$ be the finitely generated subgroup of $K_{0} ( \mathfrak{A} )$ given in (2) that corresponds to $\epsilon$ and $\mathcal{F}$.  Suppose $u \in U( \mathfrak{A} )$ such that $[ u ] \in H_{ [ 1_{ \mathfrak{A} } ] } ( \mathcal{G} , K_{1} ( \mathfrak{A} ) )$. Then, by our assumption, there exists $v \in U ( \mathfrak{A} )_{0}$ such that 
\begin{equation*}
\norm{ u a u^{*} - v a v^{*} } < \epsilon
\end{equation*}  
for all $a \in \mathcal{F}$.  Set $w = v^{*} u$.  Since $[ v ] \in U( \mathfrak{A} )_{ 0 }$, $[ w ] = [ u ]$ in $U( \mathfrak{A} ) / U( \mathfrak{A} )_{0}$.  Moreover, 
\begin{equation*}
\norm{ w a - a w} = \norm{ v^{*} u a - a v^{*} u } = \norm{ u a u^{*} - v a v^{*} } < \epsilon
\end{equation*}
for all $a \in \mathcal{F}$. 
\end{proof}

\begin{lemma}\label{l:approxunit}
Let $\mathfrak{A}$ and $\mathfrak{B}$ be separable $C^{*}$-algebras, with $\mathfrak{B}$ unital and $\mathcal{Z}$-stable and let $\mathfrak{p}$ and $\mathfrak{q}$ be supernatural numbers which are relatively prime.  Suppose $\ftn{ \varphi_{1} , \varphi_{2} }{ \mathfrak{A} }{ \mathfrak{B} }$ are $*$-homomorphisms such that 
\begin{equation*}
\ftn{ \varphi_{1} \otimes 1_{ \mathcal{Z}_{ \mathfrak{p} , \mathfrak{q} } }, \varphi_{2} \otimes 1_{ \mathcal{Z}_{ \mathfrak{p} , \mathfrak{q} } } }{ \mathfrak{A} }{ \mathfrak{B} \otimes \mathcal{Z}_{ \mathfrak{p} , \mathfrak{q} } }
\end{equation*}
are approximately unitarily equivalent via unitaries $\{ w_{n} \}_{ n \in \N}$ with $[ w_{n} ] = 0$ in $K_{1} ( \mathfrak{B} \otimes \mathcal{Z}_{ \mathfrak{p} , \mathfrak{q} } )$.  Then $\varphi_{1}, \varphi_{2}$ are approximately unitarily equivalent via unitaries $\{ v_{n} \}_{ n \in \N }$ in $\mathfrak{B}$ such that $[ v_{n} ] = 0$ in $K_{1} ( \mathfrak{B} )$.

Consequently, for every $\epsilon > 0$ and for every finite subset $\mathcal{F}$ of $\mathfrak{A}$, there exists $\delta > 0$ such that if $w$ is a unitary in $\mathfrak{B} \otimes \mathcal{Z}_{ \mathfrak{p} , \mathfrak{q} }$ such that 
\begin{equation*}
\norm{ ( \varphi_{1} \otimes 1_{ \mathcal{Z}_{ \mathfrak{p} , \mathfrak{q} } }  )( a ) - w ( \varphi_{2} \otimes 1_{ \mathcal{Z}_{ \mathfrak{p} , \mathfrak{q} } } ) ( a ) w^{*} } < \delta 
\end{equation*}
for all $a \in \mathcal{F}$ and $[ w ] = 0$  in $K_{1} ( \mathfrak{B} \otimes \mathcal{Z}_{ \mathfrak{p} , \mathfrak{q} } )$, then there exists a unitary $v$ in $\mathfrak{B}$ with $[ v ] = 0$ in $K_{1} ( \mathfrak{B} )$ such that
\begin{equation*}
\norm{ \varphi_{1} ( a ) - v \varphi_{2} ( a ) v^{*} } < \epsilon
\end{equation*}
for all $a \in \mathcal{F}$.
\end{lemma}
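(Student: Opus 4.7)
The plan is to transfer the approximate unitary equivalence from $\mathfrak{B} \otimes \mathcal{Z}_{ \mathfrak{p} , \mathfrak{q} }$ down to $\mathfrak{B}$ by routing it through $\mathfrak{B} \otimes \mathcal{Z}$ and invoking the $\mathcal{Z}$-stability of $\mathfrak{B}$. First, I would use Theorem 3.4 of \cite{RordamWinter} to identify $\mathcal{Z} = \overline{\bigcup_{k} \mathcal{Z}_{k}}$ with $\mathcal{Z}_{1} \cong \mathcal{Z}_{\mathfrak{p} , \mathfrak{q} }$, yielding a unital embedding $\iota_{\mathcal{Z}} : \mathcal{Z}_{\mathfrak{p}, \mathfrak{q}} \hookrightarrow \mathcal{Z}$. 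Tensoring with $\id_{\mathfrak{B}}$ gives a unital embedding $\iota : \mathfrak{B} \otimes \mathcal{Z}_{\mathfrak{p}, \mathfrak{q}} \hookrightarrow \mathfrak{B} \otimes \mathcal{Z}$ for which $\iota \circ (\varphi_{i} \otimes 1_{\mathcal{Z}_{\mathfrak{p}, \mathfrak{q}}}) = \varphi_{i} \otimes 1_{\mathcal{Z}}$. Since $\mathcal{Z}_{\mathfrak{p},\mathfrak{q}}$ and $\mathcal{Z}$ both lie in the UCT class with $K_{*} = (\Z, 0)$ and the embedding $\iota_{\mathcal{Z}}$ preserves the class of the unit, the K\"unneth formula forces the induced map $\iota_{*}$ on $K_{1}$ to be an isomorphism. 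Hence the unitaries $\iota(w_{n}) \in U(\mathfrak{B} \otimes \mathcal{Z})$ still satisfy $[\iota(w_{n})] = 0$ in $K_{1}(\mathfrak{B} \otimes \mathcal{Z})$ and implement an approximate unitary equivalence between $\varphi_{1} \otimes 1_{\mathcal{Z}}$ and $\varphi_{2} \otimes 1_{\mathcal{Z}}$.

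Second, I would use the $\mathcal{Z}$-stability of $\mathfrak{B}$ together with Lemma \ref{l:isoz} to produce a $*$-isomorphism $\Phi : \mathfrak{B} \to \mathfrak{B} \otimes \mathcal{Z}$ and a sequence of unitaries $\{u_{n}\} \subseteq U(\mathfrak{B} \otimes \mathcal{Z})_{0}$ such that $\lim_{n} \| \Phi(b) - u_{n}(b \otimes 1_{\mathcal{Z}}) u_{n}^{*}\| = 0$ for every $b \in \mathfrak{B}$. This is the ``$\mathfrak{B}$-coefficient'' version of Lemma \ref{l:isoz}, which the paper already deploys in the proofs of Theorem \ref{thm:BIGCONTAINS} and Lemma \ref{lem:NuclearQuasidiagonalCase}: one fixes an isomorphism $\mathfrak{B} \cong \mathfrak{B} \otimes \mathcal{Z}$ and transports the map $\id_{\mathfrak{B}} \otimes \Phi_{0}$ obtained by tensoring Lemma \ref{l:isoz} with $\id_{\mathfrak{B}}$. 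Consequently, the composition $\Phi \circ \varphi_{i}$ is approximately unitarily equivalent, via the unitaries $u_{n}$, to $\varphi_{i} \otimes 1_{\mathcal{Z}}$, with all implementing unitaries in $U(\mathfrak{B} \otimes \mathcal{Z})_{0}$ and hence of trivial $K_{1}$-class.

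Third, I would chain these equivalences via a diagonal extraction. Setting $y_{n} = u_{n} \iota(w_{n}) u_{n}^{*}$, one has $[y_{n}] = 0$ in $K_{1}(\mathfrak{B} \otimes \mathcal{Z})$, and for each fixed $a \in \mathfrak{A}$,
\begin{align*}
y_{n}\, \Phi(\varphi_{2}(a))\, y_{n}^{*} \ \approx\ y_{n}\, u_{n}(\varphi_{2}(a) \otimes 1) u_{n}^{*}\, y_{n}^{*} \ =\ u_{n}\, \iota(w_{n})(\varphi_{2}(a)\otimes 1)\iota(w_{n})^{*}\, u_{n}^{*} \ \approx\ u_{n}(\varphi_{1}(a)\otimes 1)u_{n}^{*} \ \approx\ \Phi(\varphi_{1}(a)),
\end{align*}
so a diagonal subsequence extracts $y_{n_{k}}$ with $y_{n_{k}} \Phi(\varphi_{2}(a)) y_{n_{k}}^{*} \to \Phi(\varphi_{1}(a))$ for all $a$ in a fixed countable dense set of $\mathfrak{A}$. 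Since $\Phi^{-1}$ is an isomorphism and preserves $K_{1}$-classes, the unitaries $v_{n_{k}} = \Phi^{-1}(y_{n_{k}}) \in U(\mathfrak{B})$ satisfy $[v_{n_{k}}] = 0$ and implement approximate unitary equivalence of $\varphi_{1}$ and $\varphi_{2}$. The second ($\epsilon$-$\delta$) conclusion then follows from the first by a standard compactness/contradiction argument. The main obstacle is the second step: establishing that the identification isomorphism $\Phi : \mathfrak{B} \to \mathfrak{B} \otimes \mathcal{Z}$ is approximately inner with implementing unitaries lying in $U_{0}$, and carefully tracking that $K_{1}$-triviality is preserved through both the K\"unneth-enabled inclusion $\iota$ and the transport by $\Phi^{-1}$.
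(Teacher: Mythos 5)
Your argument is correct and follows essentially the same route as the paper's proof: the paper likewise pushes the $w_{n}$ forward along a unital embedding $\overline{\sigma}_{\mathfrak{p},\mathfrak{q}}\colon \mathcal{Z}_{\mathfrak{p},\mathfrak{q}} \to \mathcal{Z}$ (citing Proposition 3.4 of \cite{ww_localelliott}), conjugates by the unitaries $u_{n}$ coming from the approximately inner identification $\nu_{\mathfrak{B}}\colon \mathfrak{B} \to \mathfrak{B} \otimes \mathcal{Z}$ supplied by Lemma \ref{l:isoz}, and sets $v_{n} = \nu_{\mathfrak{B}}^{-1}\bigl(u_{n}(\id_{\mathfrak{B}} \otimes \overline{\sigma}_{\mathfrak{p},\mathfrak{q}})(w_{n})u_{n}^{*}\bigr)$, which is exactly your $\Phi^{-1}(y_{n})$. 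Two of your steps are superfluous (though harmless): functoriality alone gives $[\iota(w_{n})] = \iota_{*}(0) = 0$ without any K\"unneth argument, and no diagonal extraction is needed since each error term in your chain already tends to $0$ for every fixed $a$.
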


\begin{proof}
Since $\mathfrak{B} \cong \mathfrak{B} \otimes \mathcal{Z} \cong \mathfrak{B} \otimes \mathcal{Z} \otimes \mathcal{Z}$, by Lemma \ref{l:isoz}, there exist a $*$-isomorphism $\ftn{ \nu_{ \mathfrak{B} } }{ \mathfrak{B} }{ \mathfrak{B} \otimes \mathcal{Z} }$ and a sequence of unitaries $\{ u_{n} \}_{ n \in \N }$ in $\mathfrak{B} \otimes \mathcal{Z}$ such that
\begin{equation*}
\lim_{ n \to \infty } \norm{ \nu_{ \mathfrak{B} } ( b ) - u_{n} ( b \otimes 1_{ \mathcal{Z} } ) u_{n}^{*} } = 0
\end{equation*}
for all $b \in \mathfrak{B}$.  Let $\ftn{ \overline{ \sigma }_{ \mathfrak{p} , \mathfrak{q} } }{ \mathcal{Z}_{ \mathfrak{p} , \mathfrak{q} } }{ \mathcal{Z} }$ be the unital embedding in Proposition 3.4 of \cite{ww_localelliott}.
Note that 
\begin{equation*}
\lim_{ n \to \infty } \norm{ \varphi_{i} ( a ) - \nu_{ \mathfrak{B} }^{-1} ( u_{n} ) \nu_{ \mathfrak{B} }^{-1}( \varphi_{i} ( a ) \otimes 1_{ \mathcal{Z} } ) \nu_{ \mathfrak{B} }^{-1} ( u_{n}^{*} ) } = 0
\end{equation*}
and 
\begin{align*}
\varphi_{1} ( a  ) \otimes 1_{ \mathcal{Z} } &= \varphi_{1} ( a ) \otimes \overline{\sigma}_{ \mathfrak{p} , \mathfrak{q} } ( 1_{ \mathcal{Z}_{ \mathfrak{p} , \mathfrak{q} } } ) \\
						&= ( \id_{ \mathfrak{B} } \otimes \overline{\sigma}_{ \mathfrak{p} , \mathfrak{q} } ) ( \varphi_{1} ( a ) \otimes 1_{ \mathcal{Z}_{ \mathfrak{p} , \mathfrak{q} } } ) \\   
\varphi_{2} ( a  ) \otimes 1_{ \mathcal{Z} } &= \varphi_{1} ( a ) \otimes \overline{\sigma}_{ \mathfrak{p} , \mathfrak{q} } ( 1_{ \mathcal{Z}_{ \mathfrak{p} , \mathfrak{q} } } ) \\
					&= ( \id_{ \mathfrak{B} } \otimes \overline{\sigma}_{ \mathfrak{p} , \mathfrak{q} } )( \varphi_{2} ( a ) \otimes 1_{ \mathcal{Z}_{ \mathfrak{p} , \mathfrak{q} } } )
\end{align*}
for all $a \in \mathfrak{A}$.  Therefore,
\begin{align*}
&\lim_{ n \to \infty } \norm{ \varphi_{1} ( a ) \otimes 1_{ \mathcal{Z} } - ( \id_{ \mathfrak{B} } \otimes \overline{\sigma}_{ \mathfrak{p} , \mathfrak{q} } )( w_{n} ) ( \varphi_{2} ( a ) \otimes 1_{ \mathcal{Z} } ) ( \id_{ \mathfrak{B} } \otimes \overline{\sigma}_{ \mathfrak{p} , \mathfrak{q} } )( w_{n}^{*} ) } \\
&=\lim_{ n \to \infty } \norm{ (\id_{ \mathfrak{B} } \otimes \overline{\sigma}_{ \mathfrak{p} , \mathfrak{q} } ) (  \varphi_{1} ( a ) \otimes 1_{ \mathcal{Z}_{ \mathfrak{p} , \mathfrak{q} } } ) - (\id_{ \mathfrak{B} } \otimes \overline{\sigma}_{ \mathfrak{p} , \mathfrak{q} } ) ( w_{n}  (\varphi_{2} ( a ) \otimes 1_{ \mathcal{Z}_{ \mathfrak{p} , \mathfrak{q} } } )w_{n}^{*} ) } \\
&= 0
\end{align*}
for all $a \in \mathfrak{ \mathfrak{A} }$.  Set $v_{n} = \nu_{ \mathfrak{B} }^{-1} \left( u_{n}  \left[\id_{ \mathfrak{B} } \otimes \overline{\sigma}_{ \mathfrak{p} , \mathfrak{q} } \right]( w_{n} )  u_{n}^{*} \right)$.  Let $a \in \mathfrak{A}$.  Then 
\begin{align*}
&\norm{ \varphi_{1} ( a ) - v_{n} \varphi_{2} ( a ) v_{n}^{*} } \\
&\quad \leq \norm{ \varphi_{1} ( a ) - \nu_{ \mathfrak{B} }^{-1} ( u_{n} ) \nu_{ \mathfrak{B} }^{-1} ( \varphi_{1} ( a ) \otimes 1_{ \mathcal{Z} } ) \nu_{ \mathfrak{B} }^{-1} ( u_{n}^{*} ) } \\
&\qquad + \| \nu_{ \mathfrak{B} }^{-1} ( u_{n} ) \nu_{ \mathfrak{B} }^{-1} ( \varphi_{1} ( a ) \otimes 1_{ \mathcal{Z} } ) \nu_{ \mathfrak{B} }^{-1} ( u_{n}^{*} )  \\
&\qquad \qquad  - \nu_{ \mathfrak{B} }^{-1}  ( u_{n} ( \id_{ \mathfrak{B} } \otimes \overline{ \sigma }_{ \mathfrak{p} , \mathfrak{q} } )( w_{n} ) ) \nu_{ \mathfrak{B} }^{-1} ( \varphi_{2} ( a ) \otimes 1_{ \mathcal{Z} } ) \nu_{ \mathfrak{B} }^{-1}  ( ( \id_{ \mathfrak{B} } \otimes \overline{ \sigma }_{ \mathfrak{p} , \mathfrak{q} } )( w_{n}^{*} ) u_{n}^{*}  ) \| \\
&\qquad + \norm{ \nu_{ \mathfrak{B} }^{-1}  ( u_{n} ( \id_{ \mathfrak{B} } \otimes \overline{ \sigma }_{ \mathfrak{p} , \mathfrak{q} } )( w_{n} ) ) \nu_{ \mathfrak{B} }^{-1} ( \varphi_{2} ( a ) \otimes 1_{ \mathcal{Z} } ) \nu_{ \mathfrak{B} }^{-1}  ( ( \id_{ \mathfrak{B} } \otimes \overline{ \sigma }_{ \mathfrak{p} , \mathfrak{q} } )( w_{n}^{*} ) u_{n}^{*}  ) - v_{n} \varphi_{2} ( a ) v_{n}^{*}  } \\
&\quad \leq 
\norm{ \varphi_{1} ( a ) - \nu_{ \mathfrak{B} }^{-1} ( u_{n} ) \nu_{ \mathfrak{B} }^{-1} ( \varphi_{1} ( a ) \otimes 1_{ \mathcal{Z} } ) \nu_{ \mathfrak{B} }^{-1} ( u_{n}^{*} ) } \\
&\qquad + \norm{ \varphi_{1} ( a ) \otimes 1_{ \mathcal{Z} } - ( \id_{ \mathfrak{B} } \otimes \overline{ \sigma }_{ \mathfrak{p} , \mathfrak{q} })( w_{n} ) (\varphi_{2} ( a ) \otimes 1_{ \mathcal{Z} } )( \id_{ \mathfrak{B} } \otimes \overline{ \sigma }_{ \mathfrak{p} , \mathfrak{q} })( w_{n}^{*} ) }  \\
&\qquad + \norm{ \varphi_{2} ( a ) - \nu_{ \mathfrak{B} }^{-1} ( u_{n} ) \nu_{ \mathfrak{B} }^{-1} ( \varphi_{2} ( a ) \otimes 1_{ \mathcal{Z} } ) \nu_{ \mathfrak{B} }^{-1} ( u_{n}^{*} ) }. 
\end{align*}
Therefore, for each $a \in \mathfrak{A}$, 
\begin{equation*}
\lim_{ n \to \infty } \norm{ \varphi_{1} ( a ) - v_{n} \varphi_{2} ( a ) v_{n}^{*} } = 0.
\end{equation*}
Since $[ w_{n} ] = 0$ in $K_{1} ( \mathfrak{B} \otimes \mathcal{Z}_{ \mathfrak{p} , \mathfrak{q} } )$, then 
\begin{align*}
[ v_{n} ] &= \left( K_{1} ( \nu_{ \mathfrak{B} }^{-1} ) \circ K_{1} ( \mathrm{Ad} ( u_{n} ) ) \circ K_{1} ( \id_{ \mathfrak{B} } \otimes \overline{ \sigma }_{ \mathfrak{p} , \mathfrak{q} } ) \right) ( [ w_{n} ] )\\
		&= 0
\end{align*}
in $K_{1} ( \mathfrak{B} )$.
\end{proof}

\begin{lemma}\label{l:k1iso}
Let $\mathfrak{A}$ be in $\mathcal{A}_{ \mathcal{Z} }$.  Then the canonical homomorphism from $U( \mathfrak{A} ) / U ( \mathfrak{A} )_{0}$ to $K_{1} ( \mathfrak{A} )$ is an isomorphism.
\end{lemma}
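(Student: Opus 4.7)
The plan is to use the standard dichotomy for simple, unital, $\mathcal{Z}$-stable $C^*$-algebras and cite known results in each case. By Theorem 3 of \cite{GongJiangSu}, $\mathfrak{A}$ is either purely infinite or stably finite. In the purely infinite case, the isomorphism $U(\mathfrak{A})/U(\mathfrak{A})_0 \cong K_1(\mathfrak{A})$ is a classical theorem of Cuntz: every $K_1$-class is represented by a unitary in $\mathfrak{A}$ itself, and two unitaries give the same class iff they lie in the same connected component of $U(\mathfrak{A})$.

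In the stably finite case, I would invoke Rørdam's theorem that a simple, unital, $\mathcal{Z}$-stable, stably finite $C^*$-algebra has stable rank one (see \cite{mr_dimstablerk} and \cite{RordamWinter}). Stable rank one yields surjectivity of the canonical map $\eta : U(\mathfrak{A})/U(\mathfrak{A})_0 \to K_1(\mathfrak{A})$: any element of $K_1(\mathfrak{A})$ is represented by an invertible $x$ in some $\mathsf{M}_n(\mathfrak{A})$, which, via stable-rank-one elementary operations, reduces to a block-diagonal invertible $y \oplus 1_{n-1}$ with $y \in GL(\mathfrak{A})$, and polar decomposition of $y$ then produces the required unitary in $U(\mathfrak{A})$. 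Stable rank one also implies $K_1$-injectivity, which is precisely the injectivity of $\eta$.

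Alternatively, an intrinsic argument could proceed as follows: by Lemma \ref{lem:AotimesUHFProperties}, $\mathfrak{A} \otimes \mathsf{M}_{\mathfrak{p}} \otimes \mathsf{M}_{\mathfrak{q}}$ has stable rank one and is $K_1$-injective, so given $x \in K_1(\mathfrak{A})$ one would realize its images in $K_1(\mathfrak{A} \otimes \mathsf{M}_{\mathfrak{p}})$ and $K_1(\mathfrak{A} \otimes \mathsf{M}_{\mathfrak{q}})$ by unitaries $u_1, u_2$; since $u_1 \otimes 1_{\mathsf{M}_{\mathfrak{q}}}$ and $1_{\mathsf{M}_{\mathfrak{p}}} \otimes u_2$ then agree in $K_1(\mathfrak{A} \otimes \mathsf{M}_{\mathfrak{p}} \otimes \mathsf{M}_{\mathfrak{q}})$, they are connected by a continuous path of unitaries, giving a unitary in $\mathfrak{A} \otimes \mathcal{Z}_{\mathfrak{p}, \mathfrak{q}} \subseteq \mathfrak{A} \otimes \mathcal{Z} \cong \mathfrak{A}$ realizing $x$ via the Künneth identification $K_1(\mathfrak{A} \otimes \mathcal{Z}_{\mathfrak{p},\mathfrak{q}}) \cong K_1(\mathfrak{A})$. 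The main obstacle with this intrinsic approach is the case of $K_1$-classes of $\mathfrak{A}$ that become torsion (or zero) after tensoring with $\mathcal{Q}$, which cannot be detected from the endpoints of the path; citing Rørdam's stable rank one theorem circumvents this obstacle entirely and handles both surjectivity and injectivity uniformly.
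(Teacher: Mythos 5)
Your proposal is correct and follows essentially the same route as the paper: the paper notes that $\mathfrak{A}$ is finite (because $\mathfrak{A} \otimes \mathsf{M}_{\mathfrak{p}}$ is tracially AI), invokes R{\o}rdam's theorem that a finite, simple, unital, $\mathcal{Z}$-stable $C^*$-algebra has stable rank one, and then applies Rieffel's Theorem 10.12 to conclude that the canonical map is an isomorphism. Your extra purely infinite branch of the dichotomy is vacuous here (tracial states on $\mathfrak{A}\otimes\mathsf{M}_{\mathfrak{p}}$ rule it out), and your ``alternative intrinsic argument'' is correctly flagged by you as problematic, but the main argument matches the paper's.
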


\begin{proof}
Let $\mathfrak{p}$ be a supernatural number of infinite type.  Then $\mathfrak{A} \otimes \mathsf{M}_{ \mathfrak{p} }$ is a tracially AI algebra.  Thus $\mathfrak{A}$ is a finite $\mathcal{Z}$-stable $C^{ * }$-algebra.  Hence, by \cite{MRrrZabs}, $\mathfrak{A}$ has stable rank one.  Since $\mathfrak{A}$ is a simple unital $C^{*}$-algebra with stable rank one, by Theorem 10.12 of \cite{mr_dimstablerk} and Corollary 7.14 of \cite{BlackComp}, the canonical homomorphism from $U( \mathfrak{A} ) / U ( \mathfrak{A} )_{0}$ to $K_{1} ( \mathfrak{A} )$ is an isomorphism.
\end{proof}

\begin{theorem}\label{t:strongapprox}
Let $\mathfrak{A}$ be in $\mathcal{A}_{ \mathcal{Z} }$.  Then $\mathfrak{A}$ satisfies Property (C).
\end{theorem}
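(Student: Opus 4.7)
The strategy is to combine Theorem \ref{t:approxcomm} with the $\mathcal{Z}$-stability of $\mathfrak{A}$ to descend the almost-commuting unitary produced by that theorem from $\mathfrak{A} \otimes \mathcal{Z}_{\mathfrak{p},\mathfrak{q}}$ back down to $\mathfrak{A}$. Let $\epsilon > 0$ and a finite set $\mathcal{F} \subseteq \mathfrak{A}$ be given, and fix supernatural numbers $\mathfrak{p} = 2^{\infty}$ and $\mathfrak{q} = 3^{\infty}$ (so $\mathsf{M}_{\mathfrak{p}} \otimes \mathsf{M}_{\mathfrak{q}} \cong \mathcal{Q}$). By $\mathcal{Z}$-stability of $\mathfrak{A}$ there is a $*$-isomorphism $\Psi \colon \mathfrak{A} \to \mathfrak{A} \otimes \mathcal{Z}$ that is approximately unitarily equivalent to the first-factor inclusion $\iota_{\mathcal{Z}} \colon a \mapsto a \otimes 1_{\mathcal{Z}}$; fix such $\Psi$ together with a unitary $U \in \mathfrak{A} \otimes \mathcal{Z}$ satisfying $\| U \Psi(a) U^{*} - a \otimes 1_{\mathcal{Z}} \| < \epsilon/4$ for all $a \in \mathcal{F}$. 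I then apply Theorem \ref{t:approxcomm} with tolerance $\epsilon/4$ and set $\mathcal{F}$ to obtain a finitely generated subgroup $\mathcal{G} \subseteq K_{0}(\mathfrak{A})$ containing $[1_{\mathfrak{A}}]$; this $\mathcal{G}$ will witness Property (C).

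Given any $u \in U(\mathfrak{A})$ with $[u] \in H_{[1_{\mathfrak{A}}]}(\mathcal{G}, K_{1}(\mathfrak{A}))$, Theorem \ref{t:approxcomm} supplies a continuous path $w \colon [0,1] \to U(\mathfrak{A} \otimes \mathsf{M}_{\mathfrak{p}} \otimes \mathsf{M}_{\mathfrak{q}})$ meeting the stated boundary and $K_{1}$ conditions and obeying $\| w(t)(a \otimes 1 \otimes 1) - (a \otimes 1 \otimes 1) w(t) \| < \epsilon/4$. The boundary conditions let this path be regarded as a single unitary $W \in \mathfrak{A} \otimes \mathcal{Z}_{\mathfrak{p},\mathfrak{q}}$. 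Pushing $W$ forward through the canonical unital embedding $\mathrm{id}_{\mathfrak{A}} \otimes \overline{\sigma}_{\mathfrak{p},\mathfrak{q}} \colon \mathfrak{A} \otimes \mathcal{Z}_{\mathfrak{p},\mathfrak{q}} \to \mathfrak{A} \otimes \mathcal{Z}$ gives a unitary $\tilde{W} \in U(\mathfrak{A} \otimes \mathcal{Z})$ with the same approximate commutation against $a \otimes 1_{\mathcal{Z}}$, and I set $w := \Psi^{-1}(U^{*} \tilde{W} U) \in U(\mathfrak{A})$. The triangle inequality together with the defining property of $U$ then gives $\| w a - a w \| < \epsilon$ for all $a \in \mathcal{F}$.

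What remains, and is the main obstacle, is to verify $[w] = [u]$ in $U(\mathfrak{A})/U(\mathfrak{A})_{0}$, which by Lemma \ref{l:k1iso} equals $K_{1}(\mathfrak{A})$. Since $\Psi$ is approximately unitarily equivalent to $\iota_{\mathcal{Z}}$ and $K_{1}$ is invariant under inner automorphisms, $[w]$ is the image of $[\tilde{W}]$ under $K_{1}(\iota_{\mathcal{Z}})^{-1}$, where $K_{1}(\iota_{\mathcal{Z}}) \colon K_{1}(\mathfrak{A}) \to K_{1}(\mathfrak{A} \otimes \mathcal{Z})$ is the canonical K\"unneth isomorphism (available because $K_{*}(\mathcal{Z}) = (\mathbb{Z}, 0)$). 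The factorization $\iota_{\mathcal{Z}} = (\mathrm{id}_{\mathfrak{A}} \otimes \overline{\sigma}_{\mathfrak{p},\mathfrak{q}}) \circ \iota_{\mathcal{Z}_{\mathfrak{p},\mathfrak{q}}}$ identifies $[w]$ with the unique preimage $x \in K_{1}(\mathfrak{A})$ of $[W]$ under the K\"unneth isomorphism $K_{1}(\iota_{\mathcal{Z}_{\mathfrak{p},\mathfrak{q}}})$, so it suffices to show $x = [u]$.

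To pin down $x$, I apply the evaluation homomorphisms $\mathrm{id}_{\mathfrak{A}} \otimes \mathrm{ev}_{i}$ at $i = 0, 1$; under the K\"unneth identifications these become the maps $K_{1}(\mathfrak{A}) \to K_{1}(\mathfrak{A}) \otimes \mathbb{Z}[1/2]$ and $K_{1}(\mathfrak{A}) \to K_{1}(\mathfrak{A}) \otimes \mathbb{Z}[1/3]$ sending $y \mapsto y \otimes 1$, whose kernels are the $2$-primary and $3$-primary torsion subgroups $K_{1}(\mathfrak{A})[2^{\infty}]$ and $K_{1}(\mathfrak{A})[3^{\infty}]$ respectively. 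By Theorem \ref{t:approxcomm}(2), $[\mathrm{ev}_{0}(W)] = [u \otimes 1_{\mathsf{M}_{\mathfrak{p}}}]$ and $[\mathrm{ev}_{1}(W)] = [u \otimes 1_{\mathsf{M}_{\mathfrak{q}}}]$, so $x - [u]$ lies in $K_{1}(\mathfrak{A})[2^{\infty}] \cap K_{1}(\mathfrak{A})[3^{\infty}] = \{ 0 \}$, the intersection being trivial since an element whose order divides both a power of $2$ and a power of $3$ must itself be $0$. Hence $x = [u]$, so $[w] = [u]$ in $K_{1}(\mathfrak{A})$, which completes the verification of Property (C).
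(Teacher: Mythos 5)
Your proof is correct and follows essentially the same route as the paper's: apply Theorem \ref{t:approxcomm}, regard the resulting path as a unitary in $\mathfrak{A} \otimes \mathcal{Z}_{\mathfrak{p},\mathfrak{q}}$, and descend to $\mathfrak{A}$ via the unital embedding $\overline{\sigma}_{\mathfrak{p},\mathfrak{q}} \colon \mathcal{Z}_{\mathfrak{p},\mathfrak{q}} \to \mathcal{Z}$ together with the approximate innerness of $\mathfrak{A} \cong \mathfrak{A} \otimes \mathcal{Z}$ relative to the first-factor embedding --- which is exactly the content of Lemma \ref{l:approxunit}, invoked by the paper and reproved by hand in your argument, with Lemma \ref{l:k1iso} closing the loop in both versions. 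The only substantive local difference is that where the paper cites Proposition 5.2 of \cite{ww_localelliott} to conclude that a unitary in $\mathfrak{A} \otimes \mathcal{Z}_{\mathfrak{p},\mathfrak{q}}$ whose two endpoint evaluations are trivial in $K_{1}$ is itself trivial in $K_{1}$, you derive the same injectivity directly from the K\"unneth formula and the observation that the $2$-primary and $3$-primary torsion subgroups of $K_{1}(\mathfrak{A})$ intersect trivially --- a valid, self-contained substitute.
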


\begin{proof}
By Lemma \ref{l:equivalentstate}, it is enough to prove (2) of Lemma \ref{l:equivalentstate}.  Let $\epsilon > 0$ and let $\mathcal{F}$ be a finite subset of $\mathfrak{A}$.  Let $\delta > 0$ be the quantity given in Lemma \ref{l:approxunit} corresponding to $\mathcal{F}$ and $\epsilon$.  Let $\mathcal{G}$ be the finitely generated subgroup of $K_{0} ( \mathfrak{A} )$ given in Theorem \ref{t:approxcomm} corresponding to $\mathcal{F}$ and $\frac{ \delta }{ 2 }$.

Let $u \in U( \mathfrak{A} )$ with $[u] \in H_{ [ 1_{ \mathfrak{A} } ] } ( \mathcal{G} , K_{1} ( \mathfrak{A} ) )$.  Let $\mathfrak{p}$ and $\mathfrak{q}$ be supernatural numbers of infinite type such that $\mathsf{M}_{ \mathfrak{p} } \otimes \mathsf{M}_{ \mathfrak{q} }$ is isomorphic to $\mathcal{Q}$.  By Theorem \ref{t:approxcomm}, then there exists a continuous path of unitaries $w(t)$ in $\mathfrak{A} \otimes \mathsf{M}_{ \mathfrak{p} } \otimes \mathsf{M}_{ \mathfrak{q} }$ such that 
\begin{itemize}
\item[(1)] $w(0) \in \mathfrak{A} \otimes \mathsf{M}_{ \mathfrak{p} } \otimes 1_{ \mathsf{M}_{ \mathfrak{q} } }$ and $w(1) \in  \mathfrak{A} \otimes 1_{ \mathsf{M}_{ \mathfrak{p} } }  \otimes \mathsf{M}_{ \mathfrak{q} }$;

\item[(2)] $[ w(0) ] = [ u \otimes 1_{ \mathsf{M}_{ \mathfrak{p} } } \otimes 1_{ \mathsf{M}_{ \mathfrak{q} } } ]$ in $K_{1} ( \mathfrak{A} \otimes \mathsf{M}_{ \mathfrak{p} } \otimes 1_{ \mathsf{M}_{ \mathfrak{q} } } )$ and $[ w(1) ] = [ u \otimes 1_{ \mathsf{M}_{ \mathfrak{p} } } \otimes 1_{ \mathsf{M}_{ \mathfrak{q} } } ]$ in $K_{1} ( \mathfrak{A} \otimes 1_{ \mathsf{M}_{ \mathfrak{p} } }  \otimes \mathsf{M}_{ \mathfrak{q} } )$; and

\item[(3)] $\norm{ w(t) ( a \otimes 1_{ \mathsf{M}_{ \mathfrak{p} } } \otimes 1_{ \mathsf{M}_{ \mathfrak{q} } }  ) - ( a \otimes 1_{ \mathsf{M}_{ \mathfrak{p} } } \otimes 1_{ \mathsf{M}_{ \mathfrak{q} } }  ) w(t) } < \frac{ \delta }{ 2 }$ for all $a \in \mathcal{F}$ and $t \in [ 0 , 1 ]$.
\end{itemize}
Since 
\begin{equation*}
\mathcal{Z}_{ \mathfrak{p} , \mathfrak{q} } = \setof{ f \in C\left( [ 0 , 1 ], \mathsf{M}_{ \mathfrak{p} } \otimes \mathsf{M}_{ \mathfrak{q} } \right) }{ f(0) \in \mathsf{M}_{ \mathfrak{p} } \otimes 1_{ \mathsf{M}_{ \mathfrak{q} } } \ \text{and} \ f(1) \in 1_{ \mathsf{M}_{ \mathfrak{p} } } \otimes \mathsf{M}_{ \mathfrak{q} } },   
\end{equation*}
$\mathfrak{A} \otimes \Z_{ \mathfrak{p} , \mathfrak{q} }$ can be identified with the $C^{*}$-algebra 
\begin{align*}
\setof{ f \in C\left( [ 0 , 1 ], \mathfrak{A} \otimes \mathsf{M}_{ \mathfrak{p} } \otimes \mathsf{M}_{ \mathfrak{q} } \right) }{ f(0) \in \mathfrak{A} \otimes \mathsf{M}_{ \mathfrak{p} } \otimes 1_{ \mathsf{M}_{ \mathfrak{q} } } \ \text{and} \ f(1) \in \mathfrak{A} \otimes 1_{ \mathsf{M}_{ \mathfrak{p} } } \otimes \mathsf{M}_{ \mathfrak{q} } }.
\end{align*}
With this identification, $w$ is an element of $U( \mathfrak{A} \otimes \mathcal{Z}_{ \mathfrak{p} , \mathfrak{q} } )$ such that
\begin{equation*}
\norm{ w ( a \otimes 1_{ \mathcal{Z}_{ \mathfrak{p} , \mathfrak{q} } }  ) - ( a \otimes 1_{ \mathcal{Z}_{ \mathfrak{p} , \mathfrak{q} } } ) w } < \delta
\end{equation*}
for all $a \in \mathcal{F}$.  Since  $[ w(0) ] = [ u \otimes 1_{ \mathsf{M}_{ \mathfrak{p} } } \otimes 1_{ \mathsf{M}_{ \mathfrak{q} } } ]$ in $K_{1} ( \mathfrak{A} \otimes \mathsf{M}_{ \mathfrak{p} } \otimes 1_{ \mathsf{M}_{ \mathfrak{q} } } )$ and $[ w(1) ] = [ u \otimes 1_{ \mathsf{M}_{ \mathfrak{p} } } \otimes 1_{ \mathsf{M}_{ \mathfrak{q} } } ]$ in $K_{1} ( \mathfrak{A} \otimes 1_{ \mathsf{M}_{ \mathfrak{p} } }  \otimes \mathsf{M}_{ \mathfrak{q} } )$, we have that 
\begin{equation*}
[ \left( ( u \otimes 1_{ \mathcal{Z}_{ \mathfrak{p}, \mathfrak{q} } }) w^{*} \right) (0) ] = 0 
\end{equation*}
in $K_{1} ( \mathfrak{A} \otimes \mathsf{M}_{ \mathfrak{p} } \otimes 1_{ \mathsf{M}_{ \mathfrak{q} } } )$ and 
\begin{equation*}
[ \left( (u \otimes 1_{ \mathcal{Z}_{ \mathfrak{p}, \mathfrak{q} } } )w^{*} \right) (1) ] = 0 
\end{equation*}
in $K_{1} ( \mathfrak{A} \otimes 1_{ \mathsf{M}_{ \mathfrak{p} } }  \otimes \mathsf{M}_{ \mathfrak{p} } )$.  By Proposition 5.2 of \cite{ww_localelliott}, $[  ( u \otimes 1_{ \mathcal{Z}_{ \mathfrak{p}, \mathfrak{q} } }) w^{*}  ] = 0$ in $K_{1} ( \mathfrak{A} \otimes \mathcal{Z}_{ \mathfrak{p} , \mathfrak{q} } )$.  Let $a \in \mathcal{F}$.  Then
\begin{equation*}
\norm{ ( \mathrm{Ad} ( u ) \otimes 1_{ \mathcal{Z}_{ \mathfrak{p} , \mathfrak{q} } } )( a ) - \mathrm{Ad} \left( (u \otimes 1_{ \mathcal{Z}_{ \mathfrak{p} , \mathfrak{q} }  })  w^{*} \right)( a \otimes 1_{ \mathcal{Z}_{ \mathfrak{p} , \mathfrak{q}  } } ) } < \delta.
\end{equation*}
Hence, by Lemma \ref{l:approxunit}, there exists a unitary $v \in U( \mathfrak{A} )$ such that $[ v ] = 0$ in $K_{1} ( \mathfrak{A} )$ and 
\begin{equation*}
\norm{ u a u^{*} - v a v^{*} } < \epsilon
\end{equation*}
for all $a \in \mathcal{F}$.  By Lemma \ref{l:k1iso}, $v \in U ( \mathfrak{A} )_{0}$.  We have just shown that (2) of Lemma  \ref{l:equivalentstate} holds.  Therefore, $\mathfrak{A}$ satisfies Property (C).
\end{proof}

\subsection{The structure of the automorphism group of a $C^{*}$-algebra in $\mathcal{A}_{ \mathcal{Z} }$.}

Recall from \cite{multcoeff}, there is a natural order structure on $\underline{K}(\mathfrak{A} )$.  Let $\mathrm{Aut}(\underline{K}(\mathfrak{A}))$ be the topological group of all ordered group isomorphisms of $\underline{K}(\mathfrak{A})$, which preserve the grading and the Bockstein operations in \cite{multcoeff}.  For a unital $C^*$-algebra $\mathfrak{A}$, let $\mathrm{Aut} ( \underline{K} ( \mathfrak{A} ) )_{1}$ be the subgroup of $\mathrm{Aut} ( \underline{K}(  \mathfrak{A} ) )$ which sends $[ 1_{ \mathfrak{A} } ]$ to $[ 1_{ \mathfrak{A} } ]$.  Set
\begin{align*}
J(\mathfrak{A} ) = (\underline{K}(\mathfrak{A}), T(\mathfrak{A}) , U( \mathfrak{A} ) / CU( \mathfrak{A} ) ).
\end{align*}
Then an element of $\mathrm{Aut} (J(\mathfrak{A} ) )_1$ is an ordered tuple
$(\alpha, \lambda_T, \lambda_{U} )$ where 
$\alpha \in \mathrm{Aut} ( \underline{K} ( \mathfrak{A} ) )_{1}$, $\ftn{ \lambda_T }{ T( \mathfrak{A} ) }{ T( \mathfrak{A} ) }$ is an affine homeomorphism, and $\ftn{ \lambda_{U} }{ U( \mathfrak{A} ) / CU( \mathfrak{A} ) }{ U( \mathfrak{A} ) / CU( \mathfrak{A} ) }$ is a homeomorphism such that $\alpha$, $\lambda_{T}$, and $\lambda_{U}$ are compatible is the sense of Section 2, pp.~5 of \cite{hlzn:hom}.
  
\begin{theorem}
Let $\mathfrak{A}$ be a $C^{*}$-algebra in $\mathcal{A}_{ \mathcal{Z} }$ satisfying the UCT.  Then 
\begin{itemize}
\item[(a)] $\overline{\mathrm{Inn}}_{0} ( \mathfrak{A} )$ is a simple topological group;

\item[(b)] $\frac{ \overline{ \mathrm{Inn} } ( \mathfrak{A} ) }{ \overline{ \mathrm{Inn} }_{0} ( \mathfrak{A} ) }$ is totally disconnected; and

\item[(c)] the sequence
\begin{align*}
0 \to \overline{\mathrm{Inn}} ( \mathfrak{A} ) \to \mathrm{Aut} ( \mathfrak{A} ) \to \mathrm{Aut} ( J( \mathfrak{A} ) )_{1}
\end{align*}
is exact.
\end{itemize}
In addition, if $K_{1} ( \mathfrak{A} )$ is torsion free, then the sequence in (c) becomes a short exact sequence
\begin{align*}
0 \to \overline{\mathrm{Inn}} ( \mathfrak{A} ) \to \mathrm{Aut} ( \mathfrak{A} ) \to \mathrm{Aut} ( J( \mathfrak{A} ) )_{1} \to 0.
\end{align*}
\end{theorem}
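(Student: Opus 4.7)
The plan is to deduce each clause from the machinery already developed in the paper. For (a), I will invoke Theorem~\ref{t:simpletop}(c), hypothesis~(1). Nuclearity, separability, simplicity, unitality and $\mathcal{Z}$-stability are immediate from $\mathfrak{A} \in \mathcal{A}_{\mathcal{Z}}$; it only remains to check quasidiagonality. Fix any supernatural number $\mathfrak{p}$ of infinite type; by the definition of $\mathcal{A}_{\mathcal{Z}}$, $\mathfrak{A} \otimes \mathsf{M}_{\mathfrak{p}}$ is simple tracially AI, hence quasidiagonal, and the unital inclusion $\mathfrak{A} \hookrightarrow \mathfrak{A} \otimes \mathsf{M}_{\mathfrak{p}}$ transports quasidiagonality back to $\mathfrak{A}$.

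For (b), Theorem~\ref{t:strongapprox} gives Property~(C) for $\mathfrak{A}$, and Lemma~\ref{l:k1iso} supplies the isomorphism $U(\mathfrak{A})/U(\mathfrak{A})_{0} \cong K_{1}(\mathfrak{A})$. Plugging both into Theorem~2 of \cite{pr_auto} produces a topological-group isomorphism
\[
\mu : \frac{\overline{\mathrm{Inn}}(\mathfrak{A})}{\overline{\mathrm{Inn}}_{0}(\mathfrak{A})} \xrightarrow{\ \cong\ } \lim_{\longleftarrow} \frac{K_{1}(\mathfrak{A})}{H_{[1_{\mathfrak{A}}]}(G_{n}, K_{1}(\mathfrak{A}))}
\]
onto an inverse limit of discrete abelian groups equipped with the inverse limit topology. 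Such a limit is totally disconnected, which gives~(b).

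For the exact sequence in (c), the inclusion $\overline{\mathrm{Inn}}(\mathfrak{A}) \subseteq \ker$ is routine: each $\mathrm{Ad}(u)$ fixes $\underline{K}(\mathfrak{A})$, $T(\mathfrak{A})$, and $U(\mathfrak{A})/CU(\mathfrak{A})$ (since $uvu^{*}v^{-1}$ is a commutator and so trivial modulo $CU(\mathfrak{A})$), and these actions on $J(\mathfrak{A})$ are continuous in the topology of pointwise convergence; hence the closure lies in the kernel as well. For the reverse inclusion, suppose $\alpha \in \mathrm{Aut}(\mathfrak{A})$ acts trivially on $J(\mathfrak{A})$. Then $\alpha$ and $\mathrm{id}_{\mathfrak{A}}$ induce the same invariant in the sense of Lin--Winter, and the approximate uniqueness theorem of~\cite{hlzn:hom}, applicable because $\mathfrak{A} \in \mathcal{A}_{\mathcal{Z}}$ satisfies the UCT, forces $\alpha$ to be approximately unitarily equivalent to $\mathrm{id}_{\mathfrak{A}}$; that is, $\alpha \in \overline{\mathrm{Inn}}(\mathfrak{A})$.

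For the final assertion, under the torsion-free hypothesis on $K_{1}(\mathfrak{A})$ I will prove surjectivity of $\mathrm{Aut}(\mathfrak{A}) \to \mathrm{Aut}(J(\mathfrak{A}))_{1}$ via the existence half of the Lin--Winter classification. Given $(\kappa, \lambda_{T}, \lambda_{U}) \in \mathrm{Aut}(J(\mathfrak{A}))_{1}$, torsion-freeness of $K_{1}(\mathfrak{A})$ kills the Bockstein obstructions that would otherwise obstruct lifting $\kappa$ from the graded $K_{*}$-data to a fully compatible homomorphism; the existence theorem then produces a unital endomorphism $\beta$ of $\mathfrak{A}$ realizing the triple. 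Applying the same construction to the inverse triple and composing yields endomorphisms that induce the identity on $J(\mathfrak{A})$, which are approximately inner by the portion of~(c) already proven, and an Elliott-style intertwining argument promotes $\beta$ to an actual automorphism with the prescribed invariant. The principal obstacle is the careful invocation of~\cite{hlzn:hom}: both the uniqueness direction needed for~(c) and the existence direction needed in the torsion-free case depend on matching our $J(\mathfrak{A})$ precisely to the invariant classified there, and on pushing the intertwining through without assuming that $K_{0}(\mathfrak{A})$ is torsion-free.
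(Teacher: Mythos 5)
Your proposal is correct and follows essentially the same route as the paper: part (a) via Theorem \ref{t:simpletop} (the paper leaves the quasidiagonality check implicit, but your argument --- $\mathfrak{A}\otimes\mathsf{M}_{\mathfrak{p}}$ is tracially AI hence quasidiagonal, and quasidiagonality passes to the unital subalgebra $\mathfrak{A}$ --- is exactly the right justification), part (b) via Theorem \ref{t:strongapprox}, Lemma \ref{l:k1iso} and Theorem 2 of \cite{pr_auto}, part (c) via the uniqueness theorem (Theorem 4.8) of \cite{hlzn:hom}, and the surjectivity in the torsion-free case via the existence theorem (Theorem 5.9) of \cite{hlzn:hom} combined with an approximate intertwining. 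The only slight imprecision is your gloss on the role of torsion-freeness: $\kappa$ is already given as an automorphism of the full invariant $\underline{K}(\mathfrak{A})$, so there is no Bockstein lifting problem; the hypothesis is needed simply because the existence theorem being quoted requires it.
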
  

\begin{proof}
(a) follows from Theorem \ref{t:simpletop}.  (b) follows from Theorem 2 of \cite{pr_auto} and Theorem \ref{t:strongapprox} and Lemma \ref{l:k1iso}.  (c) follows from Theorem 4.8 of \cite{hlzn:hom}.

Suppose $K_{1} ( \mathfrak{A} )$ is torsion free.  Let $( \alpha, \lambda_{T}, \lambda_{U} )$ be an element of $\mathrm{Aut} ( J( \mathfrak{A} ) )_{1}$.  By Theorem 5.9 of \cite{hlzn:hom}, there exist unital homomorphisms $\ftn{ \phi, \psi }{ \mathfrak{A} }{ \mathfrak{A} }$ such that 
\begin{itemize}
\item[(i)] $( \alpha, \lambda_{T}, \lambda_{U} )$ is induced by $\phi$ and 

\item[(ii)] $( \alpha^{-1} , \lambda_{T}^{-1}, \lambda_{U}^{-1} )$ is induced by $\psi$.  
\end{itemize}
Let $\{ \mathcal{F}_{n} \}_{n = 1}^{ \infty }$ be an increase sequence of finite subsets of $\mathfrak{A}$ whose union is dense in $\mathfrak{A}$.  Using Theorem 4.8 of \cite{hlzn:hom}, we get a sequence of unital homomorphisms $\ftn{ \phi_{n}, \psi_{n} }{ \mathfrak{A} }{ \mathfrak{A} }$ such that 
\begin{align*}
\norm{ (\psi_{n} \circ \phi_{n})( x )  - x } < \frac{1}{ 2^{n} } \quad \text{and} \quad \norm{ ( \phi_{n+1} \circ \psi_{n} )( a ) - a } < \frac{1}{2^{n} }
\end{align*}
for all $a \in \mathcal{F}_{n}$, and $\phi_{n}$ is unitarily equivalent to $\phi$ for each $n \in \N$ and $\psi_{n}$ is unitarily equivalent to $\psi$ for each $n \in \N$.  Therefore, we get an isomorphism $\ftn{ \beta }{ \mathfrak{A} }{ \mathfrak{A} }$ such that $\beta$ and $\phi$ induce the same element in $\mathrm{Aut} ( J( \mathfrak{A} ) )_{1}$, i.e., $\beta$ induces $( \alpha, \lambda_{T}, \lambda_{U} )$.
\end{proof}

\begin{remark}
Note that if $\mathfrak{A}$ is a nuclear, separable, unital, tracially AI algebra satisfying the UCT, then by \cite{linTR1}, the sequence
\begin{align*}
0 \to \overline{\mathrm{Inn}} ( \mathfrak{A} ) \to \mathrm{Aut} ( \mathfrak{A} ) \to \mathrm{Aut} ( J( \mathfrak{A} ) )_{1} \to 0
\end{align*}
is exact even when $K_{1} ( \mathfrak{A} )$ has an element with finite order.  Moreover, $\mathfrak{A}$ is an element of $\mathcal{A}_{ \mathcal{Z} }$.  The authors believe that the above sequence should be exact for an arbitrary $\mathfrak{A}$ in $\mathcal{A}_{\mathcal{Z}}$ satisfying the UCT but the proof has eluded the authors.  
\end{remark}

\section{Acknowledgement}  The authors are grateful to the referee for a careful reading of the paper and useful suggestions.

\end{document}